\documentclass[12pt]{article}
\usepackage[margin=1in]{geometry} 
\usepackage{amsmath,amsthm,amssymb,amsfonts}
\usepackage{graphicx, color}
\usepackage{indentfirst}
\usepackage{mathrsfs}
\usepackage{cleveref}
\usepackage{dsfont}
	\newcommand{\1}{\mathds{1}}
\usepackage{enumitem}

\usepackage{mathtools}

\usepackage{tikz-cd}

\usepackage{soul}

\newtheorem{thm}{Theorem}[section]

\newtheorem{lemma}[thm]{Lemma}
\newtheorem{prop}[thm]{Proposition}

\newtheorem{corollary}[thm]{Corollary}
\newtheorem{defn}[thm]{Definition}
\newtheorem{remark}[thm]{Remark}

\numberwithin{equation}{section}

\newcommand{\N}{\mathbb{N}}
\newcommand{\Z}{\mathbb{Z}}
\newcommand{\R}{\mathbb{R}}

\renewcommand{\P}{\mathbb P}

\newcommand{\C}{\mathbb C}

\newcommand{\EX}{\mathbb E}

\newcommand{\DD}{\mathbb D}

\newcommand{\<}{\leq}
\renewcommand{\>}{\geq}

\def\sE{\mathcal E}
\def\sF{\mathcal F}
\def\sL{\mathcal L}
\def\sN{\mathcal N}
\def\sX{\mathcal X}

\newcommand{\2}{\alpha}
\newcommand{\3}{\beta}
\renewcommand{\r}{\gamma}
\newcommand{\e}{\varepsilon}
\newcommand{\8}{\infty}

\newcommand{\6}{\partial}
\renewcommand{\-}{\setminus}

\newcommand{\w}{\omega}
\renewcommand{\d}{\delta}
\renewcommand{\.}{\cdot}

\renewcommand{\k}{\kappa}
\renewcommand{\t}{\tau}

\newcommand{\inn}{\subseteq}

\renewcommand{\~}{\widetilde}
\renewcommand{\limsup}{\varlimsup}
\renewcommand{\liminf}{\varliminf}

\newcommand{\jd}{\varrho} 
\newcommand{\leb}{m_0}
\newcommand{\Rd}{\R^d}

\renewcommand{\v}{\check}

\begin{document}

\title{Discrete Approximation to Time-changed Brownian Motions}
\author{Zhen-Qing Chen \ and \ Yang Yu}
\date{}

\maketitle

\abstract{
	We develop a general discrete approximation scheme for time-changed Brownian motions on $\R^d$.
	Our approximation scheme works for any smooth measure with full quasi-support on $\Rd$ with suitable initial distributions. 
	Under some mild conditions on the smooth measure, the discrete approximation scheme works for every starting point. Our results in particular give a discrete approximation scheme for Liouville Brownian motions.}
\tableofcontents

\section{Introduction and Main Results} \label{section intro}

Time change is an important transformation for Markov processes and for Brownian motion in particular. 
It has been extensively studied in literature; see, e.g., \cite{BG, CF12, FOT} and the references therein. 
Let $d\>1$ be a positive integer and $W$ be the standard Brownian motion in $\R^d$
whose infinitesimal generator is $\frac 12 \Delta$.
Suppose that $\rho$ is a strictly positive  Borel measurable function on $\R^d$.
It gives rise to a positive continuous additive functional (PCAF) $A:=\{A_t; t\geq 0\}$ of $W$ defined by $A_t:=\int_0^t 
\rho (W_s)ds$,
which has the property that $A_{t+s} = A_t + A_s \circ \theta_t$ for any $s, t\geq 0$. Here
$\{\theta_s; s\geq 0\}$  are  the time-shift operators so that $W_t (\theta_s \omega) = W_{t+s} (\omega)$. 
Let 
\begin{equation}\label{e:1.1a} 
\tau_t:=\inf\{r\geq 0: A_r >t\}
\end{equation}
 be the generalized inverse of $A$. 
It is known that the time-changed process
 $X_t:=W_{\tau_t}$ is a continuous strong Markov process on $\R^d$
that is symmetric with respect to the measure $\mu(dx):=\rho (x) dx$ and  has infinitesimal generator 
\begin{equation}\label{e:1.1}
\sL = \frac1{2\rho (x)} \Delta . 
\end{equation} The measure $\mu$ is called the Revuz measure of the PCAF $A$.
They are related by
\begin{equation} \label{e:1.2}
\lim_{t\to 0} \frac1t \int_{\R^d} \EX_x \left[ \int_0^t f(W_s) dA_s \right] h(x) dx = \int_{\R^d} h(x) f(x) 
\mu (dx) 
\end{equation}
for any Borel measurable $f\geq 0$ on $\R^d$ and any $\gamma$-excessive function $h$ of $X$ for some $\gamma\geq 0$. 
In fact, there is a one-to-one correspondence between PCAFs and a family of $\sigma$-finite measures on $\R^d$ that does not
charge polar sets called smooth measures 
through the Revuz correspondence \eqref{e:1.2}. 
In general, smooth measures on $\R^d$ do not need to be absolutely continuous with respect to the Lebesgue measure on $\R^d$. 

Let $\mu$ be a smooth measure   and $A$ its corresponding PCAF of $W$. 
Let 
$F:=\{x\in \R^d: \P_x (\tau_0=0)=1\}$, where $\tau_t$ is defined by \eqref{e:1.1a}.
The set $F$ is called the support of the PCAF $A$, which is also called the quasi-support of $\mu$.
Suppose that $\mu$ is a Radon measure that has full quasi-support on $\R^d$. In this case the PCAF $A_t$ is strictly increasing in $t$ and so its inverse 
$\tau_t:=\inf\{r\geq 0: A_r >t\}$ is continuous in $t\geq 0$. 
The time-changed Brownian motion $X_t:=W_{\tau_t}$  is a continuous strong Markov process on $\R^d$.
Heuristically, the infinitesimal generator of $X$ should be ``conformal" to the Laplacian but it can not be explicitly expressed 
in the form of \eqref{e:1.1} if $\mu$ is not absolutely continuous with respect to the Lebesgue measure. 
Instead, it should be characterized through the Dirichlet form associated with the time-changed Brownian motion. 
See \Cref{potential theory} below for a brief introduction of these concepts, and we refer the reader 
to \cite{FOT, CF12} for more information.

 The goal of this paper is to  develop a scheme that can be used 
to simulate the time-changed Brownian motion $X$ that only uses $\mu$ (no need for Brownian sample paths or inverses of PCAFs) as input so that it is easy to implement and do simulation. 
It is well known that Brownian motion is the scaling limit of simple random walks. A natural idea is to change the holding times or waiting times of random walks according to the mass of $\mu$ at that place in a suitable way. We show such scheme indeed works. 
In fact, a more general scheme is developed in this paper.

For  any $r>0$, we consider a continuous time random walk process $\~W^r$ approximating the standard Brownian motion $W$, and a measure $\mu_r$ that converges to $\mu$ vaguely as $r\downarrow 0$. We do time change on $\~W^{r}$ to get a time-changed random walk process $X^r$. A natural question is when does  $X^r$ converge to the time-changed Brownian motion $X$ as $r\downarrow 0$.

We construct $X^{r}$ as follows. Let $\jd$ be a probability density function (with respect to the Lebesgue measure) that has zero mean and covariance $\sigma_{\jd}^2 \d_{ij}$ for some constant $\sigma_{\jd}>0$, where $\d_{ij}$ is the Kronecker delta. 
It will serve as
the  one-step  distribution of the random walk.
A typical example of $\jd$ is $\jd(x) = \frac{\1_{B(0,1)}(x)}{|B(0,1)|}$.
Here for $x_0\in \R^d$ and $r>0$, $B(x_0,r):=\{y\in\R^d:|y-x_0|<r\}$ denotes the open ball
centered at $x_0$ with radius $r$,   and $|B(x_0,r)|$ denotes its Lebesgue volume.  
Let $\phi$ be a probability density function (with respect to the Lebesgue measure) which 
will be used  as the mollifier for the measure $\mu$. Denote $\v f(x) = f(-x)$ the reflection of any function $f$ on $\Rd$
with respect to the origin. 
For each $r>0$, set 
$\phi_r(x):=r^{-d}\phi(x/r)$,  $\jd_r(x):=r^{-d}\jd(x/r)$, 
\begin{eqnarray}
\mu_r(dx)&:=& (\v\phi_r*\mu)(dx)=\int \phi_r(y-x)\mu(dy)dx,  \nonumber \\
 m_r(dx) &:=& (\sigma_{\jd}^2r^2)^{-1}dx ,  \nonumber  \\ 
\lambda_r(x) &:=& \frac{dm_r}{d\mu_r}(x)=\left(\sigma_{\jd}^2r^2 \int \phi_r(y-x)\mu(dy)\right)^{-1},
\label{e:1.4}
\end{eqnarray} 
  and $Q_r(x,dy) := \jd_r(y-x)dy$.
We construct a  time-changed random walk process $X^{r}$ that waits for an exponentially distributed time with rate $\lambda_r$ and then moves according to the
 transition probability kernel $Q_r$. More specifically,
let $\{\xi_i\}_{i=1}^\8$ be i.i.d. random variables with probability density function $\jd$.
Let $\{\eta_i\}_{i=0}^\8$ be i.i.d. exponentially distributed with parameter 1 and independent of $\{\xi_i\}_{{i=1}}^\infty$.
 Let $\xi_0$ be a random variable in $\R^d$ independent of $\{\xi_i\}_{i=1}^\8$ and $\{\eta_i\}_{i=0}^\8$.
For each $r>0$,  let $\xi^r_i:=r\xi_i$ (hence it has distribution $\jd_r$) and set $W^r_n := \xi_0+ \sum_{i=1}^n\xi^r_i$ for $n\in \N$.
We will use the convention that $\sum_{i=1}^n\xi^r_i = 0$ if $n=0$.
Thus $W^r_n$ is a random walk process with initial distribution $\xi_0$ and step distribution $\jd_r$.
Then define $\eta_i^r(x):=\eta_i/\lambda_r(x)$ (which is exponentially distributed with mean  $1/\lambda_r(x)$) and $N^r_t := \inf\{n\in\N:\sum_{i=0}^n\eta^r_i(W^r_i)>t\}$. Then the process $X^r_t:=W^r_{N^r_t}$ is the so-called regular step process with road map $Q_r$ and speed function $\lambda_r$ (see, e.g. \cite[2.2.1]{CF12}). When $\mu(dx)$ is the Lebesgue measure we denote the corresponding $X^r$ by $\~ W^r$. Note that  $\~W^r$ is simply the continuous time random walk with step distribution $\jd$ and jumping rate $(\sigma_{\jd}^{2} r^{2})^{-1}$ or mean holding time $\sigma_{\jd}^{2} r^{2}$. 

It is natural to expect that the time-changed continuous time random walk $X^r$ converges 
in distribution to the time-changed Brownian motion $X$ with initial distribution of $\xi_0$
as $r\downarrow 0$. Under some mild assumptions on $\jd$, $\mu$ and $\phi$, we show that this is indeed the case.
When $\mu(dx)$ is the Lebesgue measure we have $\lambda_r(x)=(\sigma_{\jd}^{2} r^2)^{-1}$ for any $x\in \R^d$ and $r>0$.
In this case, it is well known that
 $X^r$ converges to the standard Brownian motion starting from $\xi_0$.
  For a general  smooth measure $\mu$ on $\R^d$ with full quasi-support,
   $X^r$ will converge to the corresponding time-changed Brownian motion with respect to the measure $\mu$. Here is a diagram that illustrates our approximation scheme.

\begin{center}
    
    \begin{tikzcd}[column sep=6cm]
        \~ W_{t}^{r}
        \arrow[r, "\text{Donsker's invariance principle}"] 
        \arrow[d, "\text{t.c. by } \mu_{r}" ]
        & \text{Brownian motion} \arrow[d, "\text{t.c. by } \mu"] \\
        X_{t}^{r} \arrow[r, "\text{Our approximation scheme}" ]
        & \text{time-changed Brownian motion}
    \end{tikzcd}
     \end{center}

\medskip

Throughout this paper, we impose the following basic assumptions on the probability densities $\jd$ and the 
measure $\mu$:
\begin{align*}
	(a)& 
 	~ \jd (x)=\jd(-x) \hbox{ on } \R^d \hbox{ with } \int_{\R^d}  x_{i} x_{j} \jd(x) dx = \sigma_\jd^2  \d_{ij} 
	\hbox{  for } 1\leq i, j\leq d .\\
	(b)& ~\mu \hbox{ is a Radon measure that has full quasi-support on $\R^d$ and does not charge polar sets.} \\
\end{align*}
Note that Assumption (a) implies that $\jd$ has zero mean and $\~W^{r}$ converges to a 
Brownian motion as $r\downarrow 0$ 
by Donsker's invariance principle. The Assumption (b) allows us to construct a \textit{continuous} time change of Brownian motion according to the measure $\mu$. 
Without full quasi-support condition, time changes may not be continuous and the time-changed Brownian motions may have jumps.

\medskip

We list below some other assumptions on the probability densities $\jd$, $\phi$ and measure $\mu$
that will be used later in some parts of the paper.
\begin{enumerate}[label=(A.\arabic*),ref=(A.\arabic*)]
	\item \label{jd0} 
 	$\phi$ is bounded and  has compact support
	and first order weak derivative (in the sense of Schwartz distributions) 
	such that $|\nabla \phi| \< C\jd$ on $\Rd$ for some constant $C>0$.   
	
	\item \label{jd8} 
	$\jd$ is bounded above by a constant $C_{\jd}>0$.
    Moreover there exists $\d>0$ such that $\jd$ has a finite $\max\{ 2,d-2+\d\}$-th absolute moment. 

	\item \label{condA}
	For any $R > 0$, there exist constants  $C_{R}>0$ and $\3_{\mu} = \3_{\mu}(R) > d - 2$
	 so that for any $x\in B(0,R)$ and $r\in(0,1)$ we have 
 	\begin{equation}\label{measure upper bound condition}
		\mu(B(x,r))\<C_R r^{\3_{\mu}}, 
	\end{equation}
	and
	\begin{equation} \label{measure lower bound condition}
		\lim_{r\downarrow0}r^{\3_{\mu}-d+2}\log\left(\int_{B(0,2R)}\frac{dx}{\int \phi((y-x)/r)\mu(dy)}\right)=0.
	\end{equation}

	\item \label{NE} $\mathbb P_x\left(A_\infty=\infty\right)=1$ for quasi-every $x\in\mathbb R^d$, where $A$ is the PCAF of $W$ corresponding to $\mu$.
 \end{enumerate}

\begin{remark} \rm 
\begin{enumerate}  
\item [(i)]
A sufficient condition for \ref{jd0} to hold is the existence of
an open ball $B_{\jd} \inn \Rd$ on which $\jd$ is bounded below by a constant $c_{\jd}>0$.
In this case, we can easily find a
 $C^1$ smooth probability density function $\phi$ supported in $B_{\jd}$ so that \ref{jd0} holds.
 
 \item[(ii)]  A typical example of $\jd$
  that satisfies both  \ref{jd0} and \ref{jd8} is 
  $\jd(x) = \frac{\1_{B(0,1)}(x)}{|B(0,1)|}$.
In this case, $\sigma_\jd^2 = \int_{\R^d} x_1^2 \jd(x) dx= \frac{1}{d+2}$. But we do not require $\jd$ have compact support (so random walks can have long range jumps).

 \item[(iii)]  When $d=1$, condition \eqref{measure upper bound condition} is automatically satisfied by taking $\beta_\mu=0$.
Assuming \ref{jd0} and \ref{jd8}, a sufficient condition for \eqref{measure lower bound condition} 
of condition \ref{condA} to hold
is that for any $R > 0$, there exists $c_{R}>0$ and $\3_{\mu}'>0$ such that for any $x\in B(0,R)$ and $r\in(0,1)$ we have $\mu(B(x,r))\>c_R r^{\3_{\mu}'}$.
To see this, note that $\phi\in W^{1,\8}(\Rd)$ by \ref{jd0} and \ref{jd8}, so it is continuous by the Sobolev embedding theorem. Hence $\phi$ is bounded from below by a positive constant on an open ball. Consequently, the term inside logarithm of \eqref{measure lower bound condition} is bounded by $C r^{-\beta_{\mu}'}$ for some $C=C(R)>0$. Condition \ref{NE} assures that the time-changed Brownian motion $X$ does not explode in finite time. 
For  any non-zero Radon measure $\mu$   on $\R^d$ that does not charge polar sets,
condition  \ref{NE}  is automatically satisfied when $d=1, 2$  by  
\cite[Theorem 5.2.5]{CF12}, as recurrent processes have infinite lifetime. 
A typical nontrivial example of $\mu$ that satisfies \ref{condA} and \ref{NE} is the Liouville measure 
on $\R^2$
from Liouville quantum gravity, which we will briefly introduce after \Cref{mainthm2}.  
\end{enumerate} 
\end{remark}

If $(\jd,\phi)$ satisfies \ref{jd0} and \ref{NE}, we show in \Cref{mainthm1}  that
 $X^{r}$ converges weakly as $r\downarrow 0$ in the Skorokhod topology to the time-changed Brownian motion $X$ under initial distributions that are absolutely continuous with respect to the symmetrizing measures.   If in addition, conditions \ref{jd8} and \ref{condA} are satisfied, 
 we show in  \Cref{mainthm2} that the aforementioned weak convergence can be strengthened to processes starting  
 from individual points.

To state these two results precisely, let
 $(\Omega, \mathcal A,  \P_x,  x\in\Rd )$ be the probability space that $\{\xi_i,\eta_i\}_{i=0}^\8$ live on with $\P_x(\xi_0=x)=1$. 
Set $\P_\nu=\int\nu(dx)\P_x$ for any measure $\nu$. Let $\DD([0,\8); \Rd)$ denote the space of right continuous paths with left limits
equipped with the Skorokhod $J_{1}$ topology; see, e.g. \cite[Chapter 14]{kallenberg1997foundations} for a brief introduction of this topology. See also \cite{Bil99}. Throughout this paper, when we say the Skorokhod topology, we mean the Skorokhod $J_{1}$ topology.

The following two theorems are the main results of this paper. Note that both theorems are about time change of the \textit{standard} Brownian motion, but the result can be easily extended to Brownian motion with constant diffusion matrix through a linear transformation. For an open subset $D\subset \R^d$, we use $C_c(D)$ to denote the space of continuous functions with compact support in $D$.

\begin{thm} [Approximation under absolutely continuous initial distributions]
\label{mainthm1}
Under assumption \ref{jd0} and \ref{NE}, for any $f_0\>0$ in $C_c(\R^d)$, set $\nu_r=f_0\cdot\mu_r$ and $\nu=f_0\cdot\mu$. Then the law of $\{X^{r}_t;t\>0\}$ under $\P_{\nu_r}$ converges weakly in $\DD([0,\8);\R^d)$ in the Skorokhod topology to the law of time-changed Brownian motion $X$ by $\mu$ with initial distribution $\nu$ as $r\downarrow0$.
\end{thm}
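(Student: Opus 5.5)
The plan is to prove \Cref{mainthm1} by Dirichlet form methods, via Mosco convergence of the Dirichlet forms of $X^r$ to that of $X$, together with tightness.

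\textbf{The forms.} The process $X^r$ is a regular step process with road map $Q_r$ and speed $\lambda_r=dm_r/d\mu_r$. Since $\jd$ is symmetric, $X^r$ is symmetric with respect to $\mu_r$, and its Dirichlet form on $L^2(\mu_r)$ is
\[
\sE^r(f,f)=\frac{1}{2\sigma_\jd^2 r^2}\int\!\!\int (f(y)-f(x))^2\jd_r(y-x)\,dy\,dx .
\]
This form is the same as the form of $\~W^r$ on $L^2(dx)$; only the reference measure has changed. The limit $X$ is the time change of $W$ by $\mu$. By the time-change theory of \cite{FOT, CF12}, its Dirichlet form on $L^2(\mu)$ is the trace form
\[
\check\sE(f,f)=\tfrac12\int|\nabla f|^2dx ,
\]
defined on the extended Sobolev space restricted to quasi-support $\R^d$. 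Condition \ref{NE} guarantees that $X$ is conservative.

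\textbf{Semigroup convergence.} Since $L^2(\mu_r)$ changes with $r$, I will use Mosco convergence in the sense of Kuwae--Shioya on varying Hilbert spaces, with identification map $\Phi_r u=u$ for $u\in C_c(\R^d)$. This is legitimate because $\mu_r\to\mu$ vaguely, so $\|u\|_{L^2(\mu_r)}\to\|u\|_{L^2(\mu)}$.

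For the liminf inequality, take $u_r\to u$ weakly in the varying sense with $\sup_r\sE^r(u_r,u_r)<\8$. I will mollify $u_r$ against $\phi_r$ and use the key estimate
\[
\left|\nabla(\phi_r*g)\right|\le \frac{C}{r}\,\jd_r*|\,\cdot\,| ,
\]
coming from $|\nabla\phi|\le C\jd$ in \ref{jd0}. This gives $\tfrac12\int|\nabla(\phi_r*u_r)|^2dx\lesssim \sE^r(u_r,u_r)$, and the constant must come out sharp. Passing to a limit then gives a function in the extended Sobolev space with $\tfrac12\int|\nabla v|^2\le\liminf\sE^r(u_r,u_r)$.

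The sharpness of that constant, together with identifying the limit $v$ with $u$ $\mu$-q.e., is the main obstacle. The identification needs the pairing
\[
\int (\phi_r*u_r)\,g\,d\mu=\int u_r\,(\v\phi_r*(g\mu))\,dx ,
\]
which follows from the definition of $\mu_r$. For the sharp constant, I would first obtain the inequality with a constant $C$. I would then refine it by testing against fixed smooth vector fields $\psi$: compute $\lim\sE^r(u_r,\varphi)$ for smooth $\varphi$ by Taylor expansion, where the second moments $\sigma_\jd^2\d_{ij}$ from Assumption (a) give exactly $\tfrac12\int\nabla u\cdot\nabla\varphi$. The sharp bound then follows by lower semicontinuity of the norm in the weak limit.

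For the limsup (recovery) inequality, it suffices to treat $u\in C_c^\8$, since these form a core for $\check\sE$ thanks to full quasi-support and regularity. Take $u_r=u$. A Taylor expansion gives $\sE^r(u,u)\to\tfrac12\int|\nabla u|^2$. Moreover $\|u\|_{L^2(\mu_r)}\to\|u\|_{L^2(\mu)}$ by vague convergence.

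Mosco convergence then yields strong convergence of the semigroups $P^r_tf\to P_tf$ in the varying $L^2$ sense. Together with the initial laws $\nu_r=f_0\mu_r\to f_0\mu$, this gives convergence of finite-dimensional distributions under $\P_{\nu_r}$.

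\textbf{Tightness.} I will use the Lyons--Zheng forward-backward martingale decomposition, which applies because $X^r$ is $\mu_r$-symmetric and $\nu_r\le\|f_0\|_\8\mu_r$. For $u\in C_c^\8$, write
\[
u(X^r_t)-u(X^r_0)=\tfrac12\bigl(M_t-(\hat M_T-\hat M_{T-t})\bigr).
\]
The quadratic variations are controlled by the carré du champ
\[
\Gamma^r(u)(x)=\lambda_r(x)\int (u(y)-u(x))^2\jd_r(y-x)\,dy .
\]
Since $\Gamma^r(u)\,d\mu_r\le \|\nabla u\|_\8^2\,dx$ on a neighbourhood of ${\rm supp}\,u$, the time-changed clock is controlled. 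I will use the fact that the stationary measure is $\mu_r$ to bound the expected time spent, $\int_0^T\Gamma^r(u)(X_s^r)ds$, by $\|f_0\|_\8\,T\int\Gamma^r(u)\,d\mu_r$. Burkholder–Davis–Gundy and Aldous' criterion then give tightness of $u(X^r)$ for a countable dense family of $u$.

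Compact containment is handled as follows. Conservativeness from \ref{NE} transfers through the semigroup convergence, via $P^r_t1\to1$ locally, to control the probability of exiting large balls. Combining tightness with finite-dimensional convergence completes the proof of \Cref{mainthm1}.
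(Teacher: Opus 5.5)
The finite-dimensional part of your proposal is fine and is essentially the paper's \Cref{mosco}. Your sharp-constant device is a legitimate alternative to Banach--Saks plus \Cref{form monotone 2}, with one correction. You cannot Taylor-expand $\sE^r(u_r,\varphi)=-\int u_r\,\sL^r\varphi\,dx$ (with $\sL^r=(\sigma_\jd r)^{-2}(Q_r-I)$) directly, because $u_r$ converges only weakly in $L^2(\mu_r)$ and $\mu$ may be singular. Instead, write $\sE^r(u_r,\varphi)=\sE^r(\phi_r*u_r,\varphi)+\sE^r(u_r,\varphi-\check\phi_r*\varphi)$. Kill the second term by Cauchy--Schwarz, compare $\sE^r$ with $\sE$ on $\phi_r*u_r$ by Fourier transform, and use weak compactness of $\phi_r*u_r$ in $(\sF^\mu,\sE^\mu_1)$.

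\textbf{The gap is in tightness.} Your bound $\EX_{\nu_r}\int_0^T\Gamma^r(u)(X^r_s)\,ds\le\|f_0\|_\infty T\int\Gamma^r(u)\,d\mu_r$ controls the quadratic variation only in expectation, over deterministic windows.

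Verifying Aldous' criterion for $M^r$ (via Rebolledo) requires $\EX_{\nu_r}[\langle M^r\rangle_{\tau+\delta}-\langle M^r\rangle_\tau]=\EX_{\nu_r}[\psi^r_\delta(X^r_\tau)]$ to be small uniformly in $r$ and in the stopping time $\tau$. Here $\psi^r_\delta(x):=\EX_x\int_0^\delta\Gamma^r(u)(X^r_s)\,ds$. The law of $X^r_\tau$ is not dominated by $\|f_0\|_\infty\mu_r$ (think of hitting times), so in effect you need $\sup_x\psi^r_\delta(x)\to0$ uniformly in $r$. A BDG modulus estimate meets the same quantity through $\EX_{\nu_r}\langle M^r\rangle_\delta^2\le 2\|f_0\|_\infty\,\delta\int\psi^r_\delta\,\Gamma^r(u)\,d\mu_r$, which must be $o(\delta)$.

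Nothing in your proposal controls $\psi^r_\delta$, and it cannot be extracted from $\Gamma^r(u)\,d\mu_r\le C\,dx$:
\begin{itemize}
\item $\Gamma^r(u)\approx|\nabla u|^2/(d\mu_r/dx)$, and $d\mu_r/dx\to0$ Lebesgue-a.e.\ when $\mu\perp dx$ (e.g.\ the Liouville measure).
\item In the limit the energy measure $|\nabla u|^2dx$ is singular to $\mu$, so $\langle M^u\rangle$ is not even absolutely continuous in $t$.
\end{itemize}
A uniform bound on $\psi^r_\delta$ is a Kato-type estimate: how much random-walk time elapses during $\delta$ units of the $\mu_r$-clock, from an arbitrary starting point. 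That needs quantitative lower bounds on $\mu$, which \Cref{mainthm1} does not assume (only \ref{jd0} and \ref{NE}).

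Your compact-containment step has a similar flaw. $P^r_t1\to1$ controls $X^r_t$ at fixed times, not $\sup_{s\le T}|X^r_s|$.

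\textbf{What the paper does instead.} It needs no modulus estimate at all. The missing idea is that the coordinate functions are harmonic for the mean-zero walk. Hence, for $R$ as in \Cref{R sequence}, the stopped process $Y^r=X^r_{\cdot\wedge\tau_R(X^r)}$ is itself a martingale, uniformly integrable by the overshoot bound of \Cref{uint}. Aldous' Proposition~1.2 in \cite{aldous1989stopping} then upgrades finite-dimensional convergence of such martingales, including the terminal value, to Skorokhod convergence.

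The work shifts to identifying the joint limit of $(X^r,\tau_R(X^r),X^r_{\tau_R(X^r)})$:
\begin{itemize}
\item Mosco convergence of the killed forms on $B(0,R)$ gives convergence in law of $\tau_R(X^r)$.
\item Meyer--Zheng convergence, the moving-average argument and the random-walk exit-distribution result \Cref{hitting converge} identify the exit position (\Cref{stopped}).
\end{itemize}
Finally, \ref{NE} together with the exit-time convergence removes the localization, and this is also what handles compact containment.
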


\begin{thm}[Approximation under individual starting points]
\label{mainthm2}
Under assumptions \ref{jd0}, \ref{jd8}, \ref{condA}, and \ref{NE}, for any sequence $x_r\in\R^d$ converges to $x_0\in\R^d$, the law of $\{X^{r}_t;t\>0\}$ under $\P_{x_r}$ converges weakly in $\DD([0,\8);\R^d)$ in the Skorokhod topology to the law of time-changed Brownian motion $X$ by $\mu$ starting from $x_0$ as $r\downarrow 0$.
\end{thm}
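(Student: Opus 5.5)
The plan for Theorem \ref{mainthm2} is to upgrade the convergence of Theorem \ref{mainthm1}, which holds for initial laws $f_0\cdot\mu_r$, to deterministic starting points. This needs two ingredients: uniform equicontinuity estimates for the resolvents or semigroups of $X^r$ on compacts, and tightness of $\{X^r\}$ under $\P_{x_r}$. Both come from the explicit structure $X^r_t=W^r_{N^r_t}$. Writing $\widetilde W^r$ for the continuous time random walk with rate $(\sigma_\jd^2r^2)^{-1}$, we have $X^r_t=\widetilde W^r_{\tau^r_t}$. Here $\tau^r$ is the inverse of the additive functional $A^r_t=\int_0^t \lambda_r(\widetilde W^r_s)^{-1}(\sigma_\jd^2r^2)^{-1}ds=\int_0^t \rho_r(\widetilde W^r_s)\,ds$, with $\rho_r(x)=\int\phi_r(y-x)\mu(dy)$ the density of $\mu_r$. (Up to the independent exponential clocks, this is exactly how regular step processes are time changed.)

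\textbf{Step 1 (uniform potential bounds).} Using \eqref{measure upper bound condition} with $\beta_\mu>d-2$, together with the moment and boundedness assumptions \ref{jd8} on $\jd$, we show that the $\gamma$-resolvent (or killed Green) kernel $g^r$ of the random walk $\widetilde W^r$ satisfies a uniform local bound $g^r(x,y)\le C(|x-y|\vee r)^{2-d}$ (logarithmic for $d=2$, bounded for $d=1$). This is a local CLT/Green function estimate for long range walks, for which the $\max\{2,d-2+\delta\}$ moment is exactly what is needed. Combined with the upper Frostman bound it gives
\[
\sup_{r<1}\sup_{x\in B(0,R)}\EX_x\Big[\int_0^{t\wedge\sigma_R} \rho_r(\widetilde W^r_s)ds\Big]\le C t^{\kappa}
\]
for some $\kappa>0$, where $\sigma_R$ is the exit time of $B(0,2R)$. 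This yields uniform continuity of $A^r$ near time $0$ and a H\"older-type uniform modulus for $x\mapsto \EX_x[\int_0^{\sigma_R} e^{-A^r_s}\ldots]$.

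\textbf{Step 2 (tightness from points).} We need $A^r$ not to grow too slowly, i.e. $\tau^r_t$ must not be too large, so that $X^r$ does not wander. This is where \eqref{measure lower bound condition} enters. The expected occupation of $\widetilde W^r$ in regions where $\rho_r$ is tiny is controlled via $\int_{B(0,2R)}\rho_r^{-1}$. The lower bound condition says this quantity grows subexponentially at scale $r^{-(\beta_\mu-d+2)}$, which is beaten by the Green function gain from Step 1. Together with the Aldous criterion applied via the strong Markov property and Step 1, this gives tightness in $\DD$ under $\P_{x_r}$. \ref{NE} is used to rule out explosion.

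\textbf{Step 3 (identification).} Consider $u_r(x)=\EX_x\big[\int_0^\infty e^{-\alpha t}f(X^r_t)\,dt\big]$ for $f\in C_c$. By Step 1 the family $\{u_r\}$ is equicontinuous on compacts, so along subsequences $u_r\to u$ locally uniformly. Theorem \ref{mainthm1} identifies $\int u\,f_0\,d\mu$ with the corresponding quantity for $X$ for all $f_0$; since $\mu$ has full quasi-support, $u$ must agree with the $X$-resolvent, and by fine continuity this holds at every point. Iterating with finite dimensional resolvents then gives convergence of finite dimensional distributions from $x_r\to x_0$, and Step 2 completes the proof.

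I expect Step 2, the lower-bound control using \eqref{measure lower bound condition} to show $X^r$ does not linger, to be the main obstacle.
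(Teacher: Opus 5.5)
Your Step 2 is where the proposal breaks down, and it misreads the role of \eqref{measure lower bound condition}. That condition only says $\log\int_{B(0,2R)}\rho_r(x)^{-1}\,dx=o\bigl(r^{-(\beta_\mu-d+2)}\bigr)$. So it allows $\int_{B(0,2R)}\rho_r^{-1}\,dx$ to grow like $\exp(r^{-\gamma'})$ for any $\gamma'<\beta_\mu-d+2$. Chebyshev plus a Green bound then controls the time $\widetilde W^r$ spends in $\{\rho_r<\eta\}$ only when $\eta$ is exponentially small in $r^{-\gamma'}$, and the resulting lower bound on $A^r$ collapses as $r\downarrow 0$. No hypothesis gives a quantitative pointwise lower bound on $\mu$, so you do not get the ``no quick exit'' estimate, uniform in the starting point, that a strong-Markov Aldous argument requires.

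The paper never proves such an estimate; tightness comes from the martingale structure. Since $\jd$ is centred, $X^r$ stopped on leaving $B(0,R)$ is a uniformly integrable martingale (\Cref{uint}). Aldous' criterion for martingales then turns finite-dimensional convergence of the stopped processes into Skorokhod convergence. The real work is that finite-dimensional convergence, obtained from pseudo-path convergence of $X^r$ and $X^{r,D}$ plus the exit-position argument of \Cref{stopped} and \Cref{hitting converge}, with radii chosen by \Cref{R sequence pointwise}. Condition \ref{NE} then removes the localization.

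Condition \eqref{measure lower bound condition} enters only in the semigroup part of \Cref{Holder}. Because of the atom at the starting point, one writes $P^{n,D}_tf=U^{n,D}_\alpha P^{n,D}_{t/2}F_{t/2}$ and must bound $Q_{n,D}F_s$ uniformly. The Faber--Krahn argument of \Cref{ultracontractivity} carries an error $O_n\log\|h\|_{2,n}$ with $O_n=O(r_n^{\beta_\mu-d+2})$ coming from the holding times. Condition \eqref{measure lower bound condition} is exactly what makes this error vanish for $h=h_{n,x}$, the $\mu_n$-density of $Q_{n,D}(x,\cdot)$. This semigroup regularity is what gives fixed-time convergence from points in \Cref{pointwise}. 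Your Step 3 only delivers convergence of time-Laplace transforms, so without a working Step 2 or a semigroup estimate it does not reach fixed times.

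Step 1 also does not yield the equicontinuity you use in Step 3. Green upper bounds together with \eqref{measure upper bound condition} give only the exit-time estimate $\EX_x[\tau_{B(x_0,s)}(X^r)]\le c\,s^{\beta}+o_r(1)$ of \Cref{X exit time}. That controls two of the three terms in the standard decomposition of the resolvent, namely $\EX_x\int_0^{\tau}e^{-\alpha t}f(X^r_t)\,dt$ and $\EX_x[(e^{-\alpha\tau}-1)U_\alpha f(X^r_\tau)]$. The remaining term $x\mapsto\EX_x[U_\alpha f(X^r_\tau)]$ is harmonic, and its continuity needs a uniform oscillation estimate for $\widetilde W^r$-harmonic functions down to scale $c_0r$. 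In the paper this is \Cref{holder Harmonic functions}. It is built from near-diagonal lower bounds for the killed walk, stated in integral form because of the atom (\Cref{killedlowerbound}), the resulting hitting estimate, and a L\'evy-system bound on long jumps. An upper bound on $\EX_xA^r$ cannot substitute for this; you need that estimate or a quantitative coupling of walks started at nearby points. Once both ingredients are in place, your fine-continuity identification of the limit at every point is a reasonable alternative to the paper's use of \Cref{killed Holder}.
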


\begin{remark}\rm 
\begin{enumerate}  
\item[(i)]  Theorems  \ref{mainthm1} and \ref{mainthm2} in particular give a direct way of constructing time-changed Brownian motion
$X$ directly from the smooth measure $\mu$ without going through the PCAF $A$ associated with it and its inverse process $\{\tau_t; t\geq 0\}$. They also provide a practical implementable scheme to simulate $X$. 

\item[(ii)]  Condition \ref{jd0} is used in \Cref{energy bounded} for deriving the convergence in finite dimensional distributions in  \Cref{mosco}.  Condition  \ref{jd8} is used to get the uniform estimates of 
Green functions of 
scaled continuous-times
random walks in \Cref{green est}, 
which is a key step in establishing the uniform H\"older regularity of the semigroups of time-changed continuous-time random walks.
The crude Green function estimates obtained in \Cref{green est} as well as the transition densities/probabilities of discrete/continuous time random walks on $\Rd$ obtained along the way, are also of independent interest.

\item[(iii)] The moment condition in \ref{jd8} is automatically satisfied
 when $d\< 3$ as $\jd$ is assumed a priori to have a finite second moment.
	Using \eqref{measure upper bound condition} of condition \ref{condA},
	 we can in fact show that $\mu$ charges no polar sets and is a smooth measure in strict sense in 
	 \Cref{smooth measure}. See \cite{FOT, CF12} for the definition of smooth measures in strict sense. This allows us to define time-changed Brownian motion 
	 from individual starting points. Condition \eqref{measure lower bound condition} of   \ref{condA} is   used only  in \Cref{Holder}.

\item [(iv)]  Condition \ref{NE} is only used at the last stage of both proofs. If Condition \ref{NE} is not satisfied, then the time-changed Brownian motion $X$ may explode in finite time. In this case, the convergence of $X^{r}$ to $X$ in finite dimensional distributions as well as in pseudo-path topology still holds (\Cref{mosco}, \Cref{pseudo}). In addition, we show in \Cref{stopped},  
without condition  \ref{NE},
that for suitably
chosen $R>0$ the $X^r$ stopped upon leaving $B(0,R)$ converges in distribution in $\DD([0,\8];\R^d)$ in the Skorokhod topology to $X$ stopped upon leaving $B(0,R)$ as $r\downarrow0$.

\end{enumerate} 
\end{remark}

Our results are readily applicable to
Liouville Brownian motions constructed and defined in \cite{Berestycki2013DiffusionIP, garban2016liouville}, which 
are the time-changed Brownian motion with respect to the Liouville measures.
Liouville Brownian motions are a family of canonical diffusion processes,  indexed by $\gamma \in (0, 2)$, under Liouville quantum gravity.
It is shown in   \cite{BG22, GMS21} that they are the scaling limits of   random walks on mated-CRT planar maps.

 For each $\r\in(0,2)$,  the Liouville measure $M_{\r}$
(in random environment of a massive Gaussian free $h$ on $\C=\R^2$)
is the weak limit 
$\lim_{\e\downarrow0}e^{\r h_{\e}(z) - \frac{\r^{2}}{2}\EX(h_{\e}^{2}(z))}dz$, 
  where $h_{\e}$ is the circle average of $h$. 
  It is shown in \cite[Theorem 3.1]{Andres:2016aa} (see also \cite[Theorem 2.2]{garban2016liouville}) 
that 
for almost every realization of the random Liouville measure $M_{\r}$, 
 for any $\e>0$ and $R>1$, there are positive constants $C_1$ and $C_2$ so that
\begin{equation*}
C_{1} r^{\2_{1}+\e} \leq M_{\r}(B(x,r)) \leq C_{2} r^{\2_{2}-\e}  \quad \hbox{for every } x\in B(0,R)
\hbox{ and }  r\in(0,1), 
\end{equation*}
where $\2_{1} = \frac{1}{2}(\r+2)^{2}$ and $\2_{2} =  \frac{1}{2}(2-\r)^{2}$.
This implies that the Liouville measure $M_{\r}$ satisfies
the condition \ref{condA} as long as $\phi$ is chosen to be bounded below away from 0 on an open ball. By \Cref{smooth measure} it charges no polar sets. Furthermore, by \cite[Theorem 2.7]{garban2016liouville} the PCAF with Revuz measure  $M_{\r}$ is strictly increasing to infinity for any starting point $x\in \Rd$. This implies that
the Liouville measure $M_{\r}$ has full quasi-support on $\R^2$  by \cite[Theorem 5.2.1 (i)]{CF12} and satisfies Condition \ref{NE}. Hence \Cref{mainthm2} 
in particular gives the construction as well as a discrete    approximation scheme for the Liouville Brownian motion just using the Liouville measure $M_{\r}$ itself rather than its associated PCAF and its inverse. This is in fact one of the motivations of this paper. 

\begin{corollary}
	Let $d=2$ and fix $\r \in(0,2)$. Let $\mu$ be the Liouville measure with parameter $\r$ and $X$ be the Liouville Brownian motion. Then for any sequence $x_r\in\R^d$ converges to $x_0\in\R^d$, the time-changed random walk $\{X^{r}_t;t\>0\}$ starting from $x_r$ converges in distribution in $\DD([0,\8);\R^d)$ in the Skorokhod topology to the Liouville Brownian motion $X$ starting from $x_0$.
\end{corollary}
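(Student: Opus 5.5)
The corollary will follow from \Cref{mainthm2} once its four hypotheses are checked for $d=2$, $\mu=M_\r$, and a suitable pair $(\jd,\phi)$. Since the corollary does not specify $\jd$, I will fix $\jd(x)=\1_{B(0,1)}(x)/|B(0,1)|$.

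\textbf{Choice of $\jd$ and $\phi$.} With this $\jd$, the remark after the assumptions lets us take a $C^1$ probability density $\phi$ supported in $B(0,1)$. Then $|\nabla\phi|\le C\jd$, so \ref{jd0} holds. The density $\jd$ is bounded. Because $d=2$, the required moment order is $\max\{2,\d\}=2$ for small $\d$, and $\jd$ has compact support, so \ref{jd8} holds.

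\textbf{Condition \ref{condA}.} The plan is to work on a full-probability event on which the two-sided ball estimate of \cite[Theorem 3.1]{Andres:2016aa} holds. Fix $R>1$ and choose $\e>0$ small enough that $\3_\mu:=\2_2-\e>0=d-2$. This is possible because $\2_2=\frac12(2-\r)^2>0$ for $\r<2$.
\begin{itemize}
\item The upper bound gives \eqref{measure upper bound condition} for $x\in B(0,R)$.
\item For \eqref{measure lower bound condition}, first note that $\phi$ is continuous and positive on some ball $B(z_0,\k)$, with $\phi\ge c>0$ there. Hence
\[
\int\phi((y-x)/r)\,\mu(dy)\ge c\,M_\r\bigl(B(x+rz_0,r\k)\bigr)\ge c' r^{\2_1+\e}
\]
for $x\in B(0,2R)$. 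To cover these points, apply the lower bound with radius $2R+|z_0|$ in place of $R$.
\item The logarithm in \eqref{measure lower bound condition} is therefore $O(\log(1/r))$. Multiplying by $r^{\3_\mu}$, which tends to $0$, sends the expression to $0$.
\end{itemize}
One subtlety is that \ref{condA} must hold for all $R$ simultaneously. This is handled by taking the countable intersection of the almost-sure events over integer $R$.

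\textbf{Conditions (b) and \ref{NE}.} By \Cref{smooth measure}, \eqref{measure upper bound condition} implies that $M_\r$ charges no polar sets and is smooth in the strict sense. Full quasi-support and \ref{NE} come from \cite[Theorem 2.7]{garban2016liouville} combined with \cite[Theorem 5.2.1]{CF12}, exactly as described before the corollary. Alternatively, since $d=2$, \ref{NE} follows from recurrence via \cite[Theorem 5.2.5]{CF12}. The identification of the time-changed Brownian motion with the Liouville Brownian motion of \cite{Berestycki2013DiffusionIP, garban2016liouville} holds by construction, since both are defined as $W_{\t_t}$ for the PCAF with Revuz measure $M_\r$.

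\textbf{Main obstacle and conclusion.} The only delicate step is the almost-sure quantifier bookkeeping: the random constants $C_1$ and $C_2$ must be valid for every $R$ and for a fixed $\e$ on a single null-set complement. With that settled, \Cref{mainthm2} applies for almost every realization of $M_\r$ and gives the stated convergence.
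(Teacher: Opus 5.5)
Your proposal is correct and follows essentially the same route as the paper: it verifies \ref{jd0}--\ref{NE} for $M_\r$ via the two-sided ball estimates of \cite[Theorem 3.1]{Andres:2016aa}, then \Cref{smooth measure}, then \cite[Theorem 2.7]{garban2016liouville} together with \cite[Theorem 5.2.1]{CF12}, and finally invokes \Cref{mainthm2}. You also make explicit details the paper leaves implicit: a concrete choice of $(\jd,\phi)$, the lower bound $\int\phi((y-x)/r)\,\mu(dy)\ge c' r^{\2_1+\e}$ that makes the logarithm in \eqref{measure lower bound condition} of order $O(\log(1/r))$, and the almost-sure quantifier bookkeeping.
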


\medskip

 The main novelty of this paper, and the main difficulties we encounter and overcome are summarized as follows.
 \begin{enumerate}   
 \item [(i)] This paper  provides a direct random-walk approximation scheme for Brownian motion time-changed by 
 any smooth measure $\mu$  with full quasi-support on $\R^d$, using only the information of the measure $\mu$ 
 via $\lambda_r (x)$ defined by \eqref{e:1.4} which determines the rate of jumps for discrete random walk at level $r$. 
 This scheme not only gives an implementable algorithm to generate time-changed Brownian motion, but also gives 
 an alternative way of constructing time-changed Brownian motion without using the PCAF associated with $\mu$. 
 
 \item [(ii)] The weak convergence is established on the Skorokhod space $\DD([0,\8); \Rd)$ equipped with the Skorokhod topology.
 Our strategy is to first establish the convergence of finite-dimensional distributions and then tightness under the Skorokhod topology.
 For the convergence of finite-dimensional distributions, we establish in Proposition \ref{mosco}
 the Mosco convergence of the corresponding Dirichlet forms. 
 The advantage of using Mosco convergence is that it not only gives the convergence of the finite-dimensional distributions but also
 identifies the limit process.  
 However, proving the Mosco convergence is far from easy as 
 the $L^2$-space for the approximating time-changed  random walk $X^r$ changes as $r\to 0$.  
 We overcome this difficulty through some delicate energy estimates carried out in Subsection \ref{S:3.1}. 
  
   \item [(iii)] For the tightness under absolutely continuous initial distributions, 
   we first show that the laws of the time-changed random walks $X^r$,  $r>0$, are
    tight under the pseudo-path topology, or, the topology of convergence-in-measure. 
    We then improve it to be tight under the Skorokhod topology by using an Aldous' tightness result.
    For this, we use a localization argument. 
	We stop the time-changed processes upon leaving bounded regions so that they become uniformly integrable martingales. The difficulties arise when proving the finite dimensional distribution convergence of the stopped processes. The pseudo-path topology helps to build a bridge between the stopped processes and their limits by looking at their path average.
      \item [(iv)] To establish the weak convergence result under individual starting points, Theorem \ref{mainthm2}, 
      we need to deal with singularity from the atom in the transition distributions due to the holding time before the first jump.
      This singularity causes new challenges in establishing the H\"older regularity of harmonic functions and the transition semigroups of the time-changed random walks. 
To the best of the authors' knowledge, all the existing results (e.g., \cite{BKU10, chen2015quenched, SZ97}) 
on Green function estimates and the H\"older regularity for 
harmonic functions and transition semigroups for random walks are not applicable in our setting. 
We establish the needed Green function estimates in Proposition \ref{green est} 
and H\"older regularity estimates in Propositions \ref{holder Harmonic functions} and \ref{Holder}, respectively. 
As we do not assume that $\jd$ has compact support, universal bounds of Gaussian type need not hold. For Green function estimates, we utilize local central limit theorem and Burkholder-Davis-Gundy inequality to overcome this difficulty. Standard approaches to show H\"older regularity for harmonic functions and transition semigroups use transition density estimates. Since the transition densities may not exist in our general setting, we obtain similar estimates in integral form with delicate estimates to deal with the impact of possible atoms in transition distributions.
 
 \end{enumerate}

\medskip

The rest of the paper is organized as follows. In \Cref{Preliminaries}, we recall the necessary background on Dirichlet forms, time changes by smooth measures, and Mosco convergence for varying Hilbert spaces, and we identify the Dirichlet forms associated with the limiting and approximating processes. In \Cref{section2}, we prove \Cref{mainthm1}. 
This is done by establishing Mosco convergence of the Dirichlet forms associated with the time-changed random walks, deriving convergence of finite-dimensional distributions 
of the  time-changed random walks
in \Cref{mosco} and their weak convergence under pseudo-path topology in
  \Cref{pseudo}, and then upgrading the weak convergence under 
  the Skorokhod topology through a localization argument in \Cref{stopped}. 
  \Cref{section thm2} is devoted to the proof of \Cref{mainthm2}.
 This is carried out  by establishing the Green function estimates in \Cref{green est}, H\"older regularity of harmonic functions
 of the time-changed random walks in \Cref{holder Harmonic functions} and of their transition semigroups 
 in \Cref{Holder}, \Cref{killed Holder}.

\medskip

In this paper, we use the following notations and conventions. 
We use $:=$ as a way of definition.
For $a,b \in \R$,  $a\wedge b:=\min\{a,b\}$, $a\vee b:=\max\{a,b\}$, $a^{+}:=a\vee0$ and $a^{-}:=(-a)\vee0$.
We will fix the dimension $d\>1$. We use $\N$ and $\N^*$ to denote nonnegative integers and positive integers respectively. We denote $\mathbb{Q}_+$ for nonnegative rational numbers.
The symbols $c,C$ with or without numerical subscripts stand for positive constants whose value may change from place to place, and they may only depend on the dimension $d$ and values related to $\jd$, $\mu$ or $\phi$, unless the dependency of other variables is explicitly specified. 
By adding letter subscripts except numbers to the symbols $c,C$ we indicate their dependence on those subscripts, and their values may also change from place to place. 
We use $r>0$ as subscript/superscript on $W,X,Q$ etc. to denote the step size $r$ in the approximation scheme. 
Sometimes we will fix a sequence $r_n\<1$ and $r_n\downarrow0$ and with a slight abuse of notation, we will replace subscript/superscript $r$ on $W,X,Q$ etc. by $n$ to denote the corresponding object when $r=r_n$. When $r$ replaced by $n$, we assume a sequence $r_n\in (0,1]$ tending to 0 (as $n\to\8$) is fixed. We use $o_h(1)$ to denote values going to 0 as the subscript $h$ approaches to some value (usually 0 or $\8$). We use $L^{2}(\sX;\mu)$ denote $L^{2}$-space on $\sX$ with respect to the measure $\mu$; when $\sX=\Rd$ we simply denote it by $L^{2}(\mu)$. Given a Dirichlet form $(\mathcal{E}, \mathcal{F})$, we write $\mathcal{E}(u) = \mathcal{E}(u,u)$ for $u\in\mathcal{F}$. 

\section{Preliminaries}\label{Preliminaries}

In this section we introduce some necessary concepts and propositions that will be used later. 

\subsection{Some basics of potential theory}\label{potential theory}

We state basic concepts from potential theory. For detailed discussions, see \cite{FOT, CF12}. 

Let $\sX$ be a locally compact separable metric space, $m$ is a positive Radon measure on $\sX$. 
Denote by $\sX_\partial:= \sX \cup \{\partial\}$ the one-point compactification of $\sX$. 
Let $\Omega$ be the space of right continuous functions defined on $[0, \infty)$ taking values in $\sX_\partial$ that have left limits 
Let $X:=(X_t, t\geq 0; \P_x, x\in \sX)$ be an $m$-symmetric Hunt process defined on $\Omega$ 
taking values in $(\sX,\mathcal B(\sX))$,  where $X_t(\w) = \w (t)$.
Denote the lifetime of $X$ by $\zeta$, and the Dirichlet form of $X$ on $L^2(\sX; m)$ by  $(\mathcal E,\mathcal F)$. 
 For a closed subset $F$ of $\sX$, define 
\begin{equation*}
	\mathcal{F}_{F} := \{f\in\mathcal{F}: f=0 ~m\text{-a.e. on } \sX\- F \}.
\end{equation*}
Define $\mathcal E_1(f, g) := \mathcal E(f, g ) + \int_\sX f(x) g(x) m(dx)$ for $f, g\in \sF$. 
Denote by $\mathcal O$ the family of all open subsets of $\sX$. For $A\in \mathcal O$ we define 
$$\mathcal{L}_A =\{u \in \mathcal{F}: u \geq 1 ~m \text {-a.e. on } A\},$$
\begin{align*}
	\operatorname{Cap}(A) & = \begin{cases}\inf _{u \in \mathcal{L}_A} \mathcal{E}_1(u, u), & \mathcal{L}_A \neq \emptyset \\
\infty & \mathcal{L}_A=\emptyset,\end{cases}
\end{align*}
and for any set $A\inn \sX$ we let 
$$\operatorname{Cap}(A)=\inf_{B\in \mathcal O, A\inn B} \operatorname{Cap}(B).$$
We call this the capacity of $A$. 
We call a subset $\sN\subset \sX$ an $\sE$-polar set if $\sN$ has zero capacity. 

An increasing sequence $\{F_k;  k \> 1\}$  of closed sets of $\sX$ is an $\mathcal E$-nest  
if $\cup_{k} \mathcal{F}_{F_{k}}$ is $\mathcal E_1$-dense in $\mathcal{F}$.
It is known that $A\subset \sX$ is $\sE$-polar if and only there is an $\sE$-nest $\{F_k; k\geq 1\}$ so that 
$A\inn \cap _{k\>1}(\sX\- F_k)$. 
A nearly Borel measurable subset $\sN\subset \sX$ is said to be properly exceptional for the Hunt process
 $X$ if $m(\sN)=0$ and 
$$
\P_x (\hbox{there is some } t\in [0, \infty) \hbox{ so that } X_t\in \sN \hbox{ or } X_{t-}\in \sN)=0
\quad \hbox{for every } x\in \sX\setminus \sN.
$$
It is known (see, e.g., \cite[Theorems 3,1,3 and 3.1.5]{CF12}) that every properly exceptional set is $\sE$-polar and
every $\sE$-polar is contained in a Borel properly exceptional set.

A measure $\mu$ is called smooth if $\mu$ charges no $\mathcal E$-polar set and there exists a $\mathcal{E}$-nest $\{F_k, k \> 1\}$ such that $\mu(F_k)<\8$ for every $k\>1$.

Let $\{\sF_t; t\geq 0\}$ be the minimum augmented filtration generated by $X$
and $\{\theta_t; t\geq 0\} $ be the time-shift operators on $\Omega$ so that    $\theta_t(\w) (s) =\w (t+s) $ for any $t, s\geq 0$.

A numerical function $A_t(\omega)$ of two variables $t \geq 0$, $\omega \in \Omega$ is called an additive functional of $X$ if there exists
 $\Lambda \in \mathcal{F}_{\infty}$ and a properly exceptional set $\sN \subset \sX$ with
\[
\P_x(\Lambda)=1 \text { for } x \in \sX \backslash \sN \quad \text { and } \quad \theta_t \Lambda \subset \Lambda \text { for } t>0,
\]
and the following conditions are satisfied:
\begin{itemize}
	\item For each $t \geq 0,\left.A_t\right|_{\Lambda}$ is $\left.\mathcal{F}_t\right|_{\Lambda}$-measurable, where $A_{t}|_{\Lambda}$ is the restriction of $A_t$ to $\Lambda$ and $\left.\mathcal{F}_t\right|_{\Lambda}$ is the $\sigma$-algebra of $\mathcal{F}_t$ restricted to $\Lambda$.
	\item For any $\omega \in \Lambda$, $	A_.(\omega)$ is right continuous on $[0, \infty)$ has the left limits on $(0, \zeta(\omega))$, $A_0(\omega)=0,\left|A_t(\omega)\right|<\infty$ for $t<\zeta(\omega)$, and $A_t(\omega)=A_{\zeta(\omega)}(\omega)$ for $t \geq \zeta(\omega)$.
	\item  The additivity
	\[
	A_{t+s}(\omega)=A_t(\omega)+A_s\left(\theta_t \omega\right) \quad \text { for every } t, s \geq 0,
	\]
	is satisfied.
\end{itemize}
A positive continuous additive functional (PCAF) of $X$ is a continuous additive functional $A$ of $X$ so that $A_t\geq 0$ for every $t\geq 0$. 
It is known there is a one-to-one correspondence between smooth measures and PCAFs through the Revuz correspondence \eqref{e:1.2}; see \cite[Theorem 4.1.1]{CF12}.

\subsection{Convergences in different Hilbert spaces}

The following definitions are  mainly taken
 from  \cite{kuwae2003convergence}.  
 See also \cite{kolesnikov2005convergence} for a concise introduction on these concepts.

\begin{defn}\label{hilbert space convergence}
	We say that a sequence of Hilbert spaces $\mathcal{H}_{n}$ converges to a Hilbert space $\mathcal{H}$ if there exists a dense subspace $\mathcal{C} \inn \mathcal{H}$ and a sequence of linear operators $\Phi_n: \mathcal{C} \to \mathcal{H}_{n} $ such that $\lim_{n\to\8}\|\Phi_n u\|_{\mathcal{H}_{n}}=\|u\|_\mathcal{H}$ for every $u\in \mathcal{C}$.
\end{defn}

For the following discussion, we assume that the Hilbert space $\mathcal{H}_{n}$ converges to the Hilbert space in the sense of \Cref{hilbert space convergence}.

\begin{defn}\label{strong convergence}
	(Strong convergence) We say that a sequence $u_n \in \mathcal{H}_{n}$ strongly converges to $u \in \mathcal{H}$ if there exists a sequence $ v_m \in \mathcal{C}$ strongly converges to $u$ in $\mathcal{H}$  such that 
	$$\lim_{m\to\8}\limsup_{n\to\8}\|\Phi_n v_m - u_n\|_{\mathcal{H}_{n}}= 0.$$
\end{defn}

\begin{defn}\label{weak convergence}
	(Weak convergence) We say that a sequence of  $u_n\in \mathcal{H}_{n}$ weakly converges to $u \in  \mathcal{H}$ if for every sequence $v_n\in \mathcal{H}_{n}$ strongly converges to $v \in \mathcal{H}$, we have 
	$$(u_n,v_n)_{\mathcal{H}_{n}}\to (u,v)_\mathcal{H}.$$
\end{defn}

\begin{prop}\label{weak convergence norm bounded}
	For any sequence $u_n\in \mathcal{H}_{n}$ that converges to $u\in \mathcal{H}$ in the sense of \Cref{weak convergence} we have 
	$$\sup_{n}\|u_n\|_{\mathcal{H}_{n}}<\8 \qquad \text{and }\qquad \|u\|_H\<\liminf_n\|u_n\|_{\mathcal{H}_{n}}.$$
\end{prop}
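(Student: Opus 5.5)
The proof has two parts: a uniform boundedness principle for the first claim, and the definition of weak convergence tested against the sequence $u_n$ itself for the second.

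\textbf{Uniform bound.} For each $v\in\mathcal{C}$, the sequence $\Phi_n v$ converges strongly to $v$ in the sense of \Cref{strong convergence}; take $v_m\equiv v$, so the defining quantity is identically $0$. Weak convergence then gives that $(u_n,\Phi_n v)_{\mathcal{H}_n}$ converges, and in particular $\sup_n|(u_n,\Phi_n v)_{\mathcal{H}_n}|<\8$ for every $v\in\mathcal{C}$. This alone does not bound $\|u_n\|$, since the $\mathcal{H}_n$ vary. So I argue by contradiction. Suppose, after passing to a subsequence, $\|u_n\|\to\8$. Set $w_n:=u_n/\|u_n\|^{1/2}$, so that $\|w_n\|\to\8$ while $w_n/\|w_n\|=u_n/\|u_n\|$. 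Now consider $v_n:=u_n/\|u_n\|^{3/2}$, which has $\|v_n\|=\|u_n\|^{-1/2}\to0$. A sequence with norms tending to zero converges strongly to $0\in\mathcal{H}$: take $v_m=0\in\mathcal{C}$, and $\limsup_n\|\Phi_n0-v_n\|=0$. Hence weak convergence forces $(u_n,v_n)\to(u,0)=0$. But $(u_n,v_n)=\|u_n\|^{1/2}\to\8$, a contradiction. This settles the uniform bound directly, without invoking Banach--Steinhaus.

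\textbf{Lower semicontinuity.} I would first establish two auxiliary facts about strong convergence.
\begin{enumerate}[label=(\roman*)]
\item If $u_n\to u$ strongly, then $\|u_n\|_{\mathcal{H}_n}\to\|u\|$. Choose $v_m$ as in \Cref{strong convergence}. Then $\big|\|u_n\|-\|\Phi_n v_m\|\big|\le\|\Phi_nv_m-u_n\|$, and $\|\Phi_n v_m\|\to\|v_m\|\to\|u\|$; letting $n\to\8$ and then $m\to\8$ gives the claim.
\item Linear combinations of strongly convergent sequences converge strongly, because $\Phi_n$ is linear and $\mathcal{C}$ is a subspace.
\end{enumerate}
Next, fix $\e>0$ and pick $w\in\mathcal{C}$ with $\|w-u\|<\e$. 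Then $\Phi_n w\to w$ strongly, so weak convergence gives $(u_n,\Phi_n w)\to(u,w)$. By Cauchy--Schwarz and (i),
$$|(u,w)|\le\liminf_n\|u_n\|\,\|\Phi_nw\|=\liminf_n\|u_n\|\,\|w\|.$$
Since $(u,w)\ge\|u\|^2-\e\|u\|$ and $\|w\|\le\|u\|+\e$, letting $\e\to0$ yields $\|u\|^2\le\liminf_n\|u_n\|\,\|u\|$. If $u\ne0$, divide by $\|u\|$; if $u=0$ the inequality is trivial.

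The only delicate point is the first claim: the varying spaces prevent a direct appeal to uniform boundedness, and the rescaling trick above is what replaces it.
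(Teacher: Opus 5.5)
Your proof is correct. The paper gives no argument of its own here and only cites \cite[Lemma 2.3]{kuwae2003convergence}, so your write-up is a self-contained substitute. For the uniform bound you use the rescaled test sequence $u_n/\|u_n\|^{3/2}$: its norms tend to $0$, so it converges strongly to $0$. For lower semicontinuity you test against $\Phi_n w$ with a fixed $w\in\mathcal{C}$ close to $u$, and only afterwards let $w\to u$ in $\mathcal{H}$. This avoids first building a sequence in $\mathcal{H}_n$ that converges strongly to $u$ itself, which is what the diagonal construction of \Cref{make convergence sequence} would supply.

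In that step you only need $\|\Phi_n w\|_{\mathcal{H}_n}\to\|w\|_{\mathcal{H}}$, which is part of \Cref{hilbert space convergence}. So your fact (i) reduces to the definition in the one place you use it. Fact (ii) is never used, and neither is the auxiliary $w_n=u_n/\|u_n\|^{1/2}$; both can be deleted.

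One detail needs tidying. The definition of weak convergence only constrains test sequences indexed by all $n$, so ``passing to a subsequence'' is not automatically harmless when the spaces vary. Fix it as follows:
\begin{itemize}
\item Set $v_n:=u_n/\|u_n\|^{3/2}_{\mathcal{H}_n}$ for $n$ in the subsequence, and $v_n:=0$ otherwise.
\item This full sequence still has $\|v_n\|_{\mathcal{H}_n}\to 0$, hence converges strongly to $0$.
\item Weak convergence then gives $(u_n,v_n)_{\mathcal{H}_n}\to 0$ along all $n$.
\item Along the subsequence, however, $(u_n,v_n)_{\mathcal{H}_n}=\|u_n\|_{\mathcal{H}_n}^{1/2}\to\infty$, which is the contradiction you want.
\end{itemize}
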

\begin{proof}
	See \cite[Lemma 2.3]{kuwae2003convergence}.
\end{proof}

\begin{prop} Suppose that $u_n\in \mathcal{H}_{n}$ converges to $u\in \mathcal{H}$ and $v_n\in \mathcal{H}_{n}$ converges to $v\in \mathcal{H}$ in the sense of \Cref{strong convergence}. Then $u=v$ if and only if $\lim_{n\to\8}\|u_n-v_n\|_{\mathcal{H}_{n}}=0$.
\end{prop}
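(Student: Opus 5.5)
The plan is to prove the two implications separately. The key tool is the definition of strong convergence: each sequence is approximated by images $\Phi_n v_m$ of elements $v_m\in\mathcal C$, and \Cref{hilbert space convergence} gives $\|\Phi_n w\|_{\mathcal H_n}\to\|w\|_{\mathcal H}$ for $w\in\mathcal C$.

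Suppose $u_n\to u$ strongly via $v_m\in\mathcal C$ with $v_m\to u$ in $\mathcal H$, and $\lim_m\limsup_n\|\Phi_n v_m-u_n\|_{\mathcal H_n}=0$. Likewise suppose $v_n\to v$ strongly via $w_m\in\mathcal C$ with $w_m\to v$. Fix $m$. By the triangle inequality,
\begin{equation*}
\|u_n-v_n\|_{\mathcal H_n}\le \|u_n-\Phi_n v_m\|_{\mathcal H_n}+\|\Phi_n(v_m-w_m)\|_{\mathcal H_n}+\|\Phi_n w_m-v_n\|_{\mathcal H_n}.
\end{equation*}
Since $\Phi_n$ is linear and $v_m-w_m\in\mathcal C$, the middle term tends to $\|v_m-w_m\|_{\mathcal H}$ as $n\to\infty$. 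The same triangle argument run in reverse gives the lower bound
\begin{equation*}
\|\Phi_n(v_m-w_m)\|_{\mathcal H_n}-\|u_n-\Phi_nv_m\|_{\mathcal H_n}-\|\Phi_nw_m-v_n\|_{\mathcal H_n}\le \|u_n-v_n\|_{\mathcal H_n}.
\end{equation*}

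Take $\limsup_n$ and $\liminf_n$ in these two inequalities. Writing $\varepsilon_m$ for the sum of the two $\limsup_n$ error terms, we get
\begin{equation*}
\|v_m-w_m\|_{\mathcal H}-\varepsilon_m\le\liminf_n\|u_n-v_n\|_{\mathcal H_n}\le\limsup_n\|u_n-v_n\|_{\mathcal H_n}\le \|v_m-w_m\|_{\mathcal H}+\varepsilon_m .
\end{equation*}
Now let $m\to\infty$. Then $\varepsilon_m\to0$ and $\|v_m-w_m\|_{\mathcal H}\to\|u-v\|_{\mathcal H}$. Hence
\begin{equation*}
\lim_{n\to\infty}\|u_n-v_n\|_{\mathcal H_n}=\|u-v\|_{\mathcal H}.
\end{equation*}
Both directions follow at once: $u=v$ if and only if this limit is $0$.

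There is no real obstacle here. The only point needing care is that $\limsup_n$ and $\liminf_n$ are taken before letting $m\to\infty$, which matches the order of limits in \Cref{strong convergence}. The linearity of $\Phi_n$ is what lets us combine the two approximating sequences into the single element $v_m-w_m$ of $\mathcal C$.
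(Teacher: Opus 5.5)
Your proof is correct. The paper does not prove this proposition itself; it only refers to \cite[Lemma 7.1]{kolesnikov2005convergence}, so your argument supplies a self-contained proof. It also yields the stronger identity $\lim_{n}\|u_n-v_n\|_{\mathcal H_n}=\|u-v\|_{\mathcal H}$, from which both implications are immediate.

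The two points that make it work are both ones you flag:
\begin{itemize}
\item $\mathcal C$ is a subspace and $\Phi_n$ is linear, so the middle term is $\|\Phi_n(v_m-w_m)\|_{\mathcal H_n}$, which tends to $\|v_m-w_m\|_{\mathcal H}$.
\item The $\limsup_n$ and $\liminf_n$ are taken before $m\to\infty$. If $\varepsilon_m=+\infty$ for small $m$, the inequalities hold trivially, and $\varepsilon_m$ is finite for large $m$.
\end{itemize}

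One cosmetic point: you use $v_m\in\mathcal C$ for the approximants of $u$, while $v_n\in\mathcal H_n$ already names the second sequence in the statement. Rename the approximants (say $\tilde u_m$) to avoid the clash.
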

\begin{proof}
	See \cite[Lemma 7.1]{kolesnikov2005convergence}.
\end{proof}

\begin{prop} \label{make convergence sequence}
	If $u_n\in \mathcal{C}\inn \mathcal{H}$ converges to $u\in \mathcal{H}$ strongly in $\mathcal{H}$, then there exists a nondecreasing subsequence $k_n\uparrow\8$ and $v_n:=u_{k_n}$ such that $\Phi_n v_n \in \mathcal{H}_{n}$ converges strongly to $u\in \mathcal{H}$ in the sense of \Cref{strong convergence}.
\end{prop}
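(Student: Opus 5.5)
The plan is a diagonal argument. For each fixed $m$, the constant sequence $w_n:=u_m$ lies in $\mathcal{C}$ and its images $\Phi_n u_m$ are the natural candidates. By Definition \ref{hilbert space convergence}, $\|\Phi_n u_m\|_{\mathcal{H}_n}\to\|u_m\|_{\mathcal{H}}$ as $n\to\infty$. The same holds for differences, because $\Phi_n$ is linear and $u_m-u_j\in\mathcal{C}$:
$$\|\Phi_n u_m-\Phi_n u_j\|_{\mathcal{H}_n}=\|\Phi_n(u_m-u_j)\|_{\mathcal{H}_n}\longrightarrow\|u_m-u_j\|_{\mathcal{H}} \qquad (n\to\infty).$$
Consequently, for each $m$ there is an index $N_m$ such that for all $n\ge N_m$ and all $j\le m$,
$$\|\Phi_n(u_m-u_j)\|_{\mathcal{H}_n}\le \|u_m-u_j\|_{\mathcal{H}}+\tfrac1m .$$
This is possible because only finitely many $j$ are involved. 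We may also choose the $N_m$ strictly increasing in $m$.

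Next we define $k_n:=\max\{m: N_m\le n\}$, with $k_n:=1$ if no such $m$ exists. This sequence is nondecreasing and tends to infinity, since each $N_m$ is finite. Set $v_n:=u_{k_n}$.

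To verify Definition \ref{strong convergence}, we use the approximating sequence $(u_j)_j\subset\mathcal{C}$ itself, which converges to $u$ in $\mathcal{H}$ by hypothesis. Fix $j$. For $n$ large we have $k_n\ge j$, and since $n\ge N_{k_n}$ by construction,
$$\|\Phi_n u_j-\Phi_n v_n\|_{\mathcal{H}_n}\le \|u_{k_n}-u_j\|_{\mathcal{H}}+\tfrac1{k_n}.$$
As $n\to\infty$, $u_{k_n}\to u$ and $1/k_n\to0$. Taking the limsup in $n$ gives
$$\limsup_{n\to\infty}\|\Phi_n u_j-\Phi_n v_n\|_{\mathcal{H}_n}\le\|u-u_j\|_{\mathcal{H}},$$
and this tends to $0$ as $j\to\infty$. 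That is exactly the condition $\lim_j\limsup_n\|\Phi_n u_j-\Phi_n v_n\|_{\mathcal{H}_n}=0$, so $\Phi_n v_n$ converges strongly to $u$.

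The only delicate point is making the estimate uniform over all the comparison indices $j\le m$ at once. The finite-maximum choice of $N_m$ is what handles this.
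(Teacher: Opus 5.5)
Your argument is correct and is essentially the paper's diagonal argument. In both, each index gets a threshold beyond which $\|\Phi_n(u_m-u_j)\|_{\mathcal{H}_n}$ is close to $\|u_m-u_j\|_{\mathcal{H}}$, and the selected index then grows slowly with $n$.

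The only difference is bookkeeping, and yours is slightly cleaner. The paper first passes to a Cauchy subsequence $u_{N_i}$ with explicit tolerances and uses the selected sequence itself as the approximating sequence in \Cref{strong convergence}. You instead make the threshold uniform over all $j\le m$ and test against the original sequence $(u_j)$, letting $\|u_{k_n}-u_j\|_{\mathcal{H}}\to\|u-u_j\|_{\mathcal{H}}$ close the argument.
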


\begin{proof}
	By \Cref{hilbert space convergence} we have for any $m,n\>0$
	$$\lim_{k\to\8}\|\Phi_k (u_m-u_n)\|_{\mathcal{H}_k}=\|u_m-u_n\|_{\mathcal{H}}$$
	and convergence of $u_n$ implies
	$$\lim_{N\to \8}\sup_{m,n\>N}\|u_m-u_n\|_{\mathcal{H}}=0.$$
	Hence for any $n\in \N^*$, we can find $N_n\uparrow \8$ such that for any $m\> N_n$ 
	$$\|u_m-u_{N_n}\|_{\mathcal{H}}\< 1/(2n)$$
	and	there exists $k_{n,m}\in \N$ such that 
	$$\sup_{k\>k_{n,m}}\|\Phi_k(u_m-u_{N_n})\|_{\mathcal{H}_k}\<\|u_m-u_{N_n}\|_{\mathcal{H}}+1/(2n)\<1/n.$$
	Then we find a strictly increasing sequence $\{L_i\}_{i=1}^\8$ such that $L_i > \max_{1\le j\le i} k_{j,N_i}$. Define $v_n=u_{N_i}$ if $L_i\< n < L_{i+1}$. Then we have for $m\in [L_j,L_{j+1})\cap\N^*$
	$$\limsup_{n\to\8}\|\Phi_n(v_n-v_m)\|_{\mathcal{H}_{n}}\<1/j$$
	hence $$\lim_{m\to\8}\limsup_{n\to\8}\|\Phi_nv_n-\Phi_nv_m\|_{\mathcal{H}_{n}}=0.$$
	As $v_m$ converges to $u$ strongly in $\mathcal{H}$, we have $\Phi_n v_n$ converges to $u$ in the sense of \Cref{strong convergence}.
\end{proof}

\begin{defn} [Operator strong convergence] \label{operator convergence}
	We say that a sequence of bounded operators $T_{n}$ on $\mathcal{H}_{n}$ strongly converges to a bounded operator $T$ on $\mathcal{H}$ if for every sequence $u_{n}$ strongly tending to $u\in \mathcal{H}$, the sequence $T_{n}(u_{n})$ strongly tends to $T(u)$.
\end{defn}

\subsection{Mosco convergence} \label{section mosco}

Mosco convergence is a convergence between Dirichlet forms which was first introduced by Mosco in \cite{MR1283033}. It is extended to Dirichlet forms in varying spaces in \cite{kuwae2003convergence}. See also \cite{MR2307059, chen2013discrete}.
Let $\mathcal{H}_{n}$ be Hilbert spaces which converges to the Hilbert space $\mathcal{H}$ in the sense of \Cref{hilbert space convergence}. 

\begin{defn}[Mosco Convergence] \label{Mosco Convergence}
	We say that a sequence of quadratic forms $(\mathcal E^{n},\mathcal F^{n})$ on $\mathcal{H}_{n}$ is Mosco convergent to a quadratic form $(\mathcal E,\mathcal F)$ on $\mathcal{H}$ if the following conditions hold:
\begin{itemize}
	\item For any sequence $u_n\in \mathcal F^{n}$ weakly converges to $u\in \mathcal{H}$ with $\sup_n\mathcal E^{n}(u_n)<\8 $, we have $\mathcal E(u)\< \liminf _n \mathcal E^{n}(u_n)$.
	\item For every $u\in \mathcal F$ there exist a strong convergent sequence $u_n\to u$ with $u_n\in \mathcal F^{n}$ such that $\mathcal E(u)=\lim_n \mathcal E^{n}(u_n)$.
\end{itemize}
\end{defn}

\begin{remark}
For any Dirichlet form $(\mathcal F,\mathcal E)$,
	if we make the convention that $\mathcal E(u)=\8$ for $u\in H\-\mathcal F$. Then \Cref{Mosco Convergence} is equivalent to
	\begin{itemize}
	\item For any sequence $u_n\in \mathcal{H}_{n}$ weakly converges to $u\in \mathcal{H}$ we have $\mathcal E(u)\< \liminf _n \mathcal E^{n}(u_n)$.
	\item For every $u\in \mathcal{H}$ there exist a strong convergent sequence $u_n\to u$ with $u_n\in \mathcal{H}_{n}$ such that $\mathcal E(u)=\lim_n \mathcal E^{n}(u_n)$.
\end{itemize}
\end{remark}

The Mosco Convergence is equivalent to the strong convergence (in the sense of \Cref{operator convergence}) of the semigroups associated to the corresponding Dirichlet forms. See \cite[Theorem 2.4]{kuwae2003convergence}.

We now give the Dirichlet forms of $X^{r}$ and $X$.
Because $\jd(x) = \jd(-x)$ for any $x\in\Rd$, it is easy to check that $m_r(dx) = (\sigma_{\jd}^{2} r^2)^{-1}dx$ is a symmetrizing measure of $Q_r(x,dy) = \jd_{r}(y-x) dy$, i.e., 
\begin{equation*}
	Q_r(x,dy) m_r(dx) = Q_r(y,dx) m_r(dy).
\end{equation*}
By \cite[Theorem 2.2.2]{CF12} (with the discussion on the cases of unbounded speed measure on page 55 of \cite{CF12}), we know the Dirichlet form of $X^r$ on $L^2(\R^d;\mu_r)$ is 
\begin{align*}
	\begin{cases}
		\mathcal F^{r}&= \{f\in  L^2_{loc}(\R^d):\mathcal E^{r}(f)<\8\} \cap L^2(\R^d;\mu_r)\\
		\mathcal E^{r}(u)&= \frac{1}{2} \int\int(u(x)-u(y))^{2} Q_r(x, d y) m_r (d x).
	\end{cases}
\end{align*}
By \cite[Theorem 5.2.2]{CF12},
 the Dirichlet form of the time-changed Brownian motion $X$ on $L^2(\R^d;\mu)$ is
\begin{align*}
	\begin{cases}
\mathcal F^\mu&= \widetilde{\mathcal{F}_{e}} \cap L^2(\R^d;\mu)\\
\mathcal{E}(u)&= \frac{1}{2} \int|\nabla u(x)|^{2}dx,
	\end{cases}
\end{align*}
where $\widetilde{\mathcal{F}_{e}}$ is the quasi-continuous version of the extended Dirichlet space $\mathcal{F}_{e}$ of Brownian motion on $\Rd$, the $\mathcal{E}$-completion of $C_c^\infty(\mathbb R^d)$.
In the case $d=1,2$, 
$\mathcal{F}_{e} = BL(\mathbb R^d)$, 
 where $BL(D):=\{u \in L^{2}_{\mathrm{loc}}(D): \nabla u \in L^2(D)\}$ is the Beppo-Levi space on the domain $D\inn \R^d$. In the case $d\>3$, 
$\mathcal{F}_{e}$ is isometric to $BL(\Rd)/\R$, the quotient space of $BL(\Rd)$ by the subspace of constant functions. 
See \cite[Theorems 2.2.12 and 2.2.13]{CF12} for the above facts.

\medskip

Besides the two Dirichlet forms $(\mathcal F^{r},\mathcal E^{r})$ on $L^2(\R^d;\mu_r)$ and $(\mathcal F^{\mu},\mathcal{E})$ on $L^2(\R^d;\mu)$, we introduce some other Dirichlet forms which are helpful for establishing our main results.
 Let $D$ be a bounded open set of $\Rd$ with $C^{1,1}$ boundary.
and define Dirichlet forms $(\mathcal F^{r,D},\mathcal E^{r,D})$ on $L^2(D;\mu_r|_D)$ by
\begin{align}\label{dirichlet form r E}
	\begin{cases}
		\mathcal F^{r,D}  = \{u\in  L^2_{\text{loc}}(D):\mathcal E^{r,D}(u)<\8\} \cap L^2(D;\mu_r|_D)\\
		\mathcal E^{r,D}(u)  = \frac{1}{2} \int_{D \times D}(u(x)-u(y))^{2} Q_r(x, d y) m_r (d x) \\
		 \hskip 0.7 truein  + \int_{D} u(x)^{2}(1-Q_r(x, D)) m_r (d x)
	\end{cases}
\end{align}
and $(\mathcal F^{\mu,D},\mathcal E^{\mu,D})$ on $L^2(D; \mu)$ by
\begin{align}\label{dirichlet form mu E}
	\begin{cases}
		\mathcal F^{\mu,D}&=  \overline {C^{\8}_c(D)}^{\mathcal E^{\mu,D}_1} \cap L^2(D; \mu)\\
		\mathcal E^{\mu,D}(u)&= \frac{1}{2} \int_D |\nabla u(x)|^{2} d x.
	\end{cases}
\end{align}
where $\overline {C^{\8}_c(D)}^{\mathcal E^{\mu,D}_1}$ denotes the completion of $C^{\8}_c(D)$ with respect to $\mathcal E^{\mu,D}_{1} := \mathcal E^{\mu,D}(\cdot)+\|\cdot\|^2_{L^2(D;\mu)}$.
Note that $\mathcal E^{r,D}(u)$ (resp. $\mathcal{E}^{\mu,D}(u)$) is the same as $\mathcal E^{r}(u)$ (resp. $\mathcal{E}(u)$) if we extend the function $u\in L^2(D;\mu_r|_D)$ to be 0 outside $D$. We also define $\mathcal E_{1}^{\mu}(u)=\mathcal E(u)+(u,u)_\mu$ for $u\in \mathcal F^{\mu}$ where $(\cdot,\cdot)_\mu$ is the $L^2(\mu)$ inner product.

By \cite[Theorem 3.3.8]{CF12},
the Dirichlet form $(\mathcal F^{r,D},\mathcal E^{r,D})$ on $L^2(D;\mu_r|_D)$ is associated to $X^{r,D}$, the killed process of $X^{r}$ upon leaving $D$. Similarly, the Dirichlet form $(\mathcal F^{\mu, D},\mathcal E^{\mu,D})$ on $L^2(D; \mu)$ is associated to $X^{D}$, the killed process of $X$ upon leaving $D$.

We will establish the Mosco convergence of $(\mathcal F^{r,D},{\mathcal E}^{r,D})$ to $(\mathcal F^{\mu,D},\mathcal E^{\mu,D})$ and the Mosco convergence of $({\mathcal F}^{r},{\mathcal E}^{r})$ to $({\mathcal F^\mu},{\mathcal E})$ in \Cref{mosco}. By \cite[Theorem 2.4]{kuwae2003convergence}, the Mosco convergence of Dirichlet forms in the sense of \Cref{Mosco Convergence} is equivalent to the $L^{2}$ convergence of the semigroups in the sense of \Cref{operator convergence}. It then implies the finite dimensional distribution vague convergence. For a proof of this fact, one can refer the discussion at the end of Section 6 of \cite{kolesnikov2005convergence}.

\section{Approximation under absolutely continuous initial distributions}\label{section2}

In this section we provide a proof for \Cref{mainthm1}. 
The main idea is to establish tightness and identify the limit using Dirichlet form theory. 
The convergence will be established first on a weaker topology called pseudo-path topology (\Cref{pseudo}), then we can strengthen the convergence to the Skorokhod topology.

\subsection{Convergence in finite dimensional distributions}\label{S:3.1}

We start with some simple results.

\begin{lemma} \label{formCalulate}
	For any unit vector $e\in \Rd$ we have $\int \langle e,x \rangle_{\Rd}^2\jd(x) dx = \sigma_{\jd}^{2} < \8$, 
	  where $ \langle \cdot,\cdot\rangle_{\Rd}$ denotes the Euclidean inner product. Furthermore,
	we have for any $r>0$, $x, z\in \R^d$
	\begin{equation*}
		\frac{1}{ \sigma_{\jd}^2 r^2 }\int|\langle x,y-z\rangle_{\Rd}|^2 \jd_r(y-z)dy=|x|^2.
	\end{equation*}
\end{lemma}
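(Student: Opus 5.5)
The first claim follows directly from Assumption (a), which gives the covariance matrix $\int x_i x_j \jd(x)\,dx = \sigma_\jd^2 \d_{ij}$. For a unit vector $e=(e_1,\dots,e_d)$ I will expand the square,
\[
\langle e,x\rangle_{\Rd}^2=\sum_{i,j} e_i e_j x_i x_j ,
\]
and integrate term by term against $\jd$. Each integral $\int |x_ix_j|\jd(x)\,dx$ is finite by Cauchy--Schwarz together with the finiteness of the second moments, so linearity of the integral is justified. The result is $\sum_{i,j} e_ie_j\sigma_\jd^2\d_{ij}=\sigma_\jd^2|e|^2=\sigma_\jd^2<\8$.

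For the second identity, fix $r>0$ and $x,z\in\Rd$. If $x=0$ both sides vanish, so I will assume $x\neq 0$. I will substitute $w=(y-z)/r$, so that $dy=r^d\,dw$ and $\jd_r(y-z)\,dy=\jd(w)\,dw$. The left-hand side then becomes
\[
\frac{1}{\sigma_\jd^2 r^2}\int r^2\langle x,w\rangle_{\Rd}^2\jd(w)\,dw=\frac{|x|^2}{\sigma_\jd^2}\int\langle x/|x|,w\rangle_{\Rd}^2\jd(w)\,dw .
\]
Applying the first part with the unit vector $e=x/|x|$ gives $\int\langle e,w\rangle_{\Rd}^2\jd(w)\,dw=\sigma_\jd^2$, so the expression equals $|x|^2$.

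I do not expect any real obstacle. The only points needing care are the integrability that justifies expanding the square, and keeping track of the scaling factor $r^{-d}$ in $\jd_r$, which cancels against the Jacobian $r^d$ of the change of variables.
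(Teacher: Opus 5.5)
Your proposal is correct and follows essentially the same route as the paper. For the first part, the paper decomposes $e=\sum c_i e_i$ and uses orthogonality, which is the same as your expansion of the square using $\int x_ix_j\jd=\sigma_\jd^2\d_{ij}$. For the second part, both you and the paper rescale $w=(y-z)/r$ and then apply the first part with $e=x/|x|$; your separate treatment of $x=0$ and your justification of integrability are small extra details the paper leaves implicit.
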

\begin{proof}
	Let $e = \sum_{i = 1}^{d} c_i e_i$, then $\sum_{i = 1}^{d} c_i^2 = 1$, hence by orthogonality
	\begin{equation*}
		\int \langle e,x \rangle _{\Rd}^2\jd(x) dx = \sum_{i = 1}^{d} c_i^2 \int \langle e_i,x \rangle _{\Rd}^2\jd(x) dx  = \sigma_{\jd}^{2}.
	\end{equation*}
Using the identity above we get
\begin{align*}
	\frac{1}{ \sigma_{\jd}^2 r^2 }\int\langle x,y-z\rangle _{\Rd}^2 \jd_r(y-z)dy
	&=|x|^2 \sigma_{\jd}^{-2}\int\langle \frac{x}{|x|},\frac{y-z}{r}\rangle _{\Rd}^2 \jd_r(y-z)dy\\
	&=|x|^2 \sigma_{\jd}^{-2}\int\langle \frac{x}{|x|},y\rangle _{\Rd}^2 \jd(y)dy\\
	&=|x|^2,
\end{align*}	
which proves the second statement.
\end{proof}

Recall that $\mu_r(dx)=(\v\phi_r * \mu)(dx) = \int \phi_{r}(y-x)\mu(dy)dx$ and $Q_r(x,dy)=\jd_r(y-x)dy$. 
\begin{lemma} \label{weakConvergeRelation}
The following holds.
\begin{enumerate}[label=(\arabic*)]
\item For $p\in[1,\8)$ and $f \in L^p(\mu_{r})$, we have $\phi_{r}*f\in L^p(\mu)$ and $\|\phi_{r}*f\|_{L^p(\mu)} \leq \|f\|_{L^p(\mu_r)}$.
\item For any function $f\in L^1(\mu_r)$, we have $\mu_r(f):=\int f d \mu_r=\int (\phi_r*f)(x)\mu(dx)$.
\item For any $f\in C_c(\R^d)$,   $\lim_{r\to 0}\mu_r(f)=\mu(f)$.
\item If $f_r \in L^2(\mu_{r})$ converge weakly to $f \in L^2(\mu)$ in the sense of \Cref{weak convergence}, then $\phi_{r}*f_r \in L^2(\mu)$ converges weakly to $f$ in $L^2(\mu)$.
\end{enumerate}
\end{lemma}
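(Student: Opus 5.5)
The plan is to prove (1) and (2) by Jensen's inequality and Tonelli/Fubini, (3) by uniform convergence of mollifications, and (4) by a duality argument combining (1)--(3) with the definitions of strong and weak convergence. Throughout I take the convergence $L^2(\mu_r)\to L^2(\mu)$ in the sense of \Cref{hilbert space convergence} to be realized with $\mathcal C=C_c(\R^d)$ and $\Phi_r u=u$. Under this choice, (3) applied to $u^2$ gives $\|u\|_{L^2(\mu_r)}\to\|u\|_{L^2(\mu)}$.

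For (1), fix $x$. Since $\phi_r(x-\cdot)$ is a probability density, Jensen's inequality gives $|\phi_r*f(x)|^p\le (\phi_r*|f|^p)(x)$. By Tonelli,
\[
\int (\phi_r*|f|^p)(x)\,\mu(dx)=\int |f(y)|^p\Big(\int\phi_r(x-y)\mu(dx)\Big)dy=\int |f|^p\,d\mu_r ,
\]
which is exactly the definition of $\mu_r$, and the norm bound follows. For (2), the same computation with $f$ in place of $|f|^p$ applies. Fubini is justified because (1) with $p=1$ gives $\int\phi_r*|f|\,d\mu=\|f\|_{L^1(\mu_r)}<\infty$.

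For (3), write $\mu_r(f)=\int \phi_r*f\,d\mu$ using (2). Since $f$ is uniformly continuous, $\phi_r*f\to f$ uniformly. When $\phi$ has compact support, as in \ref{jd0}, all $\phi_r*f$ with $r\le1$ are supported in a fixed compact set $K$. Because $\mu(K)<\infty$ ($\mu$ is Radon), dominated convergence then gives the claim. If $\phi$ is not compactly supported, one must also control $\int_{K^c}|\phi_r*f|\,d\mu$, and I would bound this by the tail mass of $\phi$ together with local finiteness of $\mu$. I regard this tail control as the only delicate point of (3).

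For (4), the goal is $(\phi_r*f_r,g)_\mu\to(f,g)_\mu$ for every $g\in L^2(\mu)$. First take $g\in C_c(\R^d)$. By Fubini, as in (2), $(\phi_r*f_r,g)_\mu=(f_r,\v\phi_r*g)_{\mu_r}$. I claim that $\v\phi_r*g$ converges strongly to $g$ in the sense of \Cref{strong convergence}, with the constant sequence $v_m=g$. For this it suffices that $\|g-\v\phi_r*g\|_{L^2(\mu_r)}\to0$. This holds because $\v\phi_r*g-g\to0$ uniformly, the supports lie in a fixed compact set, and $\mu_r$ of a compact neighbourhood stays bounded by (3) applied to a cutoff function. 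The definition of weak convergence then yields $(f_r,\v\phi_r*g)_{\mu_r}\to(f,g)_\mu$. For general $g\in L^2(\mu)$, \Cref{weak convergence norm bounded} gives $\sup_r\|f_r\|_{L^2(\mu_r)}<\infty$, and (1) turns this into $\sup_r\|\phi_r*f_r\|_{L^2(\mu)}<\infty$. Since $C_c$ is dense in $L^2(\mu)$ for the Radon measure $\mu$, an $\varepsilon/3$ argument extends the convergence to all $g$. This gives weak convergence of $\phi_r*f_r$ to $f$ in $L^2(\mu)$.
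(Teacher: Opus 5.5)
Your arguments for (1)--(3) are correct and match the paper's: Jensen plus Tonelli, then dominated convergence on a fixed compact set. Compact support of $\phi$ is part of \ref{jd0}, so your non-compact branch is not needed.

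The gap is in (4). The identity $(\phi_r*f_r,g)_\mu=(f_r,\check\phi_r*g)_{\mu_r}$ is false in general; it holds when $\mu$ is a multiple of Lebesgue measure. Fubini actually gives
\[
(\phi_r*f_r,g)_\mu=\int f_r(y)\Big(\int\phi_r(x-y)\,g(x)\,\mu(dx)\Big)\,dy,
\]
whereas
\[
(f_r,\check\phi_r*g)_{\mu_r}=\int f_r(y)\Big(\int\phi_r(z-y)\,g(z)\,dz\Big)\Big(\int\phi_r(x-y)\,\mu(dx)\Big)\,dy .
\]
In the first expression $g$ is averaged against $\phi_r(x-y)\,\mu(dx)$; in the second it is averaged against $\phi_r(z-y)\,dz$. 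The computation in (2) is only the case $g\equiv 1$, where the two coincide. So the adjoint of $f\mapsto\phi_r*f$ from $L^2(\mu_r)$ to $L^2(\mu)$ is not $\check\phi_r*$. Your true statement that $\check\phi_r*g\to g$ strongly therefore tells you nothing yet about $(\phi_r*f_r,g)_\mu$.

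The duality route works once you use the correct adjoint. This is the $\mu$-weighted local average
\[
h_r(y):=\frac{\int\phi_r(x-y)\,g(x)\,\mu(dx)}{\int\phi_r(x-y)\,\mu(dx)}\qquad(\mu_r\text{-a.e. } y),
\]
for which $(\phi_r*f_r,g)_\mu=(f_r,h_r)_{\mu_r}$. Suppose $\mathrm{supp}\,\phi\cup\mathrm{supp}\,g\subset B(0,M)$ and $r\le1$. Then $\phi_r(x-y)=0$ for $|x-y|\ge Mr$, so $|h_r-g|\le\omega_g(Mr)$ holds $\mu_r$-a.e., where $\omega_g$ is the modulus of continuity of $g$. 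Also $h_r-g$ vanishes off $B(0,2M)$. Hence
\[
\|h_r-g\|_{L^2(\mu_r)}\le\omega_g(Mr)\,\mu_r(B(0,2M))^{1/2}\to0,
\]
so $h_r\to g$ strongly. The rest of your argument then goes through unchanged: the weak-convergence step, the uniform bound from (1), and the $\varepsilon/3$ density step.

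This is exactly the paper's key estimate in another form. The paper bounds
\[
\bigl|\int f_rg\,d\mu_r-\int(\phi_r*f_r)g\,d\mu\bigr|\le\omega_g(Mr)\,\mu_r(B(0,2M))^{1/2}\|f_r\|_{L^2(\mu_r)}
\]
directly. It then uses only that the constant sequence $g$ converges strongly, with no adjoint needed. What your write-up is missing is precisely this modulus-of-continuity comparison between $\mu$-weighted averages of $g$ and its pointwise values.
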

\begin{proof}
Notice that by Jensen's inequality
$$\int |\phi_r * f|^p d\mu\<\int \phi_r*(|f|^p)d\mu=\int |f|^p d\mu_r.$$
Hence (1) holds. By Fubini's Theorem, for non-negative $f$ on $\R^d$, 
\begin{align*}
	\int f(x) \mu_r(dx)
	& = \int f(x) \int \phi_r(y-x)\mu(dy)dx\\
	& = \int \int f(x) \phi_r(y-x) dx \mu(dy)\\
	& = \int (\phi_r*f)(y)\mu(dy).
\end{align*}
This proves (2). Note that for $f\in C_c(\R^d)$, there is $R_0>1$ so that ${\rm supp} [ \phi_r*f] \subset B(0, R_0)$  for every $0<r\leq 1$. 
Property (3) then follows from (2) and the bounded convergence theorem.   
For (4), (1) implies $\phi_{r}*f_r \in L^2(\mu)$. To show the convergence, let $g\in C^2_c(\R^d)$ which is uniformly continuous. 
Recall that $\phi$ has compact support by \ref{jd0}. 
Let $M> 0 $ so that $\operatorname{supp}\phi\subset B(0,M)$ and $\operatorname{supp}g\inn B(0,M)$. 
Define for $r>0$,  $\omega_g (r)= \sup_{x, y\in \R^d: |x-y|<r} |g(x) -g(y)|$.
By Fubini theorem, for every $0<r<1$, 
\begin{align} \label{eq:conv}
	\begin{split} 
		&\left|\int  f_r g\,d\mu_r-\int (\phi_r*f_r) g \,d\mu\right| \\
		=
		&\left|\int (f_rg)(x)\,\mu_r(dx)
		 -\iint \phi_r(z-x)f_r(z)(g(z)+g(x)-g(z))\,dz\,\mu(dx)\right| \\
		\leq & \iint \phi_r(z-x)|g(x)-g(z)| |f_r(x)|\,dx\,\mu(dz)\\
		= & \iint \1_{B(0, 2M)}(x)  |g(x)-g(z)| \phi_r(z-x)   \,  |f_r(x)| \mu(dz) \, dx \\
		\le &\omega_g(Mr )\int_{ B(0,2M)} |f_r(x)|\,\mu_r(dx) \\
		\leq & \omega_g( Mr ) \mu_r( B(0,2M))^{1/2}\|f_r\|_{L^2(\mu_r)}, 
	\end{split}
\end{align}
where the second equality is due to the fact that $\phi_r (z-x) =0$ if $|x-z|\geq Mr$, 
and   if $|x|\geq 2M$ and $|x-z| <Mr\<M$, then $|z|>M$.
By \Cref{weak convergence norm bounded} $\|f_r\|_{L^2(\mu_r)}$ is bounded in $r$.
Moreover,
$$ 
\mu_r(B(0,2M))=\int\int  \1_{B(0,2M)} (x)   \phi_r(y-x)\,dx\,\mu(dy)\le\mu(B(0,3M))<\infty.$$
Therefore by \eqref{eq:conv}, $\lim_{r\downarrow 0} |\int  f_r g\,d\mu_r - \int (\phi_r*f_r) g \,d\mu| = 0$. 
As  $\lim_{r\downarrow 0} \int  f_r g\,d\mu_r =\int f g\,d\mu$ by assumption, 
we get $\lim_{r\downarrow0}\int \phi_{r}*f_r g\,d\mu= \int f g\,d\mu$ for $g\in C^{2}(\Rd)$. Note that by \Cref{weak convergence norm bounded} and (1), we have $\sup_{0<r\<1} \|\phi_{r}*f_r\|_{L^2(\mu)} \< \sup_{0<r\<1} \|f_r\|_{L^2(\mu_{r})}< \infty$. 
The desired weak convergence follows from the density of $C_c^{2}(\Rd)$ in $L^2(\mu)$.
\end{proof}

Next we derive some basic properties of bilinear forms $\mathcal E$ and $\mathcal E^{r}$.

\begin{lemma} \label{form mollifier monotone}
	For any compact supported probability density function $\phi$ (with respect to the Lebesgue measure on $\R^d$), we have $\mathcal E(\phi*u)\<\mathcal E(u)$ for $u\in \mathcal{F}$ and $\mathcal E^{r}(\phi*u)\<\mathcal E^{r}(u)$ for $u\in \mathcal{F}^{r}$.

\end{lemma}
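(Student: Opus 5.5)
Both inequalities come from one structural fact: each form is translation invariant and convex, and $\phi*u$ is an average of translates of $u$. Write $\tau_z u(x):=u(x-z)$, so that $(\phi*u)(x)=\int \phi(z)\,\tau_z u(x)\,dz$. Both $\mathcal E$ and $\mathcal E^r$ are nonnegative quadratic forms, so $u\mapsto \mathcal E(u)^{1/2}$ is a seminorm, hence convex. Both are also invariant under $\tau_z$: the gradient commutes with translation and Lebesgue measure is translation invariant, and in $\mathcal E^r$ the measure $Q_r(x,dy)m_r(dx)=(\sigma_\jd^2r^2)^{-1}\jd_r(y-x)\,dy\,dx$ is invariant under the simultaneous shift $(x,y)\mapsto(x+z,y+z)$. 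Jensen's inequality for the probability measure $\phi(z)dz$ then gives the result.

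For $\mathcal E^r$ I would argue directly, which avoids any density question. Write
\[
(\phi*u)(x)-(\phi*u)(y)=\int \phi(z)\bigl(u(x-z)-u(y-z)\bigr)dz .
\]
By Jensen (Cauchy--Schwarz) with respect to $\phi(z)dz$, its square is at most $\int \phi(z)(u(x-z)-u(y-z))^2dz$. Integrating against $\frac12 \jd_r(y-x)\,dy\,dx/(\sigma_\jd^2r^2)$, applying Tonelli, and changing variables $(x,y)\mapsto(x+z,y+z)$, the inner double integral equals $2\mathcal E^r(u)$ for every $z$. Hence $\mathcal E^r(\phi*u)\le \mathcal E^r(u)$. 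One side remark: $\phi*u$ is a finite-energy $L^2_{loc}$ function, which is all that is needed for the energy inequality, although membership of $\phi*u$ in $L^2(\mu_r)$ is a separate matter.

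For $\mathcal E$, I would first treat $u\in C_c^\infty(\Rd)$. There $\nabla(\phi*u)=\phi*\nabla u$, and Jensen pointwise gives $|\phi*\nabla u|^2\le \phi*|\nabla u|^2$. Integrating over $\Rd$ and using $\int\phi=1$ yields $\int|\nabla(\phi*u)|^2dx\le\int|\nabla u|^2dx$. To extend to $u\in\mathcal F\subset \widetilde{\mathcal F_e}$, I would use that for $u\in BL(\Rd)$ (the case $d=1,2$), or for a representative in $BL(\Rd)$ (the case $d\ge3$), the distributional identity $\nabla(\phi*u)=\phi*\nabla u$ holds because $\phi$ has compact support and $u\in L^2_{loc}$. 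The same pointwise Jensen bound then applies verbatim, so no approximation is actually required. Alternatively one can approximate $u$ by $u_n\in C_c^\infty$ in $\mathcal E$-seminorm. Then $\phi*u_n\to\phi*u$ in $\mathcal E$ by the inequality just proved applied to $u_n-u_m$, and the limit is identified as $\phi*u$ because local $L^2$ convergence is preserved by convolution with compactly supported $\phi$.

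The only step needing care is this last identification for $\mathcal E$. There one must make sure that $\phi*u$ is the correct element of $\mathcal F_e$ (modulo constants when $d\ge3$) and that its energy is computed by $\frac12\int|\nabla(\phi*u)|^2$. I would handle it with the distributional-gradient argument on $BL(\Rd)$, which settles it directly.
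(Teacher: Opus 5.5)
Your proposal is correct and follows the paper's proof essentially line for line. For $\mathcal E$ the paper also uses $\nabla(\phi*u)=\phi*\nabla u$, which holds because $\phi$ has compact support, together with pointwise Jensen and translation invariance of Lebesgue measure; for $\mathcal E^r$ it applies Jensen to $\int\phi(z)\bigl(u(x-z)-u(y-z)\bigr)dz$, then Fubini and the diagonal shift $(x,y)\mapsto(x+z,y+z)$, exactly as you do. Your preliminary $C_c^\infty$ step and the alternative approximation argument are not needed, since the distributional identity on $BL(\R^d)$ already covers every $u\in\mathcal F$, as in the paper.
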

\begin{proof}
Let $u\in\mathcal{F}$. Since $\phi$ has compact support, the convolutions with $\phi$ are well defined and $\nabla(\phi*u)=\phi*\nabla u$ (in the sense of weak derivative). By Jensen's inequality and shifting invariance of the Lebesgue measure we have 
\begin{align*}
	\mathcal E(\phi*u) 
	&= \frac{1}{2}\int |\nabla(\phi*u)(x)|^2dx \\
	&=  \frac{1}{2}\int |(\phi*\nabla u)(x)|^2dx\\
	&\<  \frac{1}{2}\int \int \phi(z)|\nabla u(x-z)|^2 dz dx\\
	&=  \frac{1}{2}\int \int |\nabla u(x-z)|^2 dx \phi(z)dz\\
	&= \frac{1}{2} \int |\nabla u(x)|^2dx\\ 
	&= \mathcal E(u).
\end{align*}
Similarly for $u\in\mathcal{F}^{r}$, we have
\begin{align*}
	\mathcal E^{r}(\phi*u)
		&=\frac{1}{2 \sigma_{\jd}^2  r^2}\int\int(\phi*u(x)-\phi*u(y))^2\jd_r(y-x)dydx\\
	     &=\frac{1}{2 \sigma_{\jd}^2  r^2}\int\int \left(\int (u(x-z)-u(y-z))\phi(z)dz \right)^2\jd_r(y-x)dydx\\
		&\<\frac{1}{2 \sigma_{\jd}^2  r^2}\int\int\int (u(x-z)-u(y-z))^2\phi(z)dz\jd_r(y-x)dydx\\
		&=\frac{1}{2 \sigma_{\jd}^2  r^2}\int\int\int (u(x-z)-u(y-z))^2\jd_r(y-x)dydx\phi(z)dz\\
		&=\frac{1}{2 \sigma_{\jd}^2  r^2}\int\int\int (u(x)-u(y))^2\jd_r(y-x)dydx\phi(z)dz\\
		&=\mathcal E^{r}(u)\int \phi(z)dz = \mathcal E^{r}(u).
	\end{align*}
where the forth equality uses the translation invariance of the Lebesgue measure. 
\end{proof}

\begin{prop}\label{inW12 form monotone}
		Let $u\in L^2_{\text{loc}}(\R^d)$.
	\begin{enumerate}  
	\item[(i)]  If $\liminf_{r\downarrow0}\mathcal E^{r}(u)<\8$,   
	then $u\in BL(\R^d)$.
	
	\item[(ii)] If  $u\in BL(\R^d)$, then
	$\mathcal{E}^{r}(u)\<\mathcal{E}(u)$ for any $r>0$, $\lim_{r\downarrow 0}\mathcal E^{r}(u)$ exists and $\mathcal E(u) = \lim_{r\downarrow 0}\mathcal E^{r}(u)$. 
	 \end{enumerate}
\end{prop}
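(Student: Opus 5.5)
We prove (ii) first via Fourier analysis, and then derive (i) from (ii) by mollification combined with \Cref{form mollifier monotone}.

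\textbf{Proof of (ii).} Take $u\in C_c^\infty(\R^d)$ first. By Plancherel,
\[
\mathcal E^{r}(u)=\frac{1}{2\sigma_\jd^2 r^2}\int\int |u(x+h)-u(x)|^2 dx\,\jd_r(h)dh=\frac{1}{\sigma_\jd^2 r^2}\int |\hat u(\xi)|^2\big(1-\hat\jd(r\xi)\big)\,d\xi
\]
(up to the normalization constant of the Fourier transform), where $\hat\jd$ is real because $\jd$ is symmetric. Since $1-\cos t\le t^2/2$ and $\int\langle \xi,h\rangle^2\jd(h)dh=\sigma_\jd^2|\xi|^2$ by \Cref{formCalulate}, we get $1-\hat\jd(r\xi)=\int(1-\cos\langle r\xi,h\rangle)\jd(h)dh\le \tfrac12\sigma_\jd^2 r^2|\xi|^2$. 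This gives $\mathcal E^r(u)\le\mathcal E(u)$.

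Alternatively, one can avoid Fourier analysis: write $u(x+h)-u(x)=\int_0^1\langle\nabla u(x+th),h\rangle dt$, apply Cauchy--Schwarz in $t$, integrate in $x$ using translation invariance, and then use \Cref{formCalulate}. This route extends directly to $u\in BL(\R^d)$, since such $u$ are locally absolutely continuous on almost every line, or by approximating with $u*\phi_\e$ together with Fatou.

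For the limit, the ratio $(1-\hat\jd(r\xi))/(\sigma_\jd^2 r^2)\to|\xi|^2/2$ pointwise by the second-moment expansion (dominated convergence with $1-\cos t\le t^2/2$), and it is dominated by $|\xi|^2/2$. Dominated convergence therefore gives $\mathcal E^r(u)\to\mathcal E(u)$.

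For general $u\in BL$, $\hat u$ is not defined, so we use the difference-quotient representation instead. We have $\mathcal E^r(u)=\frac{1}{2\sigma_\jd^2}\int \|\tau_{rh}u-u\|^2_{L^2}\,r^{-2}\jd(h)dh$, where $\tau_{rh}u - u\in L^2$ because $\nabla u\in L^2$. The Cauchy--Schwarz bound gives domination by $|h|^2\|\nabla u\|_2^2$, and $r^{-2}\|\tau_{rh}u-u\|_2^2\to\|\langle\nabla u,h\rangle\|_2^2$ as $r\to0$ (the difference quotients of a $BL$ function converge in $L^2$ to the directional derivative). Dominated convergence in $h$ together with \Cref{formCalulate} then gives the limit $\frac{1}{2\sigma_\jd^2}\int\int\langle\nabla u,h\rangle^2\jd(h)dh\,dx=\mathcal E(u)$.

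\textbf{Proof of (i).} Pick $r_n\downarrow0$ with $\sup_n\mathcal E^{r_n}(u)=M<\infty$. For a mollifier $\psi_\e$, set $u_\e:=\psi_\e*u\in C^\infty$, which is well defined since $u\in L^2_{loc}$. By \Cref{form mollifier monotone}, $\mathcal E^{r_n}(u_\e)\le M$. Smooth functions with $L^2$-bounded difference quotients lie in $BL$ locally. Applying Fatou on each ball $B$ in the representation above with $u_\e$ in place of $u$, we obtain $\frac12\int_B|\nabla u_\e|^2\le\liminf_n\mathcal E^{r_n}(u_\e)\le M$, hence $\|\nabla u_\e\|_2^2\le 2M$ uniformly in $\e$. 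Since $u_\e\to u$ in $L^2_{loc}$ and the $\nabla u_\e$ are bounded in $L^2$, a weakly convergent subsequence identifies the distributional gradient $\nabla u\in L^2$. Hence $u\in BL(\R^d)$.

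The main obstacle is justifying the $L^2$ convergence of the difference quotients for a general $BL$ function, which is not globally in $L^2$. We would handle this through the identity $\tau_{rh}u-u=\int_0^1 r\,\tau_{trh}\langle\nabla u,h\rangle dt$ in $L^2$, which holds for smooth approximants and passes to the limit, combined with strong continuity of translations in $L^2$ applied to $\nabla u$.
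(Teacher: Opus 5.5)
Your proposal is correct. Part (i) follows the paper's skeleton; part (ii) takes a genuinely different route to the limit.

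\textbf{Part (ii).} The paper gets the limit by a sandwich.
\begin{itemize}
\item The lower bound $\mathcal E(u)\le\liminf_{r\downarrow0}\mathcal E^r(u)$ is obtained ``by Taylor approximation'' as in (i).
\item The upper bound comes from $\mathcal E^r(u)\le\mathcal E(u)$. This is proved by Fourier transform for compactly supported $u$. It is then extended to all of $BL(\R^d)$ by taking $u_n\in C_c^\infty(\R^d)$ with $\mathcal E(u_n-u)\to0$, using Deny's theorem that $u_n+C_n\to u$ in $L^2_{\mathrm{loc}}$ for suitable constants $C_n$, and applying Fatou.
\end{itemize}
You instead prove convergence directly. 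From the $L^2$-valued identity $\tau_{rh}u-u=r\int_0^1\tau_{trh}\langle\nabla u,h\rangle\,dt$ you get
$$r^{-2}\|\tau_{rh}u-u\|_2^2\le\|\langle\nabla u,h\rangle\|_2^2\le|h|^2\|\nabla u\|_2^2.$$
This yields two things at once.
\begin{itemize}
\item Integrating against $\jd(h)\,dh$ and using \Cref{formCalulate} gives $\mathcal E^r(u)\le\mathcal E(u)$ for every $u\in BL(\R^d)$.
\item It supplies the domination for dominated convergence in $h$. Combined with $L^2$-convergence of the difference quotients (strong continuity of translations), this gives the limit.
\end{itemize}
This route avoids both the Fourier transform and the Deny--Lions approximation. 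It is also fully rigorous for non-smooth $u$. The paper's lower bound ``by Taylor approximation'' for a general $BL$ function tacitly relies on the mollification step from (i), namely $\mathcal E(u_\varepsilon)\le\liminf_r\mathcal E^r(u)$ followed by $\varepsilon\to0$.

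\textbf{Part (i).} Your skeleton matches the paper's: mollify, apply \Cref{form mollifier monotone}, and use weak compactness of $\nabla u_\varepsilon$. You replace the explicit second-order Taylor expansion with truncated moment matrices $M^{r,R}_{ij}$ by Fatou's lemma applied to the pointwise convergent difference quotients of the smooth $u_\varepsilon$. This is shorter and needs no uniform error control.

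\textbf{What the paper's route buys.} It sets up the Fourier multiplier representation of $\mathcal E^r$, which is then essential in \Cref{form monotone 2} for comparing $\mathcal E^r$ with $\mathcal E^s$. Your argument does not supply that representation, but this proposition does not need it.
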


\begin{proof}
(i) Suppose   $u\in L^2_{\text{loc}}(\R^d)$ and $\liminf_{r\downarrow0}\mathcal E^{r}(u)<\8$. 
For any $\e>0$ let $u_\e=\varphi_\e*u$,  
where $\varphi$ is any positive smooth function on $\R^d$ with compact support
having $\int \varphi (x) dx=1$ and $\varphi_\e (x):= \e^{-d} \varphi (\e^{-1} x)$.
For  every  $R>0$,
by Taylor approximation we know 
\begin{align*}
I_{R,r} := &\frac{1}{2 \sigma_{\jd}^2  r^2}\int_{|x|<R}\int_{|y-x|<R} |u_\e(y)-u_\e(x)|^2 \jd_{r}(y-x) dydx\\
=&  \frac{1}{2 \sigma_{\jd}^2  r^2}\int_{|x|<R}\int_{|y-x|<R} \langle\nabla u_\e(x),y-x\rangle_{\Rd}^2 \jd_r(y-x) dydx + o_r(1) \\
=& \sum_{i,j = 1}^{d} \frac{1}{2} \int_{|x|<R}  \6_{i} u_\e(x) M_{ij}^{r,R} \6_{j} u_\e (x) dx  + o_r(1),
\end{align*}
where $M_{ij}^{r,R} = \frac{1}{ \sigma_{\jd}^2 r^2 }\int_{|y|<R} y_{i}y_{j} \jd_{r}(y)dy$
and the Taylor error term is of $o_r(1)$ as $r\to 0$  (which is dependent on $\e >0 $)
because $|x|<R$ and $|y|< 2R$.
We claim $\lim_{r\downarrow 0} M_{ij}^{r,R} = \d_{ij}$. 
 This is because 
$$
	\left| \frac{1}{ \sigma_{\jd}^2  r^2}\int_{|y|\>R} y_{i}y_{j} \jd_{r}(y)dy \right|
	\<  \frac{1}{2 \sigma_{\jd}^2  r^2}\int_{|y|\>R} (y_{i}^{2}+y_{j}^{2}) \jd_{r}(y)dy 
	\<  \frac1{\sigma_{\jd}^2 }\int_{|y|\>R/r} |y|^{2} \jd(y)dy, 
$$
which converges to $0$ as $r\downarrow 0$. This together with \Cref{formCalulate} establishes the claim.
Hence
\begin{align*}
	\lim_{r\downarrow0} I_{R,r} = \sum_{i,j = 1}^{d} \frac{1}{2} \int_{|x|<R}  \6_{i} u_\e(x) \d_{ij} \6_{j} u_\e (x) dx
	= \frac{1}{2}\int_{|x|<R}|\nabla u_\e(x)|^2 dx.
\end{align*}
On the other hand, 
\begin{align*}
	\lim_{r\downarrow0} I_{R,r} 
	&= \lim_{r\downarrow0}\frac{1}{2 \sigma_{\jd}^2  r^2}\int_{|x|<R}\int_{|y-x|<R} |u_\e(y)-u_\e(x)|^2 \jd_r(y-x) dydx \\
	&\< \liminf_{r\downarrow0}\mathcal E^{r}(u_\e) 
	\< \liminf_{r\downarrow0} \mathcal E^{r}(u)
\end{align*}
where in the last inequality we used \Cref{form mollifier monotone}.
Hence we have  for every $\e >0$, 
$$
\mathcal E (u_\e)=\lim_{R\uparrow\8}\frac{1}{2}\int_{|x|<R}|\nabla u_\e(x)|^2 dx\<\liminf_{r\downarrow0}\mathcal E^{r}(u).
$$
  Namely, $|\nabla u_\e|$ is bounded in $L^2(\R^d)$. Then we can find a subsequence $\e_n\downarrow0$ such that for all coordinate $i\in\{1,2,...,d\}$,
  partial derivative $\6_i u_{\e_n}$ converges weakly in $L^2(\R^d)$ to some function $v_i\in L^2(\R^d)$ as $n\to \infty$. Thus
   for any $f\in C^\8_c(\R^d)$ we have
$$(f,v_i)_{L^2(\R^d)}=\lim_{n\to\8}(f,\6_i u_{\e_n})_{L^2(\R^d)}=\lim_{n\to\8}(-\6_i f, u_{\e_n})_{L^2(\R^d)}=(-\6_i f, u)_{L^2(\R^d)}$$
which shows $\6_i u = v_i\in L^2(\R^d)$ and so $u\in BL(\R^d)$.

\medskip

(ii) Let $u\in   L^2_{\text{loc}}(\R^d) \cap BL(\R^d)$. 
By Taylor approximation again
\begin{equation*}\frac{1}{2}\int_{|x|<R}|\nabla u(x)|^2 dx=\lim_{r\downarrow0}\frac{1}{2 \sigma_{\jd}^2  r^2}\int_{|x|<R}\int_{|x-y|<R} |u(y)-u(x)|^2 \jd_r(y-x)dydx\<\liminf_{r\downarrow0}\mathcal E^{r}(u)\end{equation*}
for any $R>0$, hence $\mathcal E(u)\<\liminf_{r\downarrow0}\mathcal E^{r}(u)$. 
Suppose momentarily that
$u$  has compact support.  Then $u\in L^{2}(\Rd)$ and by Fourier transform  we have for any $r>0$
\begin{align} \label{form compact monoton}
	\begin{split}
		\mathcal{E}^{r}(u) 
		&= \frac{1}{ \sigma_{\jd}^2 r^2 } \int\int |\hat u(z)|^2 (1-\cos(\langle y,z\rangle_{\Rd}))\jd_r(y)dydz\\
		&\< \frac{1}{ \sigma_{\jd}^2 r^2 }\int\int |\hat u(z)|^2  \frac{1}{2}\langle y,z\rangle_{\Rd} ^{2}\jd_r(y)dydz\\
		&= \frac{1}{2}\int |\hat u(z)|^2|z|^{2}dz = \mathcal{E}(u).
	\end{split}
\end{align}
 For $u\in L^2_{\text{loc}}(\R^d) \cap BL(\R^d)$, we choose a sequence of functions $\{u_n; n\geq 1 \} \subset C_{c}^{\8}(\Rd)$ such that 
 $\lim_{n\to \infty} \mathcal{E}(u_n-u)= 0$. By a property of BL functions (cf. \cite{deny1954espaces}) there exists a sequence of constants $\{C_n; n\geq 1\}$ such that $u_n + C_n$ 
 is $L_{\text{loc}}^{2}$-convergent to $u$. Taking a sub-sequence $\{n_k; k\geq 1\}$ so  that $u_{n_k} + C_{n_k}$ converges a.e. to $u$ as $k\to \infty$.
   By Fatou lemma and \eqref{form compact monoton} we have for any $r>0$
\begin{align*}
	\mathcal E^{r}(u) 
	&\< \liminf_{ k\to \infty }\mathcal E^{r}(u_{n_k}) \< \liminf_{ k\to \infty }\mathcal E(u_{n_k}) = \mathcal{E}(u).
\end{align*}
 Hence $\limsup_{r\downarrow0} \mathcal{E}^{r}(u) \< \mathcal{E}(u) \< \liminf_{r\downarrow0} \mathcal{E}^{r}(u)$,.
 This  shows $\lim_{r\downarrow0} \mathcal{E}^{r}(u) = \mathcal{E}(u)$.
 \end{proof}

We also need the following two lemmas to prove the Mosco convergence.

\begin{lemma}\label{energy bounded}
	Recall that $\phi$ is the probability density function as in \ref{jd0} and  $\phi_r(x) :=r^{-d} \phi(x/r) $ for $r>0$. 
	Let $u_r\in \mathcal F^{r}$ and $\sup_{r}\mathcal E^{r}(u_r)<\8$. Then $\sup_{r}\mathcal E(\phi_r*u_r)<\8$.
\end{lemma}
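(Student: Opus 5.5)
The plan is to bound $\nabla(\phi_r*u_r)$ pointwise by a nonlocal difference quotient of $u_r$ at scale $r$ weighted by $\jd_r$. This uses \ref{jd0} ($|\nabla\phi|\le C\jd$) and the zero total mass of $\nabla\phi_r$.

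\emph{Step 1: differentiate the convolution.} Since $\phi$ has compact support and a weak gradient, and $\int\phi=1$, we have $\int\nabla\phi_r(z)\,dz=0$. Hence for a.e. $x$,
\begin{equation*}
\nabla(\phi_r*u_r)(x)=\int \nabla\phi_r(x-y)u_r(y)\,dy=\int \nabla\phi_r(x-y)\bigl(u_r(y)-u_r(x)\bigr)\,dy .
\end{equation*}
Here $\nabla\phi_r(z)=r^{-d-1}(\nabla\phi)(z/r)$, so \ref{jd0} gives $|\nabla\phi_r(z)|\le C r^{-1}\jd_r(z)$. The last identity holds because $u_r\in L^2_{\rm loc}$: $\mathcal E^r(u_r)<\infty$ and $u_r\in L^2(\mu_r)$. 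Some care is needed, but $u_r\in L^1_{\rm loc}$ should suffice since $\phi_r$ has compact support.

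\emph{Step 2: Cauchy--Schwarz.} Using $\int\jd_r=1$ (and $\jd_r$ even),
\begin{equation*}
|\nabla(\phi_r*u_r)(x)|^2\le \frac{C^2}{r^2}\Bigl(\int \jd_r(y-x)|u_r(y)-u_r(x)|\,dy\Bigr)^2\le \frac{C^2}{r^2}\int \jd_r(y-x)\bigl(u_r(y)-u_r(x)\bigr)^2dy .
\end{equation*}
Integrating in $x$ and recalling $m_r(dx)=(\sigma_\jd^2r^2)^{-1}dx$ gives
\begin{equation*}
\mathcal E(\phi_r*u_r)=\tfrac12\int|\nabla(\phi_r*u_r)|^2dx\le C^2\sigma_\jd^2\,\mathcal E^r(u_r).
\end{equation*}
Taking the supremum over $r$ finishes the proof.

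The only delicate point is justifying Step 1 for $u_r$ that is merely in $L^2_{\rm loc}$ with finite $\mathcal E^r$. One must check that $\phi_r*u_r$ is weakly differentiable with the stated gradient, which requires $u_r\in L^1_{\rm loc}(dx)$. That $u_r\in L^2_{\rm loc}(\R^d)$ is built into the definition of $\mathcal F^r$, and then differentiation under the integral against a compactly supported $W^{1,\infty}$ kernel is standard, by testing against $C_c^\infty$ functions and using Fubini. I expect no real obstacle beyond this. The constant is $C^2\sigma_\jd^2$, uniform in $r$, and finiteness of $\mathcal E^r(u_r)$ also ensures that $\phi_r*u_r\in BL(\R^d)$.
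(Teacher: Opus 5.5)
Your proposal is correct and follows the paper's proof essentially step for step. Both write $\nabla(\phi_r*u_r)=(\nabla\phi_r)*u_r$, subtract $u_r(x)$ using $\int\nabla\phi_r=0$, bound $|\nabla\phi_r|\le Cr^{-1}\jd_r$ via \ref{jd0}, and apply Jensen/Cauchy--Schwarz against the probability kernel $\jd_r(y-x)\,dy$ to get $\mathcal E(\phi_r*u_r)\le C^2\sigma_\jd^2\,\mathcal E^r(u_r)$.

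One small correction: under \ref{jd0} alone, $\nabla\phi$ is only dominated by $C\jd\in L^1$, so $\phi$ is a compactly supported $W^{1,1}$ kernel, not necessarily $W^{1,\infty}$. That is all the Fubini argument for $\nabla(\phi_r*u)=(\nabla\phi_r)*u$ with $u\in L^1_{\rm loc}$ needs, so the proof is unaffected.
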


\begin{proof}
For any $u\in L_{\text{loc}}^1(\R^d)$, by the property of weak derivatives we have $\nabla(\phi_{r}*u)=(\nabla \phi_{r})*u$. Since $\phi_{r}$ has compact support, we have $\int_{\Rd} \nabla \phi_{r}(z)dz = 0$. Hence
	\begin{align*}
		\nabla(\phi_r*u)(x)
		&= (\nabla \phi_{r})*u
		=\int  (\nabla   \phi_r ) (z)  u(x-z)  dz  \\
		&=   \int  (\nabla   \phi_r ) ( z) ( u(x-z) -u(x)) dz\\
		&=\int\frac{1}{r^{d+1}}(\nabla\phi)(z/r)(u(x-z)-u(x))dz\\
		&=\int\frac{1}{r}(\nabla\phi)(z)(u(x-rz)-u(x))dz .
	\end{align*}
Therefore by \ref{jd0}
\begin{align*}
	|\nabla(\phi_r*u)(x)|
	&\<\frac{1}{r}\int|\nabla\phi(z)||u(x-rz)-u(x)|dz\\
	&\<\frac{C}{r} \int |u(x-rz)-u(x)| \jd(z) dz\\
	&=\frac{C}{r} Q_r|u-u(x)|(x)
\end{align*}
Then we have by Jensen inequality
\begin{align*}
	\sup_{r}\mathcal E(\phi_r*u_r)
&\< \sup_{r}\frac{C}{r^2 }\int (Q_r|u_r-u_r(x)|)^2(x)dx\\
&\< \sup_{r}\frac{C}{r^2}\int Q_r(u_r-u_r(x))^2(x)dx\\
&=C \sup_{r} \mathcal E^{r}(u_r)<\8.
\end{align*}
This finishes the proof.
\end{proof}

\begin{lemma}\label{form monotone 2}
	Let $v_{s}\in BL(\Rd)$ for $s>0$ and $\sup_{s>0}\mathcal{E}(v_{s})<\8$. Then for any $r>0$ we have $\limsup_{s\downarrow0} \mathcal{E}^{r}(v_{s}) \< \limsup_{s\downarrow0} \mathcal{E}^{s}(v_{s})$ and $\liminf_{s\downarrow0} \mathcal{E}^{r}(v_{s}) \< \liminf_{s\downarrow0} \mathcal{E}^{s}(v_{s})$.
\end{lemma}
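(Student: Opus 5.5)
The plan is to work on the Fourier side, as in \eqref{form compact monoton}, and compare the two symbols frequency by frequency. Write $\psi(z):=\int \cos(\langle y,z\rangle_{\Rd})\jd(y)\,dy$ for the (real, by symmetry) characteristic function of $\jd$. For $u\in C_c^\8(\Rd)$ we have
\begin{equation*}
\mathcal E^{r}(u)=\int |\hat u(z)|^2\,\Psi_r(z)\,dz,\qquad \Psi_r(z):=\frac{1-\psi(rz)}{\sigma_\jd^2 r^2},
\end{equation*}
and $\mathcal E(u)=\frac12\int|\hat u(z)|^2|z|^2dz$.

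\emph{Reduction to $C_c^\8$ functions.} Since $\mathcal E^r(w)\<\mathcal E(w)$ for $w\in BL(\Rd)$ by \Cref{inW12 form monotone}(ii), and $\mathcal E^r(\cdot)^{1/2}$, $\mathcal E(\cdot)^{1/2}$ are seminorms, it suffices to approximate. For each fixed $s$, choose $u_n\in C_c^\8(\Rd)$ with $\mathcal E(u_n-v_s)\to0$ (adding constants if needed, which changes neither form). Then $\mathcal E^r(u_n)\to\mathcal E^r(v_s)$ and $\mathcal E^s(u_n)\to\mathcal E^s(v_s)$. Hence it is enough to prove the inequality below for $C_c^\8$ functions with a constant uniform in the function.

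\emph{Symbol estimates, and the key inequality.} There are three facts to use.
\begin{enumerate}[label=(\alph*)]
\item $\Psi_r(z)\<|z|^2/2$ for all $z$, and trivially $\Psi_r(z)\<2/(\sigma_\jd^2r^2)$.
\item For every $M>0$, $\Psi_s(z)\to|z|^2/2$ uniformly on $\{|z|\<M\}$ as $s\downarrow0$. This follows from Taylor expansion with the finite second moment, uniformly on compacts.
\item There is $c>0$ with $\Psi_s(z)\>c\min\{|z|^2,s^{-2}\}$. Near the origin this again comes from the second-moment Taylor expansion. For $|w|\>w_0$ it uses $1-\psi(w)\>c'>0$, which holds because $\jd$ is a density: $|\psi(w)|<1$ for $w\ne0$, and $\psi(w)\to0$ by Riemann--Lebesgue.
\end{enumerate}
Consequently $\Psi_s(z)\>cM^2$ whenever $|z|>M$ and $s<1/M$. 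Splitting at $|z|=M$, for $u\in C_c^\8$ and $s<1/M$ this gives
\begin{equation*}
\mathcal E^r(u)\<\int_{|z|\<M}|\hat u|^2\tfrac{|z|^2}{2}dz+\frac{2}{\sigma_\jd^2r^2}\int_{|z|>M}|\hat u|^2dz
\<(1+\e_M(s))\,\mathcal E^s(u)+\frac{2}{c\,\sigma_\jd^2r^2M^2}\,\mathcal E^s(u),
\end{equation*}
where $\e_M(s)\to0$ as $s\downarrow0$ comes from (b). Passing to $v_s$ via the reduction step, we get $\mathcal E^r(v_s)\<\bigl(1+\e_M(s)+C_r/M^2\bigr)\mathcal E^s(v_s)$.

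\emph{Conclusion.} Since $\mathcal E^s(v_s)\<\mathcal E(v_s)\<\sup_{s>0}\mathcal E(v_s)=:K<\8$, we have
\begin{equation*}
\mathcal E^r(v_s)\<\mathcal E^s(v_s)+\bigl(\e_M(s)+C_r/M^2\bigr)K.
\end{equation*}
Taking $\limsup_{s\downarrow0}$, respectively $\liminf_{s\downarrow0}$, and then letting $M\to\8$ yields both claimed inequalities.

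The main obstacle is fact (c), the uniform lower bound $1-\psi(w)\>c'$ for $|w|$ bounded away from $0$. I would get it from continuity of $\psi$, the strict inequality $|\psi|<1$ off the origin (absolute continuity of $\jd$), and Riemann--Lebesgue at infinity. Some care is also needed so that the $C_c^\8$ approximation in the reduction step is legitimate for the $BL$ functions $v_s$.
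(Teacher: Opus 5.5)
Your argument is correct, but it is organized differently from the paper's.

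\textbf{How the two splittings differ.} You split only in frequency, at $|z|=M$. The paper instead splits the joint $(y,z)$-domain in $\mathcal E^r(v)=\sigma_{\jd}^{-2}\iint|\hat v(z)|^2 r^{-2}(1-\cos\langle ry,z\rangle)\jd(y)\,dy\,dz$, according to whether $|\langle ry,z\rangle|<R$.
\begin{itemize}
\item On the region $|\langle ry,z\rangle|<R$, an elementary property of $t\mapsto(1-\cos t)/t^2$ gives the pointwise bound $r^{-2}(1-\cos\langle ry,z\rangle)\le s^{-2}(1-\cos\langle sy,z\rangle)$ once $s<s_0(r,R)$. So this part is at most $\mathcal E^s(v)$, with no $(1+\e)$ factor.
\item On the complement, $2/r^2\le 2\langle y,z\rangle^2/R^2$ together with the second moment gives at most $\frac{4d}{R^2}\mathcal E(v)$.
\end{itemize}
The resulting inequality $\mathcal E^r(v)\le\mathcal E^s(v)+\frac{4d}{R^2}\mathcal E(v)$ uses nothing about the characteristic function $\psi$ beyond second moments.

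\textbf{What your route needs.} You need two facts.
\begin{itemize}
\item[(i)] The ratio statement $\inf_{0<|z|\le M}\Psi_s(z)/(|z|^2/2)\to1$ as $s\downarrow0$. Fact (b) as you phrased it (uniform convergence on $\{|z|\le M\}$) does not by itself give a multiplicative factor $1+\e_M(s)$ near $z=0$. The ratio statement does hold, though: it follows from the finite second moment by dominated convergence, uniformly in the direction of $z$.
\item[(ii)] The lower bound (c), which genuinely uses that $\jd$ is a density.
\end{itemize}

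\textbf{Fact (c) is not needed for the lemma as stated.} You single out (ii) as the main obstacle, but you can bound $\int_{|z|>M}|\hat u|^2\,dz\le\frac{2}{M^2}\mathcal E(u)$ directly. Since your last step replaces $\mathcal E^s(v_s)$ by $K$ anyway, this gives the same conclusion.

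\textbf{What each approach buys.} Keeping (c) gives a comparison $\mathcal E^r\le(1+\e_M(s)+C_r/M^2)\,\mathcal E^s$ purely between the discrete forms. That yields both inequalities without the hypothesis $\sup_{s}\mathcal E(v_s)<\8$; only $v_s\in BL(\Rd)$ is used, in the approximation step. The paper's route, in exchange, is more elementary and does not rely on Riemann--Lebesgue or on $\psi<1$ away from the origin.
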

\begin{proof}
	First we assume $v\in W^{1,2}(\Rd):=\{u\in L^{2}(\Rd):\nabla u \in L^{2}(\Rd)\}$. 
	Fourier transform implies that 
	\begin{equation*}
		\mathcal{E}^{r}(v) = 
		\frac{1}{ \sigma_{\jd}^2 }\int\int |\hat v(z)|^2 \frac{1-\cos(\langle ry,z\rangle_{\Rd})}{r^{2}}\jd(y)dydz
	\end{equation*}
	Let $R>0$. On the one hand, there exists $s_{0}$ depending only on $r,R$ such that for any $s<s_{0}$ we have
	\begin{align*}
		&\frac{1}{ \sigma_{\jd}^2 }\int\int_{|\langle ry, z \rangle|<R} |\hat v(z)|^2 \frac{1-\cos(\langle ry,z\rangle_{\Rd})}{r^{2}}\jd(y)dydz\\
		\<& \frac{1}{ \sigma_{\jd}^2 }\int\int_{|\langle ry, z \rangle|<R} |\hat v(z)|^2 \frac{1-\cos(\langle sy,z\rangle_{\Rd})}{s^{2}}\jd(y)dydz\\
		\<& \mathcal{E}^{s}(v).
	\end{align*}
	On the other hand, for any $R>0$ we have
	\begin{align*}
		&\frac{1}{ \sigma_{\jd}^2 }\int\int_{|\langle ry, z \rangle|\>R} |\hat v(z)|^2 \frac{1-\cos(\langle ry,z\rangle_{\Rd})}{r^{2}}\jd(y)dydz\\
		\<& \frac{1}{ \sigma_{\jd}^2 }\int\int_{|\langle ry, z \rangle|\>R} |\hat v(z)|^2 \frac{2}{r^{2}}\jd(y)dydz\\
		\<& \frac{1}{ \sigma_{\jd}^2 }\frac{2}{R^{2}}\int\int_{|\langle ry, z \rangle|\>R} |\hat v(z)|^2 |\langle y, z \rangle|^{2} \jd(y)dydz\\
		\<& \frac{2}{\sigma_{\jd}^2 R^{2}}\int\int |\hat v(z)|^2 |y|^{2} |z|^{2} \jd(y)dydz\\
		=& \frac{ 2d}{R^{2}}\int|\hat v(z)|^2|z|^{2} dz = \frac{4d}{R^{2}} \mathcal{E}(v).
	\end{align*}
Then we have
\begin{align}\label{form monotone 2:1}
	\mathcal{E}^{r}(v) 
	&\< \mathcal{E}^{s}(v) + \frac{4d}{R^{2}} \mathcal{E}(v).
\end{align}
When $v\in BL(\Rd)$, we can use a sequence of $v_{n}\in W^{1,2}(\Rd)$ such that $\mathcal{E}(v_{n}-v)\to0$. By \Cref{inW12 form monotone} we also have $\mathcal{E}^{l}(v_{n}-v)\<\mathcal{E}(v_{n}-v)\to0$ for any $l>0$. By standard approximation argument we have \eqref{form monotone 2:1} holds for all $v\in BL(\Rd)$. 
Replacing $v$ in \eqref{form monotone 2:1} by $v_{s}$, then taking limits in $s$ and letting $R\to\8$, we get the desired result. 
\end{proof}

For a domain $D\inn \Rd$,
we set $\mathcal{H}_{n} = L^{2}(D; \mu_{n})$, $\mathcal{H} = L^{2}(D; \mu)$, $\mathcal{C} = C_{c}^{\8}(D)$ and $\Phi_{n}$ be the embedding map from $\mathcal{C}$ to $\mathcal{H}_{n}$. It is clear that $\mathcal{H}_{n}$ converges to $\mathcal{H}$ in the sense of \Cref{hilbert space convergence} by \Cref{weakConvergeRelation}. 
Recall that $(\mathcal F^{r,D},\mathcal E^{r,D})$ and $(\mathcal F^{\mu,D},\mathcal E^{\mu,D})$ are defined in \eqref{dirichlet form r E} and \eqref{dirichlet form mu E} respectively in \Cref{section mosco}. The processes $X^{r,D}$ and $X^{D}$ are the killed processes associated to the Dirichlet forms.
We are now ready to prove the Mosco convergence of the Dirichlet forms. 

\begin{prop} \label{mosco}
The Dirichlet form $(\mathcal F^{r,D},\mathcal E^{r,D})$ on $L^2(D;\mu_r|_D)$ (resp. $({\mathcal F}^{r},{\mathcal E}^{r})$ on $L^2(\R^d;\mu_r)$) is Mosco convergent to $(\mathcal F^{\mu,D},\mathcal E^{\mu,D})$  on $L^2(D;\mu|_D)$ (resp. $({\mathcal F^{\mu}},{\mathcal E})$ on $L^2(\R^d;\mu)$). Hence the corresponding semigroups converges in the sense of \Cref{operator convergence}.
In particular, 
$X^{r,D}$ (resp. $X^{r}$) converges in finite dimensional distribution in vague topology to $X^{D}$ (resp. $X$).
\end{prop}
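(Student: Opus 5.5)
We verify the two conditions of \Cref{Mosco Convergence} along an arbitrary sequence $r_n\downarrow 0$, with $\mathcal{H}_n=L^2(D;\mu_n)$ (resp. $L^2(\mu_n)$), $\mathcal{H}=L^2(D;\mu)$ (resp. $L^2(\mu)$), $\mathcal{C}=C_c^\8(D)$ and $\Phi_n$ the inclusion. The two cases, $D$ and $\R^d$, are handled in parallel. The last assertions then follow from \cite[Theorem 2.4]{kuwae2003convergence} and the remark at the end of \Cref{section mosco}.

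\emph{Limsup (recovery) condition.} Let $u\in\mathcal F^{\mu,D}$. First suppose $u\in C_c^\8(D)$ and take $u_n:=u$. The sequence converges strongly by \Cref{weakConvergeRelation}(3), applied to $u^2$. For the energy, note that for $r_n$ small the killing term in \eqref{dirichlet form r E} is exactly what appears when $u$ is extended by $0$, so $\mathcal E^{n,D}(u)=\mathcal E^{n}(u)$. \Cref{inW12 form monotone}(ii) then gives $\mathcal E^n(u)\to\mathcal E(u)$.

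For general $u$, pick $v_m\in C_c^\8(D)$ with $\mathcal E^{\mu,D}_1(v_m-u)\to0$. By \Cref{make convergence sequence} there is a diagonal choice $u_n:=v_{k_n}$ that converges strongly to $u$. It remains to show $\mathcal E^n(v_{k_n})\to\mathcal E(u)$. The upper bound comes from $\mathcal E^n(v_{k_n})^{1/2}\le \mathcal E^n(u)^{1/2}+\mathcal E(v_{k_n}-u)^{1/2}$, where the second term is controlled by \Cref{inW12 form monotone}(ii) since $\mathcal E^n\le\mathcal E$. The same triangle inequality in the other direction gives the matching lower bound. The whole-space case is identical, using the density of $C_c^\8(\R^d)$ in $\mathcal F^\mu$ with respect to $\mathcal E^\mu_1$.

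\emph{Liminf condition.} This is the main step. Let $u_n\in\mathcal F^{n}$ converge weakly to $u$ with $\sup_n\mathcal E^n(u_n)<\8$, where in the $D$ case $u_n$ is extended by zero. Set $w_n:=\phi_{r_n}*u_n$.
\begin{itemize}
\item By \Cref{energy bounded}, $\sup_n\mathcal E(w_n)<\8$.
\item By \Cref{weakConvergeRelation}(4), $w_n\to u$ weakly in $L^2(\mu)$.
\end{itemize}
We want to identify $u$ as the limit of $w_n$ in $\mathcal F_e$ and obtain $\mathcal E(u)\le\liminf\mathcal E^n(u_n)$. By \Cref{form mollifier monotone} applied to $\phi_{r_n}$, we have $\mathcal E^{n}(w_n)\le \mathcal E^n(u_n)$. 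Then \Cref{form monotone 2} gives, for each fixed $r>0$,
\[
\liminf_n \mathcal E^{r}(w_n)\le \liminf_n\mathcal E^{n}(w_n)\le\liminf_n \mathcal E^n(u_n).
\]
Since $\mathcal E(w_n)$ is bounded, a subsequence of $w_n$ (after subtracting constants when $d\ge3$) converges weakly in $\mathcal F_e$, and also in $L^2_{loc}(dx)$ by Rellich-type compactness, to some $w$.

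We must check that $w=u$ $\mu$-a.e., so that $u$ has the quasi-continuous version $\~w\in\mathcal F^\mu$. The argument is: weak convergence in $\mathcal F_e$ implies that Cesàro means converge strongly in $\mathcal E_1$. Strong $\mathcal{E}$-convergence of the Cesàro means implies quasi-everywhere convergence along a subsequence, and $\mu$ charges no polar sets. Combined with the weak $L^2(\mu)$ convergence, this forces $\~w=u$ $\mu$-a.e. When $d\ge3$ the constants must be shown to vanish, using the $L^2(\mu)$ bound. I expect this identification to be the delicate point.

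Once $u=\~w$, Fatou's lemma and the $L^2_{loc}$ convergence give $\mathcal E^r(u)\le\liminf_n\mathcal E^r(w_n)$. Letting $r\downarrow0$ and using \Cref{inW12 form monotone}(i),(ii) yields $u\in BL$ and $\mathcal E(u)\le\liminf_n\mathcal E^n(u_n)$.

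In the $D$ case, each $w_n$ vanishes outside a neighborhood $D_{Mr_n}$ of $D$ that shrinks to $\overline D$. Hence the limit lies in $H^1_0(D)$, since $D$ is $C^{1,1}$, and its quasi-continuous version vanishes q.e. off $D$. Therefore $u\in\mathcal F^{\mu,D}$.
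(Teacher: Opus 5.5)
Your recovery-sequence argument is the paper's: density of $C_c^\infty$, \Cref{make convergence sequence}, and \Cref{inW12 form monotone}(ii). Your two-sided triangle inequality even gives $\lim_n\mathcal E^n(u_n)=\mathcal E(u)$, whereas the paper only proves the $\limsup$ bound. The liminf half also starts as the paper does, with $w_n=\phi_{r_n}*u_n$, \Cref{energy bounded}, \Cref{weakConvergeRelation}(4) and \Cref{form monotone 2}. However, the compactness step you insert does not go through as written, for three reasons.
\begin{itemize}
\item For $d=1,2$ one has $\mathcal F_e=BL(\R^d)$, and a bound on $\mathcal E(w_n)$ gives no local $L^2(dx)$ control, since constants have zero energy. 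So Rellich cannot be invoked. Constants must be subtracted in these dimensions too. Showing they stay bounded using only $\sup_n\|w_n\|_{L^2(\mu)}<\infty$ is not a trace inequality, which fails for a general smooth $\mu$; it needs a separate capacity argument you have not supplied.
\item Banach--Saks gives strong convergence of Ces\`aro means only in a norm in which the sequence is bounded. So ``Ces\`aro means converge strongly in $\mathcal E_1$'' needs an $\mathcal E_1$-type bound you never state.
\item Fatou along the subsequence on which $w_n\to w$ a.e.\ bounds $\mathcal E^r(w)$ by the $\liminf$ along that subsequence. That quantity is not controlled by the full-sequence $\liminf_n\mathcal E^r(w_n)$ you obtained from \Cref{form monotone 2}.
\end{itemize}

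The missing observation is that the needed bound is already in hand. Together, $\sup_n\mathcal E(w_n)<\infty$ and $\sup_n\|w_n\|_{L^2(\mu)}<\infty$ say that $(w_n)$ is bounded in the Hilbert space $(\mathcal F^\mu,\mathcal E^\mu_1)$. The paper proceeds in four steps.
\begin{itemize}
\item It first passes to a subsequence with $\limsup_n\mathcal E^n(w_n)\leq\liminf_n\mathcal E^n(u_n)$, which is possible by \Cref{form mollifier monotone}. The $\limsup$ half of \Cref{form monotone 2} then gives $\limsup_n\mathcal E^k(w_n)\leq\liminf_n\mathcal E^n(u_n)$ for each fixed $k$.
\item It applies Banach--Saks in $\mathcal E^\mu_1$: Ces\`aro means $\bar w_N$ converge in $\mathcal E^\mu_1$ to a quasi-continuous $\widetilde u\in\mathcal F^\mu$. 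Uniqueness of weak limits gives $\widetilde u=u$ $\mu$-a.e.
\item Along a further subsequence, $\bar w_N\to\widetilde u$ q.e., hence Lebesgue-a.e.
\item Fatou applied to the $\bar w_N$, together with the convexity bound $\mathcal E^k(\bar w_N)^{1/2}\leq\frac1N\sum_{i\leq N}\mathcal E^k(w_i)^{1/2}$, yields $\mathcal E^k(\widetilde u)\leq\limsup_n\mathcal E^k(w_n)$. Letting $k\to\infty$ with \Cref{inW12 form monotone} concludes.
\end{itemize}
This route needs neither Rellich compactness nor $L^2_{\rm loc}(dx)$ convergence of the $w_n$ themselves.

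For $d\geq 3$ the paper simply takes $w_n\in\mathcal F^\mu$ for granted. So the vanishing of the constant at infinity that you flag as delicate is not addressed there either.

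In the $D$ case, you argue that the limit vanishes q.e.\ off $D$, using that $w_n$ is supported in a shrinking neighborhood of $\overline D$ and that $\partial D$ is $C^{1,1}$. This checks membership in $\mathcal F^{\mu,D}$, which the paper skips: it just writes $\mathcal E^{\mu,D}(\widetilde u)=\mathcal E(\widetilde u)$.
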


\begin{proof}
	The relation between Mosco convergence and finite dimensional distribution convergence has been discussed at the end of \Cref{section mosco}. We will focus on establishing the Mosco convergence.

Take any sequence $r_n\downarrow 0$. We only need to show the convergence holds along any such sequence. With slight abuse of notation, replace the subscript/superscript $r$ in $\mathcal F^{r},\mathcal E^{r},Q_r,\mu_r$, etc by $n$ to denote the corresponding things when $r=r_n$. 
Let $u_n\in L^2(\mu_n)$ converge weakly to $u\in L^2(\mu)$ in the sense of \Cref{weak convergence} and $\sup_n \mathcal E^{n}(u_n)<\8$. Set $v_n=\phi_n*u_n$. By \Cref{weakConvergeRelation} we get $v_n\to u$ weakly in $L^2(\mu)$. By \Cref{form mollifier monotone}, \Cref{energy bounded} and choosing a subsequence (still denote as $u_{n}, v_n$), we may assume that  $\limsup_n\mathcal E^{n}(v_n)\<\liminf_{n}\mathcal E^{n}(u_n)<\8$ and $\sup_n\mathcal E(v_n)<\8$. By weak convergence in $L^2(\mu)$ we know $\sup_n\|v_n\|_{L^2(\mu)}<\8$ hence $v_n$ is a bounded sequence in $\mathcal F^{\mu}$ with respect to $\mathcal E_{1}^{\mu}$. Then by the Banach-Saks theorem, we can find a subsequence of $v_n$ (still denote as $v_n$) such that $w_n:=(\sum_{i=1}^nv_i)/n$ converge strongly to a quasi-continuous function $\~u\in \mathcal F^{\mu}$ in $\mathcal E_{1}^\mu$. As $w_n$ converges to $u$ weakly in $L^2(\mu)$, we know $\~u$ is a quasi-continuous $\mu$-version of $u$ by the uniqueness of weak limits in $L^{2}(\mu)$. 
In addition, by taking a further subsequence, we may assume $w_n$ converge to $\~u$ q.e. (\cite[Theorem 1.3.2]{CF12}), hence also Lebesgue a.e.. 
By Fatou's Lemma, triangle inequalities we have for any $k\in \N$
\begin{equation*}
	\mathcal E^{k}(\~u)
	\<\liminf_{n\to \8}\mathcal E^{k}(w_n)
	\<\liminf_{n\to\8} \left( \frac{1}{n}\sum_{i=1}^{n} \sqrt{\mathcal  E^{k}(v_i)} \right)^{2}
	\<\limsup_{n\to\8}\mathcal E^{k}(v_n).
\end{equation*}
Then by \Cref{form monotone 2} and \Cref{form mollifier monotone}, 
\begin{equation*}
	\limsup_{n\to\8}\mathcal E^{k}(v_n)
	\<\liminf_{n\to\8}\mathcal E^{n}(v_n)
	\<\liminf_{n\to\8}\mathcal E^{n}(u_n).
\end{equation*}
Hence by \Cref{inW12 form monotone} we know $\mathcal E(\~u)\<\liminf_{k\to\8}\mathcal E^{k}(\~u)\<\liminf_{n\to\8}\mathcal E^{n}(u_n)$. This establishes the first property of the Mosco convergence.

For the second property of the Mosco convergence, 
by \cite[Theorem 5.1.6]{CF12} we know $C^\8_c(\R^d)$ is $\mathcal E_{1}^{\mu}$-dense in $\mathcal F^\mu$. 
For $u\in \mathcal F^{\mu}$ we can choose $u_n\in C^\8_c(\R^d)$ $\mathcal E_{1}^{\mu}$-converges to $u$. 
By \Cref{make convergence sequence} we can find subsequence $v_n$ of $u_n$ that converges in the sense of \Cref{strong convergence}. Note that $v_n$ also $\mathcal E$-converges to $u$.
By \Cref{inW12 form monotone} we have $\limsup_{n\to \8} \mathcal E^{r_n}(v_n)\<\limsup_{n\to \8} \mathcal E(v_n)=\mathcal E(u)$.

For $(\mathcal F^{r,D}, \mathcal E^{r,D})$,  the proof is similar. 
For the first property of the Mosco convergence,
 just note that if $u_n\in L^2(D;\mu_n|_D)$ converge weakly to $u\in L^2(D;\mu|_D)$, then by extending $u_n,u$ to be 0 outside $D$ we still have $v_n:=\phi_n*u_n\in L^2(\mu)$ converge weakly to $u$ in $L^2(\mu)$ and in the same way as the proof above we will be able to find the quasi-continuous version $\~u$ of $u$ in $\mathcal F^\mu$. Then
$${\mathcal E^{\mu,D}}(\~u)={\mathcal E}(\~u)
\< \liminf_{n\to \8}  \mathcal E^{n}(v_n)
\<\liminf_{n\to \8} \mathcal E^{n}(u_n)
=\liminf_{n\to \8} {\mathcal E}^{n,D}(u_n).$$
For the second property of the Mosco convergence,
 $C^\8_c(D)$ is $\mathcal E_{1}^{\mu, D}$-dense in $ \mathcal F^{\mu, D}$ 
 and by \Cref{make convergence sequence} we can find a sequence $u_n\in C^\8_c(D) \inn L^2(D;\mu_n)$ converges in the sense of \Cref{strong convergence} to $u\in L^2(D;\mu)$ and in 
 $\mathcal E_1$.
 Hence
$$\limsup_{n\to \8} \mathcal E^{n,D} (u_n)=\limsup_{n\to \8} \mathcal E^{n}(u_n)\<\limsup_{n\to \8}\lim_{k\to\8} \mathcal E^{k}(u_n)=\limsup_{n\to \8} \mathcal E(u_n)=\mathcal E(u)= \mathcal E^{\mu,D}(u).$$
The proof is completed.
\end{proof}

\subsection{Convergence in the Skorokhod topology}

In this section, we will improve  the convergence
of the time-changed random walk $X^r$ as $r\to 0$  in finite dimensional distributions to
the weak convergence in $\DD ([0, \infty); \R^d)$ under
the Skorokhod topology. We will use a localization argument and apply a result by Aldous \cite[Proposition 1.2]{aldous1989stopping}.

\medskip

For a path $\Gamma$ taking values in $\R^d$ indexed
 by $[0,\8)$ or $\N$, denote 
 $$
 \tau_R:= \tau_{R}(\Gamma):=\inf\{s\>0:|\Gamma_s|\>R\}.
 $$
We first prepare a lemma that will be used in \Cref{stopped}. Recall that $\nu= f_{0}\cdot \mu$ where $f_{0}\in C_c(\R^d)$ is given in \Cref{mainthm1}.
\begin{lemma} \label{R sequence}
	There exists a sequence of $R_{k}\uparrow \8$ as $k\to \8$ such that $\P_{\nu}(\tau_{R_{k}}(X) = t) = 0$ for any $t\in \R_+$. 
\end{lemma}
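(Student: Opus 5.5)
The plan is a countability argument: for all but countably many radii $R$, the event $\{\tau_R(X)=t\}$ is $\P_\nu$-null for every $t\geq 0$. Any sequence $R_k\uparrow\8$ chosen outside this countable exceptional set then works.

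Fix $t\geq0$ and consider the nondecreasing map $R\mapsto\tau_R(X)$, which is defined pathwise. For distinct radii $R<R'$ the events $\{\tau_R=t\}$ and $\{\tau_{R'}=t\}$ can intersect. Indeed, $\tau_R=\tau_{R'}=t$ forces $|X_t|\geq R'$ by path continuity, and $X_s$ stays in $B(0,R)$ for $s<t$. This means the path jumps radially at time $t$, which is impossible for a continuous path unless $|X_t|=R$. So a continuous path (recall $X$ is continuous since $\mu$ has full quasi-support) can satisfy $\tau_R=t$ for at most one $R$ with $\tau_R<\8$ finite. More precisely, $\tau_R=t$ implies $R=|X_t|$ when $|X_0|<R$. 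Since $\nu$ is a finite measure, removing the at most countably many $R$ with $\nu(\{|x|=R\})>0$ leaves $|X_0|<R$ or $\tau_R=0$ on a $\P_\nu$-full set. Consequently the events $E_R^t:=\{\tau_R=t\}$, $R>0$, are pairwise disjoint up to null sets.

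The difficulty is that $t$ ranges over an uncountable set, so disjointness at a single $t$ is not enough. I would instead use the joint structure by introducing the finite measure $\Lambda(A):=\P_\nu\big(|X_{\tau_R}|\ldots\big)$. A cleaner route is to consider, for each $R$, the law $\pi_R$ of $\tau_R$ under $\P_\nu$, and its set of atoms $J_R:=\{t:\P_\nu(\tau_R=t)>0\}$, which is countable. We need $J_R=\emptyset$ for suitable $R$. The key object is the finite measure $\Pi:=\int_0^\8 e^{-t}\P_\nu\circ(|X_t|)^{-1}\,dt$ on $[0,\8)$ together with the quantity $\P_\nu(\exists t: \tau_{|X_t|}=t \text{ at atom})$.

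The approach I commit to is the following. Set $F(R,t):=\P_\nu(\tau_R=t)$. For each fixed $t$, $\sum_R F(R,t)\leq\P_\nu(\tau_{|X_t|}=t)\leq 1$ by disjointness, so only countably many $R$ have $F(R,t)>0$. To handle all $t$ at once I would apply the Fubini-type claim that $\{(R,t):F(R,t)>0\}$ has countable $R$-projection. For this, note that $\P_\nu(\tau_R\in J_R)>0$ whenever $J_R\neq\emptyset$, and consider the random set $S(\omega):=\{R:\tau_R(\omega)\in J_R\}$. For $R\neq R'$ the events $\{\tau_R\in J_R\}$ and $\{\tau_{R'}\in J_{R'}\}$ are not obviously disjoint, and this is the main obstacle. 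To overcome it I would show, using the strong Markov property at $\tau_R$ and the fact that $\mu$ has no atoms on spheres in a suitable sense, that $\tau_R$ has no atoms for all but countably many $R$. Concretely, $\P_\nu(\tau_R=t)\leq\P_\nu(|X_t|=R)$, and $\int_0^\8\P_\nu(|X_t|=R)\,dR$-type counting gives $\sum_R\P_\nu(|X_q|=R)\leq1$ for each rational $q$. Hence outside a countable set of $R$ we have $\P_\nu(|X_q|=R)=0$ for all $q\in\mathbb Q_+$. The remaining step is to pass from rational times to all $t$. Here I would use that $X$ is a time change of $W$ by a strictly increasing continuous $A$. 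Writing $\tau_R(X)=A_{\tau_R(W)}$, an atom at $t$ requires $\P_\nu(A_{\sigma_R}=t)>0$ with $\sigma_R$ the Brownian exit time. The density of $\nu$ relative to $\mu$ and the absence of atoms of $A_{\sigma_R}$, which comes from the strong Markov property and the fact that $A$ strictly increases after $\sigma_R$, should then yield non-atomicity. I expect this last step to be the delicate part.
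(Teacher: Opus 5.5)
\textbf{There is a genuine gap: the proposal never reaches all $t\in\R_+$.} Your reduction to countably many times is sound. By path continuity, for $t>0$ the event $\{\tau_R(X)=t\}$ forces $|X_t|=R$. So for each fixed $t$, and hence simultaneously for all $q\in\mathbb{Q}_+$, only countably many radii are exceptional. But the statement asks for every $t\in\R_+$, and that step is never carried out.

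Your ``Fubini-type claim'' does not follow from per-$t$ disjointness. That claim says $\{(R,t):\P_\nu(\tau_R=t)>0\}$ has countable $R$-projection. The deterministic path $X_t=te$, with $e$ a unit vector, is a counterexample. For each fixed $t$, only $R=t$ carries an atom, yet $\P(\tau_R=R)=1$ for every $R$. So pure counting cannot succeed, and some structural input is needed.

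The substitute you propose is not an argument. You want non-atomicity of $A_{\sigma_R}$ from the strong Markov property and the strict increase of $A$ after $\sigma_R$. But $A_{\sigma_R}$ is determined by the path up to $\sigma_R$, so what $A$ does afterwards says nothing about its atoms. You yourself leave this step at ``should then yield.''

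\textbf{The missing idea} is to use the specific initial law $\nu=f_0\cdot\mu$, with $f_0$ bounded, together with the $\mu$-symmetry of the semigroup $P_t$ of $X$. This gives a bound uniform in $t$ for $t>0$, and also for $t=0$ once $B(0,R)\supset\operatorname{supp} f_0$:
\[
\P_\nu(\tau_R(X)=t)\le \P_\nu(X_t\in\partial B(0,R))=\int (P_t\1_{\partial B(0,R)})\,f_0\,d\mu=\int \1_{\partial B(0,R)}\,P_tf_0\,d\mu\le \|f_0\|_\infty\,\mu(\partial B(0,R)).
\]
Since $\mu$ is Radon and spheres of distinct radii are disjoint, $\mu(\partial B(0,R))=0$ for all but countably many $R$. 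Any $R_k\uparrow\infty$ chosen among these radii then works for all $t$ at once. This is exactly the paper's proof.

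Your remark that $\mu$ should have ``no atoms on spheres'' points the right way. Without the symmetry step moving $\P_\nu(X_t\in\cdot)$ back onto $\mu$, though, you cannot remove the dependence on $t$. This trick needs $\nu\ll\mu$ with bounded density. For a point-mass initial law, the paper claims only the rational-time version in Lemma~\ref{R sequence pointwise}, which is what your counting argument actually proves.
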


\begin{proof}
	Let $P_{t}$ be the transition semigroup of $X$.
	As $P_{t}$ is symmetric on $L^2 (\Rd;\mu)$, we have
	\begin{eqnarray*}
		\P_{\nu}(\tau_{R}(X) = t) 
		&\< & \P_{\nu}(X_{t} \in \6 B(0,R)) = \int ( P_{t}\1_{\6 B(0,R)}) f_{0}d\mu  \\
		&=&   \int \1_{\6 B(0,R)} P_t  f_{0}d\mu \leq  \|f_{0}\|_{\8}\, \mu(\6 B(0,R)).
	\end{eqnarray*}
 Since $\mu$ is a Radon measure on $\R^d$, there is a 
	  sequence $R_{k}\uparrow \8$ so that $\mu(\6 B(0,R_{k}))=0$.
	  This together with the last display yields the desired result. 
\end{proof}

Take a sequence $R_{k}\uparrow \8$ from \Cref{R sequence}.
Let $R \in \{R_{k}\}_{k}$ be large enough such that $f_{0} = 0$ outside $B(0,R)$ where $f_{0}$ is from \Cref{mainthm1}. 
Throughout the rest of this section, we fix this $R$ and define notations without emphasizing the dependency on $R$. In particular, we denote $\tau(\Gamma) := \tau_{R}(\Gamma)$ for a path $\Gamma$.

\medskip

We next show the convergence of the processes in a weak topology called pseudo-path topology. The topology is also called Meyer-Zheng topology as it was first introduced in the paper \cite{meyer1984tightness}. Pseudo-path topology is the topology that paths are convergent in measure. Namely, a path $\w^{n}$ converges in pseudo-path topology to $\w$ if $\int_0^\8 1\wedge|\w_{t}^{n} - \w_{t}|e^{-t}dt\to 0$ as $n\to\8$. For killed process we extend the Euclidean distance to the cemetery point $\6$ by $|\6-x|:=\8$ if $x \neq \6$ else 0 and take the convention that $1\wedge\8=1$. It is clear that $\DD([0,\8),\R^d_\6)$ (where $\R^d_\6=\R^d\cup\{\6\}$) equipped with the metric $d(\w',\w):=\int_0^\8 1\wedge|\w_{t}' - \w_{t}|e^{-t}dt$ which induce the pseudo-path topology is separable. It is known that finite dimensional distribution convergence is sufficient for the convergence in law in pseudo-path topology. See \cite[Theorem 2.1]{bogachev2016weak} for a simple proof. We state the convergence in \Cref{pseudo} below.

Let $X^{r,D}$ (resp. $X^{D}$) denote the part process of $X^r$ (resp. $X$) killed upon leaving $D:=B(0,R)$.

\begin{prop} \label{pseudo}
The law of $X^r$ (resp. $X^{r,D}$) with initial distribution $f_0\cdot \mu_r$  converges weakly
in pseudo-path topology to the law of $X$ (resp. $X^{D}$) with initial distribution $f_0\cdot\mu$.
\end{prop}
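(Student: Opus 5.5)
The plan is to deduce the pseudo-path convergence from convergence of finite dimensional distributions, via the criterion quoted just before the statement (\cite[Theorem 2.1]{bogachev2016weak}). Convergence in the pseudo-path metric $d(\w',\w)=\int_0^\8 1\wedge|\w'_t-\w_t|e^{-t}dt$ is convergence in $e^{-t}dt$-measure. So it suffices to show that for every $k$, every $0<t_1<\dots<t_k$ and every bounded continuous $g_1,\dots,g_k$ on $\R^d_\6$, the quantities $\EX_{f_0\cdot\mu_r}[\prod_i g_i(X^r_{t_i})]$ converge to the corresponding quantities for $X$. Here $g_i(\6)$ is a constant, which handles the killed case. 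We also need these integrated over $t_i$ against $e^{-t}dt$; this follows from the pointwise convergence by dominated convergence.

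\textbf{Step 1: rewrite the expectations through the semigroups.} Write the expectation as
\[
\int f_0\, P^r_{t_1}\big(g_1 P^r_{t_2-t_1}(g_2\cdots P^r_{t_k-t_{k-1}}g_k)\big)\,d\mu_r ,
\]
where $P^r_t$ is the semigroup of $X^r$ on $L^2(\mu_r)$, or of $X^{r,D}$ on $L^2(D;\mu_r|_D)$. To treat the cemetery, split $g_i=(g_i-g_i(\6))+g_i(\6)$ and use $P^r_t1$, or apply the same argument to $\P_{f_0\cdot\mu_r}(\zeta>t)$.

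\textbf{Step 2: pass to the limit by strong semigroup convergence.} By \Cref{mosco}, $P^r_t\to P_t$ strongly in the sense of \Cref{operator convergence}. First take $g_i\in C_c(\R^d)$, or $C_c(D)$ in the killed case. Multiplication by such a bounded continuous $g_i$ preserves strong convergence: by \Cref{make convergence sequence} and \Cref{weakConvergeRelation}(3), $\|\Phi_n(g v)\|_{L^2(\mu_n)}\to\|gv\|_{L^2(\mu)}$ for $v\in C_c^\8$. An induction then gives strong convergence of the nested expression. The constant function $f_0$ converges strongly as an element of $C_c$, after a routine approximation, so the pairing against $f_0$ converges by \Cref{weak convergence}.

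\textbf{Step 3: the main obstacle, non-compactly supported test functions and mass at infinity.} General bounded continuous $g_i$ on $\R^d_\6$ are not in $L^2$, so they must be truncated. Replace $g_i$ by $g_i\chi_M$ with $\chi_M\in C_c$, $0\le\chi_M\le1$, $\chi_M=1$ on $B(0,M)$. The error is then controlled by $\P_{f_0\cdot\mu_r}(|X^r_{t}|>M,\ t<\zeta)$, and this needs to be uniformly small in $r$. I would get this by the same semigroup convergence applied to $\chi_M$. Since $\mu_r(f_0)\to\mu(f_0)$, we have
\[
\limsup_r \P_{f_0\mu_r}(|X^r_t|>M)\le \mu_r(f_0)-\lim_r(f_0,P^r_t\chi_M)_{\mu_r}=\mu(f_0)-(f_0,P_t\chi_M)_\mu ,
\]
and this tends to $\P_{f_0\mu}(X_t\text{ escapes or dies})$ as $M\to\8$. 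Sub-Markovian mass loss is consistent on both sides, because it is also encoded by $1-P_t 1$ through the limits of $\chi_M$. This gives tightness of the one-dimensional marginals on $\R^d_\6$, and hence the full finite dimensional convergence on the one-point compactification.

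The final appeal to \cite[Theorem 2.1]{bogachev2016weak} then yields weak convergence in pseudo-path topology for both $X^r$ and $X^{r,D}$.
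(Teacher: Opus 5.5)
Your route is the paper's: the paper obtains \Cref{pseudo} by combining the finite-dimensional convergence from \Cref{mosco} with \cite[Theorem 2.1]{bogachev2016weak}. Your Steps 1--2 correctly spell out how strong semigroup convergence gives convergence of the nested expressions. Multiplication by any bounded continuous $g$ preserves strong convergence, since $\|g(v_m-u_n)\|_{L^2(\mu_n)}\le\|g\|_\infty\|v_m-u_n\|_{L^2(\mu_n)}$. In Step 3 you need $\operatorname{supp}\chi_M\subset B(0,M)$ rather than $\chi_M=1$ on $B(0,M)$; otherwise the inequality runs the wrong way. With that fix, your tightness estimate is fine for $X^r$ when $X$ is conservative.

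The genuine gap is in the killed case. The paper's metric sets $|\partial-x|=\infty$, so $\partial$ is an isolated point. Weak convergence of the marginals of $X^{r,D}$ therefore requires $\mathbb{P}_{f_0\cdot\mu_r}(\tau_D(X^r)>t)\to\mathbb{P}_{f_0\cdot\mu}(\tau_D(X)>t)$, i.e.\ no mass of $X^{r,D}_t$ may pile up near $\partial D$. The proof of \Cref{stopped} uses this convention. Your Step 2 only uses test functions in $C_c(D)$, and your Step 3 controls escape to spatial infinity, which is vacuous for bounded $D$. With $\chi\in C_c(D)$, $\chi\uparrow\1_D$, your argument yields only $\liminf_{r}\mathbb{P}_{f_0\cdot\mu_r}(\tau_D(X^r)>t)\ge\mathbb{P}_{f_0\cdot\mu}(\tau_D(X)>t)$. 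Passing to the one-point compactification hides this, because there ``near $\partial D$'' and ``at $\partial$'' coincide. So what you actually prove is convergence in a strictly coarser topology.

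The missing ingredient is the choice of $R$ from \Cref{R sequence}, which gives $\mu(\partial B(0,R))=0$. Your argument never uses it, and without it $\1_D$ need not be strongly convergent. If $\operatorname{supp}\phi\subset B(0,M_\phi)$, then $\mu_r(A)=\int(\phi_r*\1_A)\,d\mu\le\mu(\{x:\operatorname{dist}(x,A)<M_\phi r\})$. Hence for $\chi_m\in C_c^\infty(D)$ with $0\le\chi_m\le 1$ and $\chi_m=1$ on $B(0,R-1/m)$,
\[
\limsup_{r\downarrow 0}\|\1_D-\chi_m\|^2_{L^2(D;\mu_r)}\le \mu\big(\{R-1/m\le|x|\le R\}\big)\xrightarrow[m\to\infty]{}\mu(\partial B(0,R))=0 .
\]
Thus $\1_D$, and with it every bounded continuous function on $\overline D$, converges strongly in the sense of \Cref{strong convergence}. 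Running your induction with such test functions gives $(f_0,P^{r,D}_t\1_D)_{\mu_r}\to(f_0,P^{D}_t\1_D)_\mu$. More generally, it gives weak finite-dimensional convergence on $D\cup\{\partial\}$ with $\partial$ isolated, and then \cite[Theorem 2.1]{bogachev2016weak} delivers the statement in the sense the paper uses it.
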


Now we improve the 
weak convergence of the time-changed random walks $\{X^r; r>0\}$ as $r\downarrow 0$ 
 to be under a stronger topology, that is, under the  Skorokhod topology. 
 For given $f_{0}\in C_{c}(\Rd)$, recall that $\nu_{r} = f_{0}\.\mu_{r}$ and $\nu = f_{0}\.\mu$. 
Let $Y^r$ (resp. $Y$) be the stopped processes of $X^r$ (resp. $X$) upon leaving $D:=B(0,R)$; that is, $Y^r_{\cdot}:=X^r_{\cdot\wedge\tau(X^{r})}$ and $Y_{\cdot}:=X_{\cdot\wedge\tau(X)}$. 

\medskip

We first prepare a lemma for \Cref{stopped}.

\begin{lemma} \label{uint}
	$\{X^r_{\tau(X^{r})}\}_{r\in(0,1]}$ is uniformly integrable under $\P_{x}$ for any $x\in B(0,R)$. In particular, for each $t\>0$, $Y_{t}^{r}$ under $\P_{\nu_r}$  for $r\in(0,1]$ is uniformly integrable.
\end{lemma}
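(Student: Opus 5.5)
Since $X^r$ is a time change of the random walk $W^r$, its path visits exactly the same points as $W^r$, in the same order. The random holding times with rate $\lambda_r$ change only the time spent at each site. Hence $X^r_{\tau(X^r)} = W^r_{\sigma_r}$, where $\sigma_r:=\inf\{n\>0: |W^r_n|\>R\}$ is the discrete exit index of the walk $W^r_n = x+\sum_{i\le n} r\xi_i$. This reduces the statement to uniform integrability of the exit position of a random walk, which does not involve $\mu$ at all. The plan is to establish a uniform bound $\sup_{r\in(0,1]}\EX_x|W^r_{\sigma_r}|^2<\8$, and then to deduce uniform integrability by the de la Vallée-Poussin criterion.

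For the second moment bound, write $|W^r_{\sigma_r}| \< R + r|\xi_{\sigma_r}|$, since $|W^r_{\sigma_r-1}|<R$. It then suffices to bound $\EX r^2|\xi_{\sigma_r}|^2$. The overshoot jump may be large, because the law of the exit step is size-biased, but we have
\[
r^2|\xi_{\sigma_r}|^2 \< \sum_{i=1}^{\sigma_r} r^2|\xi_i|^2 .
\]
Wald's identity (after truncating at $\sigma_r\wedge N$ and using monotone convergence) gives
\[
\EX_x \sum_{i=1}^{\sigma_r} r^2|\xi_i|^2 = r^2 d\sigma_\jd^2\, \EX_x\sigma_r .
\]
For $\EX_x\sigma_r$ we use the martingale $|W^r_n|^2 - n r^2 d\sigma_\jd^2$, which is a martingale by Assumption (a). Optional stopping at $\sigma_r\wedge N$ gives
\[
r^2 d\sigma_\jd^2\,\EX_x(\sigma_r\wedge N) = \EX_x|W^r_{\sigma_r\wedge N}|^2 - |x|^2 .
\]
This yields the circular bound
\[
\EX_x|W^r_{\sigma_r\wedge N}|^2 \< 2R^2 + 2\EX_x\sum_{i\le\sigma_r\wedge N} r^2|\xi_i|^2 = 2R^2 + 2\bigl(\EX_x|W^r_{\sigma_r\wedge N}|^2 - |x|^2\bigr),
\]
which is useless as it stands. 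To break the circularity we split each jump into a small and a large part. Fix $K$ and use $r|\xi_{\sigma_r}| \< K + r|\xi_{\sigma_r}|\1_{\{r|\xi_{\sigma_r}|>K\}}$, so that
\[
\EX_x\bigl[r^2|\xi_{\sigma_r}|^2\1_{\{r|\xi_{\sigma_r}|>K\}}\bigr] \< \EX_x\sigma_r\cdot \EX\bigl[r^2|\xi_1|^2\1_{\{|\xi_1|>K/r\}}\bigr] = \EX_x\sigma_r \cdot r^2\,\epsilon(K/r),
\]
where $\epsilon(M):=\EX[|\xi_1|^2\1_{\{|\xi_1|>M\}}]\to0$ as $M\to\8$. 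We still need the a priori estimate $\EX_x\sigma_r \< C r^{-2}$ uniformly in $r$. This follows from a standard argument: from any point of $B(0,R)$, within $\lceil c r^{-2}\rceil$ steps the walk exits $B(0,R)$ with probability at least $p>0$ uniformly in $r\in(0,1]$ and in the starting point, by the central limit theorem/Donsker's invariance principle, since the scaled walk is close to Brownian motion. Iterating with the Markov property gives a geometric tail for $\sigma_r r^2$. Combining, we obtain $\EX_x|W^r_{\sigma_r}|^2 \< C(R+K)^2 + C\epsilon(K/r)$, which is bounded uniformly in $r\in(0,1]$. This gives uniform integrability. In fact the truncation bound already gives $\EX_x[|W^r_{\sigma_r}|\1_{\{|W^r_{\sigma_r}|>R+K\}}]\to 0$ as $K\to\8$ uniformly in $r$, which is uniform integrability directly.

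For the second assertion, $Y^r_t = X^r_{t\wedge\tau(X^r)}$ satisfies $|Y^r_t| \< \max(R, |X^r_{\tau(X^r)}|)$. Under $\P_{\nu_r}$, the starting point lies in $\mathrm{supp} f_0\inn B(0,R)$, and the bounds above are uniform in $x\in B(0,R)$. Integrating against the probability measures $\nu_r$ gives uniform integrability. Here $\nu_r$ should be normalized, and its total mass is bounded by Lemma \ref{weakConvergeRelation}(3). The main obstacle is the uniform exit-time estimate $\sup_{x\in B(0,R)}\EX_x\sigma_r\<Cr^{-2}$ for walks with possibly non-compact jump law. I would prove it via the Lindeberg CLT applied to $\langle e_1, W^r_n\rangle$ from the worst-case starting point, which needs only the second moment from Assumption (a).
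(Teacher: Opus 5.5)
Your proposal is correct and is essentially the paper's argument: you reduce to the overshoot $r|\xi_{\sigma_r}|$, dominate its tail by the Wald-type sum over all jumps before exit, and control $\EX_x\sigma_r\le CR^2/r^2$ uniformly in $x\in B(0,R)$ via the CLT plus Markov-property iteration. The paper bounds the first-moment tail directly, $\EX_x[r|\xi_{\sigma_r}|\1_{\{r|\xi_{\sigma_r}|>a\}}]\le CR^2a^{-1}\EX[|\xi_1|^2\1_{\{|\xi_1|>a/r\}}]$, which is exactly your closing remark. One small point: the truncation at $K$ is not what breaks your circularity; once $\EX_x\sigma_r\le Cr^{-2}$ is known independently, Wald already gives $\EX_x r^2|\xi_{\sigma_r}|^2\le r^2 d\sigma_{\jd}^2\,\EX_x\sigma_r\le C$ with no truncation needed.
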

\begin{proof}
	Recall that $W_{k}^{r}$ denotes the position of the random walk with scaling $r>0$ after $k$ jumps. Let $N_r=\inf\{k: |W_k^r|\> R\}$. Then $|Y_t^r|\<|X^r_{\tau(X^{r})}|\< R+r|\xi_{N_r}|$. We only need to show $r|\xi_{N_r}|$ is uniformly integrable. For $a>0$ we have
\begin{align*}
\EX_x\left[r|\xi_{N_r}|\1_{\{r|\xi_{N_r}|>a\}}\right]
&\<
\EX_x\sum_{k=1}^{N_r} r|\xi_k|\1_{\{r|\xi_k|>a\}}  \\
&= \sum_{k\> 1} \EX_x[ r|\xi_k|\1_{\{r|\xi_k|>a\}}; N_r\> k] \\
&=
\sum_{k\ge1}\P_x(N_r\> k)
\EX\left[r|\xi_1|\1_{\{r|\xi_1|>a\}}\right] \\
&=
\EX_x N_r \EX\left[r|\xi_1|\1_{\{r|\xi_1|>a\}}\right].
\end{align*}
Here we used the fact that $\{N_r\ge k\}$ depends only on $\xi_1,\dots,\xi_{k-1}$, so it is independent of $\xi_k$. 
By the central limit theorem, there exist $0<p<1$ and $A\geq 1$ such that $W^{r}$ has probability at least $p$ to move distance at least $2R$ within $\lceil A R^2/r^2\rceil$ steps. 
 The Markov property gives $\mathbb P_x(N_r>jm)\le (1-p)^j$ and therefore $\EX_{x}N_r\< C R^{2}/r^{2}$ for some $C>0$. Hence
\begin{align*}
	\EX_x N_r \EX\left[r|\xi_1|\1_{\{r|\xi_1|>a\}}\right]
	\< CR^{2} \EX\left[|\xi_1|/r\1_{\{|\xi_1|>a/r\}}\right] 
	\< CR^{2}a^{-1} \EX\left[|\xi_1|^{2}\1_{\{|\xi_1|>a/r\}}\right]
\end{align*}
which tends to 0 as $a\uparrow\8$. Hence $\{r|\xi_{N_r}|; r\in (0, 1]\}$ is uniformly integrable and so is $\{Y_{t}^{r}; r\in (0, 1 ] \}$.
\end{proof}

From the above proof we see a.s. $N_r<\infty$. The exit time of the continuous-time chain is a finite sum of finitely many exponential holding times. Hence a.s. $\tau_R(X^r)<\infty$. Also $\tau_R(X)<\infty$ a.s. since $X$ is a timed-changed Brownian motion with continuous paths. We can extend the definition of $Y_{t}^r$ and $Y_{t}$ at $t=\8$ by setting $Y_{\8}^r:=\lim_{t\uparrow\8} Y_{t}^r = X^{r}_{\t(X^{r})}$ and $Y_{\8}:=\lim_{t\uparrow\8} Y_{t} = X_{\t(X)}$.

\begin{prop} \label{stopped}
	The stopping time $\tau(X^r)$ under $\P_{\nu_r}$ converges in law to $\tau(X)$ under $\P_{\nu}$. The stopped process $Y^r$ under $\P_{\nu_r}$ converges in law to $Y$  under $\P_{\nu}$ as $r\downarrow0$  on $\DD([0,\8];\R^d)$ equipped with the Skorokhod topology. 
\end{prop}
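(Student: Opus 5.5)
We prove the proposition in two stages. First we show that the exit times converge in law. Then we upgrade the convergence of the stopped processes to the Skorokhod topology, using Aldous' criterion \cite[Proposition 1.2]{aldous1989stopping}. The bridge between the weak (pseudo-path) convergence and the strong (Skorokhod) one is the martingale structure of the stopped processes.

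\textbf{Step 1: convergence of the exit times.} The starting point is \Cref{pseudo}: under $\P_{\nu_r}$ the killed process $X^{r,D}$ converges to $X^D$ in the pseudo-path topology. The key observation is the identity
\[
\zeta(\omega)=\int_0^\8 \1_{\{\omega_t\neq \6\}}\,dt ,
\]
so the lifetime is a path-average functional. For a killed path, the lifetime of $X^{r,D}$ equals $\tau(X^r)$, up to the boundary issue of exiting exactly onto $\6 D$. By \Cref{R sequence}, $\P_\nu(\tau(X)=t)=0$ for every $t$, and this should make the relevant functionals, such as $e^{-\zeta}$ or $\int_0^T \1_{\{\omega_t\ne\6\}}dt$, continuous at $\P_\nu$-almost every limit path in the pseudo-path metric. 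The reason is that $1\wedge|\omega_t-\6|=1$ whenever $\omega_t\in\R^d$, so measure convergence of the paths forces convergence of $\mathrm{Leb}\{t\le T:\omega^n_t\neq\6\}$. The continuous mapping theorem then gives $\tau(X^r)\to\tau(X)$ in law. I expect some care to be needed here with the cemetery convention and with uniformity in $T$. Tightness of $\tau(X^r)$ should follow from $\EX N_r\le CR^2/r^2$ as in the proof of \Cref{uint}, together with the fact that the holding rates $\lambda_r$ are locally controlled.

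\textbf{Step 2: tightness via Aldous.} Each coordinate of $Y^r$ is a martingale. Indeed, the embedded walk has mean-zero symmetric steps, so $W^r_{n\wedge N_r}$ is a martingale, and a time change by independent holding times preserves this property. By \Cref{uint}, $\{Y^r_t\}$, including $t=\8$, is uniformly integrable, so $Y^r$ is a uniformly integrable martingale closed by $Y^r_\8=X^r_{\tau(X^r)}$. Aldous' proposition for such martingales says, roughly, that tightness in $\DD([0,\8];\R^d)$ follows once the terminal values and the finite-dimensional distributions converge to those of a limit process that is itself a martingale, here $Y$, a continuous uniformly integrable martingale. So the remaining tasks are the following.
\begin{itemize}
\item Identify the finite-dimensional limits of $Y^r$ as those of $Y$.
\item Show that $Y^r_\8\to Y_\8$ in law.
\end{itemize}

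\textbf{Step 3: identifying the limit (the main obstacle).} The finite-dimensional distributions of the stopped process are not directly given by \Cref{mosco}, which concerns the killed process. My plan is to work with the joint law of the killed path $X^{r,D}$ and the exit position. I would use the pseudo-path convergence to obtain joint convergence of the path average $\int_0^T g(X^{r,D}_s)\,ds$ and of $\tau(X^r)$. I would then use the martingale property and uniform integrability to recover $Y^r_t$ from path averages, via
\[
Y^r_t=\lim_{h\downarrow0}\frac1h\int_t^{t+h}\EX\big[Y^r_s\mid\sF_t\big]\,ds ,
\]
together with right-continuity. Concretely, for bounded continuous $F$ and $g$ I would compare
\[
\EX\Big[F\big(\tfrac1h\textstyle\int_t^{t+h}Y^r_sds\big)\Big]
\quad\text{with}\quad
\EX\Big[F\big(\tfrac1h\textstyle\int_t^{t+h}Y_sds\big)\Big].
\]
The averaged stopped path is a pseudo-path-continuous functional of the pair $(X^{r,D},\ Y^r_\8)$. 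Its error relative to $Y^r_t$ I would control by the martingale estimate $\EX|Y^r_{t+h}-Y^r_t|^2\le \EX|Y^r_{t+h}|^2-\EX|Y^r_t|^2$, which is uniform in $r$ thanks to uniform integrability.

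The hardest part is convergence of the exit position $Y^r_\8$. I would obtain it from the convergence of the hitting distribution of $\6 D$: the harmonic-measure density is determined by Dirichlet-form data, where Mosco convergence on $D$ applies, and the overshoot $r|\xi_{N_r}|$ tends to $0$ in probability by \Cref{uint}. Once the finite-dimensional distributions and the terminal values converge, Aldous' result yields convergence in law of $Y^r$ in $\DD([0,\8];\R^d)$.
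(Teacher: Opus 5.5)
\textbf{Verdict: there is a genuine gap in Step 3.} Your Step 1 is valid. You get $\tau(X^r)\Rightarrow\tau(X)$ from continuity of $\omega\mapsto\zeta(\omega)\wedge T=\int_0^T\mathbf{1}_{\{\omega_t\neq\partial\}}\,dt$ in the pseudo-path metric. The paper instead reads it off the one-dimensional marginals, $\mathbb{P}_{\nu_r}(\tau(X^r)>t)=\mathbb{E}_{\nu_r}[\mathbf{1}_D(X^{r,D}_t)]$, via Proposition~\ref{mosco}. Neither route needs a separate tightness argument, which is fortunate because you have no uniform control of $\lambda_r$ under (A.1) and (A.4). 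Your Step 2 matches the paper's use of Aldous' criterion.

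The problem in Step 3 is this. The quantity $\frac1h\int_t^{t+h}Y^r_s\,ds$ is a functional of the \emph{pair} $(X^{r,D},Y^r_\infty)$. So your argument needs the joint law of the killed path and the exit position to converge to that of $(X^D,X_{\tau(X)})$. The exit position is not a functional of the killed path, and pseudo-path convergence says nothing about the path at the single instant $\tau$. Hence a subsequential limit $(Z^D,\tilde\chi)$ can have both marginals correct and still satisfy $\tilde\chi\neq Z_{\tau(Z)}$.

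Your proposal only addresses the marginal law of $Y^r_\infty$, and it does so with a tool that does not apply. Mosco convergence of the part forms $(\mathcal E^{r,D},\mathcal F^{r,D})$ concerns the killed semigroups on $L^2(D;\mu_r)$ and carries no information about where the process leaves $D$. The marginal is in fact easy: $X^r$ exits where the embedded walk $W^r$ does, so Donsker's principle applies as in Proposition~\ref{hitting converge}. But the marginal is not what is needed.

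\textbf{What is missing: identifying $\tilde\chi=Z_{\tau(Z)}$ inside a coupling.}
\begin{enumerate}
\item The paper takes a Skorokhod representation of $(X^n,X^{n,D},\tau(X^n),X^n_{\tau(X^n)})$.
\item It shows the killed limit is $Z$ killed at $\tilde\tau$, and that $\tilde\tau=\tau(Z)$.
\item It proves $\tilde\chi=Z_{\tau(Z)}$ using moving averages $\bar X^{n,\sigma}$ and stopping times $T^n_{\varepsilon,\sigma,s}$. At these times $X^n$ is within $\varepsilon$ of $\partial B(0,R)$ and also within $\varepsilon$ of its moving average; such times occur with probability close to one, by contradiction with pseudo-path convergence.
\item At $T^n$ it applies the strong Markov property together with a uniform estimate: the walk started within $2\varepsilon$ of $\partial B(0,R)$ exits within $\delta$ of its starting point.
\end{enumerate}
Only after this does the decomposition according to the position of $\tau$ among $t_1<\dots<t_k$ yield the finite-dimensional distributions of $Y^r$.

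Your martingale structure could give an alternative for this step. Suppose you showed $\mathbb{E}[\tilde\chi\mid\mathcal G_{\tau(Z)-}]=Z_{\tau(Z)}$ for the filtration $\mathcal G$ of the limiting killed path. Then $|\tilde\chi|\le R=|Z_{\tau(Z)}|$ and strict convexity of the ball would force $\tilde\chi=Z_{\tau(Z)}$. However, showing that the martingale identity survives the limit is a real argument, and your proposal does not contain it.

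\textbf{A smaller error.} The uniform-in-$r$ smallness of $\mathbb{E}|Y^r_{t+h}-Y^r_t|^2$ does not follow from uniform integrability. It is nevertheless true, for a different reason. The predictable quadratic variation of $Y^r$ is $d\int_0^{\cdot\wedge\tau(X^r)}\ell_r(X^r_s)\,ds$, where $\ell_r(x)=\sigma_\varrho^2r^2\lambda_r(x)=\big(\int\phi_r(y-x)\,\mu(dy)\big)^{-1}$. By $\mu_r$-symmetry of $P^{r,D}_s$, its expected increment over $[t,t+h]$ under $\mathbb{P}_{\nu_r}$ is at most $d\,h\,\|f_0\|_\infty\,|B(0,R)|$. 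With this correction, your averaging device is a workable substitute for the paper's use of Meyer--Zheng to obtain finite-dimensional convergence on a dense set of times. It still does not supply the missing joint convergence.
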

\begin{proof}
	For simplicity of notation, we denote $\v X = X^{D}$ and $\v X^{r} = X^{r,D}$ with $D=B(0,R)$ for $R\in \{R_k\}_k$ from \Cref{R sequence}. As a consequence of Mosco convergence in \Cref{mosco}, for any bounded function $g$ on $\R^d_\6:= \Rd \cup \{\6\}$ we have 
	\begin{equation*}
		\lim_{r\downarrow0}\EX_{\nu_{r}}(g(\v X^r_t)) = \EX_{\nu}(g(\v X_t)).
	\end{equation*}
	Taking $g=f\1_{B(0,R)}$ for any $f\in C_b(\R^d)$ we get 
	\begin{equation*}
		\lim_{r\downarrow0}\EX_{\nu_{r}}[f(X^r_t),\tau(X^r)>t] = \EX_{\nu}[f(X_t),\tau(X)>t].
	\end{equation*}
	In addition, we can take $f=1_{\R^d}$ to see that $\lim_{r\downarrow0}\P_{\nu_{r}}(\tau(X^r)>t)= \P_{\nu}(\tau(X)>t)$. Hence $\tau(X^r)$ under $\P_{\nu_{r}}$ converges in distribution to $\tau(X)$ under $\P_{\nu}$.

	The main difficulty is to show $Y^r$ converges to $Y$ in finite dimensional distribution. Once proved, using \Cref{uint}, by \cite[Proposition 1.2]{aldous1989stopping} we know the convergence holds in the Skorokhod topology. 
	Let $r_n$ be any sequence of real numbers that goes down to 0. For the remaining of the proof, we denote any subscript/superscript $r_{n}$ by $n$ for simplicity of notation (e.g. denote $X^{r_n}$ by $X^n$ etc.) and let $\tau(X^n)$ (resp. $\tau(X)$) be the exit time of $X^n$ (resp. $X$) upon leaving $B(0,R)$. 
	To show finite dimensional distribution of $Y^{n}$ converging to that of $Y$, given positive integer $k$, for $0< t_{1} < t_{2} < \.\.\. < t_{k}$, we need to show that
	\begin{equation*}
		\lim_{n\to \8}\EX_{\nu_{n}}[\prod_{i = 1}^{k}f_{i}(Y_{t_{i}} ^{n})] = \EX_{\nu}[\prod_{i = 1}^{k}f_{i}(Y_{t_{i}})].
	\end{equation*}
	Notice that 
	\begin{align} \label {three terms}
		\begin{split}
			\EX_{\nu_{n}}[\prod_{i = 1}^{k}f_{i}(Y_{t_{i}} ^{n})]
			&= \EX_{\nu_{n}}[\prod_{i = 1}^{k}f_{i}(X_{\tau(X^{n})} ^{n}); \tau(X^{n}) \< t_{1}] \\
			&~~~ + \sum_{j=1}^{k-1}\EX_{\nu_{n}}[\prod_{i = 1}^{j}f_{i}(X_{t_{i}} ^{n}) \prod_{i = j+1}^{k}f_{i}(X_{\tau(X^{n})} ^{n}); t_{j}< \tau(X^{n}) \< t_{j+1}] \\
			&~~~ + \EX_{\nu_{n}}[\prod_{i = 1}^{k}f_{i}(X_{t_{i}} ^{n}); t_{k} < \tau(X^{n})]. 
		\end{split}
	\end{align}
	The last term $\EX_{\nu_{n}}[\prod_{i = 1}^{k}f_{i}(X_{t_{i}} ^{n}); t_{k} < \tau(X^{n})]$ converges to $\EX_{\nu}[\prod_{i = 1}^{k}f_{i}(X_{t_{i}}); t_{k} < \tau(X)]$ because of the finite dimensional distribution convergence of $\v X^{r}$ to $\v X$.
	To prove the first two terms converges to 
	\begin{equation*}
		\EX_{\nu}[\prod_{i = 1}^{k}f_{i}(X_{\tau(X)} ); \tau(X) \< t_{1}] + \sum_{j=1}^{k-1}\EX_{\nu}[\prod_{i = 1}^{j}f_{i}(X_{t_{i}} ) \prod_{i = j+1}^{k}f_{i}(X_{\tau(X)} ); t_{j}< \tau(X) \< t_{j+1}] , 
	\end{equation*}
	we construct a probability space such that $(X^n, X^n_{\tau(X^n)},\tau(X^n))\to (\~X,\~\chi,\~\tau)$ almost surely and show that $(\~X, \~\chi,\~\tau)$ has the same distribution as $(X, X_{\tau(X)},\tau(X))$. We divide the proof into several steps.

	\textit{Step 1}: Construct probability spaces. Because $X^n,\v X^n,\tau(X^n)$ converge in distribution to $X,\v X,\tau(X)$ as $n\to\8$ respectively and $\{X^n_{\tau(X^n)};n\geq 1\}$ is uniformly integrable by \Cref{uint}, the law of $\{(X^n,\v X^n,\tau(X^n),X^n_{\tau(X^n)});n\geq 1\}$ is tight. Let $(Z,\v Z,\~\tau,\~\chi)$ be any of its subsequential limit. 
	We want to show that $(Z,\v Z,\~\tau,\~\chi)$ has the same law as $(X,\v X,\tau,X_{\tau})$. By applying Dudley's extension of Skorokhod's representation theorem, we may assume that along that subsequence $(X^n,\v X^n,\tau(X^n),X^n_{\tau(X^n)})$ converges to $(Z,\v Z,\~\tau,\~\chi)$ almost surely, where the first component are in pseudo-path topology whereas the last two components are in $\R$ and $\R^d$ respectively. Let $\P$ denote the probability measure for this almost-surely convergence. 

	\textit{Step 2}: Identify $\v Z$.
	Note that
	$$
	\int_0^\8(1\wedge|\v X^n_t-\v Z_t|)e^{-t}dt=\int_0^{\tau(X^n)-}(1\wedge|X^n_t-\v Z_t|)e^{-t}dt+\int_{\tau(X^n)}^\8(1\wedge|\6-\v Z_t|)e^{-t}dt, 
	$$
	where we extend the Euclidean distance to the cemetery point by $|\6-x|:=\8$ if $x \neq \6$ else 0. Because almost surely the left hand side converges to 0 and $\tau(X^n)\to \~\tau$, $X^n\to Z$ in pseudo-path, $\v Z$ is right continuous, we get $\v Z$ has the following representation
	\begin{align*} \label{Z representation}
		\v Z_t = \begin{cases}
			Z_t&\text{if $t<\~\tau$}\\
			\6 &\text{if $t\>\~\tau$}\\
		\end{cases}.
	\end{align*}

	\textit{Step 3}: Identify $\~\tau=\tau(Z)$. 
	For this, we need to show 
		$Z_{t} \in B(0,R)$ for any $t < \~\tau$ and
		$Z_{\~\tau} \in \6 B(0,R)$.
	Because $\v Z$ being the limit of $\v X^{n}$ has the same distribution as $\v X$, we know $\v Z\in B(0,R)\cup \{\6\}$. Let $\zeta$ be the life time of $Z$ (possibly equal to $\8$). Note that $Z_{t} \in \Rd$ for all $t<\zeta$
	and $Z_t=\v Z_t$ for $t<\~\tau$, hence we get $Z_{t} \in \Rd \cap (B(0,R)\cup\{\6\})=B(0,R)$ for any $t < \~\tau\wedge\zeta$. 
	We claim a,s. $\~\tau\leq \zeta$. Otherwise, if with positive probability $\~\tau>\zeta$, then there exist random numbers $N>0$ and $\Delta>0$ such that $\tau(X^n)>\tau_{2R}(Z)+\Delta$ for any $n>N$, therefore 
	$$
	\int_{\tau_{2R}(Z)}^{\tau(X^n)} (|X^n_s-Z_s|\wedge1) ds\>\int_{\tau_{2R}(Z)}^{\tau_{2R}(Z)+\Delta} ( |R-Z_s|\wedge1 ) ds>0,
	$$
	which contradicts the pseudo-path convergence of $X^{n}$ to $Z$. Hence $\~\tau\leq \zeta$ and we have $Z_{t} \in B(0,R)$ for any $t < \~\tau$.
	Then from the representation of $\v Z$ we see $\~\tau$ is exactly the exit time of $\v Z$ from $B(0,R)$, i.e., $\~\tau = \tau(\v Z)$. Together with the fact that $Z_t=\v Z_t$ for $t<\~\tau$, we have
	\begin{equation*}
		\P(Z_{\~\tau-}\in \6 B(0,R)) = \P(\v Z_{\tau(\v Z)-}\in \6 B(0,R))= \P(\v X_{\tau(\v X)-}\in \6 B(0,R))=1.
	\end{equation*}
	where the last equality follows from that fact that $\~\tau=\sup\{t\in \mathbb Q_+:\v Z_t\in B(0,R) \}$ is a measurable function of the path $\v Z$ and $\v Z$ has the same distribution as the killed time-changed Brownian motion $\v X$. 
	Then we get $Z_{\~\tau-}\in \6  B(0,R)$ almost surely. Since $Z$ has the law of the continuous limit process $X$, we get $Z_{\~\tau}= Z_{\~\tau-} \in \6  B(0,R)$. We complete the proof of $\~\tau=\tau(Z)$. This also identifies that $\v Z$ is indeed the 
	part process of $Z$ killed
	upon leaving $B(0,R)$.
	
	\textit{Step 4}: Identify $\~\chi = Z_{\tau(Z)}$. 
	For any $\sigma>0$ we introduce the moving average processes
	$$
	\bar X^{n,\sigma}_t:=\frac{1}{\sigma\wedge t}\int_{(t-\sigma)^{+}}^tX^n_s ds
	 \quad \hbox{and} \quad 
	 \bar X^{\8,\sigma}_t:=\frac{1}{\sigma\wedge t}\int_{(t-\sigma)^{+}}^t Z_s ds.
	$$
	The main idea is to prove with high probability $X^{n}$ will be close to $\bar X^{n,\sigma}$ near $\6 B(0,R)$, then notice that the exit position of $\bar X^{n,\sigma}$ is close to the exit position of $Z$ because of the pseudo-path convergence;
	 hence the exit position of $X^{n}$ must be close to that of $Z$.
	Note that by the pseudo-path convergence, almost surely
	$$
	|\bar X^{n,\sigma}_t-\bar X^{\8,\sigma}_t|\<\frac{1}{\sigma\wedge t}\int_{(t-\sigma)^{+}}^t|X^n_s-Z_s| ds\xrightarrow{n\to \8}0
	$$
	uniformly in $t$ on bounded intervals away from 0 and for samples that $\{X^n_s, Z_s\}_{s<t}$ are bounded. Define $\zeta_{L}:=\inf\{t\>0: |Z_t|\>L \}$, then for any $\sigma, L, s>0$, we have
	\begin{equation}\label{eq:prob1}
		\P \Big({ \lim_{n\to\8}} \sup_{s<t<\tau(X^n)\wedge \zeta_{L}}|\bar X^{n,\sigma}_t-\bar X^{\8,\sigma}_t| = 0 \Big)=1.
	\end{equation}
	Next we set 
	$$T^n_{\e,\sigma,s}:=\inf\{t\in(s,\tau(X^n)):|\bar X^{n,\sigma}_t|> R-\e,|X^n_t-\bar X^{n,\sigma}_t|<\e\}.$$
	Clearly $T^n_{\e,\sigma,s}$ are stopping times for the right continuous filtration generated by $X^n$. We claim for any $\e>0$ 
	\begin{equation} \label{eq stopping time}
		\liminf_{s\downarrow0, L\uparrow\8}\liminf_{\sigma\downarrow0}\liminf_{n\to\8}\P(s<T^n_{\e,\sigma,s}<\zeta_L)=1.
	\end{equation}
	To see that, first notice that $\lim_{\sigma\downarrow0}\lim_{n\to\8} \bar X^{n,\sigma}_t=\lim_{\sigma\downarrow0}\bar X^{\8,\sigma}_t=Z_t$ for any $0<t<\zeta_L$ and there exists strict positive $t<\tau(Z)=\lim_{n\to\8} \tau(X^{n})$ such that $|Z_t|>R-\e$. Hence 
	\begin{equation} \label{eq exit near ball}
		\liminf_{s\downarrow0, L\uparrow\8}\liminf_{\sigma\downarrow0}\liminf_{n\to\8}
	\P \Big(\sup_{s<t<\tau(X^n)\wedge \zeta_L}|\bar X^{n,\sigma}_t|> R-\e \Big)=1.	
	\end{equation}
	Next we show by contradiction. Suppose that
	\begin{equation} \label{eq contrary stopping time}
		\limsup_{s\downarrow0, L\uparrow\8}\limsup_{\sigma\downarrow0}\limsup_{n\to\8}
		\P \Big(\inf_{\substack{s<t<\tau(X^n)\wedge \zeta_L\\ |\bar X^{n,\sigma}_t|> R-\e }}|X^n_t-\bar X^{n,\sigma}_t|\>\e \Big)>0. 
	\end{equation}
		Then with a uniform strictly positive probability that for any sufficiently large $L>0$ and sufficiently small $s>0$ there exists a sequence of $\sigma=\sigma_k\downarrow 0$, 
	\begin{equation}	\label{eq:contra2}
		\limsup_{n\to\8}\inf_{\substack{s<t<\tau(X^n)\wedge \zeta_L\\ |\bar X^{n,\sigma}_t|> R-\e }}|X^n_t-\bar X^{n,\sigma}_t|\>\e.
	\end{equation}	
	On the event 
	\begin{equation*}
		A(m,\sigma,L,\e,s):= \Big\{\sup_{\substack{s,s'\in[0,\zeta_L]\\ |s-s'|\<\sigma} }|Z_{s}-Z_{s'}|\<\e/3 \Big \} \cap \Big\{ \sup_{\substack{s<t<\tau(X^n) \wedge \zeta_L\\ n\>m}}|\bar X^{n,\sigma}_t-\bar X^{\8,\sigma}_t|\< \e/3 \Big\},
	\end{equation*}
	we have $|Z_t-\bar X^{\8,\sigma}_t|\< \e/3$ and
	$$\sup_{s<t<\tau(X^n) \wedge \zeta_L}|\bar X^{\8,\sigma}_t-\bar X^{n,\sigma}_t|<\e/3$$
	for all large $n\>m$. Hence \eqref{eq:contra2} implies that on $A(m,\sigma, L, \e, s)$,
	$$|X^n_t-Z_t|\>|X^n_t-\bar X^{n,\sigma}_t|-|\bar X^{\8,\sigma}_t-\bar X^{n,\sigma}_t| -|Z_t-\bar X^{\8,\sigma}_t| \>\e/3$$
	for any $t\in(s,\tau(X^n)  \wedge \zeta_L)$ with $|\bar X^{n,\sigma}_t|> R-\e$ and sufficiently large $n$,
	and so
	\begin{equation*} \label{contradiction} 
	\limsup_{n\to\8}\int_s^{\tau(X^n)  \wedge \zeta_L}|X^n_t-Z_t| \1_{\{t:|\bar X^{n,\sigma}_t|> R-\e\}} dt
	\> \frac{\e}{3} |\{t\in[s,\tau(Z)\wedge \zeta_L): |\bar X^{\8,\sigma}_t| > R-\frac{\e}{2}\}|.
	\end{equation*}
	In the above, we used the fact that a.s. $\lim_{n}\tau(X^{n}) = \tau(Z) < \tau_{2R}(Z)$ and $\lim_{n} \bar X^{n,\sigma}_t = \bar X^{\8,\sigma}_t$. In addition, 
	because a.s. $\lim_{\sigma\downarrow0}\bar X^{\8,\sigma}_t=Z_t$ for $0<t<\zeta$ and  $|\{t\in[0,\tau(Z)): |Z_t| \> R-\e/4\}|>0$, we have
	\begin{equation*}
		\liminf_{s\downarrow0,\sigma\downarrow0,L\uparrow\8}\P(|\{t\in[s,\tau(Z)\wedge \zeta_L): |\bar X^{\8,\sigma}_t| > R-\frac{\e}{2}\}|>0) =1.
	\end{equation*}
	Moreover, because $Z$ has continuous sample paths and \eqref{eq:prob1}, we have for each $L,s>0$
	$$\liminf_{\sigma\downarrow0}\liminf_{m\uparrow\8} \P(A(m,\sigma,L,\e,s))=1.$$
	Hence for sufficiently large $L>0$ and sufficiently small $s>0$, we may choose $\sigma$ small enough and $m>0$ large enough such that with positive probability that
	$$\limsup_{n\to\8}\int_s^{\tau(X^n)  \wedge \zeta_L}|X^n_t-Z_t| \1_{\{t:|\bar X^{n,\sigma}_t|> R-\e\}} dt>0,$$
	which contradicts the fact that $X^n$ pseudo-path converges to $Z$ almost surely because on $[0,\tau(X^n)\wedge \zeta_L)$, $|X^n|$ is bounded by $R$, and $|Z|$ is bounded by $L$, then the above integral is dominated by the pseudo-path metric. Hence assumption \eqref{eq contrary stopping time} is false, i.e.
	$$\liminf_{s\downarrow0, L\uparrow\8}\liminf_{\sigma\downarrow0}\liminf_{n\to\8}
	\P \Big(\inf_{\substack{s<t<\tau(X^n)\wedge \zeta_L\\ |\bar X^{n,\sigma}_t|> R-\e }}|X^n_t-\bar X^{n,\sigma}_t|<\e \Big)=1.
	$$
	This together with \eqref{eq exit near ball} proves \eqref{eq stopping time}.
	
	Now we can choose a sequence $\e_k\downarrow 0 , L_k\uparrow\8,s_k\downarrow0, \sigma_k\downarrow 0,n_k\uparrow\8$ such that $\P(T_k<\zeta_{L_k})=1-o_k(1)$ where $T_k:=T^{n_k}_{\e_k,\sigma_k,s_k}$ and $\P(A_k)=1-o_k(1)$ where $A_k=A(n_k, \sigma_k,L_k,\e_k,s_k)$. Let $\d_k=\d_k(\e_k)\downarrow 0 $ be such that
	\begin{equation}\label{hitting claim}
	\inf_{R-2\e_k\<|x|<R}H_{n_k}(x,B(x,\d_k))=1-o_k(1)	
	\end{equation}
	where $H_n(x,dy)=\P_x(W^{r_n}_{\tau(W^{r_n})}\in dy)$ is the exit distribution of the random walk $W^{r_n}$ starting from $x$ upon leaving $B(0,R)$. The proof of \eqref{hitting claim} is postponed to \Cref{hitting converge}.
	Then by the strong Markov property
	\begin{align*}
		\P(|X^{n_k}_{\tau(X^{n_k})}-X^{n_k}_{T_k}|<\d_k,~ T_k<\8) 
		&= \EX [ H_{n_k}(X^{n_k}_{T_k},B(X^{n_k}_{T_k},\d_{k})), T_k<\8]\\
		&\> \inf_{R-2\e_k\<|x|<R}H_{n_k}(x,B(x,\d_k))\cdot \P(T_k<\8)\\
		&= 1-o_k(1).
	\end{align*}
	Moreover, on the event $A_k\cap\{T_k<\zeta_{L_k}\}$ we have 
	\begin{equation*}|X^{n_k}_{T_k}-Z_{T_k}|\<|Z_{T_k}-\bar X^{\8,\sigma_k}_{T_k}|+|\bar X^{\8,\sigma_k}_{T_k}-\bar X^{n_k,\sigma_k}_{T_k}|+|\bar X^{n_k,\sigma_k}_{T_k}-X^{n_k}_{T_k}|\<\e_k/3+\e_k/3+\e_k\end{equation*}
	hence 
	$|X^{n_k}_{\tau(X^{n_k})}-Z_{T_k}|\<5\e_k/3+\d_k$
	with probability $1-o_k(1)$. We finally claim $Z_{T_k}$ converges to $Z_{\tau(Z)}$ in probability, then we can conclude that 
	$\~\chi=\lim_{k\to\8}X^{n_k}_{\tau(X^{n_k})}=Z_{\tau(Z)}$. To see it, just notice that on $A_k\cap\{T_k<\zeta_{L_k}\}$
	we have $|\bar X^{n_k,\sigma_k}_{T_k}-Z_{T_k}|\<2\e_k/3$ while $|\bar X^{n_k,\sigma_k}_{T_k}|\>R-\e_k$, hence $T_k>\tau_{R-2\e_k}(Z)$. Since $T_k<\tau(X^n)\to \tau(Z)$ and $\tau_{R-2\e_k}(Z)\uparrow \tau(Z)$, we see $T_k\to 
	\tau(Z)$ and hence $Z_{T_k}\to Z_{\tau(Z)}$ by continuity.

	\textit{Step 5}: Identify the law. Combining that $\~\tau = \tau(Z)$, $\~\chi = Z_{\tau(Z)}$ and $\v Z$ is the killed process of $Z$ upon leaving $B(0,R)$, we see that along a subsequence
	\begin{equation*}
		(X^n,\v X^n,\tau(X^n),X^n_{\tau(X^n)}) \to (Z,\v Z,\tau(Z), Z_{\tau(Z)})
	\end{equation*}
	almost surely. Since $\v Z,\tau(Z), Z_{\tau(Z)}$ are measurable functions of the path $Z$ and $Z$ has the same distribution as $X$, we see that along any subsequence the tuple
	$\mathbb X^{n} := (X^n,\tau(X^n),X^n_{\tau(X^n)}) $ converges in law to the tuple $ \mathbb X := (X,\tau(X), X_{\tau(X)})$, where the first component is in pseudo-path topology whereas the last two components are in $\R$ and $\R^d$ respectively. 
	This proves that $\mathbb X^{n}$ converges in law to $\mathbb X$ as $n\to \infty$.

	Finally, for any subsequence $I\inn \N$, by \cite[Theorem 5]{meyer1984tightness} there exists a further subsequence $I'\inn I$ and a dense subset $T = T(I) \inn \R_+$ such that $\mathbb X_{T}^{n}: = (\{X^n_t\}_{t\in T},\tau(X^n),X^n_{\tau(X^n)})$ converges in law to $\mathbb X_{T}: = (\{X_t\}_{t\in T},\tau(X),X_{\tau(X)})$. 
	By \Cref{R sequence} the discontinuity of function $(x_{1},...,x_{k},s) \mapsto \1_{(a,b]}(s) \prod_{i= 1}^{k}f_{i}(x_{i})$ for any $a,b\in \mathbb{Q}$ has zero probability under $\P_{\nu}$, hence the first two terms in \Cref{three terms} converges and we get 
		$\EX_{\nu_{n}}[\prod_{i = 1}^{k}f_{i}(Y_{t_{i}} ^{n})]$ converges to $\EX_{\nu}[\prod_{i = 1}^{k}f_{i}(Y_{t_{i}})]$ along $I'$. Because finite dimensional distribution on a dense set of time indices completely determines the law of a c\`adl\`ag process, by \cite[Proposition 1.2]{aldous1989stopping} we know $Y^{n}$ converges in law on $\DD([0,\8),\Rd)$ equipped with the Skorokhod topology to $Y$ along $I'$. We claim the convergence extends to $\DD([0,\8],\Rd)$. By Skorokhod representation theorem we may assume $Y^{n}$ converges to $Y$ a.s. in $\DD([0,\8),\Rd)$. Given $T>0$, on the event $E:=\{\tau(X)<T,\tau(X^{n})<T\}$, the convergence of $Y^{n}$ to $Y$ in $\DD([0,\8],\Rd)$ is implied by the convergence on $\DD([0,T],\Rd)$ as stopped path are constant on $[T,\8]$. Since $\tau(X^{n})$ converges in law and $T>0$ is arbitrary, $E$ can be chosen to have probability arbitrarily close to 1. Hence we get the convergence of $Y^{n}$ to $Y$ in $\DD([0,\8],\Rd)$ equipped with the Skorokhod topology.
		Because $I$ and $r_{n}$ are arbitrary, we get the convergence of $Y^{r}$ to $Y$ on $\DD([0,\8],\Rd)$ equipped with the Skorokhod topology. 
\end{proof}

We now give the proof of \eqref{hitting claim} which is used in the proof of \Cref{stopped}. Recall that $W^{r}$ are discrete-time random walks defined in \Cref{section intro} and $\tau(\Gamma)$ is the exit time of path $\Gamma$ leaving $B(0,R)$.

\begin{prop}\label{hitting converge}
	The exit distribution $H_n(x_{n},dy)=\P_{x_{n}}(W^{r_n}_{\tau(W^{r_n})}\in dy)$ of random walk $W^{r_n}$ converges weakly to the exit distribution of the Brownian motion $H(x,dy)=\P_x(W_{\tau(W)}\in dy)$ whenever $x_{n}\to x$. Furthermore, for any $\e_k\downarrow 0$ and $n_k\uparrow \8$ we can find $\d_k\downarrow 0$ such that $\inf_{R-2\e_k\<|x|<R}H_{n_k}(x,B(x,\d_k))=1-o_k(1)	$.
\end{prop}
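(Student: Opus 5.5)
We prove the weak convergence of exit distributions by a continuous-mapping argument and then derive the uniform bound from it.

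\textbf{Step 1 (functional invariance).} Interpolate the discrete walk linearly, or run it as the continuous-time walk $\~W^{r_n}$; exit positions do not change under this. Under $\P_{x_n}$ with $x_n\to x$, $\~W^{r_n}$ converges in law in $\DD([0,\8);\Rd)$ to Brownian motion started at $x$. This follows from Donsker's invariance principle, which applies by assumption (a) since $\jd$ has covariance $\sigma_\jd^2\d_{ij}$ and the time scaling is $\sigma_\jd^2r^2$. By Skorokhod representation we may assume the convergence is almost sure and locally uniform, because the limit is continuous.

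\textbf{Step 2 (continuity of the exit map).} Let $\Gamma\mapsto(\tau(\Gamma),\Gamma_{\tau(\Gamma)})$, where $\tau$ is the exit time from $B(0,R)$. This map is continuous at every continuous path $w$ that exits regularly, meaning that for every $\d>0$ it reaches $\{|y|>R\}$ during $(\tau(w),\tau(w)+\d)$. Brownian motion has this property almost surely, because every boundary point of the ball is regular and we can apply the strong Markov property at $\tau$.

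\begin{itemize}
\item Lower semicontinuity of $\tau$ uses uniform convergence on $[0,\tau(w)-\d]$, where $|w|\le R-\eta$ for some $\eta>0$.
\item Upper semicontinuity uses regular exit.
\item For the position, write $|\Gamma^n_{\tau(\Gamma^n)}-w_{\tau(w)}|\le \|\Gamma^n-w\|_{[0,T]}+|w_{\tau(\Gamma^n)}-w_{\tau(w)}|$. The jump at exit needs no separate control, since $\Gamma^n_{\tau(\Gamma^n)}$ is a value of the path $\Gamma^n$ itself and the uniform convergence covers it.
\end{itemize}

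Hence $H_n(x_n,\cdot)\Rightarrow H(x,\cdot)$, which is the first claim.

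\textbf{Step 3 (uniform bound by contradiction).} Fix $\d>0$. We first claim that $\liminf_{n}\inf_{R-2\e\le|x|<R}H_n(x,B(x,\d))\ge \inf_{R-2\e\le|x|\le R}H(x,\overline{B}(x,\d/2))$.

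\begin{itemize}
\item Otherwise pick $x_n$ in the shell nearly attaining the infimum. Pass to a subsequence with $x_n\to x$, where $x$ lies in the closed shell.
\item By Step 1, $x_n\to x$ and weak convergence give $\liminf H_n(x_n,B(x_n,\d))\ge \liminf H_n(x_n,B(x,\d/2))\ge H(x,B(x,\d/2))$ by the portmanteau theorem for open sets.
\end{itemize}

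Next, $\inf_{R-2\e\le|x|\le R}H(x,B(x,\d/2))\to1$ as $\e\downarrow0$. This holds because for $|x|\ge R-2\e$ the Brownian motion exits before travelling distance $\d/2$ with probability tending to one, uniformly in such $x$. That uniformity follows from the exterior cone condition of the ball, or from an explicit estimate: within small time $t$ the motion stays in $B(x,\d/2)$ with probability $1-o_t(1)$, and it leaves the ball by time $t$ with probability $1-o_{\e}(1)$ uniformly near the boundary by rotational symmetry. Combining the two facts gives, for each fixed $\d$, $\limsup_{\e\downarrow0}\limsup_n\big(1-\inf_{R-2\e\le|x|<R}H_n(x,B(x,\d))\big)=0$.

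\textbf{Step 4 (diagonal choice of $\d_k$).} Given $\e_k\downarrow0$ and $n_k\uparrow\8$, choose $\d_k\downarrow0$ by a diagonal argument. Take $\d=1/j$ and let $k_j$ be increasing such that the deficiency is at most $1/j$ for all $k\ge k_j$, using Step 3 along $\e_k\to0$ and $n_k\to\8$ jointly. The joint statement needs the double limsup above in the form $\limsup_{k}$ with $\e=\e_k$ and $n=n_k$ simultaneously. This is obtained by repeating the contradiction argument of Step 3 with $\e_k\to0$, so that limit points $x$ lie on $\6B(0,R)$. Then set $\d_k=1/j$ for $k_j\le k<k_{j+1}$.

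The main obstacle is the uniformity in the starting point in Step 3, namely the continuity of the exit map along converging starting points combined with boundary regularity. Both are handled by Step 2 together with the fact that Brownian motion started on $\6B(0,R)$ exits immediately.
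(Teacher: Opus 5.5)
Your proposal is correct and follows essentially the same route as the paper: Donsker's invariance principle, continuity of $\Gamma\mapsto(\tau(\Gamma),\Gamma_{\tau(\Gamma)})$ at continuous paths that exit regularly, and a contradiction argument in which starting points accumulate at some $z\in\partial B(0,R)$, where the limiting exit position is $z$ itself; your explicit diagonal construction of $\delta_k$ spells out a step the paper leaves implicit. One small slip: linear interpolation does change the exit position (the interpolated path crosses the sphere strictly inside the last step), so you should use the continuous-time walk or the step process $t\mapsto W^{r}_{\lfloor t/(\sigma_\jd^2 r^2)\rfloor}$, which is what the paper uses.
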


\begin{proof}
		By multidimensional Donsker's invariance principle  (see, e.g. \cite[Theorem 4.3.5]{Whi02}) we have 
		$\{S^{r}_t:t\mapsto W^{r}_{\lfloor t/(\sigma_{\jd}^2r^2) \rfloor};  \, t\geq 0\}$ converges in distribution (in uniformly topology on any finite time interval) to the standard Brownian motion $W$ on $\Rd$ as $r\downarrow 0$. 
		
		As the exit distributions of $W^r$ and $S^r$ from any open subset of $\R^d$ 
		are exactly the same by their definition, we have $H_n(x,dy)=\P_x(S^{r_n}_{\tau(S^{r_n})}\in dy)$. We now show that $\P_x(S^{r}_{\tau(S^{r})}\in dy)$ converges weakly to $H(z,dy)=\P_{z}(W_{\tau(W)}\in dy)$ as $x\to z\in \Rd$ and $r\downarrow 0$. By Donsker's invariance principle,
we know  $S^{r}$ under $\P_{x}$ converges in distribution to Brownian motion $W$ under $\P_{z}$ in $\DD([0,\8),\Rd)$
as $r \downarrow   0$. 
Define a functional $F$ on $\DD([0,\8),\Rd)$ by $F(\Gamma) :=\Gamma_{\t(\Gamma)}$. Because (almost surely) the sample path of $W$ is continuous
 and every point on $\partial B(0, R)$ is a regular point for $B(0, R)^c$ with respect to $W$, the event  
$$
A:= \{\Gamma\in\DD([0,\8),\Rd): \hbox{for every } \e>0,   \hbox{ there is  } s\in[\tau_\Gamma,\tau_\Gamma+\e)
\hbox{ so that }    |\Gamma_s|>R  \}
$$
 has probability 1 under $\P_{z}$. Thus 
   $\Gamma\mapsto\t(\Gamma)$ and hence $F$ are continuous functions under  $\P_{z}$ almost surely. Here we use the fact that if $\Gamma_n$ converges  to $\Gamma$ under the Skorokhod topology and $\Gamma$ is continuous, then $\Gamma_n$ converges to $\Gamma$ locally uniformly (see, e.g. \cite[Page 124]{Bil99} or \cite[Lemma 6.27]{Sag20} for a proof of this fact).
    By the continuous mapping theorem we get $F(S^{r})$ under $\P_{x}$ converges in distribution to $F(W)$ under $\P_z$.  
    In particular, if $z\in \6 B(0,R)$, then $S^{r}_{\t}=F(S^{r})$ under $\P_{x}$ converges to $W_{\t}=z$ in distribution hence also in probability (since the limit a single point)
      as $x\to z\in \Rd$ and $r\downarrow 0$.

		To show \eqref{hitting claim}, we only need to show that for any $\d>0$, we have 
		\[\limsup_{|x|\uparrow R,~r\downarrow 0} \P_{x}(|S^{r}_{\tau(S^{r})}-x|\>\d)=0.\]
		We show by contradiction. Suppose there exist $\e_{0}>0$ and $\d_{0}>0$ such that
		\[\limsup_{|x|\uparrow R,~r\downarrow 0} \P_{x}(|S^{r}_{\tau(S^{r})}-x|\>\d_{0})\>\e_{0}  . 
		\]
		Then
		 we can select a subsequence $x_{i}\to z\in \6 B(0,R)$ and $r_{i}\downarrow 0$ such that 
		$\P_{x_{i}}(|S^{r_{i}}_{\tau(S^{r_{i}})}-x_{i}|\>\d_{0})\>\e_{0}$ for all $i$. But as $S^{r}_{\t}$ under $\P_{x_{i}}$ converges to $z$ in probability, we have 
		\[\limsup_{i}\P_{x_{i}}(|S^{r_{i}}_{\tau(S^{r_{i}})}-x_{i}|\>\d_{0})\< \limsup_{i}\P_{x_{i}}(|S^{r_{i}}_{\t(S^{r_{i}})}-z|\>\d_{0}/2)=0.\]
  		We get a contradiction, and the desired result follows.
		 \end{proof}

Finally we prove \Cref{mainthm1}.

\begin{proof}[Proof of \Cref{mainthm1}]
	We only need to show that for each fixed $T>0$ and each closed set $F$ in $\DD[0,T]$ (with the Skorokhod topology), we have 
	\begin{equation*}\limsup_{r\downarrow 0}\P_{ \nu_{r}}\left(X^r_{[0,T]}\in F\right)\< \P_{ \nu}\left(X_{[0,T]}\in F\right).\end{equation*}
For this purpose, because Condition \ref{NE}, for any $\e>0$ we can choose $R>0$ large enough that  $\P_{ \nu}\left(\tau_R(X)\<T\right)\<\e$. Let $Y^r$ be the stopped process of $X^r$ upon leaving $B(0,R)$ and $\tau_R(X^r)$ be the stopping time. Then
	\begin{align*}
		\P_{ \nu_{r}}\left(X^r_{[0,T]}\in F\right)
		&\< \P_{ \nu_{r}}\left(Y^{r}_{[0,T]}\in F, \tau_R(X^r)>T\right) + \P_{ \nu_{r}}\left(\tau_R(X^r)\<T\right) \\
		&\< \P_{ \nu_{r}}\left(Y^{r}_{[0,T]}\in F\cap \{\w\in\DD[0,T]:\sup_{t\in [0,T]}|\w_t|\<R\} \right) + \P_{ \nu_{r}}\left(\tau_R(X^r)\<T\right).
	\end{align*}
By \Cref{stopped} we know $\tau_R(X^r)$ converges in distribution to $\tau_R(X)$. Hence
	\begin{align*}
		\limsup_{r\downarrow 0} \P_{ \nu_{r}}\left(X^r_{[0,T]}\in F\right) 
		&\< \P_{ \nu}\left(Y_{[0,T]}\in F\cap \{\w\in\DD[0,T]:\sup_{t\in [0,T]}|\w_t|\<R\} \right) + \P_{ \nu}\left(\tau_R(X)\<T\right)\\
		&= \P_{ \nu}\left(X_{[0,T]}\in F\cap \{\w\in\DD[0,T]:\sup_{t\in [0,T]}|\w_t|\<R\} \right) + \e .
	\end{align*}
Finally let $R\to\8$ and then $\e\to 0$ we get
	\begin{equation*}\limsup_{r\downarrow 0} \P_{ \nu_{r}}\left(X^r_{[0,T]}\in F\right)\<\P_{\nu}\left(X_{[0,T]}\in F\right).\end{equation*}
This shows that $X^r$ converge to $X$ in law on $\DD([0,\8);\R^d)$ equipped with the Skorokhod topology.
\end{proof}

\section{Approximation under individual starting points}\label{section thm2}

For pointwise starting convergence, we need some regularities or estimates of the analytic counterparts (Green function, harmonic functions, transition semigroup) of the continuous-time random walk processes with or without time change.  
  Recall that $W^{n}$ is the discrete time random walk and $\~W^n$  (resp. $X^{n}$) is the continuous time random walk without time change (resp. with time change) with scaling $r=r_{n}$ for some sequence $r_{n}\downarrow 0$.
Because the processes $\~W^n$ and $X^{n}$ starting at the single point will have positive probability staying at the starting point for some time, the transition semigroups of $\~W^n$ or $X^{n}$ are not absolutely continuous with respect to the Lebesgue measure, we need to work around this difficulty.
 
Let $\~W^{n,D}$ be the process  $\~W^{n}$ killed upon leaving $D$, and for $k\>1$ let $q_k^{n,D}(x,y)$ be the transition density of $W^{n,D}_k$. Note that $q_k^{n,D}(x,y)$ is symmetric in $x,y$. When $x$ is the origin, we denote the density by $q_k^{n,D}(y)$. If further $D = \Rd$ we denote $q_{k}^{n,D}$ by $q_{k}^{n}$. When $r_{n}=1$ we omit the superscript $n$ of $q$.

Let $g^{n,D}(x,y)$ be the occupation time density of $\~W^{n}$ from the second jump till leaving the domain $D\inn \Rd$, i.e., for any $f\in C_b(\Rd)$, $g^{n,D}(x, y)$ satisfies
\begin{align*}
	\int f(y) g^{n,D}(x, y) dy
	&= \EX_x \left[\int_{\eta_0^{n} + \eta_1^{n}}^{\tau_D(\~W^{n}) \vee (\eta_0^{n} + \eta_1^{n}) }f(\~ W^{n}_t)dt ; W_{1}^{n}\in D \right] \\
	&= \sum_{k = 2}^{\8} \int_{0}^{\8} e^{- \sigma_{\jd}^{-2} r_{n}^{-2} t } \frac{(\sigma_{\jd}^{-2} r_{n}^{-2} t)^{k-2}}{(k-2)!} dt ~\EX_x[f(W^{n}_k); k < \t_{D}(W^{n})] \\
	&= \sum_{k = 2}^{\8} \sigma_{\jd}^{2}r_{n}^{2} \int f(y) q_{k}^{n,D}(x, y) dy, 
\end{align*}
where $\eta_{0}^{n}$ and $\eta_{1}^{n}$ are first and second waiting times of $\~W^{n}$. 
Hence
\begin{equation*}
	g^{n,D}(x,y) = \sigma_{\jd}^2r_{n}^{2} \sum_{k = 2}^{\8} q_k^{n,D}(x,y).
\end{equation*}
Note that $g^{n,D}(x,y)$ is symmetric in $x,y$. When $x$ is the origin, we denote the density by $g^{n,D}(y)$. In addition, it enjoys shifting property, i.e., for any $x\in \Rd$ we have $g^{n,D}(x,\cdot) = g^{n,D-x}(\cdot-x)$.
When the underlying scaling $r_{n}=1$ we denote $g^{n,D}$ by $g^{D}$. 
We denote $g^{n,D}$ by $g^{n}$ when  $D = \Rd$.

\begin{remark}\rm
	We separate the first two jumps instead of just one jump because the proof of \Cref{off diagonal weak bound} requires $k\>2$.
\end{remark}
 
 \medskip

\Cref{green est} and \Cref{holder Harmonic functions} at the beginnings of \Cref{section green function} and \Cref{section harmonic function holder} respectively are the keys to establish the H\"older regularity of the semigroup of the process $X^{n,D}$.

\subsection{Green function estimates} \label{section green function}

In this section we prove the following crude bound for Green function estimates. Note that we only need the assumption \ref{jd8} to get the estimates.

\begin{prop} \label{green est}
    For $R>0$ and  $D = B(0,R)$, we have
	\begin{equation*}
		g^{n,D}(z)\<g_{R}(z)
	\end{equation*}
for all $z\in \R^d$ and $n \in \N$,
where 

$$ g_{R}(z) := 
\begin{cases}
    C_{1}R\1_{B(0,R)}(z) &\text{if $d = 1$} , \\
    C_{2}(\log(R/|z|)^{+} +1)\1_{B(0,R)}(z) &\text{if $d=2$}, \\
    C_d|z|^{2-d}\1_{B(0,R)}(z) &\text{if $d\>3$} .
\end{cases}
$$
\end{prop}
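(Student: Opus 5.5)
We first reduce to the unit scale. Since $q_k^{n,D}(x,y)=r_n^{-d}q_k^{D/r_n}(x/r_n,y/r_n)$, we get
\[
g^{n,D}(z)=\sigma_\jd^2 r_n^{2-d}\sum_{k\ge2} q_k^{D_n}(z/r_n),\qquad D_n:=B(0,R/r_n).
\]
Every $g_R$ in the statement scales the same way, since $r^{2-d}g_{R/r}(z/r)=g_R(z)$ up to constants in the cases $d\ge2$. So it suffices to prove $g^{D}(z)\le g_R(z)$ for the unit-scale walk, uniformly in $R\ge r_n$ and with constants independent of $R$. Killing only decreases $q_k$, so $q_k^D\le q_k$. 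We then split the sum at a time $K$ and write
\[
g^D(z)\le \sigma_\jd^2\Big(\sum_{2\le k\le K}q_k(z)+\sum_{k>K}q_k^D(0,z)\Big).
\]
The support claim $\1_{B(0,R)}$ is immediate, since the walk is killed before leaving $D$.

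\textbf{Heat kernel bounds for $q_k$.} The key input is a pointwise bound on the $k$-step density, valid for $k\ge2$:
\[
q_k(z)\le C k^{-d/2}\wedge C k\,|z|^{-d-\d'} \qquad\text{for } |z|^2\ge k,
\]
with some $\d'$ coming from the moment assumption in \ref{jd8}.

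\emph{On-diagonal part.} The bound $q_k\le Ck^{-d/2}$ for $k\ge2$ follows from the local CLT. For bounded $\jd$ with finite second moment, the characteristic function $\hat\jd$ satisfies $|\hat\jd(\theta)|\le 1-c|\theta|^2$ near $0$ and $\sup_{|\theta|\ge\e}|\hat\jd|<1$. We also have $\hat\jd\in L^2$ because $\jd\le C_\jd$ and $\jd\in L^1$. Then $\|q_k\|_\infty\le \int|\hat\jd|^k\le Ck^{-d/2}$ for $k\ge2$, and this is exactly where $k\ge2$ is needed.

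\emph{Off-diagonal part.} For $|z|^2\gg k$ there is no Gaussian tail, since $\jd$ is not compactly supported. Instead we use a "one big jump or a large martingale deviation" decomposition. On the event $|W_k|\approx|z|$, either some single step exceeds $|z|/(2k)$-ish, controlled by the moment of order $d-2+\d$, or the truncated walk deviates, which is handled by Burkholder–Davis–Gundy/Chebyshev. Conditioning on the other $k-1$ steps and using $\jd\le C_\jd$ converts these probability bounds into density bounds. Concretely, we would bound $q_k(z)$ by averaging over the annulus $B(z,|z|/2)$ and use $q_k=q_{k-1}*\jd$ to pass from probabilities to pointwise values.

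\textbf{Summing.} For $d\ge3$, we sum $\sum_{k\le|z|^2}q_k(z)$ with the off-diagonal bound and $\sum_{k>|z|^2}Ck^{-d/2}\le C|z|^{2-d}$. The moment exponent $d-2+\d$ is chosen precisely so that the first sum is also $\le C|z|^{2-d}$. This is the step I expect to be the main obstacle: the tail bound has to be sharp enough to beat $|z|^{2-d}$ after summing over $k\le|z|^2$, and it must hold for $|z|\lesssim1$ as well. For small $|z|$ we simply use $\sum_{2\le k\le K}Ck^{-d/2}\le C\le C|z|^{2-d}$ when $|z|\le R$ is bounded.

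For $d=1,2$ the free sums diverge, so we must use killing. By \Cref{uint}'s argument (the CLT plus the Markov property), $\P_0(\tau_D>k)\le e^{-ck/R^2}$. Combined with the Chapman–Kolmogorov bound
\[
q^D_k(0,z)\le \P(\tau_D>k/2)\,\sup q_{k/2}\le Ck^{-d/2}e^{-ck/R^2},
\]
summing over $k\ge|z|^2$ gives $CR$ for $d=1$ and $C(\log(R/|z|)^++1)$ for $d=2$. The range $k<|z|^2$ is handled by the off-diagonal bound as before.
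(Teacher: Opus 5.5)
Your overall architecture matches the paper's: rescaling to unit step, $q_k^D\le q_k$, splitting the sum at $k\approx|z|^2$, the on-diagonal bound $\|q_k\|_\infty\le Ck^{-d/2}$, and, for $d=1,2$, the Chapman--Kolmogorov bound $q_k^D\le Ck^{-d/2}e^{-ck/R^2}$. Your Fourier proof of the on-diagonal bound is a fine substitute for the local CLT. The gap is the off-diagonal estimate, the step you flagged, and as formulated it does not work, for three reasons.

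\emph{It is too weak even if granted.} For $d\ge3$,
\[
\sum_{k\le|z|^2}\min\{k^{-d/2},\,k|z|^{-d-\delta'}\}\asymp|z|^{(d+\delta')(2-d)/(d+2)},
\]
which is $\le C|z|^{2-d}$ only when $\delta'\ge2$. For $d=2$ it leaves an extra term of order $(1-\delta'/2)\log|z|$, which after rescaling $|z|\mapsto|z|/r_n$ is not uniform in $n$.

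\emph{It cannot be derived from the hypotheses.} Pointwise decay faster than $|z|^{-d}$ does not follow from a moment of order $m=\max\{2,d-2+\delta\}$. Indeed, $\jd$ may carry mass $\approx 2^{-j(m+\epsilon)}$ on balls of height $C_\jd$ placed symmetrically at distance $2^j$, and then $q_2\gtrsim|z|^{-m-\epsilon}$ at those points.

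\emph{Your mechanism loses the factor $k^{-d/2}$.} Conditioning on $k-1$ steps, i.e.\ $q_k=q_{k-1}*\jd$, gives
\[
q_k(z)\le C_\jd\,\mathbb P_0(|W_{k-1}|\ge|z|/2)+\|q_{k-1}\|_\infty\,\mathbb P(|\xi_1|\ge|z|/2).
\]
The first term carries only $C_\jd$, not $k^{-d/2}$, and summed over $k\le|z|^2$ it is of order $|z|^2$. The pigeonhole event of one step exceeding $|z|/(2k)$ is worse still, costing $k(2k/|z|)^m$.

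The missing idea is to split the walk into two halves, so that the sup bound and the tail bound multiply. Let $\bar k=\lceil k/2\rceil$. Here $k\ge2$ guarantees both halves contain at least one step; that is the real reason the sum starts at $k=2$, since $\|q_1\|_\infty\le C_\jd$ anyway. Write $q_k=q_{k-\bar k}*q_{\bar k}$ and split the integral over $\{|y|>|z|/2\}$ and $\{|y|\le|z|/2\}\subset\{|z-y|\ge|z|/2\}$:
\[
q_k(z)\le\|q_{k-\bar k}\|_\infty\,\mathbb P_0(|W_{\bar k}|\ge|z|/2)+\|q_{\bar k}\|_\infty\,\mathbb P_0(|W_{k-\bar k}|\ge|z|/2)\le Ck^{-d/2}\bigl(\sqrt k/|z|\bigr)^m .
\]
This uses the on-diagonal bound together with BDG ($\mathbb E_0|W_j|^m\le Cj^{m/2}$) and Chebyshev. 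Then $\sum_{k\le|z|^2}k^{(m-d)/2}|z|^{-m}\le C|z|^{2-d}$ precisely because $m>d-2$; this is where the moment exponent $d-2+\delta$ enters. For $d=2$ (with $m=2$) the same sum is $O(1)$. With this lemma in place of yours, the rest of your argument goes through as in the paper.
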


Let $p_{t}^{n}$ denotes the density of the measure $\P_{0}(\~W^n_{t}\in \cdot , \eta_{0}^{n} \< t)$.
Recall that for $k\>1$, $q_{k}^{n}$ is the transition density of $W_{k}^{n}$, then we can see that
$$
	p_{t}^{n} (x)
	= \sum_{k = 1}^{\8} \P(N_{t}^{n} = k) q_{k}^{n}(x)
	= \sum_{k = 1}^{\8} e^{- \sigma_{\jd}^{-2} r_{n}^{-2} t } \frac{(\sigma_{\jd}^{-2} r_{n}^{-2} t)^{k}}{k!} q_{k}^{n}(x).
$$
Let $\bar p(z) = \frac{1}{(2\pi \sigma_{\jd}^{2})^{d/2}}\exp\{-|z|^{2}/(2\sigma_{\jd}^{2})\}$ be the normal density with zero mean and covariance matrix $\sigma_{\jd}^{2}I$. Set $\bar p_k(z) = k^{-d/2} \bar p(z / \sqrt{k})$.

\begin{prop} \label{LCLT}
    Under the condition $\|\jd\|_{\8}<\8$ we have 
    \begin{equation*}
        \sup_{z\in\Rd} | q_k(z) - \bar p_k(z) | = o(k^{-d/2}) \quad \hbox{as } k\to \8. 
    \end{equation*}
\end{prop}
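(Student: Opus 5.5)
This is the uniform local central limit theorem for densities. I would prove it by Fourier inversion, since $q_k = \jd^{*k}$ (for $r=1$, walk started at $0$). Let $\hat\jd(\theta)=\int e^{i\langle\theta,x\rangle_{\Rd}}\jd(x)dx$. Assumption (a) gives that $\hat\jd$ is real, $C^2$, and satisfies $\hat\jd(\theta)=1-\tfrac{\sigma_\jd^2}{2}|\theta|^2+o(|\theta|^2)$ near $0$. Since $\jd\in L^1\cap L^\infty\subset L^2$, we have $\hat\jd\in L^2$, so $\hat\jd^{\,k}\in L^1$ for every $k\ge 2$. Fourier inversion then gives, for all $z$,
\[ q_k(z)=(2\pi)^{-d}\int e^{-i\langle\theta,z\rangle_{\Rd}}\hat\jd(\theta)^k d\theta ,\qquad \bar p_k(z)=(2\pi)^{-d}\int e^{-i\langle\theta,z\rangle_{\Rd}}e^{-k\sigma_\jd^2|\theta|^2/2}d\theta. \]
(Both sides are continuous, so the identity holds pointwise.) Hence $\sup_z|q_k-\bar p_k|\le (2\pi)^{-d}\int|\hat\jd(\theta)^k-e^{-k\sigma_\jd^2|\theta|^2/2}|d\theta$. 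Substituting $\theta=u/\sqrt k$, it suffices to show
\[ \int_{\Rd}\bigl|\hat\jd(u/\sqrt k)^k-e^{-\sigma_\jd^2|u|^2/2}\bigr|du\to0 . \]

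I would split the $u$-integral into three regions. On $|u|\le A$, the expansion gives pointwise convergence $\hat\jd(u/\sqrt k)^k\to e^{-\sigma_\jd^2|u|^2/2}$, and the integrand is bounded, so the integral tends to $0$ for each fixed $A$. On $A<|u|\le \delta\sqrt k$, choose $\delta$ small so that $|\hat\jd(\theta)|\le 1-\tfrac{\sigma_\jd^2}{4}|\theta|^2\le e^{-\sigma_\jd^2|\theta|^2/4}$ for $|\theta|\le\delta$. The integrand is then dominated by $2e^{-\sigma_\jd^2|u|^2/4}$, whose tail integral is small once $A$ is large. The Gaussian part of the integrand on $|u|>\delta\sqrt k$ is trivially negligible.

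The main obstacle is the remaining region $|\theta|>\delta$, i.e.\ showing $k^{d/2}\int_{|\theta|>\delta}|\hat\jd(\theta)|^k d\theta\to0$. Here I would use two facts. First, $\jd$ is a density, so by Riemann--Lebesgue $\hat\jd(\theta)\to0$ as $|\theta|\to\infty$. Second, $\jd$ is absolutely continuous (nonlattice), so $|\hat\jd(\theta)|<1$ for $\theta\ne0$. Together with continuity these give $\eta:=\sup_{|\theta|>\delta}|\hat\jd(\theta)|<1$. Then, by Plancherel,
\[ \int_{|\theta|>\delta}|\hat\jd|^k\le \eta^{k-2}\int|\hat\jd|^2=(2\pi)^d\eta^{k-2}\|\jd\|_2^2\le(2\pi)^d\eta^{k-2}\|\jd\|_\infty , \]
and $k^{d/2}\eta^{k-2}\to0$. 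This is exactly where $\|\jd\|_\infty<\infty$ is needed: it guarantees integrability of $\hat\jd^{\,k}$ for $k\ge2$. The strict inequality $|\hat\jd(\theta)|<1$ follows because equality would force $\langle\theta,\xi_1\rangle_{\Rd}$ to be a.s.\ in a lattice coset, which is impossible for a distribution with a density. Combining the three regions yields $\sup_z|q_k(z)-\bar p_k(z)|=o(k^{-d/2})$.
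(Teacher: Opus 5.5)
Your proof is correct and complete, and it differs from the paper mainly in being self-contained. The paper does not prove this statement. It quotes the one-dimensional local limit theorem for densities (Ibragimov--Linnik, Theorem 4.3.1), asserts without detail that it extends to $d$ dimensions, and rescales $\sup_z|k^{d/2}q_k(\sqrt k z)-\bar p(z)|=o(1)$ into the stated form.

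Your argument is the standard proof behind that cited theorem, carried out directly in $\Rd$. Its ingredients are:
\begin{itemize}
\item Fourier inversion, together with pointwise convergence on $|u|\le A$;
\item the domination $|\hat\jd(\theta)|\le e^{-\sigma_\jd^2|\theta|^2/4}$ for $|\theta|\le\delta$;
\item $\eta=\sup_{|\theta|>\delta}|\hat\jd(\theta)|<1$ (from non-lattice plus Riemann--Lebesgue), combined with Plancherel on the outer region.
\end{itemize}
So it supplies the $d$-dimensional extension the paper leaves to the reader. It also makes explicit how the hypothesis is used: only $\jd\in L^2(\Rd)$ is needed, which guarantees $\hat\jd^{\,k}\in L^1$ for $k\ge2$. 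As in Ibragimov--Linnik, boundedness of some convolution power $\jd^{*N}$ would already suffice.

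One point is worth writing out. For $k\ge2$, $q_k=\jd^{*k}$ is continuous, since $\jd*\jd$ is a convolution of two $L^2$ functions and further convolution with $L^1$ functions preserves continuity. This is what justifies the inversion formula at every $z$, and hence the supremum over all $z\in\Rd$ rather than an essential supremum.
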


\begin{proof}
    This is essentially due to the local central limit theorem. The result in \cite[Theorem 4.3.1]{ibragimov1975independent} can be generalized to $d$ dimensions. By condition \ref{jd8} $\jd$ is bounded and the local central limit theorem applies. We have 
    \begin{equation*}
        \sup_{z\in\Rd} | k^{d/2} q_k(\sqrt{k} z) - \bar p(z) | = o_k(1)  \quad \hbox{as } k\to \8 .
    \end{equation*}
	Plugging in $\bar p(z) = k^{d/2} \bar p_k(\sqrt{k}z)$ we get the desired result.
\end{proof}

\begin{corollary}
	\label{qk bounds}
	There exists $C>0$ such that for $k\> 1$ we have 
	\begin{equation*}
		\|q_{k}\|_{\8} \< C k^{-d/2} , 
	\end{equation*}
	and for any $L>0$,  there exists $c_L > 0$ and $K_{L} > 0$ such that 
	\begin{equation*}
		q_{k}(z) \> c_{L} k^{-d/2}
	\end{equation*}
	for any $k \> K_{L}$ and all $|z| \< L\sqrt{k}$.
\end{corollary}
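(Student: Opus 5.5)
The plan is to derive both bounds directly from \Cref{LCLT}, handling large $k$ via the local CLT and small $k$ by a separate elementary argument.

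\emph{Upper bound.} By \Cref{LCLT} there is $K_0$ such that for $k\geq K_0$,
\[
\sup_z|q_k(z)-\bar p_k(z)|\leq k^{-d/2}.
\]
Since $\|\bar p_k\|_\infty=(2\pi\sigma_\jd^2 k)^{-d/2}$, this gives $\|q_k\|_\infty\leq C k^{-d/2}$ for $k\geq K_0$. For the finitely many $1\leq k<K_0$ we use Young's inequality. Since $q_k=\jd*q_{k-1}$ with $q_{k-1}$ a probability density (for $k\geq 2$), we get $\|q_k\|_\infty\leq \|\jd\|_\infty\, \|q_{k-1}\|_{1}=\|\jd\|_\infty\leq C_\jd$, and $q_1=\jd$ is bounded by \ref{jd8}. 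Hence $\|q_k\|_\infty\leq C_\jd\leq C_\jd K_0^{d/2}k^{-d/2}$ for $k<K_0$. Taking $C$ to be the maximum of the two constants proves the first claim.

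\emph{Lower bound.} Fix $L>0$. For $|z|\leq L\sqrt k$ we have
\[
\bar p_k(z)=k^{-d/2}\bar p(z/\sqrt k)\geq k^{-d/2}(2\pi\sigma_\jd^2)^{-d/2}e^{-L^2/(2\sigma_\jd^2)}=:2c_L k^{-d/2}.
\]
By \Cref{LCLT} choose $K_L$ so that $\sup_z|q_k(z)-\bar p_k(z)|\leq c_L k^{-d/2}$ for $k\geq K_L$. Then $q_k(z)\geq \bar p_k(z)-c_Lk^{-d/2}\geq c_Lk^{-d/2}$ on $|z|\leq L\sqrt k$.

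The only mild subtlety is that $q_k$ is defined as a density, so it is determined only a.e. and the pointwise statements require a version. The convolution $q_k=\jd*q_{k-1}$ for $k\geq2$ is defined everywhere and is the version implicit in \Cref{LCLT}, and for $k=1$ the bound holds a.e. This is not a real obstacle; the argument is a direct consequence of \Cref{LCLT}.
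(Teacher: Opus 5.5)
Your proposal is correct and follows the same route as the paper: both bounds are read off from \Cref{LCLT}, with the explicit Gaussian $\bar p_k$ supplying the constants for large $k$, and $\|q_k\|_\infty\leq\|\jd\|_\infty$ covering the finitely many small $k$. Your constant $C_\jd K_0^{d/2}$ for small $k$ is in fact slightly more careful than the paper's choice $C=C'\vee C_\jd$, which by itself does not give $\|q_k\|_\infty\leq Ck^{-d/2}$ for $2\leq k<K_0$.
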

\begin{proof}
	From \Cref{LCLT} we see that 
	\begin{equation*}
		\|q_{k}\|_{\8} \< \|\bar p_{k}\|_{\8} + o(k^{-d/2}) \< C' k^{-d/2} 
	\end{equation*}
	for any $k$ sufficiently large. Hence we may take $C = C' \vee C_{\jd}$.
	On the other hand, we can find $K_{L}>0$ such that 
	\begin{equation*}
		\inf_{|z| \< L\sqrt{k}} q_{k}(z) \> \inf_{|z| \< L\sqrt{k}} \bar p_{k}(z) - o(k^{-d/2}) \> c_{L} k^{-d/2}
	\end{equation*}
	for any $k \> K_{L}$. We complete the proof.
\end{proof}

\begin{prop}\label{pt bounds}
	There exists $C>0$ such that
	\begin{equation*}
		\| p_t^{n}\|_{\8} \< Ct^{-d/2}
	\end{equation*}
	for all $t>0$, $n\in\N$; and for any $L>0$ there exist $c_{L}',K_{L}'>0$ such that 
	\begin{equation*}
		p_{t}^{n}(x) \> c_{L}' t^{-d/2}
	\end{equation*}
	for all $r_{n}>0$, $t>0$ and $x\in \Rd$ satisfying $t/(\sigma_{\jd}^{2}r_{n}^{2})> K_{L}'$ and $|x| \< L \sqrt{t/(2\sigma_{\jd}^2)}$.
\end{prop}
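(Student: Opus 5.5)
We plan to reduce the statement to the case $r_n=1$ by scaling and then combine Corollary \ref{qk bounds} with elementary Poisson estimates. Write $s:=t/(\sigma_{\jd}^2 r_n^2)$ for the Poisson parameter, so that $N^n_t\sim\mathrm{Poisson}(s)$. Since $W^n_k=r_n W^1_k$ when started at $0$, the densities scale as $q^n_k(x)=r_n^{-d}q_k(x/r_n)$. Hence
\begin{equation*}
p^n_t(x)=r_n^{-d}\sum_{k\geq 1}e^{-s}\frac{s^k}{k!}\,q_k(x/r_n).
\end{equation*}
Because $r_n^{-d}=(\sigma_{\jd}^2 s/t)^{d/2}$, both claimed bounds are equivalent to the following bounds in terms of $s$: an upper bound $\sum_{k\ge1}e^{-s}\frac{s^k}{k!}q_k(y)\le C s^{-d/2}$ uniformly in $y$, and a lower bound $\sum_{k\ge1}e^{-s}\frac{s^k}{k!}q_k(y)\ge c\, s^{-d/2}$ whenever $s>K'_L$ and $|y|\le L\sqrt{s}$. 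For the second, note that $|x|\le L\sqrt{t/(2\sigma_{\jd}^2)}$ translates to $|y|=|x|/r_n\le L\sqrt{s/2}\le L\sqrt s$.

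\emph{Upper bound.} By Corollary \ref{qk bounds}, $\|q_k\|_\8\le Ck^{-d/2}$, so we need $\EX[(N)^{-d/2};N\ge1]\le C s^{-d/2}$ for $N\sim\mathrm{Poisson}(s)$, for all $s>0$. For $s\le 1$ this is trivial, since the left side is at most $\P(N\ge1)\le s\le s^{-d/2}$. For $s\ge1$ we split into the events $\{N\ge s/2\}$ and $\{1\le N<s/2\}$. On the first event the contribution is at most $(s/2)^{-d/2}$. The second event has probability at most $e^{-cs}$ by a Chernoff bound, which is $\le C s^{-d/2}$. This is routine.

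\emph{Lower bound.} Since $q_k\ge 0$, we may restrict the sum to $k\in[s/2,2s]$. For such $k$ we have $|y|\le L\sqrt s\le L\sqrt{2k}$, so Corollary \ref{qk bounds} applied with $L\sqrt2$ in place of $L$ gives $q_k(y)\ge c_{L\sqrt2}k^{-d/2}\ge c\,(2s)^{-d/2}$, provided $k\ge K_{L\sqrt2}$. This last condition is guaranteed by choosing $K'_L\ge 2K_{L\sqrt2}$. It then remains to bound $\P(s/2\le N\le 2s)$ from below by a positive constant for $s$ large, which follows from Chebyshev's inequality since $\mathrm{Var}(N)=s$. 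Enlarging $K'_L$ if necessary, this probability is at least $1/2$, and the lower bound follows.

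The only delicate point is bookkeeping: one must make sure the constants in the upper bound are uniform over all $t$ and $n$, including the regime of small $s$. Both the upper and lower bounds rest entirely on the uniform local CLT bounds of Corollary \ref{qk bounds}, together with Poisson concentration.
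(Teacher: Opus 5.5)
Your proof is correct and follows essentially the paper's argument: write $p_t^n$ as a Poisson mixture of the $q_k^n$, control the bulk $k\asymp s$ with Corollary \ref{qk bounds}, and handle the tails with Poisson concentration. The differences are cosmetic. You rescale to $r_n=1$ first and use Chernoff/Chebyshev, whereas the paper keeps $r_n$ explicit and uses a single exponential tail bound $\P(|N_s-s|\geq s/2)\leq c_1e^{-c_2 s}$ in both halves. Your bookkeeping checks out, including the small-$s$ regime and the translation of $|x|\leq L\sqrt{t/(2\sigma_{\jd}^2)}$ into $|x/r_n|\leq L\sqrt{s/2}$.
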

\begin{proof}
	We know that $p_{t}^{n}$ is a Poisson mixture of $q_{k}^{n}$, so it is straightforward to derive bounds of $p_{t}^{n}$ from $q_{k}^{n}$. By \Cref{qk bounds} and scaling we have $|q_{k}^{n}|\< C k^{-d/2} r_{n}^{-d}$ for all $k,n\>1$.
    Choosing $\e = 1/2$, standard exponential estimates shows that there exist $c_{1}, c_{2} > 0$ such that
    $\P( | N_{s} - s | \> \e s ) \< c_{1} e^{-c_{2}s}$ for any $s>0$ where $N$ is a Poisson process. In particular we can choose $c_{1}<1$ if we let $s$ be sufficiently large, say $s> C_{0}$. Setting $s=\frac{t}{\sigma_{\jd}^{2} r_{n}^{2}}$, then 
    \begin{align*}
		p_{t}^{n}(x) 
        = \sum_{k = 1}^{\8} \P(N_{t}^{n} = k) q_{k}^{n}(x)
        \< c_{1} e^{-c_{2} t / (\sigma_{\jd}^{2} r_{n}^{2})}  C r_{n}^{-d} + \sum \P(N_{t}^{n} = k) q_{k}^{n}(x) , 
	\end{align*}
    where here and for the remainder of this proof we write just $\sum$ to denote the sum over all integers $k$ with $|k-\frac{t}{\sigma_{\jd}^{2} r_{n}^{2}}| < \e \frac{t}{\sigma_{\jd}^{2} r_{n}^{2}}$.
    Hence for these $k$ we have $(1-\e)t < \sigma_{\jd}^{2} r_{n}^{2}k < ( 1 + \e ) t $ . On the one hand,
    \begin{align*}
        \sum \P(N_{t}^{n} = k) q_{k}^{n}(x)
        &\< \sum \P(N_{t}^{n} = k) C r_{n}^{-d} k^{-d/2}\\
        &\< \sum \P(N_{t}^{n} = k) C (t / \sigma_{\jd}^{2})^{-d/2}\\
        &\< C (t / \sigma_{\jd}^{2})^{-d/2}.
    \end{align*}
	On the other hand, 
	\begin{equation*}
		c_{1} e^{-c_{2} t / (\sigma_{\jd}^{2} r_{n}^{2})} C r_{n}^{-d}
		= f\left(\frac{t}{\sigma_{\jd}^{2}r_{n}^{2}}\right) C (t/\sigma_{\jd}^{2})^{-d/2} , 
	\end{equation*}
	where  $ f(z) = c_{1} e^{-c_{2} z } z^{d/2} $ is bounded in $z>0$.
    Hence together we have $p_{t}^{n} \< C (t/\sigma_{\jd}^{2})^{-d/2}$.

\medskip

	For the lower bound, we may choose $c_{1}<1$ when $s=\frac{t}{\sigma_{\jd}^{2} r_{n}^{2}} > C_{0}$. For any $L>0$, let $c_{L}, K_{L}$ be as in \Cref{qk bounds} and let $ \frac{(1-\e)t}{\sigma_{\jd}^{2} r_{n}^{2}} \> K_{L}$. Then $k \> K_{L}$ for $k$ in the summation set and
	\begin{align*}
		p_{t}^{n}(x) 
        &\> \sum \P(N_{t}^{n} = k) q_{k}^{n}(x) \\
        &\> \sum \P(N_{t}^{n} = k) r_{n}^{-d} c_{L} k^{-d/2}\\
		&\> (1 - c_{1}e^{-c_{2}t/(\sigma_{\jd}^{2} r_{n}^{2})}) c_{L} \left( \frac{(1+\e)t}{\sigma_{\jd}^{2}} \right)^{-d/2} \\
		&\> (1-c_{1})c_{L} (\sigma_{\jd}^{2}/2)^{d/2} t^{-d/2}
	\end{align*}
	for all $|x| \< L \sqrt{\frac{(1-\e)t}{\sigma_{\jd}^{2} r_{n}^{2}}} r_{n} = L ( 2\sigma_{\jd}^{2})^{-1/2}\sqrt{t}$. Hence we can set $c_{L}' = (1-c_{1})c_{L} (\sigma_{\jd}^{2}/2)^{d/2}$. Combining the restriction $\frac{t}{\sigma_{\jd}^{2} r_{n}^{2}} > C_{0}$ and $ \frac{(1-\e)t}{\sigma_{\jd}^{2} r_{n}^{2}} \> K_{L}$ we get $\frac{t}{\sigma_{\jd}^{2} r_{n}^{2}} \> C_{0} \vee (2K_{L})$. Hence we can set $K_{L}' = C_{0} \vee (2K_{L})$.
\end{proof}

\begin{prop} \label{off diagonal weak bound}
    If $\int |z|^{m} \jd(z)dz < \8$ for some $m\>2$, then we have 
    \begin{equation*}
        q_k(x) \< C k^{-d/2} \left(\sqrt{k}/|x|\right)^{m} \quad \hbox{for } k\geq 2. 
    \end{equation*}
  \end{prop}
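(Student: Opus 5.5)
Here $q_k$ is the density of $S_k:=\xi_1+\dots+\xi_k$ (scale $r=1$, started at the origin). The plan is to combine a moment bound with the sup-norm bound $\|q_k\|_\8\< Ck^{-d/2}$ from \Cref{qk bounds}. We may assume $|x|\>\sqrt k$, since otherwise the claim follows directly from \Cref{qk bounds}. The requirement $k\>2$ suggests splitting the sum into two halves and using the convolution structure. Write $k=k_1+k_2$ with $k_1=\lfloor k/2\rfloor\>1$ and $k_2=\lceil k/2\rceil\>1$, so that $q_k=q_{k_1}*q_{k_2}$. If $S_k=x$, then either $|S_{k_1}|\>|x|/2$ or $|S_k-S_{k_1}|\>|x|/2$. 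This gives
\begin{equation*}
q_k(x)\<\int_{|y|\>|x|/2} q_{k_1}(y)q_{k_2}(x-y)\,dy+\int_{|x-y|\>|x|/2} q_{k_1}(y)q_{k_2}(x-y)\,dy .
\end{equation*}
In the first integral, bound $q_{k_2}(x-y)\<\|q_{k_2}\|_\8\< Ck^{-d/2}$ (using $k_2\asymp k$). What remains is $\P(|S_{k_1}|\>|x|/2)$. The second integral is symmetric, with the roles of $k_1$ and $k_2$ exchanged.

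The tail probability is handled with a moment inequality. By Markov's inequality, $\P(|S_j|\>|x|/2)\<2^m\EX|S_j|^m/|x|^m$. The key step is then $\EX|S_j|^m\< C j^{m/2}$ for $j\>1$, given that $\int|z|^m\jd(z)dz<\8$ with $m\>2$. I would obtain this coordinatewise from the Burkholder--Davis--Gundy (or Rosenthal) inequality for the martingale $S_j$:
\begin{equation*}
\EX|S_j|^m\< C_m\,\EX\Big(\sum_{i=1}^j|\xi_i|^2\Big)^{m/2}\< C_m\, j^{m/2-1}\sum_{i=1}^j\EX|\xi_i|^m = C_m j^{m/2}\EX|\xi_1|^m,
\end{equation*}
where the middle inequality is Jensen (or H\"older) for the convex function $s\mapsto s^{m/2}$, valid since $m\>2$. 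Substituting $j=k_1\asymp k$ gives $\P(|S_{k_1}|\>|x|/2)\< C(\sqrt k/|x|)^m$. Combining the two integrals yields $q_k(x)\< Ck^{-d/2}(\sqrt k/|x|)^m$.

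The main subtlety is the convolution splitting. It is what lets us pair a pointwise sup bound for one factor with a probability (tail) bound for the other, and this is exactly why $k\>2$ is needed: both halves must be nonempty. It also relies on \Cref{qk bounds} holding uniformly for all $j\>1$, with constant $C\vee C_\jd$ covering small $j$. The ratio between $k_i^{-d/2}$ and $k^{-d/2}$ is at most $2^{d/2}$ or so, so it is absorbed into $C$. No further obstacle is expected.
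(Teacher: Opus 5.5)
Your proposal is correct and follows essentially the same route as the paper. Both write $q_k=q_{k-\bar k}*q_{\bar k}$ with both pieces of size comparable to $k$, bound one factor by the sup-norm estimate of \Cref{qk bounds} and the other by the tail $\P_0(|W_j|\ge |x|/2)$, and control that tail by BDG plus Chebyshev via $\EX_0|W_j|^m\le Cj^{m/2}$; your explicit Jensen step after BDG just fills in a detail the paper leaves implicit.
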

  
\begin{proof}
	The proof is inspired by \cite[Proposition 2.4.6]{lawler2010random}.
    For $k\> 2$, let $\bar k = k / 2$ if $k$ is even and $\bar k = (k+1) / 2$ otherwise. In either case as $k\>2$, we have $\bar k \vee (k - \bar k)\>1$ and
    \begin{align*}
         q_k(x) 
        &=  q_{k - \bar k} * q_{\bar k} (x)\\
        &= \int_{|z| > |x| / 2}  q_{k - \bar k} (x-z) q_{\bar k} (z) dz + \int_{|z| \< |x| / 2}  q_{k - \bar k} (x-z) q_{\bar k} (z) dz\\
		&\< C k^{-d/2}  \P_{0}(|W_{\bar k}|\>|x|/2) + C k^{-d/2} \P_{0}(|W_{k - \bar k}|\>|x|/2).
    \end{align*}
	In the last inequality we use \Cref{qk bounds}.
    We claim $\P_{0}(|W_{\bar k}|\>|x|/2)\< C(\sqrt{k}/|x|)^{m}$. To see this, we can apply BDG inequality (see, e.g. \cite[Theorem 23.12]{kallenberg1997foundations}) to get $\EX_{0}[|W_{\bar k}|^{m}]\< C k^{m/2}$, and then by Chebyshev inequality 
    \begin{equation*}
        \P_{0}(|W_{\bar k}|>|x|/2)\< C \frac{ \EX_{0}[|W_{\bar k}|^{m}] }{ |x|^{m}} \< C (\sqrt{k}/|x|)^{m}.
    \end{equation*}
    We have a similar bound for $\P_{0}(|W_{k - \bar k}|\>|x|/2)$. Hence we finish the proof.
\end{proof}

\begin{prop} \label{killed weak bound}
    Let $R>0$ and $D\inn B(0,R)$. For $d = 1, 2$, there exists $c_R=cR^{-2}>0$ for some $c>0$ such that for all $k \> 2$ and $r_{n}$ sufficiently small that for any $x\in D/r_{n}$
    \begin{equation*}
        q^{D/r_n}_{k}(x) \< C k^{-d/2} e^{- c_R r_{n}^{2}k} .
    \end{equation*}
\end{prop}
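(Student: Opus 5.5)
We read $q^{D/r_n}_k(x)$ as the density at $x$ of the unit-scale walk $W_k$ started at $0$ and killed on leaving $D/r_n$. By the shift property this is the same as the density of the walk from $x_0$ killed on leaving $D/r_n$, so the statement is a bound uniform in the starting point. The plan is to split $k$ into blocks of length of order $R^2/r_n^2$ and combine two ingredients: an $L^\infty$ on-diagonal bound for one step of the chain, and a geometric decay of survival probabilities across the blocks.

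\emph{Uniform survival bound.} Set $m:=\lceil A R^2/r_n^2\rceil$. As in the proof of \Cref{uint}, the central limit theorem (Donsker's principle, as in \Cref{hitting converge}) gives $p\in(0,1)$ and $A\ge1$ such that, for every starting point $y\in B(0,R/r_n)$ and every sufficiently small $r_n$, the walk leaves $B(y,2R/r_n)\supseteq B(0,R/r_n)$ within $m$ steps with probability at least $p$. The Markov property then gives
\begin{equation*}
\sup_{y\in D/r_n}\P_y(\tau_{D/r_n}(W)>jm)\le (1-p)^j \quad\text{for all } j\ge0 .
\end{equation*}

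\emph{Combining with the on-diagonal bound.} Write $k=k_1+k_2$ with $k_2=\lceil k/2\rceil\ge1$. Because killing only decreases densities, the Chapman--Kolmogorov identity and \Cref{qk bounds} give
\begin{equation*}
q^{D/r_n}_k(x_0,x)=\int_{D/r_n} q^{D/r_n}_{k_1}(x_0,y)\,q^{D/r_n}_{k_2}(y,x)\,dy \le \|q_{k_2}\|_\infty\,\P_{x_0}(\tau_{D/r_n}>k_1)\le Ck^{-d/2}(1-p)^{\lfloor k_1/m\rfloor}.
\end{equation*}
For $k_1\ge m$ we have $(1-p)^{\lfloor k_1/m\rfloor}\le (1-p)^{k_1/(2m)}$, so this is at most $\exp(-c\,k\,r_n^2/R^2)$ with $c=c(p,A)$. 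For $k_1<m$ the factor $e^{-c r_n^2 k/R^2}$ is bounded below by a constant, and the bound $Ck^{-d/2}$ from \Cref{qk bounds} alone suffices after enlarging $C$. Together these give the claim with $c_R=cR^{-2}$.

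The argument never uses $d\le2$. I expect the only delicate point to be the survival step: the constants $p$ and $A$ must be uniform in the starting point and in small $r_n$, which should follow from Donsker's principle via a compactness argument over starting points. The restriction to small $r_n$ is needed exactly so that $m$ is large enough for the CLT approximation to hold.
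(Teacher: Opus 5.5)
Your proposal is correct and follows essentially the same route as the paper. Both arguments split $k$ into two halves by Chapman--Kolmogorov. On one half they use the uniform $L^\infty$ bound from \Cref{qk bounds}. On the other they bound the survival probability, which decays geometrically over blocks of length of order $R^2/r_n^2$ by the Markov property and Donsker's principle.

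The uniformity in the starting point, which you flag as the delicate step, is automatic from translation invariance. Staying in $B(0,R/r_n)$ from any $y$ in that ball is dominated by $\P_0(\max_{i\le m}|W_i|<2R/r_n)$, so no compactness argument over starting points is needed.
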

\begin{proof}
    Let $\bar k = k / 2$ if $k$ is even and $\bar k = (k+1) / 2$ otherwise. 
    Then by the Chapman-Kolmogorov equation and \Cref{qk bounds}
    \begin{equation*}
        q^{D/r_n}_{k}(x) 
		\< C k^{-d/2}\int q_{\bar k}^{D/r_n} (y)dy.
    \end{equation*}
	On the other hand, 
    \begin{equation*}
        \int q_{\bar k}^{D/r_n} (y)dy 
        \< \P_{0}\left(\max_{i\<\bar k}|W_{i}^{1}|\< R/r_n\right) , 
    \end{equation*} 
	where $W^{1}$ denotes $W^{r}$ for $r=1$ (i.e. the discrete time random walk without scaling).
    Let $j_n$ be the largest nonnegative integer such that $j_n \< \bar k / \lfloor {R^{2}}/r_n^{2}\rfloor$. 
    Then by the strong Markov property, 
    \begin{align*}
        \P_{0}(\max_{i\<\bar k}|W_{i}^{1}| < R/r_n) 
		&\< \sup_{|x|\< R/r_{n}}\P_{x}\left(\max_{i\<\lfloor R^{2}/r_{n}^{2} \rfloor}|W_{i}^{1}| < 2R/r_n\right)^{j_n} \\
		&\< \P_{0}\left(\max_{i\< \lfloor R^{2}/r_{n}^{2} \rfloor}|\frac{r_{n}}{R} W_{i}^{1}| < 3 \right)^{j_n} \\
		&\< C e^{-{c_R r_{n}^{2}k}}
    \end{align*}
	where $c_{R}=cR^{-2}$ for some $c>0$.
    The last inequality holds for sufficient small $r_n$ because by Donsker's theorem 
	\begin{equation*}
		\sup_{i\< \lfloor s^{-2} \rfloor}|s W_{i}^{1}| \to \sup_{0\<t\<\sigma_{\jd}^{2}d}|W_{t}|
		\quad \hbox{as } s\downarrow 0
	\end{equation*}
	in distribution, where $W$ is the standard Brownian motion.
\end{proof}

Recall that $q_k^{n,D}$ is the transition density of $W^{n}_k$ killed upon leaving domain $D\inn \Rd$ and $g^{n,D}(y) = \sigma_{\jd}^{2}r_{n}^{2} \sum_{k = 2}^{\8} q_k^{n,D}(y)$.
When the underlying scaling $r_{n}=1$ we omit the first superscript. When $D = \Rd$ we omit the second superscript.

\begin{proof}
	[Proof of \Cref{green est}]
    By scaling we see that 
    \begin{equation*}
        q^{n,D}_{k}(z) = r_{n}^{-d} q^{D/r_n}_{k}(z/r_n) , 
    \end{equation*}
    which implies

    \begin{equation}\label{green function scaling}
         g^{n,D}(z) = \sigma_{\jd}^{2} \sum_{k=2}^{\8} r_{n}^{2-d} q^{D/r_n}_{k}(z/r_n).
    \end{equation}
    First we show the case when $d \> 3$. Using \Cref{green function scaling} we write 
    \begin{equation*}
         g^{n,D}(z) = \sigma_{\jd}^{2} |z|^{2-d} \sum_{k=2}^{\8} (|z|/r_{n})^{d-2} q^{D/r_n}_{k}(z/r_n).
    \end{equation*}
    As $q^{D/r_n}_{k} \< q_{k}$, we only need to show that
    \begin{equation} \label{eq:green sum}
        \sup_{y\in\Rd}\sum_{k=2}^{\8} |y|^{d-2} q_{k}(y) < \8.
    \end{equation}
    For $k \< |y|^{2}$ we apply \Cref{off diagonal weak bound} to get 
    $$
        \sum_{k\< |y|^{2}} q_{k}(y) 
        \<\sum_{k\< |y|^{2}} C k^{-d/2} (\sqrt{k}/|y|)^{m}
        = C|y|^{-m} \sum_{k\< |y|^{2}} k^{(m-d)/2}.
    $$
    By the assumption \ref{jd8} we can choose $m > d - 2$, then the sum is bounded by $C|y|^{m-d+2}$, hence
    \begin{equation*}
        \sum_{k\< |y|^{2}} q_{k}(y) \< C |y|^{2-d}.
    \end{equation*}
    For $k \> |y|^{2}$ we apply \Cref{qk bounds} to see that 
    \begin{equation*}
        \sum_{k \> |y|^{2}} q_{k}(y) \< C \sum_{k \> |y|^{2}} k^{-d/2} = O(|y|^{2-d}).
    \end{equation*}
Hence \eqref{eq:green sum} is proved.

    For $d = 2$, by \Cref{off diagonal weak bound} 
    \begin{equation*}
        \sum_{k\< |z / r_{n}|^{2}} q_{k}^{D/r_{n}}(z / r_{n}) 
        \<\sum_{k\< |z / r_{n}|^{2}} C k^{-1} (\sqrt{k}/|z / r_{n}|)^{2}\\
        = C.
    \end{equation*}
    On the other hand, by \Cref{killed weak bound}
    \begin{align*}
        \sum_{k > |z/r_{n}|^{2}} q_{k}^{D/r_{n}}(z / r_{n}) 
        &\< \sum_{k > |z/r_{n}|^{2}} \frac{1}{kr_{n}^{2}} e^{-c_R r_{n}^{2}k} r_{n}^{2} \\
		&= \sum_{k > |z/r_{n}|^{2}} \frac{1}{k} e^{-c (r_{n}/R)^{2}k}\\
		&\< \sum_{|z/r_{n}|^{2}< k \< (R/r_{n})^{2}} \frac{1}{k} + \sum_{k > (R/r_{n})^{2}} \frac{1}{k} e^{-c (r_{n}/R)^{2}k}\\
		&\< C\left(\log\frac{R^2/r_n^2}{|z|^2/r_n^2}+1\right) + C \\
        &\< C_2 (\log(\frac{R}{|z|})^{+} +1).
    \end{align*}
	
    For $d = 1$, by \Cref{killed weak bound}
    \begin{align*}
        \sum_{k=1}^{\8} r_{n} q^{D/r_n}_{k}(z/r_n)
        &\< \sum_{k=1}^{\8} r_{n} C k^{-1/2} e^{-c_R r_{n}^{2}k} \\
        &= \sum_{k=1}^{\8}  C (r_{n}^{2}k)^{-1/2} e^{-c_R r_{n}^{2}k} r_{n}^{2}\\
        &\< C\int_{0}^{\8} y^{-1/2}e^{-c_R y}dy \\
        &= CR\int_{0}^{\8} (y/R^{2})^{-1/2}e^{-cR^{-2}y}R^{-2}dy
		= C_1 R.
    \end{align*}
    This completes the proof of the proposition.
\end{proof}

\subsection{H\"older regularity of harmonic functions} \label{section harmonic function holder}

We say a function $h$ is harmonic in a bounded open set $D\inn \Rd$ with respect to a process $Z$ if for any relatively compact subset $D_{1}$ of $D$, $\EX_x[|h(Z_{\tau_{D_{1}}})|; \tau_{D_{1}}<\8]<\8$ and $h(x) = \EX_x[h(Z_{\tau_{D_{1}}}); \tau_{D_{1}}<\8]$ for all $x\in D_{1}$. 

We prove the following H\"older regularity of harmonic functions. 

\begin{prop}\label{holder Harmonic functions}
	There are $\r'>0, c_{0}>0$ and $C>0$ such that for any $n\>1$, if $h$ is bounded and harmonic with respect to $X^{n}$ in a ball $B(x_0,2r)$, then 
	\begin{equation*}
		|h(x)-h(y)|\< C \left(\frac{|x-y|{\vee c_{0}r_{n}}}{r}\right)^{\r'} \|h\|_\8 \qquad \text{for }x,y\in B(x_0,r).
	\end{equation*}
\end{prop}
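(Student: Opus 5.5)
Harmonicity with respect to $X^n$ depends only on the jump chain $W^n$, since a time change does not alter exit positions. So it suffices to prove the estimate for functions harmonic with respect to the discrete-time walk $W^n$ (equivalently $\~W^n$), and the measure $\mu$ drops out entirely. This explains why the constants are uniform in $n$ and independent of $\mu$.

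The plan is the Krylov--Safonov oscillation-reduction scheme. The key step is a uniform hitting estimate. There exist $\theta\in(0,1)$, $c_0>0$ and $p>0$ such that for every $n$, every ball $B(z,s)$ with $s\ge c_0 r_n$, and every Borel set $A\subset B(z,s/2)$ with $|A|\ge \frac12|B(z,s/2)|$,
\begin{equation*}
\P_x\bigl(W^n \text{ hits } A \text{ before leaving } B(z,s)\bigr)\ge p \qquad \text{for all } x\in B(z,\theta s).
\end{equation*}
I would prove this with the Green function machinery of \Cref{section green function}. The expected number of visits (after the second jump) to $A$ before exiting is $\sigma_\jd^{-2}r_n^{-2}\int_A g^{n,B(z,s)}(x,y)\,dy$. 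A lower bound follows from \Cref{pt bounds}: for $t\asymp s^2$, $p_t^n\ge c s^{-d}$ on $B(0,Ls)$ once $s\ge c_0r_n$ (so that $t/(\sigma_\jd^2 r_n^2)>K'_L$). Combining this with a small probability of exiting $B(z,s)$ before time $\epsilon s^2$, which comes from Donsker/BDG as in \Cref{killed weak bound}, gives $\int_A g^{n,B(z,s)}(x,y)dy\ge c s^2$. For the upper bound, the Green function bound of \Cref{green est} (shifted and scaled) gives $\sup_{w\in A}\int_A g^{n,B(z,2s)}(w,y)dy\le \int_{B(0,2s)} g_{2s}\le Cs^2$. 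Dividing the two, by the strong Markov property at the hitting time of $A$, yields $p\ge c/C$. The atoms at the starting point (the first two jumps) are harmless here because $g^{n,D}$ excludes them and $|A|>0$.

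Next comes oscillation reduction. Let $M_k,m_k$ be the sup and inf of $h$ over $B(x,4^{-k}r)$, and suppose $4^{-k}r\ge c_0r_n$. Either $\{h\ge (M_k+m_k)/2\}$ or its complement occupies half of $B(x,4^{-k-1}r\cdot 2)$. Applying the hitting estimate to the appropriate one of $h-m_k$ or $M_k-h$, together with harmonicity at the exit time of $B(x,4^{-k}r)$, gives $M_{k+1}-m_{k+1}\le(1-p/2)(M_k-m_k)$ plus a correction. That correction comes from jumps landing outside $B(x,4^{-k}r)$, where $h$ is only controlled by earlier oscillations or by $2\|h\|_\8$.

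The main obstacle is controlling these long jumps, since $\jd$ need not have compact support. I would handle it with the standard weighted-tail argument (Bass--Levin style). The probability that the exit position from $B(x,4^{-k}r)$ lies outside $B(x,4^{-j}r)$ for $j<k$ is bounded using the expected exit time $\le C(4^{-k}r)^2/r_n^2$ steps, which follows from \Cref{green est}, multiplied by the one-step tail $\P(r_n|\xi|\ge 4^{-j}r/2)\le C(r_n 4^j/r)^2\E[|\xi|^2;\ldots]$. This gives a contribution $\lesssim 4^{-2(k-j)}$, which is summable against the growth $(1-p/2)^{-(k-j)}$ of the oscillations provided $\gamma'$ is small. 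Iterating down to the scale $4^{-k}r\approx c_0 r_n$, where the hitting estimate stops applying, gives $\operatorname{osc}_{B(x,4^{-k}r)}h\le C4^{-k\gamma'}\|h\|_\8$. Then for $x,y$ one picks $k$ with $4^{-k}r\approx |x-y|\vee c_0r_n$, which yields the stated bound.
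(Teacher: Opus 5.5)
Your architecture matches the paper's. Harmonicity only sees the jump chain; you use a hitting estimate uniform above scale $c_0r_n$, a Bass--Levin oscillation iteration, and long jumps controlled by (expected exit time)$\times$(one-step second-moment tail), which the paper does via the L\'evy system and \Cref{exittimeMT}.

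The gap is in your proof of the hitting estimate, specifically the lower bound $\int_A g^{n,B(z,s)}(x,y)\,dy\ge cs^2$. In your geometry $A$ need only fill half of $B(z,s/2)$, so it may sit at distance $\asymp s$ from $x$ (e.g.\ in the shell $B(z,s/2)\setminus B(z,s/4)$), while the exit ball is only twice as large. To make $\mathbb{P}_x(\tau_{B(z,s)}\le t)$ small you must take $t\le\varepsilon s^2$ with $\varepsilon$ small. But then \Cref{pt bounds} has to be used with $L\asymp\varepsilon^{-1/2}$. Its constant $c'_L$ comes from the Gaussian in the local CLT, so the bound $\mathbb{P}_x(\widetilde W^n_t\in A)\ge c'_L t^{-d/2}|A|\asymp\varepsilon^{-d/2}c'_L$ decays like $e^{-c/\varepsilon}$. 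The only uniform exit bound available (\Cref{exittime}, or Doob's inequality for the martingale $\widetilde W^n$) is $O(\varepsilon)$, and for $\varepsilon$ of order one the exit probability is not small at all. So nothing you cite makes $\mathbb{P}_x(\widetilde W^n_t\in A)-\mathbb{P}_x(\tau_{B(z,s)}\le t)$ positive for any choice of $\varepsilon$, and the occupation-time ratio gets no lower bound. A killed lower bound between points at distance comparable to the radius of the ball is exactly the hard part; it would need a chaining argument you do not give. Also, Donsker's theorem gives no uniformity in $n$ and $s\ge c_0r_n$; \Cref{exittime} is what provides that.

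The paper avoids this competition by decoupling scales. In \Cref{killedlowerbound} the starting point and the target both lie in $B(x_0,c_1t^{1/2})$, while the killing ball has radius $r\ge\Theta t^{1/2}$. Then $L$ is fixed, and $\Theta$ is chosen afterwards so that the exit probability before $t/2$ is below any prescribed $\varepsilon$. The paper subtracts $\mathbb{E}_x[f(\widetilde W^n_t);\tau_D\le t]$ via the strong Markov property and time reversal, which yields the bound linear in $\kappa$; for the fixed density needed in the iteration, subtracting $\mathbb{P}_x(\tau_D\le t)$ would already suffice. \Cref{hitting prob estimate} then gives $\mathbb{P}_y(\sigma_A<\tau_{B(x,r)})\ge\mathbb{P}_y(\widetilde W^n_t\in A,\ t<\tau_{B(x,r)})\ge c\kappa$ for $A\subset B(x,\Xi r)$, $\Xi\le\Xi_0$, directly. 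No occupation-time ratio or \Cref{green est} is needed.

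You can repair your proof this way, but then the iteration must run on $B(x,\rho^k r)$ with a small $\rho\le\Xi_0$ rather than ratio $1/4$. The small $\rho$ is also what closes the long-jump bookkeeping. Taking $\gamma'$ small absorbs the non-small overshoot into $B_{k-1}\setminus B_k$. But jumps beyond $B_{k-1}$ cost a fixed fraction of the oscillation, about $C\cdot 4^{-2}$ with $C$ unrelated to $p$, and this must be smaller than the gain $p/2$. The paper arranges this through $c_2\rho^2\le c_1\zeta/8$; alternatively you could use the truncated moment $\mathbb{E}[|\xi_1|^2;|\xi_1|\ge c\,c_0]$ and take $c_0$ large.
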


We use the standard approach to prove the H\"older regularity of harmonic functions. But since $X^{n}$ has positive probability to stay at the starting point, the transition semigroup is not absolutely continuous with respect to the Lebesgue measure, we need to overcome this difficulty. 

Recall that $\~W^{n}$ is the continuous time random walk (without time change). Note that since $X^{n}$ is a continuous time change of $\~W^{n}$, 
the harmonicity with respect to $X^{n}$ is  the same as the harmonicity with respect to $\~W^{n}$.

\begin{lemma}
	\label{exittime}
	For any $\e>0$, there exists $\d=\d(\e)>0$ such that
	$$\P_{ x_{0}}(\tau_{B({ x_{0}},R)}(\~W^n)\<\d R^2) \< \e$$
	for all $x_{0}\in \Rd$, $n\in\N$ and $R>0$.
\end{lemma}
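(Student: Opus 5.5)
By translation invariance of $\~W^n$ we may take $x_0=0$. We then split into two regimes according to the ratio $R/r_n$. The uniform-in-$n$ control comes from the maximal inequality for the martingale $\~W^n$, which is available because $\jd$ has finite second moment. The time-scale control comes from the Poisson number of jumps.

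\emph{Step 1 (maximal inequality).} Let $N^n_t$ be the number of jumps of $\~W^n$ by time $t$; it is Poisson with mean $t/(\sigma_\jd^2 r_n^2)$. Then $\~W^n_t = r_n\sum_{i\le N^n_t}\xi_i$ is a square-integrable martingale with
\begin{equation*}
	\EX_0|\~W^n_t|^2 = d\,\sigma_\jd^2 r_n^2\,\EX N^n_t = d\,t .
\end{equation*}
Doob's $L^2$ maximal inequality gives
\begin{equation*}
	\P_0\Big(\sup_{s\le t}|\~W^n_s|\ge R\Big)\le \frac{4\,\EX_0|\~W^n_t|^2}{R^2} = \frac{4dt}{R^2}.
\end{equation*}
Taking $t=\d R^2$, we get $\P_0(\tau_{B(0,R)}(\~W^n)\le \d R^2)\le \P_0(\sup_{s\le \d R^2}|\~W^n_s|\ge R)\le 4d\d$. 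Choosing $\d=\e/(4d)$ makes this at most $\e$.

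\emph{Step 2 (the boundary case $\tau=\d R^2$).} The event $\{\tau\le \d R^2\}$ includes $\{\tau=\d R^2\}$, and the exit occurs at a jump time. So $\{\tau\le t\}=\{\sup_{s\le t}|\~W^n_s|\ge R\}$ holds exactly, given right-continuous paths and the definition $\tau=\inf\{s:|\~W^n_s|\ge R\}$ (or $\notin B(0,R)$). Hence no separate treatment of a small-jump regime is needed: the bound in Step 1 holds uniformly in $r_n$ and $R$, even when $R\ll r_n$.

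\emph{Main obstacle.} I expected the main difficulty to be uniformity in $R/r_n$, in particular when $R$ is comparable to or smaller than the step size, where a Donsker-type argument fails. The martingale/Doob approach sidesteps this entirely, because the variance identity $\EX|\~W^n_t|^2=dt$ is exact for every $r_n$. The only care needed is to verify that the compensated variance uses $\EX|\xi_1|^2=d\sigma_\jd^2$, which follows from Assumption (a) via \Cref{formCalulate}, and that $N^n_t$ is independent of the $\xi_i$, so that Wald's identity applies.
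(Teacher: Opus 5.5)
Your proof is correct, but it takes a different and somewhat more direct route than the paper.

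The paper's proof goes through the generator. It fixes a bounded, rotation-invariant $C^2$ function $\varphi$ with $\varphi(0)=0$ and $\varphi\equiv 1$ off the unit ball. A second-order Taylor bound and the zero mean of $\varrho$ give $|\mathcal L^n\varphi_R|\le Ld/(2R^2)$ uniformly in $n$. Optional stopping for the supermartingale $\varphi_R(\widetilde W^n_t)-\frac{Ld}{2R^2}t$ then yields $\mathbb P_0(\tau\le \delta R^2)\le Ld\delta/2$.

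You work with the martingale $\widetilde W^n$ itself. The compound Poisson structure gives the exact identity $\mathbb E_0|\widetilde W^n_t|^2=dt$ for every $r_n$. Doob's maximal inequality for the nonnegative right-continuous submartingale $|\widetilde W^n_t|$ then finishes the argument with the explicit choice $\delta=\varepsilon/(4d)$. The weak-type Doob inequality applied to $|\widetilde W^n|^2$ would even give $\delta=\varepsilon/d$.

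In effect you use the test function $|x|^2$, for which $\mathcal L^n|x|^2\equiv d$ exactly, in place of the paper's bounded $\varphi$. The trade-off is this:
\begin{itemize}
\item Your version relies on the specific structure of $\widetilde W^n$: mean-zero i.i.d. jumps with finite variance, and a Poisson clock independent of the jumps. That structure is what justifies the unbounded $L^2$ computation.
\item The paper's argument only needs a uniform bound on $\mathcal L^n\varphi_R$ for a bounded test function. It therefore transfers more readily to settings without an exact variance identity or translation invariance.
\end{itemize}

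Your Step 2 justification is right: by right-continuity and closedness of $\{|x|\ge R\}$, $\{\tau\le t\}\subseteq\{\sup_{s\le t}|\widetilde W^n_s|\ge R\}$. The split into two regimes according to $R/r_n$ announced in your opening paragraph is never used, so you can drop that sentence.
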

\begin{proof}
	Without loss of generality, we can assume $x_{0}$ is the origin.
	Let $\varphi$ be a rotation invariant $C^2(\R^d)$ function such that $0\<\varphi\<1$, $\varphi(0) = 0$ and $\varphi(x) = 1$ for $|x|\>1$. Define $\varphi_R(x) = \varphi(x/R)$. Then $\varphi_R(\~W^n_t)-\int_{0}^{t} \mathcal L^n \varphi_R(\~W^n_s)ds$ is a martingale, where $\mathcal{L}^n = (\sigma_{\jd}r_{n})^{-2}(Q_n - I)$ is the infinitesimal generator of $\~W^n$ ($I$ is the identity map). By Taylor approximation error bound we have for some $L > 0$ (depending only on the choice of $\varphi$)
	\begin{equation*}
		|\varphi(y+x) - \varphi(x) - \nabla\varphi(x)\. y| \< \frac{L}{2}|y|^2.
	\end{equation*}
	Using the fact that $\jd$ has zero mean, we have
	\begin{align*}
		|\mathcal{L}^n \varphi_R(x)|
		&= \left|\frac{1}{\sigma_{\jd}^{2} r_n^2}\int (\varphi(\frac{r_ny+x}{R})-\varphi(\frac{x}{R}))\jd(y)dy \right|\\
		&= \left|\frac{1}{\sigma_{\jd}^{2} r_n^2}\int (\varphi(\frac{r_ny+x}{R})-\varphi(\frac{x}{R}) - \nabla\varphi(\frac{x}{R})\. \frac{r_n y}{R})\jd(y)dy \right|\\
		&\< \frac{1}{\sigma_{\jd}^{2} r_n^2}\int \frac{Lr_n^2 |y|^2}{2R^2}\jd(y)dy\\
		&= \frac{L d}{2R^2}
	\end{align*}
	for any $x\in \Rd$.
	Hence $\varphi_R(\~W^n_t)- \frac{L d}{2R^2}t$ is a supermartingale. Then 
	\begin{equation*}
		\P_0(\tau_{B(0,R)}(\~W^n)\<\d R^2) \< \EX_0[\varphi_R(\~W^n_{\tau_{B(0,R)}\wedge \d R^2})] \< \frac{L
		d}{2R^2} \EX_0[\tau_{B(0,R)}(\~W^n) \wedge \d R^2] \< \frac{Ld \d}{2}.
	\end{equation*}
	Choosing $\d = 2\e / (L d)$ finishes the proof.
\end{proof}

\begin{lemma} \label{exittimeMT}
	For any $c_{1}>0$, there exist $C_{1} > 0$ such that for any $x_{0},x\in\Rd, n\in \N, r > c_{1}r_{n}$, we have $\EX_x[\tau_{B(x_{0},r)}(\~W^n)] \< C_{1}r^2$.
\end{lemma}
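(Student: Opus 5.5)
We bound the mean exit time by showing that, from any starting point, the walk leaves the ball within time of order $r^2$ with probability bounded away from zero, and then iterating with the Markov property. This is the same argument used for $\EX_x N_r$ in \Cref{uint}.

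\emph{Step 1: uniform escape probability.} We claim there are constants $A>0$ and $p\in(0,1)$, depending only on $c_1$, $d$ and $\jd$, such that
\[
\sup_{y\in B(x_0,r)} \P_y\bigl(\tau_{B(x_0,r)}(\~W^n) > A r^2\bigr) \le 1-p
\]
for all $n$, $x_0$ and $r>c_1 r_n$. It suffices to show $\P_y(|\~W^n_{Ar^2}-y|\ge 2r)\ge p$, since leaving $B(y,2r)$ forces leaving $B(x_0,r)\subset B(y,2r)$. By translation invariance take $y=0$.

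We split according to the number of jumps. Put $t=Ar^2$ and $s=t/(\sigma_\jd^2 r_n^2) > A c_1^2/\sigma_\jd^2$.

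If $s$ is large, i.e.\ $s>K'_L$, use the lower bound of \Cref{pt bounds}. It gives $p^n_t\ge c'_L t^{-d/2}$ on $|x|\le L\sqrt{t/(2\sigma_\jd^2)}$. Choose $A$ so large that $B(0,2r)$ is a small proportion of this region, and choose $L$ so that the annulus $2r\le|x|\le L\sqrt{t/(2\sigma_\jd^2)}$ has volume of order $t^{d/2}$. Then $\P_0(|\~W^n_t|\ge 2r)\ge c>0$.

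If $s\le K'_L$, i.e.\ $r_n$ is comparable to $r$ (between $c_1^{-1}r$ and order $r/\sqrt{K'_L}$), argue directly. The walk makes at least one jump with probability $1-e^{-s}$, which is bounded below because $s$ is bounded below by $A c_1^2/\sigma_\jd^2$. A single step $r_n\xi$ satisfies $|r_n\xi|\ge 2r$ exactly when $|\xi|\ge 2r/r_n$, and $2r/r_n$ stays in a bounded range. This alone does not suffice, because $\jd$ might vanish outside a small ball. Instead, use Donsker on the discrete walk: in $m$ steps, with $m$ of order $(r/r_n)^2$ (a bounded number of steps), the walk exceeds $2r$ with positive probability by \Cref{qk bounds}. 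The Poisson number of jumps by time $t$ exceeds $m$ with positive probability, uniformly once $A$ is large.

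Making this step uniform is the main obstacle. The cleanest route is to treat all regimes at once through the discrete walk: the number of jumps by time $Ar^2$ is Poisson with mean $s \ge A c_1^2/\sigma_\jd^2$. Choosing $A$ large makes $s$ exceed any fixed threshold. The discrete walk $W_k$ with $k\approx s$ steps satisfies $\P(|W_k|\ge 2r)\ge p$ uniformly, by the central limit theorem and the lower bound in \Cref{qk bounds} with $k\ge K_L$, since $2r/r_n \le 2\sigma_\jd\sqrt{s/A}$ is small compared with $\sqrt{k}$ when $A$ is large.

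\emph{Step 2: iteration.} By the Markov property at the times $jAr^2$,
\[
\P_x(\tau > jAr^2)\le (1-p)^j .
\]
Hence
\[
\EX_x\tau \le Ar^2\sum_{j\ge0}(1-p)^j = (A/p)\, r^2 ,
\]
and we take $C_1=A/p$.
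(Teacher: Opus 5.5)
Your proof is correct. Step 2, the Markov iteration, is the same as the paper's; the difference is how you get the one-block bound $\sup_{y}\P_y(\tau_{B(x_0,r)}(\widetilde W^n)>t)\le 1-p$ with $t\asymp r^2$.

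The paper bounds the staying probability from above directly:
$\P_x(\tau>t)\le \P_x(\widetilde W^n_t\in B(x_0,r),\eta^n_0\le t)+\P_x(\eta^n_0>t)\le Ct^{-d/2}|B(x_0,r)|+\P_x(\eta^n_0>t)$.
This uses only the on-diagonal upper bound $\|p^n_t\|_\infty\le Ct^{-d/2}$ from \Cref{pt bounds}. The atom at the starting point is handled by the explicit no-jump probability, which is small because $t/r_n^2\ge c_2c_1^2$.

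You instead bound the escape probability $\P_y(|\widetilde W^n_t-y|\ge 2r)$ from below, using the near-diagonal lower bound of $p^n_t$ on an annulus. In your version the atom is automatically harmless, since it lies inside $B(y,2r)$.

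Your case split is unnecessary. Since $s=Ar^2/(\sigma_\jd^2r_n^2)>Ac_1^2/\sigma_\jd^2$ always holds, you can fix $L$ first and then take $A>\sigma_\jd^2K'_L/c_1^2$ (and large enough for the annulus). Every admissible $r$ then falls in your first regime, so the discussion of $s\le K'_L$ can be deleted.

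As for what each route buys: the paper's is shorter and needs only the upper heat-kernel bound. Yours can be run through the discrete walk using the ordinary central limit theorem, i.e. $\P(|W^1_k|\ge\varepsilon\sqrt k)\ge 1/2$ for large $k$, together with Poisson concentration of the number of jumps near $s$. In that form it avoids the local limit theorem entirely, so it would not need $\jd$ to be bounded. It is the same mechanism the paper uses for $\EX_x N_r$ in \Cref{uint}.
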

\begin{proof}
	By \Cref{pt bounds}, taking $t=c_{2}r^2$ for $c_{2}$ to be determined later, we have
	\begin{align*}
		\P_x(\tau_{B(x_{0},r)}(\~W^n) > t ) 
		&\< \P_x(\~W^n_t \in B(x_{0},r), \eta_{0}^{n} \< t) + \P_x(\eta_{0}^{n} > t)\\
		&\< Ct^{-d/2}|B(x,r)| + e^{ -\frac{t/2}{\sigma_{\jd}^{2} r_{n}^{2}}}\\
		&\< C c_{2}^{-d/2} + e^{ -c c_{2}c_{1}^{2} / \sigma_{\jd}^{2}}.
	\end{align*}
	By choosing $c_{2}$ large enough, we can make the above less than $1/2$.
	By the Markov property, for $m\in \N$,
\begin{align*}
\mathbb{P}_x\left(\tau_{B(x_{0},r)}(\~W^n)>(m+1) t\right) & \leq \mathbb{E}_x\left[\mathbb{P}_{\~W^n_{m t}}\left(\tau_{B(x_{0},r)}(\~W^n)>t\right) ; \tau_{B(x_{0},r)}(\~W^n)>m t\right] \\
& \leq \frac{1}{2} \mathbb{P}_x\left(\tau_{B(x_{0},r)}(\~W^n)>m t\right) .
\end{align*}
By induction, we have
\begin{equation*}
\mathbb{P}_x\left(\tau_{B(x_{0},r)}(\~W^n)>m t\right) \leq 2^{-m} \text{ for all } m \in \mathbb{N}.
\end{equation*}
Summing over $m\in\N$ proves the lemma.
\end{proof}

The next proposition establishes near-diagonal bounds for the transition probability of the killed process.

\begin{prop}
	\label{killedlowerbound}
	There exist constants $c_{0}, c_{1}, c_{2}>0$ and $\Theta > c_{1}$ such that for any $x_0\in \R^d$, $n\in \N$ and $t>c_{0} \sigma_{\jd}^{2} r_{n}^{2}$, $D=B(x_0,r)$ with $r\>\Theta t^{1/2}$, $x\in B(x_0,c_{1} t^{1/2})\inn D$, and any nonnegative integrable function $f\in L^{1}(\Rd)$ with support in $B(x_0,c_{1}t^{1/2})$ we have 
	\begin{equation*}
		\EX_{x}[f(\~W_{t}^{n,D})]\> c_{2} t^{-d/2} \|f\|_{1}.
	\end{equation*}
	
\end{prop}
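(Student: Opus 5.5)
Our plan is to decompose the free process according to whether it has left $D$ before time $t$:
\[
\EX_x[f(\~W^{n,D}_t)] = \EX_x[f(\~W^n_t)] - \EX_x[f(\~W^n_t);\tau_D(\~W^n)\< t].
\]
The first term will be bounded below by the near-diagonal lower bound in \Cref{pt bounds}. The second term will be shown to be a small multiple of $t^{-d/2}\|f\|_1$ by applying the strong Markov property at $\tau_D$ together with the on-diagonal upper bound in \Cref{pt bounds}.

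For the first term, since $f\>0$,
\[
\EX_x[f(\~W^n_t)] \> \EX_x[f(\~W^n_t);\eta^n_0\<t] = \int f(y)\,p^n_t(y-x)\,dy .
\]
For $x,y\in B(x_0,c_1t^{1/2})$ we have $|y-x|\<2c_1t^{1/2}$. Take $L:=2c_1\sqrt{2}\sigma_{\jd}$, so that $|y-x|\< L\sqrt{t/(2\sigma_{\jd}^2)}$. Choose $c_0\>K'_L$, so that $t/(\sigma_{\jd}^2r_n^2)>K'_L$ holds. \Cref{pt bounds} then gives $p^n_t(y-x)\>c'_Lt^{-d/2}$, and hence the first term is at least $c'_L t^{-d/2}\|f\|_1$. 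We fix $c_1=1$ (say), which fixes $L$, $c'_L$ and $K'_L$.

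For the second term, write $\tau=\tau_D(\~W^n)$ and $z=\~W^n_\tau$, so $|z-x_0|\>r\>\Theta t^{1/2}$. By the strong Markov property the term equals
\[
\EX_x\big[\,\EX_{z}[f(\~W^n_{s})]\big|_{s=t-\tau};\,\tau\<t\big].
\]
We split according to whether the walk jumps in the remaining time $s$. If it does not jump, it stays at $z$, and $f(z)=0$ because $z\notin \operatorname{supp} f$; so the atom of the holding time contributes nothing. This is the main point where the atom at the starting point would otherwise cause trouble. The contribution where it does jump is $\int f(y)\,p^n_s(y-z)\,dy$. Here $|y-z|\>(\Theta-c_1)t^{1/2}$, but $s=t-\tau$ may be small, so the bound $\|p^n_s\|_\8\<Cs^{-d/2}$ alone is useless.

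To overcome this, we would first try to show a uniform off-diagonal bound of the form
\[
\sup_{s\<t}\ \sup_{|w|\>(\Theta-c_1)t^{1/2}} p^n_s(w)\<\e(\Theta)\,t^{-d/2}
\]
with $\e(\Theta)\to0$ as $\Theta\to\8$. The plan is to repeat the argument of \Cref{off diagonal weak bound} for $p^n_s$. Condition on whether the number of jumps $k$ is at least $2$. For $k\>2$, split the convolution in halves and use $\|q^n_j\|_\8\<C(jr_n^2)^{-d/2}$ together with the Chebyshev/BDG tail bound $\P(|W^n_j|\>|w|/2)\<C(\sqrt{j}\,r_n/|w|)^m$ with $m=2$, and sum over $j$ against Poisson weights. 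The case $k=1$ is handled by $r_n^{-d}\jd(w/r_n)$, together with a moment bound on $\jd$ tails. This step is the main obstacle, since $\jd$ may have heavy tails and the bound must be uniform in small $s$ and in $r_n$.

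If this direct bound fails, our fallback is a cruder route. Since $\EX_z[f(\~W^n_s)]\<\|f\|_\8$-type bounds are unavailable, we would instead use \Cref{exittime}: $\P_x(\tau\<t)\<\e$ once $\Theta^2\>1/\d(\e)$. Combining this with \Cref{pt bounds} applied after an extra time split (first exit, then at least time $t/2$ remaining versus less), we obtain a bound of order $(\e+\e(\Theta))Ct^{-d/2}\|f\|_1$. Finally we choose $\Theta$ large enough that this is at most $\tfrac12c'_Lt^{-d/2}\|f\|_1$, and set $c_2=c'_L/2$.
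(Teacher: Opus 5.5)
\textbf{Verdict: there is a genuine gap in the late-exit case $\{t/2<T\le t\}$.}

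The parts that are fine agree with the paper. These are the decomposition $\mathbb{E}_x[f(\widetilde W^{n,D}_t)]=\mathbb{E}_x[f(\widetilde W^n_t)]-\mathbb{E}_x[f(\widetilde W^n_t);T\le t]$, the lower bound for the first term via \Cref{pt bounds}, and your handling of early exits $T\le t/2$. That last step uses \Cref{exittime}, the strong Markov property, $\|p^n_s\|_\infty\le Cs^{-d/2}$ for $s\ge t/2$, and the fact that the holding atom at the exit point contributes nothing.

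Your fallback still uses $\varepsilon(\Theta)$ exactly on the late-exit event, and the uniform small-time off-diagonal bound you want is false under the paper's hypotheses. Take $s\le t$ comparable to $\sigma_\varrho^2r_n^2$. The one-jump term alone gives $p^n_s(w)\ge c\,r_n^{-d}\varrho(w/r_n)$. Now choose $t$ with $(\Theta-c_1)t^{1/2}=|w|$. Your inequality would then force $\varrho(v)\le C_\Theta|v|^{-d}$ for all large $|v|$, i.e.\ pointwise decay of $\varrho$. Assumption \ref{jd8} gives only boundedness and finitely many moments. These control $\int_{|v|>M}\varrho$ but not $\sup_{|v|>M}\varrho$; thin bounded shells of rapidly shrinking width placed far out are allowed. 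So a ``moment bound on $\varrho$ tails'' cannot produce the bound. The multi-jump terms are no better for $d\ge3$. At $s\approx 2\sigma_\varrho^2r_n^2$ the weight $\mathbb{P}(N^n_s=2)$ is of order one. With $m<d$ moments, \Cref{off diagonal weak bound} only gives $q^n_2(w)\le Cr_n^{m-d}|w|^{-m}$. At $|w|\approx(\Theta-c_1)t^{1/2}$ this is of order $(t/r_n^2)^{(d-m)/2}t^{-d/2}$, which is unbounded in $t/r_n^2$. No pointwise bound on $p^n_{t-T}(\cdot-\widetilde W^n_T)$ that is uniform in the remaining time $t-T$ is available.

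The missing idea is to use the symmetry of $\widetilde W^n$ with respect to Lebesgue measure $m_0$, which turns a late exit into an early one. The paper first dominates $\{t/2<T\le t\}$ by $\{t/2\le T'_t\le t\}$, where $T'_t$ is the last time before $t$ that the path spends in $D^c$. It then discretizes time and reverses paths under $\mathbb{P}_{m_0}$. This gives, for nonnegative $g$ supported in $B(x_0,c_1t^{1/2})$,
\[
\int g(y)\,\mathbb{E}_y\bigl[f(\widetilde W^n_t);\,t/2<T\le t\bigr]\,dy\le \mathbb{E}_{m_0}\bigl[g(\widetilde W^n_t)f(\widetilde W^n_0);\,T\le t/2\bigr].
\]
The reversed path starts in $\operatorname{supp}f$ and exits $D$ before time $t/2$ with probability at most $\varepsilon$. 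After that it has at least $t/2$ time left. So your early-exit estimate, with $f$ and $g$ interchanged, bounds the right side by $\varepsilon Ct^{-d/2}\|f\|_1\|g\|_1$. This yields the late-exit bound $\varepsilon Ct^{-d/2}\|f\|_1$ for a.e.\ starting point in $B(x_0,c_1t^{1/2})$. A continuity argument in the starting point then upgrades it to every $x$: first for continuous $f$ by dominated convergence, then by $L^1$-approximation using $\|p^n_t\|_\infty\le Ct^{-d/2}$.

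A minor point: \Cref{exittime} must be applied to the ball $B(x,r-|x-x_0|)$, whose radius is at least $(\Theta-c_1)t^{1/2}$. The condition for $\mathbb{P}_x(T\le t/2)\le\varepsilon$ is therefore $(\Theta-c_1)^2\ge 1/(2\delta(\varepsilon))$, not $\Theta^2\ge 1/\delta(\varepsilon)$.
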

\begin{proof}
	For simplicity of notation we set $Y=\~W^n$ and $T = \tau_D(\~W^n)$. By \Cref{pt bounds} there exists $c, c_{0}, c_{1}>0$ such that $p_{t}^{n}(z) \> c t^{-d/2}$ for any $n\in \N$, $t>c_{0}\sigma_{\jd}^{2} r_{n}^{2}$ and $|z| \< 2 c_{1} t^{1/2}$. Let $x\in B(x_0,c_{1} t^{1/2})$. For nonnegative function $f\in L^1(\R^d)$ with support in $B(x_0,c_{1} t^{1/2})$, we have 
	\begin{equation} \label{*1}
		\EX_x[f(\~W_t^{n,D})]
		=\EX_x[f(Y_t)]-\EX_x[f(Y_t);T\<t].
	\end{equation}
	For $\EX_x[f(Y_t)]$ we have
	\begin{equation} \label{*2}
		\EX_x[f(Y_t)] 
		\> \EX_x[ f(Y_{t}); \eta_{0}^{n}\< t] 
		= \int f(y) p_{t}^{n}(y-x) dy 
		\> c t^{-d/2}\|f\|_{1} .
	\end{equation}
	For $\EX_x[f(Y_t);T\<t]$ we first estimate the exit probability $\P_{x}( T \< t / 2)$. For any $\e \in (0,1)$, by \Cref{exittime} there exists $\d = \d(\e)$ such that $\P_{x}( \t_{B(x,s)}(\~W^{n}) \< s^{2}\d)\<\e$ for any $s>0$.
	If $t/2 \< (r - |x-x_{0}|)^{2}\d$, then we have
	\begin{align*}
			\P_{x}( T \< t / 2) 
			&\< \P_{x}( \t_{B(x,r-|x-x_{0}|)}(\~W^{n}) \< t / 2) \\
			&\< \P_{x}( \t_{B(x,r-|x-x_{0}|)}(\~W^{n}) \< (r - |x-x_{0}|)^{2}\d)\\
			&\< \e.
	\end{align*}
	The above holds when $r \> |x-x_{0}| + (2\d)^{-1/2}t^{1/2}$. So we set $\Theta  = c_{1} + (2\d)^{-1/2}$. Returning to $\EX_x[f(Y_t);T\<t]$ we have 
	\begin{align}\label{*3}
		\begin{split}
		\EX_x[f(Y_t);T\<t] 
		&= \EX_x[f(Y_t);T\<t/2] + \EX_x[f(Y_t);t/2<T\<t]\\
		&=:S_1(x) + S_2(x)
		\end{split}
	\end{align}
	For $S_1(x)$, using the strong Markov property and \Cref{pt bounds} we have
	\begin{align}\label{*4}
		\begin{split}
		S_1(x) 
		&= \EX_x[f(Y_t);T\<t/2] \\
		&= \EX_x[\EX_{Y_T}f(Y_{t-T});T\<t/2]\\
		&\<\P_x(T\<t/2) \sup_{z\in D^c, s\in[t/2,t]} \EX_z f(Y_s)\\
		&\< \e \sup_{z\in D^c, s\in[t/2,t]} \left( \EX_z [f(Y_s); \eta_{0}^{n} \< s] +  \EX_z [f(Y_s); \eta_{0}^{n} > s] \right)\\
		&\< \e Ct^{-d/2}\|f\|_1.
		\end{split}
	\end{align}
	Note that the second term in the second-to-last line is 0 because $Y_{s}=z$ when $\eta_{0}^{n}>s$ and $z$ is not in the support of $f$.

	For $S_2(x)$, let $T_{t}'=\sup\{l\<t:\~W^n_l\in D^{c}\}$ be the last hitting time of $D^{c}$ before time $t$. 
	Then we have $T_{t}'\>T$ and 
	\begin{equation*}
		\EX_{y}[f(Y_{t}); t/2 < T \< t] \< \EX_{y}[f(Y_t);t/2\<T_{t}'\<t]
	\end{equation*}
	for all $y\in\Rd$.
	Let $g\in L^1(\R^d)$ with $g\>0$ and support in $B(x_{0},c_{1}t^{1/2})$ and $t/2=t_0<t_1<...<t_k=t$ with each $t_i = t/2 + (i/k)(t-t/2)$. Then
	\begin{align*}
		\int g(y)\EX_y[f(Y_t);t/2\<T_{t}'\<t]dy\< \liminf_{k\to\8} \sum_{j=0}^{k-1} \EX_{m_0}[g(Y_0)\1_{D^{c}}(Y_{t_{j}})\prod_{i=j+1}^k\1_{D}(Y_{t_i})f(Y_{t})]
	\end{align*}
	where $m_0$ is the Lebesgue measure.  
	By time reversal (see e.g. \cite[Lemma 4.1.2]{FOT}) we have 
	\begin{equation*}
		\EX_{m_0}[g(Y_0)\1_{D^{c}}(Y_{t_{j}})\prod_{i=j+1}^k\1_{D}(Y_{t_i})f(Y_{t})] = \EX_{m_0}[g(Y_t)\1_{D^{c}}(Y_{t-t_{j}})\prod_{i=j+1}^k\1_{D}(Y_{t-t_i})f(Y_{0})].
	\end{equation*}
	Summing over $j = 0,1,...,, k-1$ and letting $k\to \8$, the right hand side above tends to $\EX_{m_0}[g(Y_t)f(Y_0);T\<t - t/2]$. Hence 
	\begin{align*}
		\int \EX_{y}[f(Y_{t}); t/2 < T \< t] g(y) dy 
		&\< \EX_{m_0}[g(Y_t)f(Y_0);T\<t - t/2] \\
		&= \EX_{m_0}[g(Y_t)f(Y_0);T\<t/2, \eta_{0}^{n}<t].
	\end{align*}
	The last line above is because $f$ is supported in $B(x_{0},c_{1}t^{1/2})$, and since $g$ is also supported in $B(x_{0},c_{1}t^{1/2})$, we can apply the same estimate for $S_{1}$ to get
	\begin{equation*}
		\EX_{m_0}[g(Y_t)f(Y_0);T\<t/2, \eta_{0}^{n}<t] \< \e C t^{-d/2} \|g\|_{1} \|f\|_{1}
	\end{equation*} 
	and hence $\int S_{2}(y)g(y)dy\<\e C t^{-d/2} \|g\|_{1} \|f\|_{1}$ for any $f, g\in L^{1}(\Rd)$ with support in $U:=B(x_{0},c_{1}t^{1/2})$. Therefore for Lebesgue a.e. $y\in U$. 
	\begin{equation}\label{eq:S2}
		S_{2}(y) \< \e C t^{-d/2} \|f\|_{1}
	\end{equation}
	We claim $S_{2}^{f}: y\mapsto S_{2}(y)$ is continuous and so \eqref{eq:S2} holds for all $y\in B(x_{0},c_{1}t^{1/2})$. To see this, notice that for fixed $t>0$ and $y\in B(x_{0},c_{1}t^{1/2})$,  almost surely   $y+Y_s\notin \partial D$ and $Y$ does not jump at time $s$ for  $s=t/2, t$.
	 Hence if $f$ is continuous, then by the dominated convergence
	$S_{2}^{f}(y_i)=\EX_{0}[f(y_i+Y_t);t/2<\t_{D}(y_i+Y)\<t]\to S_{2}^{f}(y)$ as $y_i\to y$.
	For $f\in L^{1}(U)$, choose $f_k\in C_{c}(U)$ so that $\lim_{k\to\8}\|f_k-f\|_{1}=0$. Then uniformly in $x\in U$,
	\begin{align*}
		|S_2^{f_k}(x)-S_2^f(x)|
		&\< \EX_x[|f_k-f|(Y_t);t/2<T\le t] \\
		&\< \EX_x[|f_k-f|(Y_t);\eta_0^n\le t] \\
		&= \int |f_k(y)-f(y)|p_t^n(y-x)dy \\
		&\< \|p_t^n\|_\infty \|f_k-f\|_1 \\
		&\< C t^{-d/2}\|f_k-f\|_1.
	\end{align*}
	Thus $S_2^{f_k}\to S_2^f$ uniformly on $U$. Since each $S_2^{f_k}$ is continuous, $S_2^f$ is continuous.
	Finally we have 
	\begin{equation*}
		\EX_x[f(\~W_t^{n,D})]
		=\EX_x[f(Y_t)]- S_{1}(x) - S_{2}(x)
		\> (c-\e C)t^{-d/2}\|f\|_1.
	\end{equation*}
	Taking $\e$ small enough so that $\e C <c$, the desired result follows from \eqref{*1} to \eqref{eq:S2}.
\end{proof}

\begin{corollary}\label{hitting prob estimate}
	There exist $c_{0}', c>0$ and $\Xi_{0} \in (0,1)$ such that for any $\kappa \in (0,1]$ and $\Xi\in(0,\Xi_{0}]$, if $r\>c_{0}' \Xi^{-1} r_n$, $x\in \R^d$, and $A\inn B(x,\Xi r)$ with $|A|/|B(x,\Xi r)| \> \kappa$ and $y \in B(x,\Xi r)$, then
	\begin{equation*}
		\P_y(\sigma_A(\~W^n)< \tau_{B(x,r)}(\~W^n)) \> c\kappa.
	\end{equation*}
\end{corollary}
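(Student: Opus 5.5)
We follow the standard occupation-time argument: lower-bound the expected time $\~W^n$ spends in $A$ before leaving $B(x,r)$, upper-bound the expected time spent in $A$ once $A$ has been hit, and take the ratio. Let $D:=B(x,r)$, $T:=\tau_D(\~W^n)$ and $\sigma_A:=\sigma_A(\~W^n)$. Set $t:=(\Xi r/c_1)^2$, where $c_1,c_0,\Theta$ are the constants of \Cref{killedlowerbound}. Choose $\Xi_0$ so that $\Theta t^{1/2}=\Theta\Xi r/c_1\le r/2$ whenever $\Xi\le\Xi_0$. Choose $c_0'$ so that $r\ge c_0'\Xi^{-1}r_n$ forces $t\ge c_0\sigma_\jd^2 r_n^2$; this is possible since $t=\Xi^2r^2/c_1^2\ge c_0'^2r_n^2/c_1^2$.

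For $s\in[t,2t]$, \Cref{killedlowerbound} applies with $B(x,c_1s^{1/2})\supseteq B(x,\Xi r)\ni y$ and with $f=\1_A$. The support condition holds because $A\subset B(x,\Xi r)\subset B(x,c_1 s^{1/2})$. The radius condition holds after possibly shrinking $\Xi_0$ so that $\Theta(2t)^{1/2}\le r$. We obtain
\begin{equation*}
\EX_y\Big[\int_0^{T}\1_A(\~W^n_s)\,ds\Big]\ge\int_t^{2t}\P_y(\~W^{n,D}_s\in A)\,ds\ge c_2\,t\,(2t)^{-d/2}|A|\ge c\,\kappa\,t,
\end{equation*}
using $|A|\ge\kappa|B(x,\Xi r)|\asymp\kappa\,t^{d/2}$.

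For the upper bound, the time spent in $A$ before $T$ is zero unless $\sigma_A<T$. On that event the strong Markov property at $\sigma_A$ gives
\begin{equation*}
\EX_y\Big[\int_0^{T}\1_A(\~W^n_s)\,ds\Big]\le\P_y(\sigma_A<T)\,\sup_{z\in D}\EX_z\Big[\int_0^{T}\1_A(\~W^n_s)\,ds\Big].
\end{equation*}
The key estimate is $\sup_z\EX_z\int_0^T\1_A(\~W^n_s)ds\le C\,t$, uniformly in $n$ and $z$. A naive bound by $\EX_z[T]\le C_1r^2$ from \Cref{exittimeMT} loses a factor $\Xi^{-2}$, so we instead split the integral at time $t$. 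The part over $[0,t]$ is at most $t$. For $s\ge t$, condition on $\eta_0^n\le s$ and use \Cref{pt bounds}, which gives $\P_z(\~W^n_s\in A,\eta_0^n\le s)\le Cs^{-d/2}|B(x,\Xi r)|\le C(t/s)^{d/2}$.

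Combined with the tail bound $\P_z(T>s)\le 2^{-\lfloor s/(c r^2)\rfloor}$ from the proof of \Cref{exittimeMT}, this yields $\int_t^\infty\min\{C(t/s)^{d/2},2^{-s/(cr^2)}\}ds$, which is $O(t)$ for $d\ge3$. For $d\le2$ it gives an extra factor $\log(r^2/t)=\log(\Xi^{-2})$ (for $d=2$) or $(r^2/t)^{1/2}$ (for $d=1$). We expect this dimension dependence to be the main obstacle. The loss is harmless if we only need a constant $c$ depending on $\Xi$, but $c$ must be uniform in $\Xi\le\Xi_0$.

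To fix this, we would instead stop at $\tau_{B(x,2\Xi r/c_1)}$ or a comparable intermediate ball, i.e.\ replace $D$ by $D'=B(x,\Theta' \Xi r)\subset D$ in both the lower and the upper bound. Then $\sigma_A<\tau_{D'}$ implies $\sigma_A<T$. In the lower bound, \Cref{killedlowerbound} still applies since $D'$ has radius $\ge\Theta(2t)^{1/2}$. In the upper bound, \Cref{exittimeMT} gives $\EX_z[\tau_{D'}]\le C(\Xi r)^2\asymp t$, with a constant independent of $\Xi$. The ratio of the two bounds is then $\ge c\kappa$, which proves the corollary.
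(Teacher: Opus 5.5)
Your argument is correct, but it is a considerably longer route than the paper's. The paper simply observes that $\{\widetilde W^n_t\in A,\ t<\tau_{B(x,r)}(\widetilde W^n)\}\subset\{\sigma_A(\widetilde W^n)<\tau_{B(x,r)}(\widetilde W^n)\}$. So with $t=(\Xi r/c_1)^2$ and $f=\1_A$, \Cref{killedlowerbound} gives directly
\[
\P_y\big(\sigma_A(\widetilde W^n)<\tau_{B(x,r)}(\widetilde W^n)\big)\ \ge\ \EX_y\big[\1_A(\widetilde W^{n,B(x,r)}_t)\big]\ \ge\ c_2t^{-d/2}|A|\ \ge\ c_2c_1^d|B(0,1)|\,\kappa .
\]
Here the choices $\Xi_0=c_1/\Theta$ and $c_0'=c_1c_0^{1/2}\sigma_\jd$ encode the two hypotheses $r\ge\Theta t^{1/2}$ and $t\ge c_0\sigma_\jd^2r_n^2$. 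Your own lower-bound step, read at the single time $s=t$, is already exactly this estimate. The occupation-time ratio, the strong Markov upper bound, and the passage to an intermediate ball are therefore unnecessary here.

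That said, your final version does work. Take $D'=B(x,\Theta'\Xi r)$ with $\Theta'\ge\sqrt2\,\Theta/c_1$ and $\Xi_0\le 1/\Theta'$. Then \Cref{killedlowerbound} on $D'$ gives occupation of $A$ of at least $c\kappa t$. \Cref{exittimeMT} bounds the occupation after $\sigma_A$ by $\P_y(\sigma_A<\tau_{D'})\,C(\Theta'\Xi r)^2\asymp \P_y(\sigma_A<\tau_{D'})\,t$, with constants uniform in $\Xi$. You also diagnosed correctly that pairing your lower bound, which only sees occupation during $[t,2t]$, with an upper bound on the full ball $B(x,r)$ would lose a $\Xi$-dependent factor when $d\le 2$.

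What the ratio-of-occupation-times method buys is robustness. It needs only time-integrated, Green-function-type lower bounds for the killed process, not a lower bound at a fixed time. What the paper's approach buys is brevity: \Cref{killedlowerbound} already supplies a fixed-time bound, so the corollary is a one-line consequence.
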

\begin{proof}
	Let $c_{0}, c_{1}, c_{2}, \Theta$ be the constants in \Cref{killedlowerbound}. Set $\Xi_{0} = c_{1}/\Theta\in(0,1)$. Fix $\kappa\in(0,1]$ and $\Xi\in(0,\Xi_{0}]$.
	Applying \Cref{killedlowerbound} with $t = (\Xi r / c_{1})^{2}$ and $f = \1_{A}$, we get for $y\in B(x,\Xi r) = B(x, c_{1}t^{1/2})$ and $A \inn B(x,\Xi r)$,
	\begin{align*}
		\P_y(\sigma_A(\~W^n)< \tau_{B(x,r)}(\~W^n)) 
		&\> \P_y(\~W^n_t\in A; t < \tau_{B(x,r)}(\~W^n)) \\
		&\> c_{2} t^{-d/2} |A| \\
		&= c_{2} (\Xi r/c_{1})^{-d}|A| , 
	\end{align*}
	provided $t \> c_{0}\sigma_{\jd}^{2}r_{n}^{2}$ and $r\>\Theta t^{1/2}$. The two conditions are equivalent to 
	$r\>c_{1}c_{0}^{1/2}\sigma_{\jd}\Xi^{-1}r_{n}$ and $\Xi\< \Xi_{0}$.
	We can then choose $c_{0}'=c_{1}c_{0}^{1/2}\sigma_{\jd}$. We then apply $|A|/|B(x,\Xi r)| \> \kappa$ to get 
	\begin{equation*}
		c_{2} (\Xi r/c_{1})^{-d}|A| \> c_{2} c_{1}^{d} |B(0,1)|\kappa.
	\end{equation*}
	Setting $c =c_{2} c_{1}^{d} |B(0,1)|$ finishes the proof.
\end{proof}

\begin{proof}
	[Proof of \Cref{holder Harmonic functions}] 
	By \Cref{hitting prob estimate}, there exists $c_{0}', c_1 > 0 $ and $\Xi_{0}\in(0,1)$ such that for any $\rho\in(0,\Xi_{0}], x\in \Rd, r \> c_{0}'\rho^{-1} r_{n}$, $A \inn B(x, \rho r)$ with $|A|>|B(x,\rho r)|/3$ and $y\in B(x,\rho r)$ we have
	\begin{equation*}
		\P_y(\sigma_A(\~W^{n}) < \tau_{B(x,r)}(\~W^{n})) \> c_1.
	\end{equation*} 
	By L\'evy system formula (see, i.e. \cite[(A.3.33)]{CF12}), we have for any bounded function $f$ on $\Rd\times\Rd$ vanishing on the diagonal and region $D\inn \Rd$, we have
	\begin{equation*}
		\EX_x[\sum_{0<s\<\t_{D}}f(\~W^n_{s-}, \~W^n_{s})] = \EX_x[\int_{0}^{\t_{D}} \int f(\~W^n_{s},z) Q_n(\~W^n_s,dz)\frac{\sigma_{\jd}^{-2}}{r_{n}^{2}}ds].
	\end{equation*}
	Taking $D=B(x,r)$ and $f(y,z) = \1_{B(x,r)}(y)\1_{B(x,r')^c}(z)$ for $r' \> 2r$ and noticing that $\~W^n\in D$ before time $\t_D$ we get
	\begin{align*}
		\mathbb{P}_{x}\left(\~W^n_{\tau_{B(x,r)}} \notin B(x, r') \right) 
		&= \EX_x[\int_{0}^{\t_{B(x,r)}} \int_{B(x,r')^c} \frac{1}{ \sigma_{\jd}^2r_{n}^{2}}  \jd_n(z-\~W^n_s) dz ds] \\
		&\< \EX_x[\t_{B(x,r)}(\~W^n)] \int_{B(0,r'-r)^c}	\frac{1}{ \sigma_{\jd}^2r_{n}^{2}}  \jd_n(z)dz \\
		&\< \EX_x[\t_{B(x,r)}(\~W^n)]	\frac{1}{(r'-r)^{2}}	\frac{(r'-r)^{2}}{ \sigma_{\jd}^2r_{n}^{2}}  \int_{B(0,\frac{r'-r}{r_n})^c} \jd(z)dz.
	\end{align*}
	By \Cref{exittimeMT} there exists $c_{0}''>0$ such that $\EX_x[\t_{B(x,r)}(\~W^n)]\< c r^2$ for any $r \> c_{0}'' r_{n}$. 
	Because $\jd$ has finite second moment, we have 
	\begin{equation*}
		\frac{(r'-r)^{2}}{ \sigma_{\jd}^2r_{n}^{2}}  \int_{B(0,\frac{r'-r}{r_n})^c} \jd(z)dz
		\< \frac{1}{\sigma_{\jd}^2} \int_{B(0,\frac{r'-r}{r_n})^c} |z|^2 \jd(z)dz \< d.
	\end{equation*}
	Note also that $\frac{1}{(r'-r)^{2}}\<\frac{4}{(r')^2}$ as $r' \> 2r$. Hence we conclude there exists $c_2 > 0$ such that
	\begin{equation*}
		\mathbb{P}_{x}\left(\~W^n_{\tau_{B(x,r)}} \notin B(x, r') \right) \< c_2 (r/r')^{2}.
	\end{equation*}
	Let $\zeta = 1 - \frac{c_1}{4}$, $\rho = \Xi_{0} \wedge (\frac{\zeta}{2})^{1/2} \wedge (\frac{c_1 \zeta}{8 c_2})^{1/2} \wedge \frac{1}{2}$ and $c_{0} = c_{0}'\rho^{-1} \vee c_{0}''$.
	Let $h$ be $\mathcal E^{n}$-harmonic in $B(x_0,2r)$ for $x_0\in\Rd$. By scaling without loss of generality, assume $0\<h\<1$. For any $x\in B(x_0,r)$, $k\>0$, set $B_k = B(x,\rho^k r)$, $a_k = \inf_{B_k} h$, $b_k = \sup_{B_k} h$ and $\t_k = \tau_{B_k}(\~W^n)$. We use induction to show $b_k - a_k \< \zeta^k $ for $k\> 0$ and $\rho^{k}r \> c_{0}r_{n}$. Clearly $b_{i} - a_{i} \< 1 \< \zeta^{i}$ for all $i\< 0$. Now suppose $b_{i} - a_{i} \< \zeta^{i}$ for all $i\< k$, we are going to show that $b_{k+1} - a_{k+1} \< \zeta^{k+1}$.
	Set $A' = \{ z \in B_{k+1}: h(z) \< (a_k+b_k) / 2\}$. We may assume $|A'| \> |B_{k+1}|/2$ otherwise we can replace $h$ by $1-h$. Choose compact set $A\inn A'$ such that $|A|/|B_{k+1}| > 1/3$. For $\e>0$ choose $z_1$, $z_2\in B_{k+1}$ such that $h(z_1)\> b_{k+1} - \e$ and  $h(z_2)\< a_{k+1} + \e$. Then 
	\begin{align*}
		b_{k+1}-a_{k+1}-2 \e 
		&\leq h\left(z_1\right)-h\left(z_2\right) \\
		&= \mathbb{E}_{z_1}\left[h\left(\~W^n_{\sigma_A \wedge \tau_{k}}\right)-h\left(z_2\right)\right] \\
		&= \mathbb{E}_{z_1}\left[h\left(\~W^n_{\sigma_A}\right)-h\left(z_2\right) ; \sigma_A<\tau_{k}\right] \\
		&\quad + \mathbb{E}_{z_1}\left[h\left(\~W^n_{\tau_{k}}\right)-h\left(z_2\right) ; \sigma_A>\tau_{k},\right. \left.\~W^n_{\tau_{k}} \in B_k\right] \\& \quad + \sum_{i=1}^{\infty} \mathbb{E}_{z_1}\left[h\left(\~W^n_{\tau_{k}} \right)-h\left(z_2\right) ; \sigma_A>\tau_{k},\right. \left.\~W^n_{\tau_{k}} \in B_{k-i} \backslash B_{k+1-i}\right] \\
		&\leq \left(\frac{a_k+b_k}{2}-a_k\right) \mathbb{P}_{z_1}\left(\sigma_A<\tau_{k}\right) \quad+\left(b_k-a_k\right) \mathbb{P}_{z_1}\left(\sigma_A>\tau_{k}\right) \\
		& \qquad + \sum_{i=1}^{\infty}\left(b_{k-i}-a_{k-i}\right) \mathbb{P}_{z_1}\left(\~W^n_{\tau_{k}} \notin B_{k-i}\right)\\
		&\leq (b_k - a_k) \left(1-\frac{\mathbb{P}_{z_1}\left(\sigma_A<\tau_{k}\right)}{2}\right)+\sum_{i=1}^{\infty} \zeta^{k-i}c_{2}\rho^{2i} \\
		&\leq \zeta^k \left(1-\frac{c_1}{2}\right) + 2 c_2 \zeta^{k-1} \rho^2 \\
		&\leq \zeta^k \left(1-\frac{c_1}{2}\right) +\frac{c_1}{4} \zeta^{k-1} \zeta \\
		&=  \zeta^{k+1} .
	\end{align*}
	Since $\e>0$ is arbitrary, we have $b_k - a_k \< \zeta^k $ for all $k\> 0$.

	Now for $x,y \in B(x_0,r)$ choose $k$ to be the largest integer such that $|x-y| \vee c_{0} r_{n} \< \rho^{k}r$. Then $\log( \frac{|x-y| \vee c_{0} r_{n}}{r})\>(k+1)\log \rho$ and 
	\begin{equation*}
		|h(x) - h(y)| \< \zeta^{k} \< \zeta^{\log( \frac{|x-y| \vee c_{0} r_{n}}{r})/\log \rho - 1} \< c_3 \left(\frac{|x-y|\vee c_{0} r_{n}}{r} \right)^{\log \zeta / \log \rho}.
	\end{equation*}
	Note that the above inequality holds trivially for $r \in (0, c_{0}r_{n} ) $ if we replace $C_3$ by $C_3\vee2$. The proof is now completed.
\end{proof}

\subsection{H\"older regularity of time-changed semigroups}

In this section,  
we assume the measure $\mu$ has full quasi-support on $\Rd$ and satisfies the condition \ref{condA}. Namely, for any $R > 0$, there exists 
$ \3_{\mu} = \3_{\mu}(R) > d - 2$  such that for any $x\in B(0,R)$ and $r\in(0,1)$ we have $\mu(B(x,r))\<C_R r^{\3_{\mu}}$.
This condition ensures that the measure $\mu$ is a smooth measure (\Cref{smooth measure}).  In addition,
\begin{equation*} 
	\lim_{r\downarrow0}r^{\3_{\mu}-d+2}\log\left(\int_{B(0,2R)}\frac{dx}{\int \phi((y-x)/r)\mu(dy)}\right)=0.
\end{equation*}
This condition is used in the proof of \Cref{Holder}.
We also assume $(\jd,\phi)$ satisfies assumptions \ref{jd0} and \ref{jd8}.

We first prepare several lemmas.
The main results of this subsection are \Cref{Holder} and \Cref{killed Holder}.

\begin{lemma} \label{greenfunction}
Let $g_{r}$ be the function given in \Cref{green est}. For $R>2$, there exist 
$\beta:= \3(R)>0$ and $C_R>0$ such that for any $x\in B(0,R)$ and $r\in(0,1)$
$$(g_{r}*\mu)(x)\equiv\int_{B(x,r)}g_{r}(x-y)\mu(dy)\<C_Rr^\3.$$
\end{lemma}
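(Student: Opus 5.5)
Here $g_r$ is the function $g_R$ of \Cref{green est} with $R$ replaced by $r$; it is supported in $B(0,r)$, which is why the integral is over $B(x,r)$. The plan is a dyadic shell decomposition of $B(x,r)$ around $x$, bounding the $\mu$-mass of each shell by the upper bound \eqref{measure upper bound condition} of \ref{condA}. Take $x\in B(0,R)$ and $r\in(0,1)$, set $\3_\mu=\3_\mu(R)>d-2$, and let $C_R$ be the constant in \eqref{measure upper bound condition}. Write
\[
B(x,r)\setminus\{x\}=\bigcup_{j\ge0}A_j,\qquad A_j:=\{y:2^{-j-1}r\le|x-y|<2^{-j}r\}.
\]
The single point $\{x\}$ carries no mass: when $d\ge2$ it is polar and $\mu$ charges no polar sets, and when $d=1$ we have $\mu(\{x\})\le\mu(B(x,s))$ for every $s$, which is fine since the $d=1$ kernel is bounded. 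Since all radii $2^{-j}r$ are less than $1$ and $x\in B(0,R)$, we have $\mu(A_j)\le\mu(B(x,2^{-j}r))\le C_R(2^{-j}r)^{\3_\mu}$.

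The three dimensional cases are then handled separately.

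For $d\ge3$, on $A_j$ we have $g_r(x-y)\le C_d(2^{-j-1}r)^{2-d}$. Summing over shells gives
\[
(g_r*\mu)(x)\le C\sum_{j\ge0}(2^{-j}r)^{2-d+\3_\mu}=C r^{\3_\mu-d+2}\sum_{j\ge0}2^{-j(\3_\mu-d+2)},
\]
and the geometric series converges precisely because $\3_\mu>d-2$. So in this case we take $\3=\3_\mu-d+2$.

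For $d=2$, on $A_j$ we have $g_r(x-y)\le C_2(\log(r/|x-y|)+1)\le C(j+2)$. The sum is therefore bounded by
\[
C\sum_{j\ge0}(j+2)(2^{-j}r)^{\3_\mu}\le C r^{\3_\mu}.
\]
Note that $\3_\mu>0$ here, so we may take $\3=\3_\mu$.

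For $d=1$, the kernel satisfies $g_r\le C_1r$ on its support, so $(g_r*\mu)(x)\le C_1 r\,\mu(B(x,r))\le C_1r$ times a bound that is uniform over $x\in B(0,R)$ (namely $\mu(B(0,R+1))$). Hence $\3=1$ works, and it still works after combining with \eqref{measure upper bound condition} with $\3_\mu\ge0$.

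The only genuinely delicate point is the convergence of the geometric series, which is exactly where $\3_\mu>d-2$ is needed. Everything else consists of routine bookkeeping of the constants, which depend only on $R$ through $C_R$ and $\3_\mu(R)$.
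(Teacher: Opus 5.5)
Your proof is correct and follows essentially the same route as the paper: a dyadic shell decomposition of $B(x,r)$ around $x$ combined with the upper bound \eqref{measure upper bound condition}, and a direct bound via the bounded kernel when $d=1$. Your bookkeeping is slightly tidier than the paper's: you use half-open shells, bound the kernel at the inner radius $2^{-j-1}r$, note that $\mu(\{x\})=0$, and get the exponent $\beta=\beta_\mu$ directly when $d=2$. These refinements only affect constants and exponents, not the argument.
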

\begin{proof}
	The $d = 1$ case is obvious as by \Cref{green est}  
	\begin{equation*}
		\int_{B(x,r)}g_r(x-y)\mu(dy)\< C r\mu(B(x,r)) \< C_R r.
	\end{equation*}
	For $d \> 2$ we can divide $B(x,r)$ into a sequence of annulus $A_k=\{y\in\R^d:2^{-k-1}r<|y-x|<2^{-k}r\}$, then
	$$
		\int_{B(x,r)}g_{r}(x-y)\mu(dy)
		\< \sum_{k=0}^\8g_{r}(2^{-k}r)\mu(A_k)
		\< \sum_{k=0}^\8 g_{r}(2^{-k}r)C_R (2^{-k}r)^{\3_{\mu}}.
	$$
When $d=2$,
$$\sum_{k=0}^\8 g_{r}(2^{-k}r)C_R(2^{-k}r)^{\3_{\mu}}\<\sum_{k=0}^\8 C_2(\log(2^{k})+1)C_R (2^{-k}r)^{\3_{\mu}}\<Cr^{\3'}$$
for some $\3'\in(0,\3_{\mu})$. When $d \> 3$,
\begin{equation*}
	\sum_{k=0}^\8 g_{r}(2^{-k}r)C_R (2^{-k} r)^{\3_{\mu}}
	\<\sum_{k=0}^\8 C |2^{-k}r|^{2-d}C_R (2^{-k}r)^{\3_{\mu}}\<C_R r^{\3''}
\end{equation*}
for some $\3''=\3_{\mu}-(d-2)>0$. Hence the result is proved.
\end{proof}

\begin{lemma} \label{X exit time}
	Given $R>2$, there exist $c_{1}(R) > 0$ and $ \beta:= \3(R) > 0$
	 such that for any $r \in (0, 1)$, any $x_{0}\in B(0,R-1)$ and $x\in B(x_{0},r)$ we have 
	\begin{equation*}
		\EX_x[\tau_{B(x_0,r)}(X^n)]\< c_1 r^\3 + o_{r_{n}}(1)
	\end{equation*}
	for $n \in \N$, and 
	\begin{equation*}
		\EX_x[\tau_{B(x_0,r)}(X)]\< c_1 r^\3.
	\end{equation*}
\end{lemma}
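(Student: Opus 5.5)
The expected exit time of the time-changed process equals the Green potential of the time-change measure. For $X$, with $D=B(x_0,r)$ and $A$ the PCAF with Revuz measure $\mu$, we have $\tau_D(X)=A_{\tau_D(W)}$. Hence
\[
\EX_x[\tau_D(X)]=\EX_x[A_{\tau_D(W)}]=\int_D G_D(x,y)\,\mu(dy),
\]
where $G_D$ is the Green function of Brownian motion killed on leaving $D$. For the walk $X^n$, time is measured through the holding times with mean $1/\lambda_n(W^n_k)=\sigma_\jd^2r_n^2\,\frac{d\mu_n}{dx}(W^n_k)$ at each visited site. Summing over visits before exit gives
\[
\EX_x[\tau_D(X^n)]=\sigma_\jd^2r_n^2\sum_{k\ge0}\EX_x\Big[\tfrac{d\mu_n}{dx}(W^n_k);k<\tau_D(W^n)\Big].
\]
Separating the $k=0$ and $k=1$ terms, the remainder is $\int g^{n,D}(x,y)\,\mu_n(dy)$, using the occupation density $g^{n,D}$ defined before \Cref{green est}.

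First I treat the main term. Since $D\subset B(x,2r)$, monotonicity of killed densities and the shift property give $g^{n,D}(x,y)\le g^{n,B(x,2r)}(x,y)=g^{n,B(0,2r)}(y-x)\le g_{2r}(y-x)$ by \Cref{green est}. Thus the main term is at most $\int_{B(x,2r)}g_{2r}(y-x)\,\mu_n(dy)$. By \Cref{weakConvergeRelation}(2) this equals $\int(\phi_n*[g_{2r}(\cdot-x)\1_{B(x,2r)}])(z)\,\mu(dz)$. The function $g_{2r}$ is radially decreasing (up to constants), and $\phi$ is bounded with compact support, say in $B(0,M)$. So the mollified kernel is bounded by a constant multiple of $g_{3r}(z-x)\1_{B(x,3r)}(z)$, uniformly for $r_n\le r$, with $Mr_n$ absorbed. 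For $r_n>r$ I instead bound by $\|\phi\|_\infty r_n^{-d}$ times the integral of $g$, which is tiny after the prefactor. Then \Cref{greenfunction} (its proof is the annulus decomposition with \eqref{measure upper bound condition}, applied at radius $3r$; centers stay in $B(0,R)$ since $x_0\in B(0,R-1)$ and $r<1$) gives the bound $C_R r^\beta$. The point mass singularity of the kernel near $y=x$ matters only when $d\ge2$, where $g$ is locally integrable against $\mu$ precisely because $\beta_\mu>d-2$.

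Next I bound the first two terms, $k=0,1$. They equal $\sigma_\jd^2r_n^2\,\frac{d\mu_n}{dx}(x)+\sigma_\jd^2r_n^2\,\EX_x[\frac{d\mu_n}{dx}(W^n_1)]$. The density satisfies $\frac{d\mu_n}{dx}(x)=\int\phi_{r_n}(y-x)\mu(dy)\le\|\phi\|_\infty r_n^{-d}\mu(B(x,Mr_n))\le C r_n^{\beta_\mu-d}$. Hence each term is $O(r_n^{\beta_\mu-d+2})=o_{r_n}(1)$ uniformly in $x\in B(0,R)$, again by $\beta_\mu>d-2$. This is where the $o_{r_n}(1)$ in the statement comes from. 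For $X$ itself the same argument without these terms, using the Brownian Green function $G_{B(x,2r)}(x,y)\le g_{2r}(y-x)$ (the classical estimate, or a limit of \Cref{green est}), gives $c_1r^\beta$ directly.

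The main obstacle is justifying the identity for $\EX_x[\tau_D(X^n)]$ cleanly, together with the uniform comparison of $\phi_n*g$ with $g$ at a larger radius. The rest is bookkeeping with \Cref{green est} and \Cref{greenfunction}.
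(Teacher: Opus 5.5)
Your proposal is correct and follows the paper's proof. You use the same decomposition into the first two holding times, each bounded by $O(r_n^{\beta_\mu-d+2})$ through the density of $\mu_n$, plus $\int g^{n,D}(x,y)\,\mu_n(dy)$, which is dominated by $g_{2r}(y-x)$ via \Cref{green est} and then controlled by \Cref{greenfunction}. The only difference is the mollifier step: the paper writes $\int g_{2r}(x-y)\,\mu_n(dy)=(\check\phi_n*(g_{2r}*\mu))(x)$ and averages the pointwise bound $g_{2r}*\mu\le C_Rr^\beta$, which avoids your comparison of $\phi_n*g_{2r}$ with $g_{3r}$ and your case split on $r_n$ versus $r$.
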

\begin{proof}
	Let $B = B(x_{0},r)$. Recall that $\eta_{i}^{n}(x)$ denotes the exponentially distributed waiting time of $X^{n}$ at the i-th step at the place $x\in\Rd$. We omit the variable $x\in\Rd$ of $\eta_{i}^{n}(x)$ if we do not care about it. Note that by strong Markov property,
\begin{align*}
	\EX_x[\tau_{B}(X^n)]
	&= \EX_x[\tau_{B}(X^n); \eta_{0}^{n} + \eta_{1}^{n} \< \tau_{B}(X^n)] + \EX_x[\tau_{B}(X^n); \eta_{0}^{n} = \tau_{B}(X^n)] \\
	&= \EX_x[\eta_0^n + \eta_1^n + \tau_{B}(X^n)\circ \theta_{\eta_0^n + \eta_1^n}; W_{1}^{n} \in B] + \EX_x[\eta_{0}^{n};W_{1}^{n}\notin B]\\
	&= \EX_x[\eta_0^n(W_{0}^{n})] + \EX_x[\eta_1^n(W_{1}^{n}); W_{1}^{n} \in B] + \EX_x[\tau_{B}(X^n)\circ \theta_{\eta_0^n + \eta_1^n};W_{1}^{n} \in B].
\end{align*}

For the first two terms,
\begin{align*}
	\EX_x[\eta_0^n(W_{0}^{n})] + \EX_x[\eta_1^n(W_{1}^{n}); W_{1}^{n} \in B] 
	&\< 2 \sup_{x\in B} \frac{1}{\lambda_{n}(x)} \\
	&= 2 \sup_{x\in B} r_n^2 \sigma_{\jd}^{2}\int \phi_n(y-x)\mu(dy)\\
	&\< 2\|\phi\|_\8 \sup_{x\in B(0,R)} r_n^{2-d}\sigma_{\jd}^{2}\mu(B(x,Cr_n))\\
	&\< 2C_{R} \|\phi\|_\8  \sigma_{\jd}^{2} r_n^{2-d+\3_{\mu}} = o_{r_{n}}(1),
\end{align*}
where the last equation is due to $\3_{\mu} > d- 2$.
For the third term, by \Cref{greenfunction} and \Cref{green est}
\begin{eqnarray*}
&& \EX_x[\tau_{B}(X^n) \circ \theta_{\eta_0^n + \eta_1^n};W_{1}^{n} \in B]
	=\int g^{n,B}(x,y)\mu_n(dy)\\
	& \leq  & \int g^{n,B(x,2r)}(x,y)\mu_n(dy)
	 \<\int g_{2r}(x-y)\mu_n(dy)\\
	& =& (g_{2r}*(\v\phi_n*\mu))(x) 	=(\v\phi_n*(g_{2r}*\mu))(x)\\
	& \leq  & (\v\phi_n*(C_Rr^\3))(x)=C_Rr^\3.
\end{eqnarray*}
For the process $X$,  we have
\begin{equation*}
	\EX_x[\tau_{B}(X)] \<\int g_{2r}(x-y)\mu(dy)\<C_Rr^\3.
	\end{equation*}
The proof is completed.
\end{proof}

Given a bounded domain $D \inn B(0,R) \inn \Rd$, 
define $U_\2^{n,D}f(x):=\EX_x\int_0^\8 e^{-\2 t}f(X^{n,D}_t)dt$ and the $P_t^{n,D} f(x):=\EX_xf(X^{n,D}_t)$ for any $f\>0$ on $D$, with the convention that functions on $D$ are extended to take value 0 at the cemetery point.

\begin{lemma} \label{ultracontractivity}
	There exist $\kappa > 0, n_{0}(R)>0, C_{R,\k} > 0$ such that for all $n\>n_{0}$ and any nonnegative $h \in L^{2}(D;\mu_{n})$ with $\|h\|_{1,n} \< 1$
\begin{equation*}
	\|P_{t}^{n,D} h\|_{2,n}^{2} \< C_{R,\k} \left( (t - O_{n} \log \|h\|_{2,n})^{+} \right) ^{-1/\kappa} ,
\end{equation*}
where $O_{n} = O(r_{n}^{\3_{\mu}-d+2})$.
\end{lemma}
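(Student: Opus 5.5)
The approach is the standard Nash-type argument: derive a Nash inequality for the killed form $\mathcal E^{n,D}$ on $L^2(D;\mu_n)$, then integrate the differential inequality for $\psi(t):=\|P^{n,D}_t h\|_{2,n}^2$. The one new feature is an additive error coming from the discreteness at scale $r_n$. This error is what produces the shift $O_n\log\|h\|_{2,n}$ in the time variable.

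\textbf{Step 1: Nash inequality with a defect.} We aim to show that for $u\in\mathcal F^{n,D}$ with $\|u\|_{1,n}\le 1$,
\begin{equation*}
\|u\|_{2,n}^{2+2\kappa}\le C\,\bigl(\mathcal E^{n,D}(u)+O_n'\|u\|_{2,n}^2\bigr),
\end{equation*}
or an equivalent Faber--Krahn form. The route is through Green function bounds rather than Sobolev embedding. By Lemma~\ref{X exit time}, $\sup_x \EX_x[\tau_{B(x_0,\rho)}(X^n)]\le c_1\rho^\beta+o_{r_n}(1)$, and from its proof the error is $O(r_n^{\beta_\mu-d+2})$. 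Applying the same Green function argument to the whole ball $D$, using Proposition~\ref{green est} and Lemma~\ref{greenfunction}, bounds the $0$-order resolvent of $X^{n,D}$ as an operator $L^\infty(\mu_n)\to L^\infty$. To obtain a gain of integrability, we use $\int g_R(x-y)^p\mu_n(dy)<\infty$ for some $p>1$; this follows from $\beta_\mu>d-2$ via the annulus decomposition of Lemma~\ref{greenfunction}. The result is a bound $\|U^{n,D}_0\|_{L^{1}(\mu_n)\to L^{q}(\mu_n)}\le C$ for some $q>1$, uniformly in $n$ up to the $O_n$ defect. A standard argument (Varopoulos / Carlen--Kusuoka--Stroock style) then converts this resolvent bound into the Nash inequality above with $\kappa$ depending on $q$.

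\textbf{Step 2: Differential inequality.} Take $\|h\|_{1,n}\le 1$ with $h\ge0$. Because $P_t^{n,D}$ is sub-Markov and symmetric, $\|P_th\|_{1,n}\le1$. Also $\psi'(t)=-2\mathcal E^{n,D}(P_th)$. Step 1 therefore gives
\begin{equation*}
\psi'\le -c\,\psi^{1+\kappa}+C O_n\psi .
\end{equation*}
Set $\chi(t)=e^{-CO_nt}\psi(t)$, or equivalently handle the linear term by comparison. While $\psi$ is large the linear term is harmless. Integrating $\frac{d}{dt}\psi^{-\kappa}\ge c\kappa-C\kappa O_n\psi^{-\kappa}$ gives
\begin{equation*}
\psi(t)^{-\kappa}\ge c\,(t-t_0)^{+},
\end{equation*}
where the loss $t_0$ is the time spent while the linear term competes. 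The linear term multiplies $\psi$ by at most a factor $e^{CO_nt}$, so we get $\log\psi(t)\le\log\psi(0)+\cdots$. Rearranging yields a time shift of order $O_n\log\|h\|_{2,n}$, giving $\psi(t)\le C\bigl((t-O_n\log\|h\|_{2,n})^+\bigr)^{-1/\kappa}$. We take $n\ge n_0(R)$ so that $O_n\le1$ and the constants are uniform.

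\textbf{Main obstacle.} The delicate step is Step 1, obtaining an $L^1\to L^q$ bound for the resolvent that is uniform in $n$. The difficulty is the atom at the starting point: the first holding time contributes $1/\lambda_n$, which is only $o(1)$ and is not bounded by any Green kernel. This is exactly why the statement carries the $O_n$ correction. I would split the resolvent as in the proof of Lemma~\ref{X exit time}, into the first two holding times plus $\int g^{n,D}(x,y)\,\mu_n(dy)$. The first part is bounded by $2\sup_D \lambda_n^{-1}=O(r_n^{\beta_\mu-d+2})$, and it enters the Nash inequality only as the multiple $O_n\|u\|_2^2$. The second part is a genuine kernel controlled by $g_{2R}$ from Proposition~\ref{green est}, and it is handled by the annulus estimate of Lemma~\ref{greenfunction}.
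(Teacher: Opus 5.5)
Your Step 1 is false as stated, and this is where the real difficulty of the lemma lies.

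Fix $n$ and let $u=\1_E/\mu_n(E)$, where $E\subset D$ is a small set on which $\lambda_n\le M$. Then $\|u\|_{1,n}=1$ and $\|u\|_{2,n}^2=\mu_n(E)^{-1}$. By symmetry of $Q_n(x,dy)m_n(dx)$ and $m_n=\lambda_n\mu_n$,
\[
\mathcal E^{n,D}(u)\le 2\int_D u^2\,dm_n\le 2M\mu_n(E)^{-1}.
\]
So your inequality would force $\mu_n(E)^{-\kappa}\le C(2M+O_n')$. This fails as $\mu_n(E)\to0$, for any finite $O_n'$.

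The conceptual reason is the holding-time atom. A Nash inequality with an additive $L^2$ defect implies $\|P^{n,D}_t\|^2_{L^1\to L^2}\le Ct^{-1/\kappa}e^{Ct}$. But the atom gives $P^{n,D}_th\ge e^{-\lambda_n t}h$ for $h\ge0$, so $P^{n,D}_t$ is not $L^1\to L^2$ bounded at any fixed $n$. That is why the statement must depend on $\|h\|_{2,n}$.

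The same atom rules out your $L^1(\mu_n)\to L^q(\mu_n)$ resolvent bound, since $U^{n,D}_0h\ge h/\lambda_n$. So the atom cannot be split off as an additive term $O_n\|u\|_{2,n}^2$.

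Step 2 then does not produce the shift. From $\psi'\le-c\psi^{1+\kappa}+CO_n\psi$ you get $\psi(t)\le Ct^{-1/\kappa}$ for bounded $t$ and large $n$, independently of $\psi(0)$. The linear term only competes when $\psi$ is small. So the term $O_n\log\|h\|_{2,n}$ is asserted, not derived.

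The missing idea is that the defect belongs in the denominator. Your ingredients are the right ones: splitting off the first two holding times, $g^{n,U}\le g_R$, H\"older with some $p>1$, and the annulus estimate. The paper uses them to bound the mean exit time from every open $U\subset D$:
\[
\|G^{n,U}1\|_\infty\le C_R\mu_n(U)^\kappa+O_n,\qquad \kappa=(p-1)/p.
\]
Via Grigor'yan--Hu this gives a Faber--Krahn inequality with defect, $\Lambda_{\min}(U)\ge(C_R\mu_n(U)^\kappa+O_n)^{-1}$. The level-set argument of Grigor'yan--Hu--Hu then applies, with $U_s=\{u>s\}$, $\mu_n(U_s)\le\|u\|_{1,n}/s$ and $s=\|u\|_{2,n}^2/(4\|u\|_{1,n})$. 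It yields
\[
\mathcal E^{n,D}(u)\ge\frac{\|u\|_{2,n}^2}{2\bigl(C_R(\|u\|_{1,n}^2/\|u\|_{2,n}^2)^\kappa+O_n\bigr)}.
\]
In this inequality the decay rate saturates at about $1/O_n$ for large $\|u\|_{2,n}$, which is consistent with the atom.

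Now set $J(t)=\|P^{n,D}_th\|_{2,n}^2$ and use $\|P^{n,D}_th\|_{1,n}\le1$. This gives
\[
C_RJ^{-\kappa-1}dJ+O_nJ^{-1}dJ\le-dt.
\]
The $O_n\,dJ/J$ term integrates to exactly $O_n\log\|h\|_{2,n}^2$. This is the time that exponential decay at rate $1/O_n$ needs to bring $J$ down from $\|h\|_{2,n}^2$. The leftover $O_n\log(1/J)$ is absorbed by $\log z\le c_\kappa z^\kappa$ once $O_n\le1$.
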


\begin{proof}
	Let $G^{n,D}f (x) := \EX_{x}\int_{0}^{\8}f(X_{t}^{n,D})dt$ be the Green operator. Then
	\begin{equation*}
		G^{n,D}1 (x) = \EX_{x}[\tau_{D}(X^{n})].
	\end{equation*}
	Like in the proof of \Cref{X exit time}, we can show that
	for any open set $U \inn D \inn B(0,R)$ and $p>1$
	\begin{align*}
	\|G^{n,U }1\|_\8 
	&\< O(r_{n}^{\3_{\mu}-d+2}) + \sup_{x\in U}\int_{U} g^{n,U}(x,y)\mu_n(dy)\\
	&\< O(r_{n}^{\3_{\mu}-d+2}) + \mu_n(U)^{\frac{p-1}{p}}\sup_{x\in B(0,R)}\left(\int_{U} g^{n,U}(x,y)^p \mu_n(dy)\right)^{1/p}\\
	&\< O(r_{n}^{\3_{\mu}-d+2}) + \mu_n(U)^{\frac{p-1}{p}}\sup_{x\in B(0,R)}\left(\int g_{R}(x-y)^p \mu_n(dy)\right)^{1/p}\\
	&= O(r_{n}^{\3_{\mu}-d+2}) + \mu_n(U)^{\frac{p-1}{p}} \sup_{x\in B(0,R)}\left( \v\phi_n*(g_{R}^p*\mu)(x)\right)^{1/p}.
\end{align*}
By the same method as in the proof of \Cref{greenfunction}, we can show there is a constant $C_R>0$ such that 
$$
\sup_{x\in B(0,R)}\left( \v\phi_n*(g_{R}^p*\mu)(x)\right)^{1/p}\<C_{R},
$$
provided $p>1$ is chosen small enough so that $\3_{\mu}-p(d-2)>0$ in the case $d\>3$. Hence
\begin{equation*}
	\|G^{n,U}1\|_\8\< O(r_{n}^{\3_{\mu}-d+2})   + C_R \mu_n(U)^{\frac{p-1}{p}}.
\end{equation*}
By \cite[Lemma 3.2]{grigor2012two} we know the smallest eigenvalue $\Lambda_{\text{min}}$ of the generator $\mathcal A^{n,U}$ of $\mathcal E^{n,U}$ on $L^2(D;\mu_n)$ satisfies
\begin{equation}\label{min eigenvalue}
	\Lambda_{\text{min}}(U)
	\>\|G^{n,U}1\|_\8^{-1}
	\> (C_R \mu_n(U)^{\kappa} + O_{n})^{-1}.
\end{equation}
where $O_{n} = O(r_{n}^{\3_{\mu}-d+2}) $ and $\k = {\frac{p-1}{p}}$. We then follow the proof of \cite[Lemma 5.4, 5.5]{grigor2014upper}. For any $u\in \mathcal F^{n,D}\cap C_{c}(D)$ and $u\>0$, 
The set $U_{s}:=\{x\in D: u>s\}$ is open for every $s>0$.  By Markov property we have for any $t\>0$, 
\begin{equation*}
	\mathcal E^{n,D}(u)(u) \> \mathcal{E}((u-t)_{+}).
\end{equation*}
When $t>s$, $(u-t)_{+}$ vanish outside $U_{t}$, hence $\mathcal E^{n,D}(u)((u-t)_{+}) \> \Lambda_{\text{min}}(U_{s})\int_{U_{s}} (u-t)_{+}^{2}\mu_n(dx)$.
Let $A=\|u\|_{n,1}$ and $B = \|u\|_{n,2}^{2}$. Since $u\>0$, we can use the inequality $(u-t)_{+}^{2}\> u^{2}-2tu$ to get 
\begin{equation} \label{formlowerBound2}
	\mathcal E^{n,D}(u)( (u-t)_{+} ) \> 
	\Lambda_{\text{min}}(U_{s})(B-2tA).
\end{equation} 
On the other hand, we have 
\begin{equation} \label{measurebound}
	\mu_{n}\left(U_s\right) \leqslant \frac{1}{s} \int_{U_s} u d \mu_{n} \leqslant \frac{A}{s}.
\end{equation}
Combining \eqref{min eigenvalue}, \eqref{formlowerBound2} and \eqref{measurebound}, we get 
\begin{equation}
	\mathcal E^{n,D}(u)( (u-t)_{+} ) \> \frac{1}{C_R (A/s)^{\kappa} + O_{n}} \left(B-2tA\right).
\end{equation} and letting $t\downarrow s = \frac{B}{4A}$ we get 
\begin{equation}\label{formlowerBound}
	\mathcal E^{n,D}(u) \> \frac{1}{2}\frac{ \|u\|_{2,n}^{2}}{C_R (\|u\|_{1,n}^{2}/\|u\|_{2,n}^{2})^{\kappa} + O_{n}} .
\end{equation}
 As  $\mathcal E^{n,D}(|u|)\<\mathcal E^{n,D}(u)$, \eqref{formlowerBound} holds for any signed $u\in \mathcal F^{n,D}\cap {C_c}(D)$.
  Since $\mathcal F^{n,D}\cap {C_c} (D)$ is $\mathcal E^{n,D}_{1}$ dense (hence also $L^{1}$-dense by Cauchy-Schwarz inequality)
   in $\mathcal F^{n,D}$, a standard approximation argument shows that \eqref{formlowerBound} holds for any $u\in\mathcal F^{n,D}$.
 Set $u_{t} = P_{t}^{n,D}h$ for $h\>0$ with $\|h\|_{1,n} \< 1$.
and denote $J(t) = \|u_{t}\|_{2,n}^{2}$. Using the fact that $P_{t}^{n,D}$ is contractive in $L^{1}(D;\mu_{n})$ (because $P^{n,D}_{t}$ is symmetric in $L^{2}(D;\mu_{n})$ and $P_{t}^{n,D}1\<1$) we have
\begin{equation*}
	\frac{dJ}{dt} = -2 \mathcal E^{n,D}(u_{t}) \< -   \frac{J}{C_R J^{-\kappa} + O_{n}} ,
\end{equation*}
or
\begin{equation*}
	C_R \frac{dJ}{J^{\k + 1}} + O_{n} \frac{dJ}{J} \< -  dt.
\end{equation*}
Integrating from $0$ to $t$ yields
\begin{equation*}
	- \frac{C_R}{\kappa} (J^{-\kappa} - \|h\|_{2,n}^{-2\kappa}) + O_{n} \log \frac{J}{\|h\|_{2,n}^{2}} \< - t.
\end{equation*}
Rearranging to get 
\begin{equation*}
	t  - O_{n} \log \|h\|_{2,n}^{2} \< \frac{C_R}{\kappa} (J^{-\kappa} - \|h\|_{2,n}^{-2\kappa}) + O_{n}\log \frac{1}{J} 
	\< C_{R,\kappa} J^{-\kappa} , 
\end{equation*}
where we use the fact that $\log z \< c_{\k} z^{\k}$ for $z > 0$ in the last inequality and it holds for all sufficiently large $n$ as $O_{n}\downarrow0$.
The proof is completed.
\end{proof}

\begin{prop}\label{Holder}
There is $c_{0}>0$ such that for any $R>2, t>0,\2>0$, $D = B(0,R-1)$, there are $\r(R)>0$, $n_0(R,t)>0$, $ C_1(R,\2)>0$, $C_2(R,t)>0$ such that for any $n>n_0$ and bounded function $f$ supported on $D$ and $x,y\in B(0,R-2)$ with $|x-y|<1/4$, we have
	\begin{align*}
		|U_\2^{n,D} f(x)-U_\2^{n,D} f(y)| &\< C_1((|x-y|\vee c_{0}r_{n})^\r + o_{n}(1))  \|f\|_{\8}  , \\
		|P_t^{n,D} f(x)-P_t^{n,D} f(y)| &\<  C_{2}((|x-y|\vee c_{0}r_{n})^{\r} + o_{n}(1))\|f\|_{\8}.
	\end{align*}
\end{prop}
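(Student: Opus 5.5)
We follow the standard route from H\"older regularity of harmonic functions to H\"older regularity of resolvents and semigroups, with the atom at the starting point accounted for by the $c_0 r_n$ and $o_n(1)$ terms.

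\textbf{Resolvent estimate.} Fix $x,y\in B(0,R-2)$ with $|x-y|<1/4$. Choose a radius $s\in(|x-y|,1/2)$, to be optimized later, and set $B=B(x,2s)\subset D$. Let $\tau=\tau_{B}(X^n)$. By the strong Markov property,
\[
U^{n,D}_\2 f(z)=\EX_z\Big[\int_0^{\tau}e^{-\2 t}f(X^{n,D}_t)\,dt\Big]+\EX_z\big[e^{-\2\tau}U^{n,D}_\2 f(X^n_{\tau})\big]
\]
for $z\in B(x,s)$. The first term is at most $\|f\|_\8\EX_z\tau$, and \Cref{X exit time} bounds this by $\|f\|_\8(c_1 s^\3+o_n(1))$. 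The second term is not harmonic because of the factor $e^{-\2\tau}$. We therefore write it as $\EX_z[U^{n,D}_\2 f(X^n_\tau)]-\EX_z[(1-e^{-\2\tau})U^{n,D}_\2 f(X^n_\tau)]$. The correction is bounded by $\2\,\EX_z\tau\,\|f\|_\8/\2=\|f\|_\8\EX_z\tau$. The function $h(z):=\EX_z[U^{n,D}_\2 f(X^n_\tau)]$ is bounded by $\|f\|_\8/\2$ and harmonic for $X^n$ in $B$. By \Cref{holder Harmonic functions} applied on $B(x,2s)$,
\[
|h(x)-h(y)|\le C\big((|x-y|\vee c_0r_n)/s\big)^{\r'}\|f\|_\8/\2 .
\]
Taking $s=|x-y|^{1/2}\vee (c_0 r_n)^{1/2}$ balances the two contributions. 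This gives the first inequality with $\r=\min(\3,\r')/2$ and $C_1$ depending on $\2$ and $R$.

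\textbf{Semigroup estimate.} The plan is to write $P^{n,D}_t f=U^{n,D}_\2 g$ with $g=(\2-\mathcal A)P_t^{n,D}f$. This route fails because $g$ is not bounded. Instead we use the Markov property at time $t/2$ together with an ultracontractivity-type smoothing, following the paper's preparation of \Cref{ultracontractivity}. Write $P^{n,D}_tf=P^{n,D}_{t/2}u$ with $u=P^{n,D}_{t/2}f$. We approximate $P^{n,D}_{t/2}u$ by $\2U^{n,D}_\2 P^{n,D}_{t/2}u$ for large $\2$, using the spectral representation, and the error is controlled by $\|P^{n,D}_{t/2}u-\2U_\2^{n,D}P^{n,D}_{t/2}u\|_\8$. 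In $L^2(\mu_n)$ this error is at most $C/(\2 t)\,\|u\|_2$. To convert $L^2$ bounds into sup bounds, we use \Cref{ultracontractivity}: for $t$ fixed and $n\ge n_0(R,t)$, the term $O_n\log\|h\|_{2,n}$ is negligible, because by the second part of condition \ref{condA} (which controls $\log$ of $\int dx/\mu_n$-density) the $L^2(\mu_n)$ norms of the relevant densities grow at most like $\exp(o(r_n^{-(\3_\mu-d+2)}))$. This yields $L^1\to L^2$ and, by duality, $L^2\to L^\8$ bounds for $P^{n,D}_{t/4}$ that are uniform in $n$. Combining these gives
\[
\|P^{n,D}_tf-\2U^{n,D}_\2P^{n,D}_{t/2}f\|_\8\le C(R,t)\2^{-1}\|f\|_\8 .
\]
Applying the resolvent estimate to $\2U^{n,D}_\2(P^{n,D}_{t/2}f)$ and choosing $\2=\2(|x-y|\vee c_0r_n)$ as a negative power optimizes the two terms. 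This gives the second inequality, with $o_n(1)$ absorbing the ultracontractivity remainder.

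\textbf{Main obstacle.} The hard step is the uniform-in-$n$ $L^2(\mu_n)\to L^\8$ bound. Because $\mu_n$ puts atoms into the transition kernels, the paper can only control $L^2(\mu_n)$ norms via $\int 1/(\text{density of }\mu_n)$. That is exactly why condition \eqref{measure lower bound condition} is stated in its particular form, and why $n_0$ depends on $t$. I would first try to estimate $\|P^{n,D}_{t}f\|_\8$ pointwise by writing $P^{n,D}_t f(x)=\EX_x[P^{n,D}_{t-s}f(X^n_s)]$ for small $s$, and controlling the law of $X^n_s$ through the Green function bound in \Cref{green est}.
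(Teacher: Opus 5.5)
Your resolvent estimate matches the paper's argument: strong Markov at the exit time of a ball of radius $(|x-y|\vee c_0r_n)^{1/2}$, \Cref{X exit time} for the integral term and the $1-e^{-\alpha\tau}$ correction, and \Cref{holder Harmonic functions} for the harmonic remainder. The semigroup part, however, rests on a false step. \Cref{ultracontractivity} does not give a uniform $L^1\to L^2$ bound, and $P^{n,D}_{t/4}$ admits no $L^2(D;\mu_n)\to L^\infty$ bound, even for a single $n$. Under $\P_x$ the walk has not jumped by time $s$ with probability $e^{-\lambda_n(x)s}$. So $P^{n,D}_s g(x)$ contains the term $e^{-\lambda_n(x)s}g(x)$, which $\|g\|_{2,n}$ does not control. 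Equivalently, the law of $X^{n,D}_s$ under $\P_x$ has an atom at $x$ and no $L^2(\mu_n)$-density, so there is nothing to dualize; this is exactly the missing ultracontractivity the paper warns about. Consistently with this, the bound in \Cref{ultracontractivity} contains $O_n\log\|h\|_{2,n}$, so it is only useful for particular $h$ whose $L^2(\mu_n)$-norm is controlled, and you never identify these ``relevant densities''. Your fallback of controlling the law of $X^n_s$ through \Cref{green est} runs into the same atom, since that Green function only records occupation after the second jump.

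The missing idea is to condition on the first jump. For $x\in D$,
\[
P^{n,D}_s v(x)=e^{-\lambda_n(x)s}v(x)+\int_0^s\lambda_n(x)e^{-\lambda_n(x)u}\,\langle P^{n,D}_{s-u}v,\,h_{n,x}\rangle_n\,du,\qquad h_{n,x}:=\varrho_n(\cdot-x)\,\frac{dm_0}{d\mu_n}\,\mathbf{1}_D .
\]
\begin{itemize}
\item The no-jump term is $o_n(1)$ times a sup bound, because $1/\lambda_n\le Cr_n^{2-d+\beta_\mu}\to0$ uniformly on $B(0,R)$.
\item For the jump term, $\|h_{n,x}\|_{1,n}\le 1$ and $\|h_{n,x}\|_{2,n}^2\le C_\varrho^2 r_n^{-d}\int_D\big(\int\phi((z-y)/r_n)\mu(dz)\big)^{-1}dy$. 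This is exactly where \eqref{measure lower bound condition} makes $O_n\log\|h_{n,x}\|_{2,n}\to0$ uniformly in $x$, so \Cref{ultracontractivity} applies to $h_{n,x}$.
\item You also need a semigroup of time bounded below on the $f$-side, since $\langle g,P^{n,D}_{s-u}h_{n,x}\rangle_n$ gives nothing as $u\uparrow s$. The paper gets this from $P^{n,D}_{t/2-u}F_{t/2}=F_{t-u}$ with $t-u\ge t/2$, together with the spectral bound $|\langle F_{t-u},h\rangle_n|\le C_t\|f\|_{2,n}\|P^{n,D}_{(t-u)/4}h\|_{2,n}$.
\end{itemize}
With this decomposition, the route you discarded is precisely the one that works. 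The function $g=(\alpha-\mathcal A^{n,D})P^{n,D}_tf=P^{n,D}_{t/2}F_{t/2}$ satisfies $\|g\|_\infty\le C(R,t)\|f\|_\infty$ for $n>n_0(R,t)$; its no-jump part is at most $e^{-\lambda_n t/2}(\alpha+2\lambda_n)\|f\|_\infty=o_n(1)\|f\|_\infty$. Then $P^{n,D}_tf=U^{n,D}_\alpha g$, and the resolvent estimate with a fixed $\alpha$ finishes the proof.

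Your $\alpha\to\infty$ variant can be repaired with the same decomposition, but it needs extra bookkeeping. The H\"older bound for $\alpha U^{n,D}_\alpha$ contains $\alpha(r^\beta+o_n(1))$. So choosing $\alpha=(|x-y|\vee c_0r_n)^{-\varepsilon}$ works only because the $o_n(1)$ in \Cref{X exit time} is a positive power of $r_n$ and $\varepsilon$ is small. Also, $\alpha U^{n,D}_\alpha P^{n,D}_{t/2}f$ in your display should read $\alpha U^{n,D}_\alpha P^{n,D}_tf$.
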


\begin{proof}
	To show the first inequality, we use a similar argument as that for
	 \cite[Proposition 2.4]{chen2015quenched}. We need to check the following properties: 
	 for $R>2$ there exist $c_1(R),c_2(R), \3(R), \r' \in (0, \8)$ such that for any $x_0\in  D$, the following two hold.
	\begin{itemize}
	\item For all $x\in B(x_0,r)$ and $r\in(0,1)$,
	\begin{equation*}
		\EX_x[\tau_{B(x_0,r)}(X^n)]\< c_1 r^\3 + o_{r_{n}}(1).
	\end{equation*}
	\item There is $\r'>0$ such that if $h$ is bounded and harmonic with respect to $X^n$ in a ball $B(x_0,2r)$, then 
		$$|h(x)-h(y)|\< c_2 \left(\frac{|x-y|\vee c_{0}r_{n}}{r}\right)^{\r'} \|h\|_\8 \qquad \text{for }x,y\in B(x_0,r).$$
	\end{itemize}
The first property is implied by \Cref{X exit time}.
Since continuous time change does not change the harmonicity, harmonic functions with respect to $X^n$ are the same as that with respect to the continuous-time random walk $\~W$ (without time change). Thus the second property is ensured by \Cref{holder Harmonic functions}.

Now we use  the above two properties 
 to show the resolvents are equi-H\"older. For $x_0\in B(0,R-2)$ and $r\in(0,1)$, set $\tau^n_r:=\tau_{B(x_0,r)}(X^n) \< \tau_{D}(X^{n})$. By the strong Markov property, for $x\in B(x_{0},r/2)$
\begin{align*}
U_\2^{n,D} f(x) &=\mathbb{E}_x \int_0^{\tau_r^n} e^{-\2 t} f(X_t^{n,D}) d t+\mathbb{E}_x[(e^{-\2 \tau_r^n}-1) U_\2^{n,D} f(X_{\tau_r^n}^n)]+\mathbb{E}_x[U_\2^{n,D} f(X_{\tau_r^n}^n)] \\
&=: I_1+I_2+I_3.
\end{align*}
We have 
\begin{equation*}
	I_1\<\|f\|_\8 \EX_x \tau^n_r \< (c_1 r^\3 + o_{r_{n}}(1)) \|f\|_\8 .
\end{equation*}
By $\|U_\2^{n,D}f\|_\8 \< \2^{-1} \|f\|_\8$ and $|e^{-z}-1| \< z$ for $z\>0$,  we get
$$
I_2\<\2\EX_x \tau^n_r\|U_\2^{n,D}f\|_\8\< (c_1 r^\3 + o_{r_{n}}(1)) \|f\|_\8.
$$
Note that $I_3$ as a function of $x$ is bounded in $\R^d$ and harmonic in $B(x_0,r)$.
 Hence for $x,y\in B(x_0,r/2)$, 
the  second property above is applicable.
Combining all these together and applying $\|U_\2^{n,D}f\|_\8\<\2^{-1}\|f\|_\8$ again,  we get 
\begin{align*}
	|U_\2^{n,D} f(x)-U_\2^{n,D} f(y)|
	\<c_{3}\left(r^\3 + o_{r_{n}}(1) + \2^{-1}\left(\frac{|x-y|\vee c_{0}r_{n}}{r}\right)^{\r'}\right)\|f\|_\8.
\end{align*}
For any distinct $x,y\in B(0,R-2)$ and $|x-y|<1/4$, let $x_0=x$ and $r=(|x-y|{\vee c_{0}r_{n}})^{1/2}$, then $y\in B(x_0,r/2)$ and hence
\begin{align*}
	|U_\2^{n,D} f(x)-U_\2^{n,D} f(y)|
	&\<c_{3}\left((|x-y|\vee c_{0}r_{n})^{\3/2} + o_{r_{n}}(1) + \2^{-1}(|x-y|\vee c_{0}r_{n})^{\r'/2}\right)\|f\|_\8\\
	&=(C_1(|x-y|\vee c_{0}r_{n})^{\r} + o_{r_{n}}(1) ) \|f\|_\8
\end{align*}
for $C_1=c_{3}(1+\2^{-1})$ and $\r=(\3\wedge\r')/2$.

To show the equi-H\"older continuity of the semigroup, we follow the idea in \cite[Proposition 3.4]{bass2010symmetric}. But since we do not have the ultracontractivity of the semigroup, extra work is needed. 
We note that the generator of the process $X^{n,D}$ is $\mathcal A^{n,D} = \lambda_{n}(Q_{n,D} - I)$, where $I$ is the identity map and $Q_{n,D}f = Q_{n}(f \1_{D})\1_{D}$. From now, we may let $\2 = 1$ and will not emphasize the dependency of constants on $\2$. For any $s>0$ denote $F_{s} := (\2 I - \mathcal A^{n,D})P_{s}^{n,D}f$. By the well-known relation between the resolvent and the generator that $U_\2^{n,D}(\2 I - \mathcal A^{n,D}) = I$, and the fact that $\mathcal A^{n,D}$ and $P_{s}^{n,D}$ commute, we have

\begin{align*}
	P_{t}^{n,D}f 
	= U_\2^{n,D}(\2 I - \mathcal A^{n,D})P_{t}^{n,D}f 
	= U_\2^{n,D}P_{t/2}^{n,D}(\2 I - \mathcal A^{n,D})P_{t/2}^{n,D}f
	= U_\2^{n,D}P_{t/2}^{n,D}F_{t/2}.
\end{align*}
We want to show $\|P_{t/2}^{n,D}F_{t/2}\|_{\8}$ is bounded in $n$, then we can apply H\"older regularity of $U_\2^{n,D}$ to get the desired result.
For $x\in D$, we have
\begin{align*}
	P_{t/2}^{n,D}F_{t/2}(x)
	&= \EX_{x}[F_{t/2}(X_{t/2}^{n,D}); t/2 \> \eta_{0}^{n}] + \EX_{x}[F_{t/2}(X_{t/2}^{n,D}); t/2 < \eta_{0}^{n}] \\
	&= \EX_{x}[\EX_{W_{1}^{n}}[F_{t/2}(X_{t/2-\eta_{0}^{n}}^{n,D})]; W_{1}^{n}\in D, t/2 \> \eta_{0}^{n}] + F_{t/2}(x)\P_{x}(t/2 < \eta_{0}^{n}) \\
	&=: I_{1}' + I_{2}'.
\end{align*}
For $I_{2}'$ we have
\begin{align*}
	|I_{2}'| 
	&= |F_{t/2}(x)|\P_{x}(t/2 < \eta_{0}^{n})\\
	&= e^{-\lambda_{n}(x)t/2} |(\2 I - \lambda_{n}(Q_{n,D} - I))P_{t/2}^{n,D}f|(x)\\
	&\< e^{-\lambda_{n}(x)t/2} (\2 + 2 \lambda_{n}(x))\|f\|_{\8}
	= o_{n}(1) \|f\|_{\8}.
\end{align*}
For $I_{1}'$ we have
\begin{align*}
	I_{1}'
	&= \EX_{x}[\EX_{W_{1}^{n}}[F_{t/2}(X_{t/2-\eta_{0}^{n}}^{n,D})]; W_{1}^{n}\in D, t/2 \> \eta_{0}^{n}]\\
	&= \int_{0}^{t/2} Q_{n,D}P_{t/2-s}^{n,D}F_{t/2}(x) e^{-\lambda_{n}(x)s} \lambda_{n}(x)ds.
\end{align*}
We claim $\sup_{s\in(0,t/2)}\|Q_{n,D}P_{t/2-s}^{n,D}F_{t/2}\|_{\8}$ is bounded in $n$, then $I_{1}'$ is also bounded in $n$.
Using the spectral theorem for self-adjoint operators, there exist projection operators $E_{l} = E_{l}^{n,D}$ on the space $L^{2}(D;\mu_{n})$ such that
\begin{equation*}
	F_{s} = \int_{0}^{\8} (\2 + l) e^{-ls} d E_{l}(f).
\end{equation*}
Given $t_{0}>0$, for any $s\>t_{0}$ we have $(\2 + l) e^{-ls} \< C_{t_{0}}$, we have
\begin{equation*}
	\sup_{s\>t_{0}}\|F_{s}\|_{2,n}^{2} =  \int_{0}^{\8} (\2 + l)^{2} e^{-2l s} d \langle E_{l}(f), E_{l}(f) \rangle_{n} \< C_{t_{0}} \|f\|_{2,n}^{2} , 
\end{equation*}
where $\langle \., \. \rangle_{n}$ denote the inner product in $L^{2}(D;\mu_{n})$.
We see $F_{s} \in L^{2}(D;\mu_{n})$.
For a function $h\in L^{2}(D;\mu_{n}) $, we have
\begin{align*}
	|\langle F_{s}, h\rangle_{n}| & = \left|\int_0^{\infty}(\2 +l) e^{-l s} d\langle E_l(f), h\rangle_{n} \right|\\
	& \leq\left(\int_0^{\infty}(\2 +l) e^{-l s} d\langle E_l(f), f\rangle_{n}\right)^{1 / 2}\left(\int_0^{\infty}(\2 +l) e^{-l s} d\langle E_l(h), h\rangle_{n}\right)^{1 / 2} \\
	& \leq C_{t_{0}}\left(\int_0^{\infty} d\langle E_l(f), f\rangle_{n}\right)^{1 / 2}\left(\int_0^{\infty} e^{-l s / 2} d\langle E_l(h), h\rangle_{n}\right)^{1 / 2} \\
	& =C_{t_{0}} \|f\|_{2,n}\|P_{s / 4}^{n,D} h\|_{2,n}.
\end{align*}
For $x\in D$, by applying \Cref{ultracontractivity} with $h_{n,x}(y) = \jd_{n}(y-x) \frac{d\leb}{d\mu_{n}}(y)\1_{D}(y)$, we get
\begin{align*}
	|(Q_{n,D}F_{s})(x)| = |\langle F_{s}, h_{n,x}\rangle_{n}|
	&\< C_{t_{0}} \|f\|_{2,n}\|P_{s / 4}^{n,D} h_{n,x}\|_{2,n} \\
	&\< C_{t_{0}} \|f\|_{2,n} \left( (C_{R}s/4 - O_{n} \log \|h_{n,x}\|_{2,n})^{+} \right) ^{-1/\kappa}
\end{align*}
Note that
\begin{align*}
	\|h_{n,x}\|_{2,n}^{2} 
	&= \int_{D} \jd_{n}(y-x)^{2} \frac{d\leb}{d\mu_{n}}(y)^{2}\mu_{n}(dy)\\
	&= \int_{D} \frac{\jd_{n}(y-x)^{2}}{\int \phi_{n}(z-y)\mu(dz)} dy\\
	&\< C_{\jd}^{2} r_{n}^{-d} \int_{D} \frac{1}{\int \phi((z-y)/r_{n})\mu(dz)} dy.
\end{align*}
By \ref{condA} we have $ O_{n} \log \|h_{n,x}\|_{2}$ converges to $0$ uniformly in $x\in D$. 
This shows there exists $n_{0}= n_{0}(t_{0},R)$ such that for any $n> n_{0}$
\begin{equation*}
	\|Q_{n,D}F_{s}\|_{\8,D} \< C_{t_{0},R} \|f\|_{2,n}.
\end{equation*}
Now take $t_{0} = t/2$, we have
\begin{equation*}
	\sup_{s\in(0,t/2)}\|Q_{n,D}P_{t/2-s}^{n,D}F_{t/2}\|_{\8} 
	= \sup_{s\in(0,t/2)}\|Q_{n,D}F_{t-s}\|_{\8} 
	\<  C_{t,R} \|f\|_{2,n}.
\end{equation*}
Hence
\begin{equation*}
	\|P_{t/2}^{n,D}F_{t/2}\|_{\8} \< C_{t,R} \|f\|_{2,n} + o_{n}(1) \|f\|_{\8} \< C_{t,R} \|f\|_{\8}
\end{equation*}
where in the second inequality we use $\|f\|_{2,n} \< \mu_{n}(D)^{1/2} \|f\|_{\8} \< C_{R} \|f\|_{\8}$.
Finally, applying H\"older regularity of $U_{\2}^{n,D}$ to get
\begin{align*}
	|P_t^{n,D} f(x)-P_t^{n,D} f(y)|
	&= |U_\2^{n,D}P_{t/2}^{n,D}F_{t/2}(x) - U_\2^{n,D}P_{t/2}^{n,D}F_{t/2}(y)| \\
	&\< C_{1}((|x-y|\vee c_{0}r_{n})^{\r} + o_{n}(1))\|P_{t/2}^{n,D}F_{t/2}\|_{\8} \\
	&\< C_{2}((|x-y|\vee c_{0}r_{n})^{\r} + o_{n}(1)) \|f\|_{\8}.
\end{align*}
The proof is completed.
\end{proof}

We can show similar H\"older regularity property  for the resolvent $\{U_{\2}^{B};\2>0\}$ and semigroup $\{P^B_t;t\>0\}$ associated to the killed process $X^B$, where $B$ is any ball in $\R^d$. We state the result and sketch the proof here.   \Cref{Holder} and \Cref{killed Holder} will be used in
the proof of  \Cref{pointwise}.

\begin{prop}\label{killed Holder}
	Given any compact set $K\inn B$, $\2>0$ and $t>0$, let $\d_K$ be the distance between $B^c$ and $K$.
	There
	 exist $\r(B)>0$, $C_1$ (depending on $K,\2$) and $C_2$ (depending on $K,t$) such that for any $x,y\in K$ with $|x-y|<\d_K^2 \wedge (1/4) $, we have
	$$|U_{\2}^{B} f(x)- U_{\2}^{B} f(y)|\<C_1|x-y|^\r\|f\|_{\8}, $$
	$$|P^B_t f(x)-P^B_t f(y)|\<C_2|x-y|^\r\|f\|_{2}, $$
where $\|f\|_{\8} :=\sup_{x\in\R^d}|f(x)|$ and $\|f\|_{2}^2 :=\int f^2(x)\mu(dx)$.
\end{prop}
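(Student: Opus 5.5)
The plan is to mimic the proof of \Cref{Holder}, now for the limiting process $X^B$ instead of $X^{n,D}$. Two ingredients are needed, and both are available without the $o_n(1)$ and $c_0r_n$ errors. The first is the exit time bound $\EX_x[\tau_{B(x_0,r)}(X)]\le c_1 r^{\3}$, which is exactly the second half of \Cref{X exit time}; by translating the ball this holds for $x_0\in K$ and $r<\d_K\wedge 1$. The second is H\"older regularity of bounded functions harmonic for $X$. Harmonicity for $X$ coincides with harmonicity for Brownian motion $W$, since $X$ is a continuous time change with full quasi-support. The classical Brownian (Krylov--Safonov/oscillation) estimate then gives $|h(x)-h(y)|\le c_2(|x-y|/r)^{\r'}\|h\|_\8$ on $B(x_0,r)$ for $h$ harmonic in $B(x_0,2r)$. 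One could alternatively pass to the limit $r_n\to0$ in \Cref{holder Harmonic functions}, but the direct Brownian argument is cleaner.

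\emph{Resolvent estimate.} Fix $x\in K$ and $r\le \d_K$ with $r<1$, so that $B(x,r)\subset B$. Decompose $U^B_\2 f(x)$ via the strong Markov property at $\tau_r:=\tau_{B(x,r)}(X)$ into $I_1+I_2+I_3$ exactly as before. We have $|I_1|,|I_2|\le c_1 r^{\3}\|f\|_\8$, and $I_3(\cdot)=\EX_\cdot[U^B_\2 f(X_{\tau_r})]$ is bounded by $\2^{-1}\|f\|_\8$ and harmonic in $B(x,r)$. Choosing $r=|x-y|^{1/2}$ is legitimate because $|x-y|<\d_K^2\wedge(1/4)$ ensures $r<\d_K\wedge\frac12$; moreover $y\in B(x,r/2)$. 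This yields $|U_\2^Bf(x)-U_\2^Bf(y)|\le C_1|x-y|^{\r}\|f\|_\8$ with $\r=(\3\wedge\r')/2$.

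\emph{Semigroup estimate.} Write $P^B_t f=U^B_\2 (\2-\mathcal A^B)P^B_t f=U^B_\2 F$, where $F:=(\2-\mathcal A^B)P^B_tf$. It then suffices to show $\|F\|_\8\le C_{K,t}\|f\|_2$, after which the resolvent estimate applies. To bound $F$, split $F=P^B_{t/2}G$ with $G:=(\2-\mathcal A^B)P^B_{t/2}f$. The spectral theorem gives $\|G\|_2\le C_t\|f\|_2$, as in \Cref{Holder}. It remains to show that $P^B_{t/2}$ maps $L^2(B;\mu)$ boundedly into $L^\8$, i.e.\ ultracontractivity of $X^B$. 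This is where I expect the main obstacle, since in the discrete setting the paper lacked it. For $X$, however, the argument of \Cref{ultracontractivity} goes through with $O_n=0$. By \Cref{greenfunction} type bounds and the Brownian Green function estimate, $\|G^{U}1\|_\8\le C\mu(U)^{\k}$ for $U\subset B$. The Grigor'yan-type eigenvalue bound then gives the Nash-type inequality $\mathcal E(u)\ge c\|u\|_2^{2+2\k}/\|u\|_1^{2\k}$, which yields $\|P^B_s\|_{1\to2}\le C s^{-1/(2\k)}$, and by duality and symmetry $\|P^B_s\|_{2\to\8}\le Cs^{-1/(2\k)}$. Hence $\|F\|_\8\le C_{t}\|G\|_2\le C_{K,t}\|f\|_2$, pointwise a.e.

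\emph{Passing from a.e.\ to everywhere.} One must make sure that the resolvent bound is applied to a genuinely bounded function and that $P^B_tf$ is everywhere equal to $U^B_\2F$. The latter holds because the strong Feller property of $X^B$ (time change of killed Brownian motion, $\mu$ smooth in the strict sense by \Cref{smooth measure}) makes both sides continuous. Altogether this gives $|P^B_tf(x)-P^B_tf(y)|\le C_2|x-y|^{\r}\|f\|_2$.
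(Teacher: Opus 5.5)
Your proposal follows the paper's proof. The resolvent step is the same: strong Markov decomposition at $\tau_{B(x,|x-y|^{1/2})}$, using \Cref{X exit time} and the regularity of harmonic functions. The semigroup step takes the same route: the Green bound $\|G^U1\|_\infty\le C\mu(U)^{\kappa}$, Grigor'yan's eigenvalue estimate, the Nash-type inequality and ultracontractivity, fed into the Bass--Kassmann--Kumagai identity $P^B_tf=U^B_\alpha(\alpha-\mathcal A^B)P^B_tf$.

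The one step where you go beyond the paper, upgrading that identity from $\mu$-a.e.\ to every point, should not rest on a strong Feller property of $X^B$, since that is essentially what is being proved. Instead, use that $U^B_\beta(x,dy)\le G_B(x,y)\,\mu(dy)$ for every $x$, where $G_B$ is the Green function of Brownian motion killed on leaving $B$ (as $\mu$ is smooth in the strict sense). This gives $\beta U^B_\beta P^B_tf(x)=\beta U^B_\beta U^B_\alpha F(x)$ for all $x$. Then let $\beta\to\infty$, using right-continuity of $u\mapsto P^B_{t+u}f(x)$ for continuous $f$, and finish with a monotone class argument.
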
 
\begin{proof}
	For a stopping time $\tau$ that is less than $\tau_B$,
	by the strong Markov property we have for any $f\in C_c(B)$ 
	\begin{align*}
	U_{\2}^{B} f(x)
	&=\EX_x\int_0^{\tau_B} e^{-\2 t}f(X_t)dt\\	
	&=\EX_x\int_0^{\tau} e^{-\2 t}f(X_t)dt+\EX_x\left[\EX_{X_{\tau}}\int_{0}^{\tau_B} e^{-\2 (t+\tau)}f(X_t)dt\right]\\	
	&=\EX_x\int_0^{\tau} e^{-\2 t}f(X_t)dt+\EX_x\left[(e^{-\2\tau}-1)U^B_\2 f(X_{\tau})\right]+\EX_x\left[U^B_\2 f(X_{\tau})\right].
	\end{align*}	
For any distinct $x,y\in K$ with $|x-y|<\d_K^2 \wedge (1/4) $ We set $\t=\t_{B(x,r)}$ where $r=|x-y|^{1/2}$ (this guarantees that $\t<\t_B$). Then using \Cref{X exit time} we can apply the same estimate as in \Cref{Holder} to get the locally H\"older property of $U^B_\2$.
To prove H\"older property of the semigroup $P_{t}^{B}$,
	let $G^{B}f (x) := \EX_{x}\int_{0}^{\8}f(X_{t}^{B})dt$ be the Green operator and let 
	$R>2$ so that $B\inn B(0,R)$. Then for any open set $U\inn B$, we have
	\begin{align*}
		\|G^{U }1\|_\8 
		&\< \sup_{x\in U}\int_{U} g_{R}(x-y)\mu(dy)\\
		&\< \mu(U)^{\frac{p-1}{p}}\sup_{x\in B(0,R)}\left(\int_{B(0,R)} g_{R}(x-y)^p \mu(dy)\right)^{1/p}\\
		&= \mu(U)^{\frac{p-1}{p}}\sup_{x\in B(0,R)}\left( g_{R}^p*\mu(x)\right)^{1/p}.
	\end{align*}
	By the same estimate method as in the proof of \Cref{greenfunction},
	 we can show 
	 $$\sup_{x\in B(0,R)}\left( g_{R}^p*\mu(x)\right)^{1/p}\<C_{R}  <\infty
	 $$ 
	 provided $p>1$ is chosen small enough so that $\3_{\mu}-p(d-2)>0$ in the case $d\>3$. Hence  $\|G^{U}1\|_\8\<C\mu(U)^{\frac{p-1}{p}}$. By \cite[Lemma 3.2]{grigor2012two} we know the smallest eigenvalue $\Lambda_{\text{min}}$ of the generator $\mathcal A^{U}$ of $\mathcal E^{U}$ on $L^2(B(0,R),\mu)$ satisfies
	\begin{equation*}
		\Lambda_{\text{min}}(U)\>\|G^{U}1\|_\8^{-1}\> C \mu(U)^{-\frac{p-1}{p}}.
	\end{equation*}
	Then by \cite[Lemma 5.5]{grigor2014upper} we have $\|P_{t}^{B}\|_{L^{1}\to L^{\8}}< C_{B,t}$ hence the classical method in \cite[Proposition 3.4]{bass2010symmetric} applies and we get the H\"older continuity of $P^B_t$.
\end{proof}

\subsection{Convergence in the Skorokhod topology under individual starting points}

We first use Green function estimate \Cref{green est} to show that $\mu$ is a smooth measure.

\begin{prop}\label{smooth measure}
	The measure $\mu$ under the condition \ref{condA} is a smooth measure in strict sense. In particular, $\mu$ does not charge polar sets.
\end{prop}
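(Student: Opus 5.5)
The plan is to show that $\mu$ restricted to each ball $B(0,R)$ is a measure of finite energy integral with a \emph{bounded} potential. By \cite[Theorem 5.1.6/Lemma 5.1.7]{FOT} (equivalently \cite[Chapter 4]{CF12}), a positive Radon measure $\nu$ whose Newtonian-type potential $U\nu(x)=\int G(x,y)\nu(dy)$ with respect to some killed Brownian motion is bounded belongs to $S_{00}$. Such measures are smooth in the strict sense. Since $\Rd=\bigcup_R B(0,R)$, exhausting by balls then gives that $\mu$ itself is smooth in the strict sense. In particular $\mu$ charges no polar set, because measures of finite energy charge no set of zero capacity.

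Concretely, fix $R>2$ and set $D=B(0,R)$. Let $G_D(x,y)$ be the Green function of Brownian motion killed on exiting $D$. I will use the classical bound $G_D(x,y)\le C\, g_{2R}(x-y)$ for $x,y\in D$, where $g_{2R}$ is the function in \Cref{green est}: it is of order $R$ when $d=1$, $\log(2R/|x-y|)+1$ when $d=2$, and $|x-y|^{2-d}$ when $d\ge3$. Alternatively, this bound follows from \Cref{green est} by letting $n\to\infty$, using Donsker's principle and Fatou. I then aim to show
\[
\sup_{x\in D}\int_D g_{2R}(x-y)\,\mu(dy)<\infty .
\]
For this, split $D$ into $B(x,1)$ and $D\setminus B(x,1)$. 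On $D\setminus B(x,1)$, $g_{2R}$ is bounded and $\mu(D)<\infty$ since $\mu$ is Radon. On $B(x,1)$, decompose into dyadic annuli $A_k=\{2^{-k-1}<|y-x|\le 2^{-k}\}$ and use \eqref{measure upper bound condition}, $\mu(B(x,2^{-k}))\le C_{2R}2^{-k\beta_\mu}$. For $d\ge3$ the annulus sum is $\sum_k 2^{k(d-2)}2^{-k\beta_\mu}<\infty$ since $\beta_\mu>d-2$. For $d=2$ the logarithmic factor is absorbed by $\beta_\mu>0$. The case $d=1$ is trivial. This is exactly the computation of \Cref{greenfunction}, taken uniformly in $x$.

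From the bounded potential, the energy $\int\!\!\int G_D(x,y)\mu|_D(dx)\mu|_D(dy)\le \mu(D)\sup_x G_D\mu(x)<\infty$. Hence $\mu|_D$ is of finite energy integral for the part form on $D$, and its $0$-potential is bounded, so $\mu|_D\in S_{00}$ relative to $X^D$. The associated PCAF, $A^D_t$, is then defined for every starting point with the exceptional set empty. Patching the $A^{B(0,R)}$ over $R\uparrow\infty$ via the standard compatibility $A^{B(0,R)}=A^{B(0,R')}$ before $\tau_{B(0,R)}$ yields a PCAF in the strict sense with Revuz measure $\mu$. Alternatively, I can quote the characterization in \cite[Theorem 5.1.7]{FOT}: $\mu$ is smooth in the strict sense iff there exist finely open nest sets $G_k$ with $\mathbf 1_{G_k}\mu\in S_{00}$, and here $G_k=B(0,k)$.

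The main obstacle I expect is the Green function comparison $G_D\le C g_{2R}$ uniformly on $D$ in $d=1,2$, where the whole-space Green function does not exist. I will handle it via the explicit interval/disc Green functions: $G_D(x,y)\le CR$ for $d=1$, and $G_D(x,y)\le \frac1\pi\log(2R/|x-y|)$ for $d=2$ by monotonicity of $D\subset B(x,2R)$. A secondary point is checking the precise definition of strict smoothness used in \cite{FOT}, which I will match with the nest $\{B(0,k)\}$.
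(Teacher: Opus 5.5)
Your proposal is correct and follows essentially the paper's route. The paper likewise mimics the dyadic-annulus computation of Lemma~\ref{greenfunction}, using only the upper bound \eqref{measure upper bound condition}, to bound the Green potential of $\mu$ for Brownian motion killed on leaving $B(0,R)$; it then invokes \cite[Exercise 4.2.2]{FOT} to get that $\1_K\cdot\mu$ has finite energy integral, and concludes that $\mu$ charges no polar sets and is smooth in the strict sense. Your version is more explicit than the paper's, which only records $G_R\mu(x)<\infty$: you make the bound uniform in $x$ and verify the nest $\{B(0,k)\}$, which is what strict-sense smoothness actually requires. One small point: in \cite{FOT}, membership in $S_{00}$ refers to the $1$-resolvent of Brownian motion on all of $\Rd$ rather than of the killed process, and the same annulus estimate applied to the $1$-resolvent kernel handles this.
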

\begin{proof}
One can mimic the proof of \Cref{greenfunction} 
to show that for any $R>0$
$$
G_R\mu(x):=\int g_R(x,y)\mu(dy)<\8,
$$
where $g_R(x,y)$ is the Green kernel for the standard Brownian motion killed upon leaving $B(0,R)$. By \cite[Exercise 4.2.2]{FOT}
it shows for any compact set $K\inn\R^d$ the measure $1_K\cdot\mu$ is of finite energy integrals, hence by \cite[Theorem 2.3.7]{CF12} charge no polar sets. Thus $\mu$ is a smooth measure in strict sense.
\end{proof}

With the above proposition, by \cite[Theorem A.3.9]{CF12} one may construct a strong Markov process $(X, \P_x)_{x\in \R^d}$ called the time-changed Brownian motion by $\mu$. 

We first establish the finite dimensional distribution convergence, given below. Recall that $X^{n,D}$ (resp. $X^{D}$) denotes the killed process $X^{n}$ (resp. $X$) upon leaving domain $D\inn \Rd$.

\begin{prop} \label{pointwise}
Let $D=B(0,R)$ with $R>2$. For any sequence $x_n\in\R^d$ converges to $x_0\in B(0,R-2)$, the law of $X^{n}$ (resp. $\v X^{n,D}$) under $\P_{x_n}$ converges vaguely in finite dimensional distribution to the law of $X$ ((resp. $\v X^{D}$)) under $\P_{x_0}$ as $n\to\8$. In addition, for any $R>2+|x_{0}|$, we have the law of $\tau_{R}(X^n)$ under $\P_{x_n}$ converges weakly to the law of $\tau_{R}(X)$ under $\P_{x_0}$, where $\tau_{R}(\Gamma)$ denotes the exit time of the path $\Gamma$ from $B(0,R)$.
\end{prop}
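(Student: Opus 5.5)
We upgrade the $L^2$-semigroup convergence of \Cref{mosco} (which only controls absolutely continuous initial laws) to individual starting points. The upgrade uses the equi-H\"older estimates of \Cref{Holder} for $X^{n,D}$ together with the H\"older continuity of the limit semigroup in \Cref{killed Holder}. We first treat the killed processes. Fix $t>0$ and $f\in C_c(D)$, and aim to show $P^{n,D}_t f(x_n)\to P^D_t f(x_0)$. For small $\e>0$ take $f_0^\e:=|B(x_0,\e)|^{-1}$-normalised bump supported in $B(x_0,\e)$, i.e.\ a nonnegative $f_0^\e\in C_c(B(x_0,\e))$ with $\int f_0^\e\,d\mu=1$. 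Put $\nu_n^\e=f_0^\e\mu_n/\mu_n(f_0^\e)$; by \Cref{weakConvergeRelation}(3), $\mu_n(f_0^\e)\to 1$.

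The first step is to write
\begin{equation*}
|P^{n,D}_tf(x_n)-P^D_tf(x_0)|\le \Bigl|P^{n,D}_tf(x_n)-\int P^{n,D}_tf\,d\nu^\e_n\Bigr|+\Bigl|\int P^{n,D}_tf\,d\nu^\e_n-\int P^D_tf\,f_0^\e d\mu\Bigr|+\Bigl|\int P^D_tf\,f_0^\e d\mu-P^D_tf(x_0)\Bigr|.
\end{equation*}
We bound the three terms separately.

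\begin{itemize}
\item The first term is at most $C_2\bigl((2\e\vee c_0r_n)^\r+o_n(1)\bigr)\|f\|_\8$ by \Cref{Holder}, since $x_n$ and the support of $\nu^\e_n$ lie in $B(x_0,2\e)\subset B(0,R-2)$ for large $n$. To stay within the setting of \Cref{Holder}, we use $D'=B(0,R')$ with $R'$ slightly larger and apply the lemma with $R'+1$.
\item The second term tends to $0$ as $n\to\8$. This follows from the strong convergence of semigroups in \Cref{mosco}, tested against $f_0^\e\in C_c$, which converges strongly. We pair via weak and strong convergence in the sense of \Cref{weak convergence}.
\item The third term is at most $C\e^\r\|f\|_2$ by \Cref{killed Holder}, because $P^D_tf$ is H\"older near $x_0$.
\end{itemize}

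Letting $n\to\8$ and then $\e\downarrow0$ gives one-dimensional convergence. Multi-time marginals follow inductively by the Markov property. Write $\EX_{x_n}[g_1(X^{n,D}_{t_1})\cdots]=P^{n,D}_{t_1}(g_1 P^{n,D}_{t_2-t_1}(\cdots))(x_n)$. We handle the inner functions using the same decomposition: they are bounded by $\|g\|_\8$ products, and the equi-H\"older bound of \Cref{Holder} needs only $\|\cdot\|_\8$ control. The strong convergence of $g_1 P^{n,D}_s h_n$ follows from strong semigroup convergence and multiplication by the continuous $g_1$.

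For the unkilled process and for $\tau_R$, first note that taking $f\equiv1$ on $D$ (approximated from inside by $C_c(D)$ functions) yields $\P_{x_n}(\tau_R(X^n)>t)\to\P_{x_0}(\tau_R(X)>t)$. To justify the approximation from inside we need $\P_{x_0}(X^D_t\in D\setminus K)$ small for a large compact $K\subset D$. This holds because $X^D_t\in D$ is tight, and on the approximating side $\sup_n\P_{x_n}(X^{n,D}_t\in D\setminus B(0,R-\d))$ is small. We get the latter from the uniform exit estimates \Cref{exittime} and \Cref{hitting converge} (random walks cannot linger near the boundary), or alternatively by choosing $R$ with $\mu(\6B(0,R))=0$. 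This gives convergence of $\tau_R$ in law. For the unkilled finite-dimensional distributions, split on $\{\tau_R>t_k\}$: on this event $X^n$ agrees with $X^{n,D}$, and $\P(\tau_R\le t_k)$ is small uniformly in $n$ once $R$ is large, by \ref{NE} and the convergence of $\tau_R$.

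The main obstacle is the boundary approximation needed to get $\tau_R$ from $f\equiv1_D$, since \Cref{Holder} requires support away from $\6D$. We would handle it by working with $D=B(0,R)$, bounded $f=\1_{B(0,R)}$, and applying \Cref{Holder} with the bigger ball $B(0,R+1)$, stopping at $\6B(0,R)$ only through the killed semigroup. \Cref{Holder} is stated for bounded $f$ supported on the killing domain, which $\1_D$ is, so only the Mosco step (where $\1_D$ is not continuous) needs the $\mu(\6 D)=0$ choice of $R$ from \Cref{R sequence}.
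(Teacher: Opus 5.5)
Your argument follows the paper's proof. You average the starting point against a bump near $x_0$, control the averaging error with \Cref{Holder} for $P^{n,D}_t$ and \Cref{killed Holder} for $P^D_t$, and use \Cref{mosco} for the averaged term. You then take $f\equiv1$ on $D$ to get $\tau_R$, and finally split on $\{\tau_R>t_m\}$ using \ref{NE}. Normalising $\nu^\e_n$ just absorbs the paper's term $I_1$.

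\textbf{Gap in the $\tau_R$ claim.} You insist on $\mu(\partial B(0,R))=0$ for the Mosco step. That gives convergence of $\tau_R(X^n)$ only for all but countably many radii, whereas the proposition asserts it for every $R>2+|x_0|$. The restriction is unnecessary, and the fix is as follows.
\begin{itemize}
\item Take $f_0^\e\in C_c^\infty(D)$. Since $P^{n,D}_t$ is symmetric on $L^2(D;\mu_n)$,
\[
\int f_0^\e\,P^{n,D}_t\1_D\,d\mu_n=\int_D P^{n,D}_tf_0^\e\,d\mu_n .
\]
\item By \Cref{mosco}, $P^{n,D}_tf_0^\e\to P^D_tf_0^\e$ strongly.
\item $\1_D$ converges to $\1_D$ weakly in the sense of \Cref{weak convergence}. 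This holds because $\sup_n\mu_n(D)<\infty$ and $\mu_n(w)\to\mu(w)$ for $w\in C_c^\infty(D)$ by \Cref{weakConvergeRelation}.
\item By that definition, pairing a weakly convergent sequence with a strongly convergent one converges. So the middle term converges for every $R$.
\end{itemize}
The same device is what justifies the paper's use of $f\in C_c(\R^d)$ with $f\equiv1$ on $\bar D$.

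\textbf{Two smaller points.} First, \Cref{Holder} does not require $f$ to vanish near $\partial D$, so your stated ``main obstacle'' is not one, as you observe yourself. Second, your alternative route via \Cref{exittime} and \Cref{hitting converge} does not work. The former bounds exit times from below and the latter controls exit positions. Neither yields $\sup_n\P_{x_n}(X^{n,D}_t\in D\setminus B(0,R-\d))\to0$ as $\d\to0$.
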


\begin{proof}
Let $0=t_0<t_1<...<t_m$ and $f_0,f_1,...,f_m \in C_{c}(\Rd)$ with the convention that they 
take value 0  at the cemetery $\6$. Set 
\begin{align*}
	G_n(x) 
	&:= f_0(x)P^{n,D}_{t_1-t_0}(f_1P^{n,D}_{t_2-t_1}(f_2\cdot\cdot\cdot (f_{m-1}P^{n,D}_{t_m-t_{m-1}}f_m)\cdot\cdot\cdot))(x) \\
	&=\EX_x[f_0(X^{n,D}_{t_0})f_1(X^{n,D}_{t_1})\cdot\cdot\cdot f_m(X^{n,D}_{t_m})] 
\end{align*}
and $G(x):= G_\8(x):=\EX_x[f_0(X_{t_0}^{D})f_1(X_{t_1}^{D})\cdot\cdot\cdot f_m(X_{t_m}^{D})]$. We will show 
\begin{equation*}
	\lim_{n\to \8}|G_n(x_n)-G(x_0)| = 0.
\end{equation*}
Without loss of generality we assume $\|f_i\|_\8 \< 1$ for $i = 0,...,m$. 
Note that 
$\|G_k\|_\8\< \prod_{i=0}^m\|f_i\|_\8 \< 1$ for all $k\in\N\cup\{\8\}$. And by Mosco convergence \Cref{mosco} we know for any $f\in C_c(\R^d)$
\begin{equation*} \langle f,G_n \rangle _{\mu_n}\to \langle f,G \rangle_{\mu}.\end{equation*}
For $x_0\in\R^d$ and $\e>0$,
 let $g_\e\>0$ be in $C_c(\R^d)$ supported in $B(x_0,\e)$ and $\int g_\e d\mu=1$. Then 
	$$
	|G_n(x_n)-G(x_0)|\< I_1 + I_2 + I_3 + I_4,
	$$
where
\begin{align*}
	I_1 &= \left|G_n(x_n)-\int g_\e(x)G_n(x_n)d\mu_n\right|,\\
	I_2	&= \left|\int g_\e(x)G_n(x_n)d\mu_n-\int g_\e(x)G_n(x)d\mu_n \right|,\\
	I_3	&= \left|\int g_\e(x)G_n(x)d\mu_n-\int g_\e(x)G(x)d\mu \right|,\\
	I_4	&= \left|\int g_\e(x)G(x)d\mu- G(x_0) \right|.\\
\end{align*}
We have
\begin{align*}
	\limsup_{n\to\8} I_1 &\< \limsup_{n\to\8}\left|1-\int g_\e d\mu_n \right| = \limsup_{n\to\8}\left|1-\int g_\e d\mu \right| = 0.\\
\end{align*} 
By \Cref{mosco} we have $\limsup_{n\to\8} I_3 = 0$.
By \Cref{Holder},
\begin{align*}
	\limsup_{n\to\8} I_2 
	&\< \limsup_{n\to\8}\int g_\e(x) \left|G_n(x)-G_n(x_n)\right|\mu_n(dx) \\
	&\< \limsup_{n\to\8}\int g_\e(x) (|f_{0}(x)-f_{0}(x_{n})|+ C_2(|x-x_n|\vee c_{0}r_{n}) ^\r + o_{r_{n}}(1) ) \mu_n(dx)\\
	&= o_\e(1) , 
\end{align*}
where $r_{\phi}>0$ so that $\text{supp}(\phi)\inn B(0,r_{\phi})$ and $C_2$ is from \Cref{Holder}.
Similarly for $I_4$ we have by \Cref{killed Holder}
\begin{align*}
	\limsup_{n\to\8} I_4 
	&\< \limsup_{n\to\8} \int g_\e (x)|G(x)-G(x_0)| d\mu \\
	&\< \limsup_{n\to\8} \int g_\e (x) (|f_{0}(x)-f_{0}(x_{0})|+ C_{2}|x-x_{0}|^{\r}) d\mu
	=o_\e(1).
\end{align*}
Since $\e>0$ is arbitrary, we get 
$$
\lim_{n\to\8} G_n(x_n)=G(x_0),
$$
or equivalently,
\begin{equation*}
	\lim_{n\to\8}\EX_{x_{n}}[f_0(X^{n,D}_{t_0})f_1(X^{n,D}_{t_1})\cdot\cdot\cdot f_m(X^{n,D}_{t_m})] = \EX_{x_{0}}[f_0(X^{D}_{t_0})f_1(X^{D}_{t_1})\cdot\cdot\cdot f_m(X^{D}_{t_m})].
\end{equation*}
This shows the finite dimensional distribution convergence of $X^{n,D}$ to $X^{D}$.
In particular, choose $f\in C_{c}(\Rd)$ with $f\equiv 1$ on $\bar D$, we get
\begin{equation*}
	\lim_{n\to\8}\EX_{x_n}f(X^{n,D}_t)=\EX_{x_0}f(X_t^{D}), 
\end{equation*}
which is equivalent to $\lim_{n\to\8}\P_{x_n}(\tau_{R}(X^n)>t)=\P_{x_0}\{\tau_{R}(X)>t\}$. 
Hence the law of $\tau_{R}(X^n)$ under $\P_{x_n}$ converges weakly to the law of $\tau_{R}(X)$ under $\P_{x_0}$.
Next notice that for $n\in\N^{*}\cup\{\8\}$,  we have 
\begin{align*}
	& \EX_{x_{n}}[f_0(X^{n}_{t_0})f_1(X^{n}_{t_1})\cdot\cdot\cdot f_m(X^{n}_{t_m})] \\
	=& \,  \EX_{x_{n}}[f_0(X^{n,D}_{t_0})f_1(X^{n,D}_{t_1})\cdot\cdot\cdot f_m(X^{n,D}_{t_m})] 
 + \EX_{x_{n}}[f_0(X^{n}_{t_0})f_1(X^{n}_{t_1})\cdot\cdot\cdot f_m(X^{n}_{t_m}); t_{m} \> \tau_{R}(X^n)] , 
\end{align*}
where we interpret $X^{\8,D} :=X^{D}$ and $X^{\8} :=X$. Then
\begin{align*}
	& \limsup_{n\to \8} \left| \EX_{x_{n}}[f_0(X^{n}_{t_0})f_1(X^{n}_{t_1})\cdot\cdot\cdot f_m(X^{n}_{t_m})] - \EX_{x_{0}}[f_0(X_{t_0})f_1(X_{t_1})\cdot\cdot\cdot f_m(X_{t_m})] \right|\\
	\<& \limsup_{n\to \8} \left| \EX_{x_{n}}[f_0(X^{n,D}_{t_0})f_1(X^{n,D}_{t_1})\cdot\cdot\cdot f_m(X^{n,D}_{t_m})] - \EX_{x_{0}}[f_0(X^{D}_{t_0})f_1(X^{D}_{t_1})\cdot\cdot\cdot f_m(X^{D}_{t_m})] \right| \\
	& + \limsup_{n\to\8} \P_{x_{n}}(t_{m} \> \tau_{R}(X^n))
	  + \P_{x_{0}}(t_{m} \> \tau_{R}(X)) \\
	=& 2 \P_{x_{0}}(t_{m} \> \tau_{R}(X)).
\end{align*}
Because $R$ can be taken arbitrarily large, we see
\begin{equation*}
	\lim_{n\to \8}  \EX_{x_{n}}[f_0(X^{n}_{t_0})f_1(X^{n}_{t_1})\cdot\cdot\cdot f_m(X^{n}_{t_m})] = \EX_{x_{0}}[f_0(X_{t_0})f_1(X_{t_1})\cdot\cdot\cdot f_m(X_{t_m})],
\end{equation*}
which shows the vague convergence of finite dimensional distributions of $X^{n}$ under $\P_{x_{n}}$ to $X$ under $\P_{x_{0}}$.
\end{proof}

Finally, we are ready to prove \Cref{mainthm2}. The proof is almost the same as that under symmetrizing measure, except for \Cref{R sequence} where we used the initial distribution of the process. We modify it and give the lemma below.

\begin{lemma} \label{R sequence pointwise}
	Given $x_{0}\in\Rd$, there exists a sequence of $R_{k}\uparrow \8$ as $k\to \8$ such that $\P_{x_{0}}(\tau_{R_{k}}(X) = t) = 0$ for any $t\in \mathbb{Q}_+$. 
\end{lemma}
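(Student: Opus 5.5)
We follow the proof of \Cref{R sequence}, replacing the symmetry argument, which used the absolutely continuous initial law $\nu=f_0\cdot\mu$, by an averaging argument in the radius.

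\textbf{Reduction to a fixed time.} If $\tau_R(X)=t<\infty$, then by continuity of paths $|X_t|=R$. Hence
\[
\P_{x_0}(\tau_R(X)=t)\le \P_{x_0}(X_t\in \6 B(0,R)).
\]
It therefore suffices to find $R_k\uparrow\8$ such that, for every $t\in\mathbb{Q}_+$,
\[
\P_{x_0}(X_t\in\6 B(0,R_k))=0.
\]

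\textbf{The case $t=0$.} Here $X_0=x_0$, so we only need $R_k\neq|x_0|$.

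\textbf{The case $t>0$.} Fix a rational $t>0$ and let $\mu_t:=\P_{x_0}(X_t\in\cdot)$ on $\R^d$. The defective mass on the cemetery is irrelevant, since the event $X_t\in\6 B(0,R)$ requires $X_t\in\R^d$. Then $\mu_t$ is a finite measure.

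The spheres $\{\6B(0,R)\}_{R>0}$ are pairwise disjoint. A finite measure can give positive mass to at most countably many members of a disjoint family, so the set $E_t:=\{R>0:\mu_t(\6 B(0,R))>0\}$ is countable.

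\textbf{Choosing the radii.} The set $E:=\{|x_0|\}\cup\bigcup_{t\in\mathbb{Q}_+,\,t>0}E_t$ is a countable union of countable sets, hence countable. Its complement $(0,\8)\setminus E$ is therefore dense, and in particular unbounded. We can thus choose $R_k\in(0,\8)\setminus E$ with $R_k\uparrow\8$, and additionally $R_k>k$ if desired.

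For these $R_k$ we get, for every $t\in\mathbb{Q}_+$,
\[
\P_{x_0}(\tau_{R_k}(X)=t)\le \mu_t(\6 B(0,R_k))=0,
\]
which proves the lemma.

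\textbf{Main obstacle.} There is no real difficulty here. The only point needing care is that the argument uses the law of $X_t$ under the single starting point $x_0$, and so relies on the time-changed process $X$ being defined from every point. This is guaranteed by \Cref{smooth measure}, which shows that $\mu$ is smooth in the strict sense. The restriction to rational $t$ is what makes the union of exceptional radii countable.
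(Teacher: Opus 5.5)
This is essentially the paper's argument: bound $\mathbb{P}_{x_0}(\tau_R(X)=t)$ by $\mathbb{P}_{x_0}(X_t\in\partial B(0,R))$, use disjointness of the spheres to see that for each rational $t$ only countably many radii are charged, and choose $R_k\uparrow\infty$ outside the countable union of exceptional radii. One small correction at $t=0$: you need $R_k>|x_0|$, not merely $R_k\neq|x_0|$, because for $R<|x_0|$ one has $\tau_R(X)=0$ surely, so your reduction ``$\tau_R(X)=t\Rightarrow |X_t|=R$'' fails there (the paper's inequality has the same issue); discarding the finitely many $R_k\le|x_0|$ fixes this.
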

\begin{proof}
	Let $P_{t}$ be the transition semigroup of $X$. Then $P_{t}$ is contractive on $L^{\8}(\Rd;\mu)$ and 
	$\P_{x_{0}}(\tau_{R}(X) = t) \< \P_{x_{0}}(X_{t} \in \6 B(0,R)) = P_{t}\1_{\6 B(0,R)}(x_{0})$.
	For each $t\in \mathbb{Q}_+$, there are at most countable many distinct $R>0$ such that 
	$P_{t}\1_{\6 B(0,R)}(x_{0})>0$, otherwise there exists $k\in \N^*$ such that infinite many distinct $R_i>0$ satisfies $P_{t}\1_{\6 B(0,R_i)}(x_{0})>1/k$, which contradicts that $\sum_{i} P_{t}\1_{\6 B(0,R_i)}(x_{0})\<P_{t}\1 = 1$. Hence for any $R>0$ except for at most countable positive real numbers, we have $\P_{x_{0}}(\tau_{R}(X) = t)=0$ for all $t\in \mathbb{Q}_+$. Then the existence of a desired sequence $R_{k}\uparrow \8$ follows.
\end{proof}

\begin{proof}[Proof of \Cref{mainthm2}]
	For any sequence $x_n\in\R^d$ converges to $x_0\in\R^d$, by \Cref{pointwise}, the process $\{X^{n,D}_t;t\>0\}$ under $\P_{x_{n}}$ converges in finite dimensional distribution to $\{X^{D}_t;t\>0\}$ under $\P_{x_0}$, where $D=B(0,R)$ with $R>2+|x_{0}|$ ($X^{n,D},X^{D}$ is the killed process) and $D=\Rd$ ($X^{n,D}=X^{n},X^{D}=X$). As discussed in \Cref{pseudo}
	The convergence in finite dimensional distributions in fact implies convergence in pseudo-path topology (ref. \cite[Theorem 2.1]{bogachev2016weak}). Take a sequence of $R_{k}\uparrow\8$ from \Cref{R sequence pointwise} instead of \Cref{R sequence} and let $R\in\{R_k\}_k$ and $\t(\Gamma):=\inf\{s\>0:|\Gamma_s|\>R\}$ be the exit time of the path $\Gamma$ upon leaving $B(0,R)$. The ingredients of the proof \Cref{stopped} are the convergence of pseudo-path topology of the killed process, the Aldous's tightness criterion \cite{aldous1989stopping}, the existence of stopping radius \Cref{R sequence pointwise}, and the path property of Brownian motion. Hence we can follow the same proof of \Cref{stopped} to get that the stopped process $Y^{n}:=X^{n}_{\cdot\wedge\t(X^{n})}$ under $\P_{x_n}$ converges in law to $Y:=X_{\cdot\wedge\t(X)}$ under $\P_{x_{0}}$ on $\DD([0,\8);\R^d)$ equipped with the Skorokhod topology. The convergence of exit time $\t_{R}(X^{n})$ under $\P_{x_n}$ to $\t_{R}(X)$ under $\P_{x_{0}}$ has been shown in \Cref{pointwise}. Hence using the same technique as in the proof of \Cref{mainthm1} at the end of \Cref{section2}, we can extend the convergence to the non-stopped process ($X^{n}$ to $X$). 
\end{proof}

\bigskip

\noindent {\bf Acknowledgement.} 
 Part of this work was carried out while both of the authors were 
in residence at the Mathematical Sciences Research Institute in Berkeley, California, during the spring semester of 2022
for the {\it The Analysis and Geometry of Random Spaces}  program, 
which is supported in part by the National Science Foundation
under Grant No. DMS-1928930.

\vskip 0.3truein 

{\bf Zhen-Qing Chen}

Department of Mathematics, University of Washington, Seattle, WA 98195, USA

Email: zqchen@uw.edu

\bigskip

{\bf Yang Yu}

Department of Mathematics, University of Washington, Seattle, WA 98195, USA

Email: yuy10@uw.edu

\end{document}